\newtheorem{thm}{Theorem}
 \newtheorem{coro}[thm]{Corollary}
 \newtheorem{lemm}[thm]{Lemma}
 \newtheorem{prop}[thm]{Proposition}
 \newtheorem{defi}[thm]{Definition}
 \newtheorem{rem}[thm]{Remark}
  \newtheorem{example}[thm]{Example}
\newcommand{\IC}{\mathbb{C}}
\newcommand{\IZ}{\mathbb{Z}}
\begin{document}

\title{A positive formula for type A Peterson Schubert calculus \thanks{This work was supported by National Science Foundation grant, Award \#1201458. }
}


\author{Rebecca Goldin     
 \and
        Brent Gorbutt 
}




\maketitle

\begin{abstract}
The Peterson variety is a special case of a nilpotent Hessenberg variety, a class of subvarieties of $G/B$ that have appeared in the study of quantum cohomology, representation theory and combinatorics. In type A, the Peterson variety $Y$ is a subvariety of 
$Fl(n;\IC)$, the set of complete flags in $\IC^n$, and comes equipped with an action by a one-dimensional torus subgroup $S$ of a standard torus $T$ that acts on $Fl(n;\IC)$. Using the {\em Peterson Schubert basis} introduced in \cite{HT} and obtained by restricting a specific set of Schubert classes from $H_T^*(Fl(n; \IC))$ to $H_S^*(Y)$, we describe the product structure of the equivariant cohomology $H_{S}^*(Y)$.
In particular, we show that the product is {\em manifestly positive} in an appropriate sense by  providing an explicit, positive, combinatorial formula for its structure constants. 
A key step in our proof requires a new 
combinatorial identity of binomial coefficients that generalizes Vandermonde's identity, and merits independent interest.
\end{abstract}




\section{Introduction}\label{se:intro}

Let $G= Gl(n, \IC)$, $B$ upper triangular matrices, and $B_-$ lower triangular matrices. The quotient $G/B = Fl(n;\IC)$ is the associated {\em flag variety}. Let $T$ be compact form of the set of diagonal matrices in $G$, i.e. diagonal matrices in which each entry has norm 1. Then $G/B$ has a left $T$ action with isolated fixed points, $(G/B)^T$. The fixed point set may be identified with the Weyl group $W \cong S_n$, the permutation group on $n$ letters. We denote by $\mathfrak t$ the  
Lie algebra of $T$ and by $\mathfrak t^*$ its dual. 
Let $x_i$ be the $i$th coordinate function on $T\cong (S^1)^n$, for $i = 1,\dots n$. 
Finally let $\{\alpha_i := x_i-x_{i+1}:\ i\in \{1,\dots, n-1\} \}$ denote a choice of positive simple roots, with the property that the roots spaces of the Lie algebra $\mathfrak b$ of $B$  are positive. 

The ordinary cohomology and the $T$-equivariant cohomology of $G/B$ have a linear basis given by {\em Schubert classes} $\sigma_w$ as $w$ varies over elements of $W$. Indeed, they are each free modules over the corresponding ordinary or equivariant cohomology of a point. We use cohomology with complex coefficients throughout, and
 identify the equivariant cohomology of a point, denoted $H_T^*$, with the polynomial ring $\IC[x_1,\dots, x_n]$.

The products of Schubert classes define coefficients $c_{u,v}^w\in H_T^*$ by expanding in the basis:
$$
\sigma_u\sigma_v = \sum\limits_{w\in W} c_{u,v}^w \sigma_w
$$
for all $u, v\in W$. The coefficients $c_{u,v}^w$ are polynomials in $\alpha_1,\dots, \alpha_{n-1}$ with {\em non-negative coefficients} \cite{gr}. 

This manuscript describes a similar story with a particular subvariety of $Fl(n;\IC)$, namely the {\em Peterson variety} $Y$.
The Peterson variety is a special nilpotent Hessenberg variety first introduced in unpublished work by Peterson \cite{peterson:notes}, in which he proposed a link with the quantum cohomology of $Fl(n;\mathbb C)$. There are multiple equivalent definitions that have been given for the Peterson, and we provide one that works in all Lie types.  In Definition~\ref{defi:Peterson}, we provide another definition specific to the case that $G/B = Fl(n;\IC)$.  Let $w_0$ denote the longest word in the Weyl group $W$, and $e \in \mathfrak{b}$ a principal nilpotent element in the Lie algebra of $B$. Define $G^e$ be the centralizer of $e$.
The Peterson variety is defined as the closure in $G/B$ of an orbit of $G^e$ on the point $w_0B$, as follows:
\[ Y:= \overline{G^ew_0B} \hookrightarrow G/B \/. \]

Kostant elaborated on the connection to integrable systems, showing that the quantum cohomology ring of $Fl(n;\mathbb C)$ is isomorphic to the coordinate ring of an open dense affine subvariety of the Peterson variety \cite{Ko96}.
Rietsch generalizes these results to $G/P$ for any parabolic $P$, 
and proved the Peterson variety is paved by these affine varieties as $P$ varies \cite{rietsch2003totally}. Her work revealed an explicit relationship among geometric, algebraic and combinatorial descriptions of quantum cohomology, which she subsequently generalized to equivariant quantum cohomology, noting that each stratum may also play the role of a ``mirror symmetry phenomenon" for $G/P$ \cite{rietsch2008}.

The Peterson variety $Y$ in $Fl(n;\IC)$ is invariant under the action of a one-dimensional subgroup $S$ of $T$ (specified in Section~\ref{sse:background1}).  
We describe the product structure of the $S$-equivariant cohomology $H_{S}^*(Y)$ in a specific linear basis, termed 
the {\em Peterson Schubert basis}. In particular, we show that the product is {\em positive} in an appropriate sense by  providing an explicit positive combinatorial formula for the $S$-equivariant and ordinary structure constants (see Theorems~\ref{general},  \ref{thm:AuBconsec}, \ref{thm:AuBnotconsec},  \ref{thm:disconnectedgeneral}, and their corollaries).

The (equivariant) cohomology of the Peterson variety has been formulated and described in several ways.  Tymoczko showed the Peterson variety has a paving by affine cells \cite{T1}, implying its cohomology groups are nonzero only in even degrees. Tymoczko and Insko explore the non-equivariant cohomology through the study of its homology groups \cite{insko.tymoczko:intersection.theory}.
The ring structure has been described both as a quotient ring and as a subring of a sum of polynomial rings in work by Brion and Carrel \cite{BC}, Harada, Horiguchi, and Masuda \cite{HHM}, and Fukukawa, Harada, and Masuda in \cite{Fukukawa2015}, and via a connection with hyperplane arrangements \cite{abe2019hessenberg}.
Harada and Tymoczko \cite {HT} introduced a Schubert-type basis for the $S$-equivariant cohomology of the Peterson variety as a module over the $S$-equivariant cohomology of a point and 
  proved a {\em manifestly positive} Chevalley-Monk formula for the equivariant cohomology of the Peterson variety of $Fl(n;\IC)$. 
 Drellich extended the Chevalley-Monk formula proved by Harada and Tymoczko to all Lie types as well as proved Giambelli's formula for $Y$ in all Lie types \cite {Dr}.  After the appearance of this manuscript on the arXiv, Abe, Horiguchi, Kuwata, and Zeng posted a paper that computes the structure constants for the ordinary cohomology of $Y$ \cite{Abe-Horiguchi-Kuwata-Zeng}.

Harada and Tymoczko's insight was to use a natural composition
$$j: H_T^*(Fl(n;\IC))\longrightarrow H_{S}^*(Fl(n;\IC))\longrightarrow H_{S}^*(Y)$$
to obtain a basis of $H_{S}^*(Y)$ (as a module over $H_{S}^*$) as the image of a specific subset of Schubert classes on $Fl(n;\IC)$ indexed by subsets  
$$A \subseteq [n-1]=\{1, \ldots, n-1\}.$$
More specifically, let $\alpha_1, \dots \alpha_{n-1}$ denote the simple roots ordered by adjacency in the Dynkin diagram, and $s_1,\dots, s_{n-1}$ the corresponding reflections. For   $A = \{a_1,\dots, a_k\}$ listed in increasing order, let
 $$
 v_A= s_{a_1}s_{a_2}\dots s_{a_k}
 $$
 and $\sigma_{v_A}$ the corresponding Schubert class. The {\em Peterson Schubert classes} $p_A$ are defined by 
 $$p_A = j(\sigma_{v_A}).$$
The set $\{p_A\}_{A\subseteq [n-1]}$ forms a module basis of  $H_{S}^*(Y)$. Thus the product of two Peterson Schubert classes is an $H_{S}^*$-linear combination of Peterson Schubert classes.  
For $A, B, C \subseteq \{1, \ldots, n-1\}$, define the {\em structure constant} $b_{A,B}^C \in H_{S}^*$ by
\begin{equation}\label{eq:structureconstants}
p_A  p_B =
\sum_{C \subseteq \{1, \ldots, n-1\}}
b_{A,B}^C \ p_C.
\end{equation}
Harada and Tymoczko show that $b_{A,B}^C$ is a non-negative integer multiple of a power of $t$ when $A=\{i\}$ consists of a single element, and provide a positive (counting) formula for the coefficients $b_{\{i\}, B}^C$. 

Their work raises the enticing question of whether the product structure is positive in the equivariant sense, i.e. whether the structure constants $b_{AB}^C$ are polynomials with nonnegative coefficients for all $A, B, C$.  Our main results are combinatorially positive formulas for these equivariant Peterson Schubert structure coefficients
when $G = Gl(n,\IC)$.  
The explicit formulas are found in Theorems~\ref{general},  \ref{thm:AuBconsec}, \ref{thm:AuBnotconsec}, and \ref{thm:disconnectedgeneral}, which together provide manifestly positive formulas for the equivariant structure constants of $H_{S}^*(Y)$ in the basis $\{p_A: A\subset\{1,\dots, n-1\}\}$ of Peterson Schubert classes.
As a result, we obtain both the statement that structure constants are nonnegative, as well as simple criteria for when they are positive.  The first author explores a geometric proof of positivity in all Lie types in separate work \cite{Goldin-Milhalcea-Singh}.

We call a subset $C_k\subset C\subset \{1,\dots, n-1\}$  {\em maximal consecutive} if $C_k$ is consecutive set such that 
\begin{align*}
(\min C_k-1)\not\in C && \mbox{and} &&  (\max{C_k}+1)\not\in C.
\end{align*}

\vspace{.1in}

\noindent {\bf  Corollary \ref{cor:nonnegativecoeff}, Theorem \ref{thm:positive}} 
{\em The equivariant structure constants $b_{A,B}^C$ defined by \eqref{eq:structureconstants}
are nonnegative, integral multiples of powers of $t$. They have  positive coefficients if and only if $A\cup B\subseteq C$ and each maximal consecutive subset $C_k$ of $C$ satisfies $|C_k|\leq |C_k\cap A| + |C_k\cap B|$.}
\vspace{.1in}

One consequence of these theorems is a manifestly positive formula for the structure constants in ordinary Peterson Schubert calculus (Corollary~\ref{cor:ordinary}).

The proofs in this paper are combinatorial rather than geometric. A crucial step for the proof is an unexpected combinatorial identity (Theorem~\ref{identity}), a generalization of Vandermonde's identity, which we prove using a technique we term {\em bike lock moves}.

The structure of the paper is as follows. In Section~\ref{se:positivity} we state the main positivity theorems which together provide a full picture of the positivity of the structure constants. In Section~\ref{se:background} we define the basics of equivariant cohomology, Peterson varieties, and positivity. We prove the main positivity theorems in Section \ref{se:proofs}, and the crucial combinatorial theorem in Section~\ref{se:proofofidentity}. 

\vspace{.1in}
\thanks{\noindent {\bf Acknowledgements} 
The authors would like to thank Geir Agnarsson, James Lawrence, and Julianna Tymoczko for many useful conversations about both Hessenberg varieties and combinatorics. We also thank the referees for excellent and insightful suggestions for this manuscript.}

\section{Positivity Theorems}\label{se:positivity}

In this section, we describe the main results on the structure constants for the equivariant cohomology $H_{S}^*(Y)$ of the Peterson variety $Y$  in $Fl(n;\IC)$ (both defined in Section~\ref{se:background}), which show directly their positivity. To each subset $A\subseteq \{1,2,\dots, n-1\}$, we define an element $p_A \in H_S^*(Y)$ in Section~\ref{sec:Petersonclasses} as the pullback of a specific Schubert class from $G/B$. We call $p_A$  a {\em Peterson Schubert class},  The collection $\{ p_A: A\subset \{1,\dots, n-1\}\}$ a free module basis for the equivariant cohomology $H_S^*(Y)$ over $H_S^*:=H_{S}^*(pt)$. Define the {\em structure constants} $b_{A,B}^C\in H_{S}^*$ by
\begin{equation}\label{babc2} 
p_A p_B = \sum_{C\subseteq \{1,2,\dots, n-1\}} b_{A,B}^C\ p_C.
\end{equation}
By construction, $p_\emptyset=1$, and thus the coefficients $b_{A,B}^C$ are easy to calculate when $A, B$ or $C$ is empty: $b_{A,\emptyset}^A=b_{\emptyset,A}^A=1$ for all $A\subseteq \{1,\dots, n-1\}$, and all other coefficients vanish. 

For $A, B, C$ nonempty, Theorem \ref{general} gives an explicit positive, integral formula for the coefficients $b_{A,B}^C$ 
when $A$ and $B$ are consecutive. Theorems~\ref{thm:AuBconsec}, \ref{thm:AuBnotconsec} and \ref{thm:disconnectedgeneral} describe the constants in the nonconsecutive cases.  
Nonvanishing conditions for the structure constants are described in Theorem~\ref{thm:positive}. Proofs are relegated to Section~\ref{se:proofs}.
 
We recall notation found in \cite{HT}. For $A \subseteq \{1, \ldots, n-1\}$ with $A$ nonempty and consecutive, let  $\mathcal T_A = \min\{a\in A\}$ and $\mathcal H_A = \max \{a\in A\}$, called the {\em tail} and {\em head} of $A$, respectively.

\begin{thm}[$A, B, C$ consecutive] \label{general} 
Let $A, B, C \subseteq \{1, \ldots, n-1\}$ be nonempty consecutive subsets. 
 If $C \supseteq A \cup B$ and $|C| \leq |A|+|B|,$ then
\begin{equation}\label{eq:general}
b_{A, B}^C = 
d!
{\mathcal H_A - \mathcal T_B +1 \choose d, \ \mathcal T_A - \mathcal T_C, \ \mathcal H_C - \mathcal H_B}
{\mathcal H_B - \mathcal T_A +1 \choose d, \ \mathcal T_B - \mathcal T_C, \ \mathcal H_C - \mathcal H_A}
t^d
\end{equation}
for $d:=|A|+|B|-|C|$.
\end{thm}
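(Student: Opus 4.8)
The plan is to compute $b_{A,B}^C$ by restriction to $S$-fixed points. Since $Y$ has a paving by affine cells, its cohomology is concentrated in even degrees, $H_S^*(Y)$ is free over $H_S^*$, and the restriction map $H_S^*(Y)\hookrightarrow\bigoplus_{K\subseteq\{1,\dots,n-1\}}H_S^*$ to the fixed points $w_K$ is injective; hence \eqref{eq:structureconstants} is equivalent to the family of scalar equations $p_A|_{w_K}\cdot p_B|_{w_K}=\sum_C b_{A,B}^C\,p_C|_{w_K}$ for all $K$. The first step is to recall the explicit combinatorial restriction formula: using the Harada--Tymoczko restriction of the single classes $p_{\{i\}}$ together with Drellich's Giambelli formula, which expresses $p_A$ for consecutive $A$ through the $p_{\{i\}}$, one obtains that $p_A|_{w_K}=0$ unless $A\subseteq K$, and that otherwise $p_A|_{w_K}$ is an explicit scalar multiple of $t^{|A|}$ depending only on the position of $A$ inside the maximal consecutive block of $K$ containing it. A degree count then gives at once that $b_{A,B}^C$ is a scalar multiple of $t^{d}$ with $d=|A|+|B|-|C|$, and in particular it vanishes when $|C|>|A|+|B|$.

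Next I would pin down the support. Localizing \eqref{eq:structureconstants} at $w_C$ when $A\cup B\not\subseteq C$ makes the left side zero, and since $p_{C'}|_{w_C}=0$ unless $C'\subseteq C$ while $p_C|_{w_C}\neq 0$, a straightforward induction on $|C|$ forces $b_{A,B}^C=0$ whenever $A\cup B\not\subseteq C$. Together with the hypotheses $C\supseteq A\cup B$ and $|C|\le|A|+|B|$, this forces $A\cup B$ to itself be consecutive --- a gap between $A$ and $B$ would make every consecutive $C\supseteq A\cup B$ strictly larger than $|A|+|B|$. Consequently the only $C'$ contributing to $p_Ap_B$ are those with $A\cup B\subseteq C'\subseteq C$, and the system is triangular with respect to inclusion, so the constants can be found by induction on $|C|$, carried out jointly with Theorems~\ref{thm:AuBconsec}, \ref{thm:AuBnotconsec}, and \ref{thm:disconnectedgeneral} since the intermediate $C'$ need not be consecutive. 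The base case $C=A\cup B$ is immediate: localizing at $w_{A\cup B}$ leaves only one term on the right, so $b_{A,B}^{A\cup B}=\dfrac{p_A|_{w_{A\cup B}}\,p_B|_{w_{A\cup B}}}{p_{A\cup B}|_{w_{A\cup B}}}$, which one checks by hand equals the right side of \eqref{eq:general} with $d=|A\cap B|$.

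For the inductive step, localize \eqref{eq:structureconstants} at $w_C$ and isolate the top coefficient, $b_{A,B}^C\,p_C|_{w_C}=p_A|_{w_C}\,p_B|_{w_C}-\sum_{A\cup B\subseteq C'\subsetneq C}b_{A,B}^{C'}\,p_{C'}|_{w_C}$, where every term on the right is now explicit: the restrictions from Step~1 and the lower constants $b_{A,B}^{C'}$ from the inductive hypothesis (using the companion theorems for the non-consecutive $C'$). Substituting these values and cancelling the common power of $t$, the claimed formula \eqref{eq:general} turns into a nontrivial identity --- a sum over the intermediate subsets $C'$ of products of multinomial coefficients must collapse to the single product of two multinomials appearing in \eqref{eq:general}. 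This is precisely a specialization of the generalized Vandermonde identity of Theorem~\ref{identity}, and proving it is the crux of the whole argument; I would establish it by the ``bike lock move'' technique, a sequence of local rebalancing operations on the summation indices that telescopes the sum to a closed form. The remaining ingredients --- the explicit restrictions, the triangular inversion, and the degree bookkeeping --- are routine.

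Finally, positivity is manifest from \eqref{eq:general}: under the hypotheses, $d=|A|+|B|-|C|\ge 0$, and the entries $\mathcal{T}_A-\mathcal{T}_C$, $\mathcal{H}_C-\mathcal{H}_B$, $\mathcal{T}_B-\mathcal{T}_C$, $\mathcal{H}_C-\mathcal{H}_A$ are nonnegative because $A\subseteq C$ and $B\subseteq C$, so both multinomial coefficients are positive integers and $b_{A,B}^C$ is a positive integral multiple of $t^d$. Applying the four theorems block by block to an arbitrary $C$ with maximal consecutive blocks $C_k$, one then obtains the general nonvanishing and positivity statements of Corollary~\ref{cor:nonnegativecoeff} and Theorem~\ref{thm:positive}: $b_{A,B}^C$ is a nonnegative integral multiple of a power of $t$, and it is nonzero precisely when $A\cup B\subseteq C$ and $|C_k|\le|C_k\cap A|+|C_k\cap B|$ for every block $C_k$.
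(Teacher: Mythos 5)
Your strategy is essentially the one the paper uses: restrict to the $S$-fixed points (injectivity from Theorem~\ref{th:injection}), use the explicit restriction formulas for $p_A|_{w_K}$, observe triangularity and induct on the consecutive supersets of $A\cup B$, and collapse the resulting recursion with the generalized Vandermonde identity. The genuine gap is in the sentence where you declare that, after substituting the restrictions and the inductively known lower coefficients, the relation ``is precisely a specialization of Theorem~\ref{identity}.'' That matching is the crux, and it only comes out as stated after a case split you never make. When $A$ and $B$ are intertwined, or adjacent and disjoint, one has $|A\cap B|=\mathcal H_A-\mathcal T_B+1$ and $|A\cup B|=\mathcal H_B-\mathcal T_A+1$, and with $w=|A\cup B|$, $x=|A\cap B|$, $y=|A|$, $z=|B|$ the localized recursion at $w_{{}_mD_n}$ is literally the identity \eqref{eq:combinatorialidentity}; this is Lemma~\ref{le:TATBHAHB}. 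But the hypotheses of Theorem~\ref{general} also allow the nested case $B\subseteq A$ (e.g.\ $A=\{1,2,3\}$, $B=\{2\}$, $C=\{1,2,3,4\}$), where those equalities fail: $\mathcal H_A-\mathcal T_B+1$ and $\mathcal H_B-\mathcal T_A+1$ depend on where $B$ sits inside $A$, not just on the four cardinalities, so the sum to be collapsed is not visibly the same specialization. The paper deals with this by a separate conversion, Lemma~\ref{le:bconversion}, proving $|A|!\,|B|!\,p_Ap_B=|A'|!\,|B'|!\,p_{A'}p_{B'}$ with the intertwined sets $A'=\{a\in A:\ a\le\mathcal H_B\}$, $B'=\{b\in A:\ b\ge\mathcal T_B\}$, and then transporting the intertwined formula back. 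Your proposal contains no mechanism for this case, and without it the ``substitute and recognize Theorem~\ref{identity}'' step is an unverified leap exactly where the proof is hardest.

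A secondary point: in the inductive step you allow non-consecutive intermediate $C'$ and propose a joint induction with Theorems~\ref{thm:AuBconsec}--\ref{thm:disconnectedgeneral}. This is avoidable and it obscures the precise shape of the sum you must collapse. Since $A\cup B$ is consecutive under your hypotheses, any $C'\supseteq A\cup B$ possessing a component detached from the block of $A\cup B$ has $b_{A,B}^{C'}=0$ (Lemma~\ref{le:vanishingb}), so the recursion runs only over the consecutive enlargements ${}_iD_j$ of $D=A\cup B$, indexed by $0\le i\le m$, $0\le j\le n$ --- which is exactly why the sum takes the double-indexed form appearing in Theorem~\ref{identity}. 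Keeping the stray non-consecutive terms (even though they ultimately vanish) leaves the identity you need unspecified, and leaning on the companion theorems, which the paper derives from Theorem~\ref{general}, forces you into delicate ordering of a joint induction that the vanishing lemma renders unnecessary.
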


\begin{example} 
Let $A= \{1,2\}$, $B=\{2,3,4\}$ and $C=\{1,2,3,4\}$. Then $C$ is consecutive, contains $A\cup B$ and $|C|=4\leq |A|+|B|=5$, so that $b_{A,B}^C$ is given by \eqref{eq:general}. 
Observe 
\begin{align*}
\mathcal H_A=2 && \mathcal T_A =1  && 
\mathcal H_B =4 &&\mathcal T_B =2\\
\mathcal T_C = 1&& \mathcal H_C = 4 & & d=1
\end{align*}
so that $b_{A,B}^C= 1!{ 1 \choose 1, \ 0, \ 0}{ 4 \choose 1, \ 1, \ 2}t^1= \frac{4!}{2!}t=12t.$
\end{example}

An immediate consequence of Theorem~\ref{general} is a formula for the ordinary cohomology structure constants.
For degree reasons, the product $p_A p_B$ in ordinary cohomology requires simply summing over classes $p_C$ such that $|C| = |A| + |B|$.  

\begin{coro} \label{cor:ordinary}  Let $A, B, C \subseteq \{1, \ldots, n-1\}$ be nonempty consecutive subsets. Suppose $A \cup B \subseteq C$, and $|C| = |A| + |B|$. Without loss of generality, assume that $\mathcal T_A \leq \mathcal T_B$.  Then $b_{A,B}^C$ is the product of binomial coefficients:
$$
b_{A,B}^C 
=
{\mathcal H_A - \mathcal T_B +1 \choose \mathcal T_A - \mathcal T_C}
{\mathcal H_B - \mathcal T_A +1 \choose \mathcal T_B - \mathcal T_C}.
$$
\end{coro}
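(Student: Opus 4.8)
The plan is to obtain Corollary~\ref{cor:ordinary} as the $d=0$ specialization of Theorem~\ref{general}. The surrounding remark---that in ordinary cohomology one sums only over $C$ with $|C|=|A|+|B|$---is a degree count: since $A$ is consecutive the word $v_A=s_{a_1}\cdots s_{a_k}$ is reduced, so $\ell(v_A)=|A|$ and $p_A=j(\sigma_{v_A})$ lies in degree $2|A|$ (likewise $p_B,p_C$), while $\deg t=2$; comparing degrees in \eqref{babc2} forces each $b_{A,B}^C$ to be homogeneous of degree $2(|A|+|B|-|C|)$. Hence under the specialization $H_S^*(Y)\to H^*(Y)$ sending $t\mapsto 0$ (valid because $H^*(Y)$ is concentrated in even degrees by Tymoczko's affine paving \cite{T1}, so $H_S^*(Y)$ is free over $\IC[t]$ with $H_S^*(Y)\otimes_{\IC[t]}\IC\cong H^*(Y)$), the only surviving structure constants are the constant terms $b_{A,B}^C$ with $|C|=|A|+|B|$.

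It remains to evaluate $b_{A,B}^C$ for $A,B,C$ nonempty consecutive with $A\cup B\subseteq C$ and $|C|=|A|+|B|$, i.e.\ $d:=|A|+|B|-|C|=0$. Then $d!=t^d=1$, and Theorem~\ref{general} gives
\begin{equation*}
b_{A,B}^C=\binom{\mathcal H_A-\mathcal T_B+1}{0,\ \mathcal T_A-\mathcal T_C,\ \mathcal H_C-\mathcal H_B}\binom{\mathcal H_B-\mathcal T_A+1}{0,\ \mathcal T_B-\mathcal T_C,\ \mathcal H_C-\mathcal H_A}.
\end{equation*}
A trinomial coefficient with a zero lower entry collapses to an ordinary binomial coefficient precisely when its other two lower entries add to its upper entry, so the content of the remaining step is to verify
\[
(\mathcal T_A-\mathcal T_C)+(\mathcal H_C-\mathcal H_B)=\mathcal H_A-\mathcal T_B+1,\qquad
(\mathcal T_B-\mathcal T_C)+(\mathcal H_C-\mathcal H_A)=\mathcal H_B-\mathcal T_A+1.
\]
Both follow from consecutivity, which gives $|X|=\mathcal H_X-\mathcal T_X+1$ for $X\in\{A,B,C\}$, combined with $|C|=|A|+|B|$; these yield the single relation $\mathcal H_C-\mathcal T_C=(\mathcal H_A-\mathcal T_A)+(\mathcal H_B-\mathcal T_B)+1$, and substituting it for $\mathcal H_C-\mathcal T_C$ verifies each identity by direct cancellation. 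Therefore the two trinomials equal $\binom{\mathcal H_A-\mathcal T_B+1}{\mathcal T_A-\mathcal T_C}$ and $\binom{\mathcal H_B-\mathcal T_A+1}{\mathcal T_B-\mathcal T_C}$, whose product is the claimed formula; the hypothesis $\mathcal T_A\le\mathcal T_B$ only fixes the order of the two factors, since $b_{A,B}^C=b_{B,A}^C$.

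Since Theorem~\ref{general} already carries the substance, I do not expect a genuine obstacle. The only points needing care are the index identities just displayed and confirming that the binomial arguments are honest nonnegative integers: $\mathcal T_A-\mathcal T_C,\ \mathcal T_B-\mathcal T_C,\ \mathcal H_C-\mathcal H_A,\ \mathcal H_C-\mathcal H_B\ge 0$ because $A,B\subseteq C$, while $\mathcal H_A-\mathcal T_B+1\ge 0$ and $\mathcal H_B-\mathcal T_A+1\ge 0$ follow from the observation that, with $\mathcal T_A\le\mathcal T_B$, a consecutive $C$ with $A\cup B\subseteq C$ and $|C|=|A|+|B|$ forces $A$ and $B$ to overlap or to be exactly adjacent---a gap between them would make the smallest consecutive superset of $A\cup B$ strictly larger than $|A|+|B|$ elements.
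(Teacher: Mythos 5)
Your proposal is correct and follows essentially the same route as the paper: the paper also derives the corollary from Theorem~\ref{general} with $d=0$, using the identity $\mathcal H_C-\mathcal T_C+1=(\mathcal H_A-\mathcal T_A+1)+(\mathcal H_B-\mathcal T_B+1)$ to show the lower entries of each trinomial sum to its upper entry, so the trinomials collapse to binomials. Your additional remarks on the $t\mapsto 0$ specialization and the nonnegativity of the binomial arguments are consistent extras but not needed beyond what the paper records.
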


\begin{proof} By the degree assumption, 
$\mathcal H_C-\mathcal T_C+1 = (\mathcal H_A -\mathcal T_A+1) +  (\mathcal H_B -\mathcal T_B+1)$. Thus $ \mathcal H_A -\mathcal T_B+1 = (\mathcal T_A-\mathcal T_C)+(\mathcal H_C-\mathcal H_B)$ and 
$$  \mathcal H_B -\mathcal T_A+1 = (\mathcal T_B-\mathcal T_C)+(\mathcal H_C-\mathcal H_A).$$
 The corollary follows.   \end{proof}

 We successively loosen the restrictive demand of Theorem~\ref{general} that $A, B$ and $C$ are each sets with consecutive numbers, as follows:
\begin{itemize}
\item[$\bullet$] Sets $A\cup B$ and $C$ consecutive (Theorem \ref{thm:AuBconsec}),
\item[$\bullet$] The set $C$ is consecutive (Theorem~\ref{thm:AuBnotconsec}), and
\item[$\bullet$] No constraint on $A, B, C$ (Theorem~\ref{thm:disconnectedgeneral}).
\end{itemize}

When $A, B$ or $C$ are not consecutive, 
there are non-equivariant analogs for ordinary cohomology.  
We won't list them, however, as each result is identical to the corresponding theorem with an additional hypothesis to ensure the degree is correct: 
 any coefficient $b_{E, F}^G$ occurring in the formula are set to $0$ unless $|E|+|F|=|G|$.

\begin{thm}[$A\cup B$, $C$ consecutive]\label{thm:AuBconsec}
Let $A,B,C \subseteq \{1, \ldots, n-1\}$ be nonempty subsets with $A\cup B$  and $C$
 consecutive. 
Rename the maximal consecutive subsets of 
 $A$ and $B$ by  $E_1,\dots, E_v$
 ordered with increasing tails i.e. $\mathcal T_{E_1}\leq \mathcal T_{E_2}\leq \dots\leq \mathcal T_{E_v}$.
Then
\begin{equation}\label{eq:AuBconsec}
b_{A,B}^C =\sum_{(C_2,\dots, C_{v-1})}
b_{E_1,E_2}^{C_2} b_{C_2,E_3}^{C_3} b_{C_3,E_4}^{C_4}\dots. b_{C_{v-2}, E_{v-1}}^{C_{v-1}}
b_{C_{v-1},E_v}^C
\end{equation}
where the sum is over $v-2$-tuples 
of consecutive sets $C_i$.
\end{thm}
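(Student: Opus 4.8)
The plan is to reduce the identity, via associativity and commutativity of the product on $H_S^*(Y)$, to the ``connected'' case already settled by Theorem~\ref{general}. The one structural input is a \emph{factorization lemma}: if $D\subseteq\{1,\dots,n-1\}$ has maximal consecutive pieces $D^{(1)},\dots,D^{(m)}$, then $p_D=p_{D^{(1)}}\cdots p_{D^{(m)}}$. Applying this to $D=A$ and to $D=B$, and using commutativity, the pooled multiset of maximal consecutive pieces of $A$ and $B$ --- namely $E_1,\dots,E_v$ --- satisfies $p_A p_B=p_{E_1}p_{E_2}\dots p_{E_v}$, where the factors are ordered so that $\mathcal T_{E_1}\le\dots\le\mathcal T_{E_v}$ as in the statement. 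Formula~\eqref{eq:AuBconsec} then falls out of expanding this product one factor at a time from the left, once we know each intermediate index set is consecutive.

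The bookkeeping rests on an elementary interval fact: because $A\cup B$ is consecutive and the $E_i$ are in increasing-tail order, every partial union $U_i:=E_1\cup\dots\cup E_i$ is a consecutive set and $\mathcal T_{E_{i+1}}\le\mathcal H_{U_i}+1$. (If not, the element $\mathcal H_{U_i}+1$, which lies in the interval $A\cup B$ but in neither $U_i$ nor $E_{i+1}$, is covered by some $E_k$ with $k>i+1$, forcing $\mathcal T_{E_{i+1}}\le\mathcal T_{E_k}\le\mathcal H_{U_i}+1<\mathcal T_{E_{i+1}}$.) Consequently, for any consecutive $C'$ with $U_i\subseteq C'$ one has $\mathcal T_{C'}\le\mathcal T_{E_1}\le\mathcal T_{E_{i+1}}\le\mathcal H_{U_i}+1\le\mathcal H_{C'}+1$, so $C'\cup E_{i+1}$ is again a consecutive interval.

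With this in hand I would prove by induction on $i$ that
\[
p_{E_1}\dots p_{E_i}=\sum_{(C_2,\dots,C_i)} b_{E_1,E_2}^{C_2}\,b_{C_2,E_3}^{C_3}\dots b_{C_{i-1},E_i}^{C_i}\ p_{C_i},
\]
the sum over tuples of consecutive sets, with every $C_i$ occurring with nonzero coefficient being consecutive and containing $U_i$. The base case $i=2$ is the expansion of $p_{E_1}p_{E_2}$, for which $E_1$, $E_2$, and $E_1\cup E_2=U_2$ are all consecutive: Theorem~\ref{general} gives the coefficients, and they are nonzero only for consecutive $C_2\supseteq U_2$. For the inductive step, multiply by $p_{E_i}$; by the previous paragraph $C_{i-1}\cup E_i$ is consecutive whenever $C_{i-1}$ is consecutive with $U_{i-1}\subseteq C_{i-1}$, so Theorem~\ref{general} applies to $p_{C_{i-1}}p_{E_i}$ and again contributes only consecutive $C_i\supseteq C_{i-1}\cup E_i\supseteq U_i$. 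Setting $i=v$ and reading off the coefficient of $p_C$ (so $C_v=C$) yields exactly~\eqref{eq:AuBconsec}; and if $A\cup B\not\subseteq C$ then $U_v\not\subseteq C$ and every term vanishes, consistent with $b_{A,B}^C=0$.

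Two points need care. The more serious is the factorization lemma. Distinct maximal consecutive pieces of $D$ are pairwise non-adjacent, so the simple reflections indexed by $D^{(i)}$ commute with those indexed by $D^{(j)}$ for $i\ne j$ and $v_D=v_{D^{(1)}}\dots v_{D^{(m)}}$ is a reduced expression in ``orthogonal'' directions; one must verify that $\sigma_{v_{D^{(1)}}}\dots\sigma_{v_{D^{(m)}}}=\sigma_{v_D}$ holds \emph{exactly} in $H_T^*(Fl(n;\IC))$, with no lower-degree equivariant correction terms, before applying the ring homomorphism $j$ to pass to $H_S^*(Y)$. This can be deduced from the equivariant Chevalley-Monk relations (the would-be corrections vanish because a fundamental weight attached to one block is orthogonal to the roots of the others), or read off from the Giambelli-type results of \cite{Dr}; alternatively one may argue entirely inside $H_S^*(Y)$ using the Chevalley-Monk formula of \cite{HT}. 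The secondary point is the repeated use of the vanishing $b_{E,F}^G=0$ unless $E\cup F\subseteq G$ (so that the intermediate $C_i$ are forced to contain $U_i$), together with the fact that when $E\cup F$ is consecutive any surviving $G$ is itself consecutive --- a maximal consecutive piece of $G$ disjoint from both $E$ and $F$ being incompatible with a nonzero coefficient. This vanishing is a localization/GKM fact about the restrictions of the $p_A$ to the $S$-fixed points of $Y$, and one should check it is not invoked circularly with Theorem~\ref{thm:positive}.
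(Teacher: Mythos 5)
Your proposal is correct and is essentially the paper's own proof: the factorization $p_A p_B = p_{E_1}\cdots p_{E_v}$ is exactly Lemma~\ref{breakup} (quoted from \cite{HT}, Lemma 6.7), and your left-to-right expansion with intermediate consecutive sets $C_i\supseteq E_1\cup\cdots\cup E_i$, justified by the support-vanishing facts and computed at each step by Theorem~\ref{general}, is the same induction the paper runs. The two points you flag are non-issues here: Lemma~\ref{breakup} is taken as given from \cite{HT}, and the vanishing statements (Lemma~\ref{le:vanishingcoefficientsupport}, Lemma~\ref{le:vanishingb}) are proved beforehand from the AJS--Billey restriction formula (Corollary~\ref{co:HTvanishing}), independently of Theorem~\ref{thm:positive}, so no circularity arises.
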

Note that, for each term in the sum of Theorem~\ref{thm:AuBconsec}, the factors $b_{E_1,E_2}^{C_2}$ and $b_{C_i, E_{i+1}}^{C_{i+1}}$ are each calculated using Theorem~\ref{general} (as $C_i, E_{i+1}$ and $C_{i+1}$ are all consecutive).

\begin{example} 
 Let $A = \{1,2, 4, 5\}$,  $B = \{2,3,4\}$ and $C=\{1,2,3,4,5,6\}$. 
We use Theorem~\ref{thm:AuBconsec} to compute $b_{A,B}^C$ noting that $A\cup B$ is consecutive.
 
 By ordering according to the smallest element in each maximal consecutive set, choose $E_1 = \{1,2\}, E_2=B, E_3=\{4,5\}$ and note $v=3$.
 Thus the sum \eqref{eq:AuBconsec} is
$$
b_{A,B}^C = \sum\limits_{(C_2)\atop C_2\ \mbox{ \tiny consecutive}} b_{E_1, E_2}^{C_2} b_{C_2,E_3}^C.
$$
By Theorem~\ref{general}, $b_{E_1, E_2}^{C_2} \neq 0$ implies  $C_2$ contains $E_1\cup E_2 = \{1,2,3,4\}$ and $|C_2|\leq |E_1|+|E_2| =5$.  
Since $C_2$ is consecutive, the two possibilities are $C_2 = \{1,2,3,4\}$ and $C_2 = \{1,2,3,4,5\}$. Thus by Theorem~\ref{thm:AuBconsec} 
$$
b_{A,B}^C =  b_{E_1, E_2}^{\{1,2,3,4\}}  b_{\{1,2,3,4\},E_3}^C + b_{E_1,E_2}^{\{1,2,3,4,5\}} b_{\{1,2,3,4,5\},E_3}^C.
$$
Each factor of each term can be computed using Theorem~\ref{general}:
\begin{align*}
b_{E_1,E_2}^{ \{1,2,3,4\}} &=
1!{1 \choose 1, \ 0, \ 0}{4 \choose 1, \ 1, \ 2}t^{1} = 12t \\
b_{\{1,2,3,4\},E_3}^C &=
0!{1 \choose 0, \ 0, \ 1}{5 \choose 0, \ 3, \ 2}t^{0}= 10 \\
b_{E_1,E_2}^{\{1,2,3,4,5\}}&=
0!{1 \choose 0, \ 0, \ 1} {4 \choose 0, \ 1, \ 3} t^{0} = 4 \\
b_{\{1,2,3,4,5\},E_3}^C  &=  
1!{2 \choose 1, \ 0, \ 1}{5 \choose 1, \ 3, \ 1}t^{1} = 40t.
\end{align*}
Therefore $b_{A,B}^C = 12t\cdot 10 + 4\cdot 40t = 280t.$
\end{example}

The following theorem is a complete description of the product when $C$ is consecutive.

\begin{thm}[$C$ consecutive]\label{thm:AuBnotconsec}
Let $A,B,C \subseteq \{1, \ldots, n-1\}$ be nonempty subsets with $C$ consecutive. 
Let $A\cup B = D_1\cup\dots\cup D_u$ be a union of maximal consecutive subsets. 
Write $A^i = D_i\cap A$ and $B^i = D_i\cap B$, and note that $D_i = A^i\cup B^i$.
Then $$
b_{A,B}^C = \sum_{(E_1,\dots, E_u): \ D_i\subseteq E_i,\atop{E_i\mbox{ {consecutive}}} }\left(\prod_{i=1}^u b_{A^i, B^i}^{E_i}\right)b_{E_1,\dots, E_u}^C,$$
where $b_{A^i, B^i}^{E_i}$ is calculated using Theorem~\ref{thm:AuBconsec}, and
$b_{E_1,\dots, E_u}^C$ is the coefficient of $p_C$ in the product $\prod_{i=1}^u p_{E_i}$.  

If $\cup_i E_i$ is consecutive, $b_{E_1,\dots, E_u}^C$ may be calculated by Theorems \ref{general} and \ref{thm:AuBconsec}.
If $\cup_i E_i$ is not consecutive, 
\begin{equation}\label{eq:productofmanyEs}
b_{E_1,\dots, E_u}^C = \sum_{\substack{(F^{(1)}, F^{(2)},\dots, F^{(u-2)})\\ \text{consecutive}}} b_{E^{(1)}_{j_1}, E^{(1)}_{k_1}}^{F^{(1)}} b_{E^{(2)}_{j_2},E^{(2)}_{k_2}}^{F^{(2)}} \dots b_{E^{(u-2)}_{j_{u-2}}, E^{(u-2)}_{k_{u-2}}}^{F^{(u-2)}}
 b_{E^{(u-1)}_{j_{u-1}}, E^{(u-1)}_{k_{u-1}}}^C
\end{equation}
where $E^{(1)}_i= E_i$,  and the sets $E^{(s)}_i$ for $s=2,\dots, u-1$ 
are defined inductively as follows. $E_{j_s}^{(s)}$ and $E_{k_s}^{(s)}$ are chosen so that their union is consecutive, the sum is over consecutive sets $F^{(s)}$ containing $E_{j_s}^{(s)}\cup E_{k_s}^{(s)}$, and 
the sets $E^{(s+1)}_i$ are a relabeling of the $u-s$ sets 
$$F^{(s)}, E^{(s)}_1,\dots, \widehat{E}^{(s)}_{j_{s}},\widehat{E}^{(s)}_{k_{s}}, \dots, E^{(s)}_{(u-s+1)}$$
 in which the two sets $E^{(s)}_{j_{s}}$ and $E^{(s)}_{k_{s}}$ have been excluded. The sum is independent of choices involved with ordering. Each term $b_{E^{(s)}_{j_s},E^{(s)}_{k_s}}^{F^{(s)}}$ may be calculated using Theorem~\ref{general} as $E^{(s)}_i$ is consecutive.  
\end{thm}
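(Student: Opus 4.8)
The plan is to reduce the computation of $b_{A,B}^C$ to the already-treated case of Theorem~\ref{thm:AuBconsec} by factoring $p_A$ and $p_B$ along the maximal consecutive components of $A\cup B$. Write $A\cup B=D_1\sqcup\cdots\sqcup D_u$ with each $D_i$ consecutive, and set $A^i=A\cap D_i$, $B^i=B\cap D_i$, so that $A=\bigsqcup_i A^i$, $B=\bigsqcup_i B^i$, and $D_i=A^i\cup B^i$. The preliminary ingredient is a factorization lemma: if a subset $X\subseteq[n-1]$ is a disjoint union $X=\bigsqcup_i X^i$ in which no two pieces contain integers differing by exactly one, then $p_X=\prod_i p_{X^i}$. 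This follows because the simple reflections indexing different pieces commute and their supports are disjoint and non-adjacent in the Dynkin diagram, so $\sigma_{v_X}=\prod_i\sigma_{v_{X^i}}$ in $H_T^*(Fl(n;\IC))$ with no lower-order terms, and $j$ is a ring homomorphism. Since consecutive components of $A\cup B$ are separated by a gap, the pieces $A^i$ are pairwise separated in this sense, and likewise the $B^i$; hence $p_A=\prod_i p_{A^i}$ and $p_B=\prod_i p_{B^i}$.

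Next, using that $H_S^*(Y)$ is commutative, I would interleave these factors to get $p_A p_B=\prod_{i=1}^u\bigl(p_{A^i}p_{B^i}\bigr)$. Each factor has $A^i\cup B^i=D_i$ consecutive, so Theorem~\ref{thm:AuBconsec} computes $p_{A^i}p_{B^i}$; moreover the nonvanishing constraints (inherited from the shape of Theorems~\ref{general} and \ref{thm:AuBconsec}, or read off from Theorem~\ref{thm:positive}: an index with a component disjoint from $D_i$ would be forced to have a part of nonpositive size) guarantee that only classes $p_{E_i}$ with $E_i$ consecutive and $D_i\subseteq E_i$ occur. Thus $p_{A^i}p_{B^i}=\sum_{E_i}b_{A^i,B^i}^{E_i}p_{E_i}$ over such $E_i$ (with the convention $b_{A^i,\emptyset}^{A^i}=1$ when $B^i=\emptyset$). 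Expanding the product over $i$ and collecting,
$$p_A p_B=\sum_{(E_1,\dots,E_u)}\Bigl(\prod_{i=1}^u b_{A^i,B^i}^{E_i}\Bigr)\prod_{i=1}^u p_{E_i}=\sum_{(E_1,\dots,E_u)}\Bigl(\prod_{i=1}^u b_{A^i,B^i}^{E_i}\Bigr)\sum_{G}b_{E_1,\dots,E_u}^{G}\,p_G,$$
and extracting the coefficient of $p_C$ yields the stated formula, where $b_{E_1,\dots,E_u}^{C}$ is by definition the coefficient of $p_C$ in $\prod_i p_{E_i}$.

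For the final claim, I would compute $b_{E_1,\dots,E_u}^{C}$ by iterated pairwise multiplication. At each stage choose two of the current (all consecutive) sets whose union is consecutive, multiply the corresponding classes using Theorem~\ref{general} to write $p_{E_j}p_{E_k}=\sum_{F}b_{E_j,E_k}^{F}p_F$ over consecutive $F\supseteq E_j\cup E_k$, substitute, and reduce the number of factors by one. The relabeling of the surviving sets is exactly the inductive definition of the $E^{(s)}_i$ in the statement, and after $u-1$ stages one reaches an expression in the $p$-basis whose coefficient of a consecutive $C$ is \eqref{eq:productofmanyEs}. When $\bigcup_i E_i$ is consecutive a combinable pair exists at every stage — the interval with smallest tail must abut another, else $\bigcup_i E_i$ would be disconnected — so only Theorems~\ref{general} and \ref{thm:AuBconsec} are needed; when $\bigcup_i E_i$ is not consecutive the sum over consecutive $F^{(s)}$ can become empty, matching the vanishing of the coefficient. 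That the result does not depend on which pairs are chosen, or in what order, is just commutativity and associativity of multiplication in $H_S^*(Y)$.

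I expect the main obstacles to be bookkeeping rather than conceptual. First, one must state and justify the factorization lemma carefully; its honest proof rests on the fact that Schubert classes with orthogonal supports multiply without correction terms, which is standard but should be cited or proved. Second, the inductive relabeling producing \eqref{eq:productofmanyEs} needs to be made precise: one must check that a pair with consecutive union can be selected whenever the process has not terminated (and interpret an empty choice as a zero contribution), and one must verify well-definedness, i.e.\ independence of the order of combination — formally immediate from commutativity, but it must be articulated. All of the genuinely enumerative content, including the generalized Vandermonde identity of Theorem~\ref{identity}, already enters through Theorems~\ref{general} and \ref{thm:AuBconsec}; the present theorem layers a structural reduction on top of them.
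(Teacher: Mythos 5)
Your proposal follows essentially the same route as the paper: factor $p_Ap_B=\prod_i p_{A^i}p_{B^i}$ using Lemma~\ref{breakup} (your factorization lemma is exactly that result, so it can simply be cited rather than re-proved via Schubert classes upstairs), expand each block by Theorem~\ref{thm:AuBconsec}, extract the coefficient of $p_C$, and compute $b_{E_1,\dots,E_u}^C$ by iterated pairwise multiplication of classes whose index sets have consecutive union, exactly as in the paper's inductive relabeling. One caution: the fact that only consecutive $E_i\supseteq D_i$ occur should be justified by Lemmas~\ref{le:vanishingcoefficientsupport} and \ref{le:vanishingb} (as the paper does), not by appeal to Theorem~\ref{thm:positive}, whose proof relies on the present theorem and would make that justification circular.
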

Note that the sum in \eqref{eq:productofmanyEs} is not independent of the order of $F^{(1)}, \dots F^{(u-2)}$. The set of possible $F^{(s)}$ depend on the term $F^{(s-1)}$ in the prior sum, as well as the choice of sets $E^{(s)}_{j_{s}}$ and $E^{(s)}_{k_{s}}$ whose union is consecutive. Theorem~\ref{thm:AuBnotconsec} guarantees that these sets exist for each $s$ when the coefficient is nonzero.

Finally, when $C$ is not consecutive, $b_{A,B}^C$ is a product of coefficients with consecutive superscripts. 
  \begin{thm}\label{thm:disconnectedgeneral} Let $A,B,C \subseteq \{1, \ldots, n-1\}$ be subsets such that $b_{A,B}^C\neq 0$.  Then
$$
b_{A,B}^C = \prod_{k=1}^m b_{A\cap C_k,B\cap C_k}^{C_k}.
$$
where  $C=C_1\cup \dots \cup C_m$ is written as a union of maximal consecutive subsequences. 
\end{thm}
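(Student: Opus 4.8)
The plan is to prove the factorization by localizing at the $S$-fixed points of $Y$, using that these points and the restrictions of Peterson Schubert classes are compatible with the decomposition of $\{1,\dots,n-1\}$ into maximal consecutive blocks. Recall that the $S$-fixed points of $Y$ are indexed by subsets $K\subseteq\{1,\dots,n-1\}$, the point $w_K$ being the longest element of the parabolic $W_K=\langle s_k:k\in K\rangle$; write $\pi_K\colon H_S^*(Y)\to H_S^*$ for the ring homomorphism restricting to $w_K$. Since $p_{C'}=j(\sigma_{v_{C'}})$ and $v_{C'}$ is a reduced product of simple reflections with support $C'$, one has $v_{C'}\le w_K$ in Bruhat order iff $C'\subseteq K$; together with the computations of \cite{HT}, this gives $\pi_K(p_{C'})\ne 0$ exactly when $C'\subseteq K$, and in particular $\pi_K(p_K)\ne 0$. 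Hence, for any fixed $C$, the square matrix $M=\big(\pi_K(p_{C'})\big)_{K,\,C'\subseteq C}$ indexed by subsets of $C$ is triangular for the inclusion order with nonzero diagonal, so invertible over $\IC(t)$; and applying $\pi_K$ to \eqref{babc2} for $K\subseteq C$ involves only the coefficients $b_{A,B}^{C'}$ with $C'\subseteq C$, which therefore form the unique solution of the invertible linear system $Mx=v$, where $v=\big(\pi_K(p_A)\pi_K(p_B)\big)_{K\subseteq C}$.

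First I would show that $b_{A,B}^C\ne 0$ forces $A\subseteq C$ and $B\subseteq C$. Otherwise, pick $C_0$ minimal for inclusion among the $C'$ with $b_{A,B}^{C'}\ne 0$ and $A\not\subseteq C'$ or $B\not\subseteq C'$; restricting \eqref{babc2} to $w_{C_0}$, the left side vanishes while the diagonal term $b_{A,B}^{C_0}\pi_{C_0}(p_{C_0})$ does not, so some $C_1\subsetneq C_0$ has $b_{A,B}^{C_1}\ne 0$, and minimality of $C_0$ then gives $A,B\subseteq C_1\subsetneq C_0$, contradicting the choice of $C_0$. So we may assume $A,B\subseteq C$. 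Setting $A^k:=A\cap C_k$ and $B^k:=B\cap C_k$, we have $A=\bigsqcup_k A^k$ and $B=\bigsqcup_k B^k$; moreover consecutive maximal blocks of $C$ are separated by a gap, so reflections attached to distinct blocks commute and $v_A=\prod_k v_{A^k}$, $v_B=\prod_k v_{B^k}$ are reduced.

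The key structural input is that restriction at $w_K$ factors over consecutive blocks. For $K\subseteq C$ each maximal consecutive block of $K$ lies inside a unique $C_k$, and for $C'\subseteq C$,
\[
\pi_K(p_{C'})=\prod_{k=1}^{m}\phi_k\big(C'\cap C_k,\,K\cap C_k\big),
\]
where $\phi_k(D,J)$ depends only on $D,J\subseteq C_k$ — it is the product, over the maximal consecutive blocks of $J$, of the restrictions of the corresponding Peterson Schubert classes computed inside those blocks — and $\phi_k(D,D)=\pi_D(p_D)\ne 0$. I would deduce this from the restriction formula of \cite{HT} (see also \cite{Dr}), or directly from Billey's subword formula, using that a reduced word for $w_K=\prod_j w_0^{W_{K_j}}$ can be taken block by block and that a subword spelling $v_{C'}=\prod_j v_{C'\cap K_j}$ must spell each factor inside its own block. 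Under the bijection between subsets of $C$ and tuples $(K\cap C_1,\dots,K\cap C_m)$, this reads $M=M_1\otimes\cdots\otimes M_m$ with $M_k=\big(\phi_k(D,J)\big)_{D,J\subseteq C_k}$, again triangular with nonzero diagonal, hence invertible; and since $\pi_K(p_A)=\prod_k\phi_k(A^k,K\cap C_k)$ and similarly for $p_B$, the data vector splits as $v=v_1\otimes\cdots\otimes v_m$ with $v_k=\big(\phi_k(A^k,J)\phi_k(B^k,J)\big)_{J\subseteq C_k}$. Thus $\big(b_{A,B}^{C'}\big)_{C'\subseteq C}=M^{-1}v=(M_1^{-1}v_1)\otimes\cdots\otimes(M_m^{-1}v_m)$. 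On the other hand, restricting $p_{A^k}p_{B^k}=\sum_D b_{A^k,B^k}^D p_D$ to all $w_J$ with $J\subseteq C_k$ shows $\big(b_{A^k,B^k}^D\big)_{D\subseteq C_k}=M_k^{-1}v_k$. Reading off the coordinate $C'=C$, i.e. $D=C_k$ in every factor, yields $b_{A,B}^C=\prod_{k=1}^m b_{A\cap C_k,\,B\cap C_k}^{C_k}$; each factor, having consecutive superscript $C_k$, is in turn computed by Theorem~\ref{thm:AuBnotconsec}.

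I expect the main obstacle to be the block-multiplicativity of $\pi_K$: precisely defining the local contributions $\phi_k$ and proving that $\pi_K(p_{C'})$ genuinely factors as a product over the maximal consecutive blocks of $K$, with each factor depending only on the corresponding block. Everything else is elementary Bruhat-order bookkeeping together with linear algebra over $\IC(t)$ — triangularity, and the Kronecker decomposition of $M$, of $v$, and hence of the solution. To keep the argument self-contained and combinatorial, I would establish the splitting via Billey's formula, constructing a reduced word for $w_K$ from reduced words for the block longest elements and tracking which letters of it a subword spelling $v_{C'}$ may use.
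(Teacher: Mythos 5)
Your argument is correct, but it takes a genuinely different route from the paper. You work entirely at the level of fixed-point restrictions: you set up the triangular system $\bigl(\pi_K(p_{C'})\bigr)_{K,C'\subseteq C}$, observe that under $K\mapsto (K\cap C_1,\dots,K\cap C_m)$ it becomes a Kronecker product of the analogous block systems, and read off the factorization of the unique solution. Your key input --- that $\pi_K(p_{C'})=\prod_k \pi_{K\cap C_k}(p_{C'\cap C_k})$ --- is essentially a localized form of \cite[Lemma 6.7]{HT} (Lemma~\ref{breakup} in the paper), and your sketch of it via the AJS--Billey formula does go through: with $W_K$ taken as the concatenation of block words, a subword spelling $v_{C'}$ must respect the blocks (increasing positions force all letters from an earlier block to precede those from a later one), and reflections from one block fix the simple roots of another, so the weights in Lemma~\ref{billeyp1} split; together with $p_{C'}|_{w_K}=0$ unless $C'\subseteq K$ (Corollary~\ref{co:HTvanishing}) and $p_K|_{w_K}\neq 0$ (Corollary~\ref{cor:selfrestrict} blockwise) this is all you need. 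The paper instead argues with the classes themselves: it quotes Lemma~\ref{breakup} to write $p_Ap_B=\prod_k(p_{A^k}p_{B^k})$, then uses the support lemmas (Lemma~\ref{le:vanishingcoefficientsupport}, Corollary~\ref{nconsec}, Lemma~\ref{le:vanishingb}) to show that in each block only terms $E_k\subseteq C_k$ can occur and that only $E_k=C_k$ contributes to the $p_C$-coefficient, since $\prod_k p_{E_k}=p_{\cup_k E_k}$. The paper's route is shorter given the lemmas it has already established (in particular it never inverts a restriction matrix), while yours is more self-contained at the level of localization, avoids Lemma~\ref{le:vanishingb} entirely, and yields in passing the stronger family of identities $b_{A,B}^{C'}=\prod_k b_{A\cap C_k,B\cap C_k}^{C'\cap C_k}$ for every $C'\subseteq C$; its cost is that the block-multiplicativity of restrictions, which you correctly flag as the main obstacle, must be written out carefully, whereas the paper gets the equivalent statement for free from \cite{HT}.
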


An immediate corollary to these theorems is  that the structure constants for multiplication of $\{p_A\}$ in $H_{S}^*(Y)$, and hence  in $H^*(Y)$ are nonnegative. 

\begin{coro}\label{cor:nonnegativecoeff}
For any $A,B,C \subseteq \{1, \ldots, n-1\}$, $b_{A,B}^C$ is a nonnegative, integral multiple of a power of $t$.
\end{coro}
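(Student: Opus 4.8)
The plan is to deduce Corollary~\ref{cor:nonnegativecoeff} directly from the four structural theorems, proceeding by reduction from the most general case to the base case. First I would dispose of trivialities: if any of $A$, $B$, $C$ is empty the coefficient is $0$ or $1$ by the remarks following \eqref{babc2}, so assume all three are nonempty. If $b_{A,B}^C=0$ there is nothing to prove, so assume $b_{A,B}^C\neq 0$. The argument is then an induction that peels off, in turn, the hypotheses ``$C$ consecutive,'' ``$A\cup B$ consecutive,'' and finally ``$A,B$ consecutive,'' each time invoking the corresponding theorem to express $b_{A,B}^C$ as a sum of products of coefficients satisfying stronger hypotheses.

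The steps, in order, are as follows. (1) Base case: if $A$, $B$, and $C$ are all nonempty and consecutive, then Theorem~\ref{general} applies whenever $C\supseteq A\cup B$ and $|C|\le|A|+|B|$ (and $b_{A,B}^C=0$ otherwise, which one should check is forced by the nonvanishing behavior built into the other theorems). The displayed formula \eqref{eq:general} is $d!$ times a product of two multinomial coefficients times $t^d$ with $d=|A|+|B|-|C|\ge 0$; a multinomial coefficient ${m\choose i,j,k}$ is a nonnegative integer (it is $0$ when the lower entries fail to be nonnegative and sum to $m$), so the whole expression is a nonnegative integer times $t^d$, a nonnegative power of $t$. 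This settles the base case. (2) $C$ consecutive, $A\cup B$ consecutive: Theorem~\ref{thm:AuBconsec} writes $b_{A,B}^C$ as a sum over tuples of consecutive sets of products of coefficients each of the form handled in step (1); a sum of products of nonnegative integral multiples of powers of $t$ is again such, since monomials $c\,t^d$ with $c\in\IZ_{\ge0}$ are closed under addition only when the powers agree — but here degree considerations (the total degree of $p_A p_B$ is fixed) force every surviving term to carry the same power $t^{|A|+|B|-|C|}$, so the sum collapses to a single nonnegative integer times that power. (3) $C$ consecutive, no constraint on $A\cup B$: Theorem~\ref{thm:AuBnotconsec} expresses $b_{A,B}^C$ as a sum of products of factors $b_{A^i,B^i}^{E_i}$ (handled by step (2), since each $D_i=A^i\cup B^i$ is consecutive) and a factor $b_{E_1,\dots,E_u}^C$, which by the same theorem is either reducible to steps (1)--(2) (when $\cup E_i$ is consecutive) or is given by \eqref{eq:productofmanyEs} as a nested sum of products of coefficients $b_{E^{(s)}_{j_s},E^{(s)}_{k_s}}^{F^{(s)}}$ with consecutive super- and subscripts, hence step (1). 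Again the nonnegative-integral-multiple-of-a-power-of-$t$ property is preserved under the sums and products, with the common power pinned down by degree. (4) General $C$: Theorem~\ref{thm:disconnectedgeneral} factors $b_{A,B}^C=\prod_{k=1}^m b_{A\cap C_k,B\cap C_k}^{C_k}$ with each $C_k$ consecutive, so each factor is handled by step (3), and a product of nonnegative integral multiples of powers of $t$ is again one. This completes the induction.

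The main obstacle — or rather the one place demanding care — is the bookkeeping of the powers of $t$ in steps (2) and (3). A priori a ``nonnegative integral combination of powers of $t$'' need not be a nonnegative integral \emph{multiple of a single power} of $t$; to conclude the sharper statement of the corollary one must observe that the product $p_A p_B$ lives in a fixed cohomological degree (equivalently, the $H_S^*$-grading is respected), so that in \eqref{eq:AuBconsec}, \eqref{eq:productofmanyEs}, and \eqref{babc2} every term contributing to a fixed $b_{A,B}^C$ is homogeneous of the same degree in $t$; hence the multinomial-coefficient integers simply add up and the power of $t$ is the common one, namely $|A\cap C_k|+|B\cap C_k|-|C_k|$ on each consecutive block. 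With that grading observation in hand, every reduction step is closed under the operations ``add'' and ``multiply,'' and the corollary follows formally. No genuinely new mathematics is required beyond citing the four theorems; the proof is a short structural induction, and I would write it as such, being explicit only about the nonnegativity of multinomial coefficients and about the degree argument that upgrades ``combination of powers of $t$'' to ``multiple of a power of $t$.''
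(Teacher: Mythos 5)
Your proposal is correct and follows essentially the same route as the paper's own proof: reduce via Theorems~\ref{thm:disconnectedgeneral}, \ref{thm:AuBnotconsec}, and \ref{thm:AuBconsec} to the consecutive case, where Theorem~\ref{general} exhibits the coefficient as a nonnegative integer times $t^d$. Your extra care about upgrading ``sum of monomials'' to ``multiple of a single power of $t$'' via homogeneity in the $H_S^*$-grading is a point the paper leaves implicit, but it is the same argument in substance.
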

\begin{proof}
If $A$ and $B$ are consecutive, then this follows immediately from Theorem~\ref{general} as $b_{A,B}^C$ is 0, 1, or described by Equation~\ref{eq:general}.
If $A$ or $B$ is not consecutive, but $A\cup B$ is consecutive, then Theorem~\ref{thm:AuBconsec} implies that $b_{A,B}^C$ is a sum of products of the terms for consecutive $A$ and $B$.  Finally, Theorems~\ref{thm:AuBnotconsec} and \ref{thm:disconnectedgeneral} show that when $A\cup B$ is not consecutive, the terms associated with consecutive pieces are nonnegative and integral, and the terms associated with the product of those terms is also nonnegative and integral.
  \end{proof}

Finally, we state a nonvanishing result for arbitrary $A, B, C$.

\begin{thm}\label{thm:positive} Let $A,B,C \subseteq \{1, \ldots, n-1\}$ be arbitrary subsets. The structure constant 
$b_{A,B}^C\neq 0$ 
if and only if 
 \begin{itemize}
\item[$\bullet$]   $A\cup B \subseteq C$, and 
\item[$\bullet$]   For each maximal consecutive subset $C_k$ of $C$, $|C_k|\leq |C_k\cap A| + |C_k\cap B|$.
\end{itemize}
\end{thm}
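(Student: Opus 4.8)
The plan is to read the statement off the explicit formulas in Theorems~\ref{general}, \ref{thm:AuBconsec}, \ref{thm:AuBnotconsec} and \ref{thm:disconnectedgeneral}. Since every $b_{A,B}^C$ is a nonnegative integer multiple of a power of $t$ (Corollary~\ref{cor:nonnegativecoeff}) and each of these formulas expresses $b_{A,B}^C$ as a sum of products of ``atomic'' coefficients $b_{A',B'}^{C'}$ with $A',B',C'$ consecutive, the guiding reduction is that $b_{A,B}^C\ne0$ exactly when at least one term of the relevant sum is nonzero, and a product of atomic coefficients is nonzero exactly when each factor is. So I would first determine precisely when an atomic coefficient vanishes, and then propagate this through the recursions in both directions.

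For the atomic case, take $A',B',C'$ nonempty consecutive with $C'\supseteq A'\cup B'$ and $|C'|\le|A'|+|B'|$, so that $b_{A',B'}^{C'}$ is given by \eqref{eq:general}, and set $d=|A'|+|B'|-|C'|$. Writing $|X|=\mathcal H_X-\mathcal T_X+1$, one checks that in each of the two multinomial coefficients in \eqref{eq:general} the three lower entries sum exactly to the upper entry, so each multinomial is a strictly positive integer precisely when all three of its lower entries are nonnegative. This yields the criterion: for $A',B',C'$ consecutive, $b_{A',B'}^{C'}\ne0$ if and only if $A'\cup B'\subseteq C'$ and $|C'|\le|A'|+|B'|$ (the containment encoding $\mathcal T_{C'}\le\min(\mathcal T_{A'},\mathcal T_{B'})$ and $\mathcal H_{C'}\ge\max(\mathcal H_{A'},\mathcal H_{B'})$, the size bound encoding $d\ge0$), and $b_{A',B'}^{C'}=0$ outside this range.

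For the forward direction, suppose $b_{A,B}^C\ne0$. First, $A\cup B\subseteq C$: by Theorem~\ref{thm:disconnectedgeneral}, $b_{A,B}^C=\prod_k b_{A\cap C_k,B\cap C_k}^{C_k}$ over the maximal consecutive subsets $C_k$ of $C$, and since $H_S^*=\mathbb{C}[t]$ is graded and each structure constant is homogeneous, comparing $t$-degrees of the two sides forces $|A|+|B|=|A\cap C|+|B\cap C|$, i.e. $A,B\subseteq C$. Each factor $b_{A\cap C_k,B\cap C_k}^{C_k}$ is then nonzero with $C_k$ consecutive; peeling off one layer of Theorems~\ref{thm:AuBconsec} and \ref{thm:AuBnotconsec} at a time and applying the atomic criterion along the resulting chain of sets (containments compose, sizes are subadditive along the chain) gives $|C_k|\le|C_k\cap A|+|C_k\cap B|$ for every $k$.

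The backward direction is the main obstacle. Given $A\cup B\subseteq C$ with $|C_k|\le|C_k\cap A|+|C_k\cap B|$ for every maximal consecutive $C_k\subseteq C$, I must exhibit one nonzero term. After the routine reduction that Peterson Schubert classes supported on non-adjacent index blocks multiply independently, it suffices to treat $C$ consecutive with $A,B\subseteq C$ and $|C|\le|A|+|B|$. I would then build the intermediate consecutive sets of Theorems~\ref{thm:AuBnotconsec} and \ref{thm:AuBconsec} greedily: first enlarge each maximal consecutive block of $A$ and of $B$ just enough to bridge the gaps of $A\cup B$ --- possible because the total gap length is at most $|A|+|B|-|A\cup B|$ --- so that the blocks assemble into a single consecutive set, and then repeatedly replace a pair of blocks by the interval it spans, spending the remaining slack $|A|+|B|-|C|\ge0$ to grow the running interval outward until it equals $C$, checking at each step that the atomic inequalities $X\cup Y\subseteq Z$ and $|Z|\le|X|+|Y|$ are preserved. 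By the atomic criterion the term so constructed is a product of strictly positive coefficients, hence nonzero, so $b_{A,B}^C\ne0$. The delicate point --- and the hardest part of the argument --- is the slack-accounting: one must verify that the available slack is never exhausted prematurely and can always be positioned inside $C$, and it is exactly the two hypotheses of the theorem that guarantee this.
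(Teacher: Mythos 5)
Your overall architecture is the same as the paper's: the forward direction follows from the support conditions together with the factorization over the blocks of $C$ (and in fact you do not need the degree comparison or the ``peeling'' step --- once $b_{A\cap C_k,B\cap C_k}^{C_k}\neq 0$, Lemma~\ref{le:vanishingcoefficientsupport} already gives $|C_k|\le |A\cap C_k|+|B\cap C_k|$), and the backward direction reduces, by nonnegativity of all structure constants so that no cancellation can occur, to exhibiting a single nonzero term; your atomic criterion for consecutive $A',B',C'$ is correct. The genuine gap is in the backward direction for $C$ consecutive (the paper's Lemma~\ref{le:consecutiveCvanishing}), which you rightly identify as the crux but do not actually prove. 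Your first step --- enlarge each block ``just enough to bridge the gaps of $A\cup B$,'' justified by the bound on the total gap length --- can be impossible: each enlargement is constrained locally by $|E_i|\le|A^i|+|B^i|$, while the slack $|A\cap B|$ may sit in blocks that cannot reach a gap without swallowing other blocks. Concretely, take $A=\{1,3,5,6\}$, $B=\{5,6\}$, $C=\{1,\dots,6\}$. The hypotheses of the theorem hold ($|C|=6\le|A|+|B|=6$), the blocks of $A\cup B$ are $\{1\}$, $\{3\}$, $\{5,6\}$ with admissible enlargements of sizes at most $1$, $1$, $4$, and no admissible choice of intervals covers the gap element $2$ (an interval containing both $\{5,6\}$ and $2$ has size at least $5>4$). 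So the blocks cannot be ``assembled into a single consecutive set'' at the first stage, even though the total gap length ($=2$) does not exceed $|A|+|B|-|A\cup B|$; the global count you invoke does not control the positioning of the slack.

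The paper closes exactly this hole by a different greedy: in Lemma~\ref{le:consecutiveCvanishing} each $E_i$ is chosen inside $C$ of \emph{maximal} allowed size $\min(|A^i|+|B^i|,|C|)$ rather than ``just enough,'' consecutiveness of $\cup_i E_i$ is never required at intermediate stages, and one instead uses the counting fact that $\sum_i|E_i|\ge|C|$ with all $E_i\subseteq C$ and $C$ consecutive forces two of the intervals to be adjacent or overlapping; these are merged into an interval of maximal allowed size (each merge being nonzero by Corollary~\ref{co:nonvanishingAuB,Cconsec}), and the process is iterated until it reaches $C$. In the example above this produces $E_3=\{3,4,5,6\}$, deliberately overlapping the block $\{3\}$; the overlap is precisely what creates the slack that later bridges the gap at $2$ when $E_2=\{3\}$ and $E_3$ are merged into $\{2,\dots,6\}$ and then combined with $\{1\}$. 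Your proposal needs this maximal-size/iterated-merging mechanism (or an equivalent slack-transport argument) before the backward direction is actually established.
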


Theorem~\ref{thm:positive} and Corollary~\ref{cor:nonnegativecoeff} imply these structure constants are positive (i.e. are monomials with positive coefficients) when they are non-vanishing.

The proof of Theorem \ref{general} relies heavily on the following combinatorial result, a generalization of Vandermonde's formula. 

\begin{thm} \label{identity}
Let $m,n,w,x,y,z \in \mathbb Z$ with $w+x = y+z$ and $m,n \geq 0$.  Then
\begin{multline} \label{eq:combinatorialidentity}
{w+m \choose w}
{y+m \choose x}
{w+n \choose y}
{z+n \choose z} 
\\
=
\sum_{
\substack{
0 \leq i \leq m \\
0 \leq j \leq n}}
{w+i +n \choose w+i+j}
{w+m+j \choose
i, \ j, \ m-i, \ x-i-j, \ z-x+j, \ y-x+i}.
\end{multline}
\end{thm}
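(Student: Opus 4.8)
The plan is to prove the identity by a double induction that peels off the parameters $m$ and $n$ one unit at a time, reducing everything to the base case $m=n=0$, which is the trivial tautology ${w\choose w}{y\choose x}{w\choose y}{z\choose z}={w+0\choose w}{w\choose w}{w\choose \cdots}$—more precisely, when $m=n=0$ both sides collapse to ${y\choose x}{w\choose y}$ (the left side directly, the right side because the only term is $i=j=0$ and ${w+n\choose w+i+j}\big|_{n=0}=1$, while the multinomial ${w+m+j\choose i,j,m-i,x-i-j,z-x+j,y-x+i}\big|_{m=n=0}={w\choose 0,0,0,x,z-x,y-x}=\binom{w}{x}\binom{w-x}{z-x}$, and using $w+x=y+z$ one checks this equals $\binom{y}{x}\binom{w}{y}$). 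So the genuine content is the inductive step. The key tool is the ordinary Pascal recursion $\binom{a}{b}=\binom{a-1}{b}+\binom{a-1}{b-1}$ applied to a carefully chosen binomial factor on each side; on the left I would expand, say, $\binom{w+m}{w}$ in terms of $m-1$, and on the right I would expand one of the multinomial entries (this is where the combinatorial notion of \emph{bike lock moves} promised in the introduction presumably comes in: each such move re-indexes one unit of "slack" from one coordinate of the multinomial to an adjacent one). The bookkeeping must be arranged so that, after substituting the recursions, the two sides match term by term against the induction hypothesis applied with $(m-1,n)$ and with $(m,n-1)$ (and possibly $(m-1,n-1)$).

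Concretely, I would first do the induction on $m$ with $n$ fixed. Write $L(m,n)$ and $R(m,n)$ for the two sides. Using $\binom{w+m}{w}=\binom{w+m-1}{w}+\binom{w+m-1}{w-1}=\binom{w+m-1}{w}\cdot\frac{w+m}{m}$—actually the cleaner route is the absorption identity $\binom{w+m}{w}=\frac{w+m}{m}\binom{w+m-1}{w-1}$, but Pascal is safer for an additive decomposition—split $L(m,n)$ into two pieces and likewise decompose $R(m,n)$ by applying Pascal to the factor $\binom{w+m+j}{\,i,j,m-i,\dots}$ via $\binom{w+m+j}{\dots,m-i,\dots}$, separating the cases according to whether the "$m-i$" slot or the "$w+m+j$ minus everything" slot receives the decrement. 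The aim is to show $R(m,n)-R(m-1,n)$ telescopes into something recognizable. I expect the identity $w+x=y+z$ to be used repeatedly to rewrite the last multinomial entry $y-x+i$ (equivalently $w-z+i$) so that the six entries visibly sum to $w+m+j$; keeping that constraint visible throughout is essential for the Pascal manipulations to stay inside the space of legal multinomials.

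The main obstacle, I expect, is precisely the combinatorial matching in the inductive step: the right-hand side has a six-part multinomial coefficient whose entries are coupled by three linear relations (they sum to $w+m+j$, and two involve $x,y,z$ with the constraint $w+x=y+z$), so a naive Pascal expansion produces several terms that must be re-summed and re-indexed before they line up with $R(m-1,n)$ and $R(m,n-1)$. This is where I anticipate needing the "bike lock" bijection in earnest—organizing the $(i,j)$ lattice and the six slack-coordinates so that shifting one unit is reversible and the whole sum reorganizes cleanly; the shifts in $i$ versus $j$ interact, so one likely needs to handle the $m$-induction and $n$-induction not entirely independently but with a compatible joint reindexing. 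Once the step is set up correctly the verification should be mechanical, but designing the reindexing so that boundary terms (where some entry would go negative, i.e. $i=0,m$ or $j=0,n$) vanish or get absorbed is the delicate part. An alternative fallback, if the direct induction proves too unwieldy, is a generating-function / formal-power-series proof: interpret both sides as coefficient extractions from a product like $(1+s)^{w+m}(1+t)^{w+n}\cdots$ and reduce \eqref{eq:combinatorialidentity} to an identity of rational functions—but I would attempt the bike-lock induction first, since the paper explicitly advertises that method.
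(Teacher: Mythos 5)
Your proposal is a plan, not a proof: the only step you actually carry out is the base case $m=n=0$ (which is verified correctly), while the entire content of the theorem --- the inductive step --- is left as a sequence of expectations (``I would expand\dots'', ``The aim is to show\dots'', ``I anticipate needing\dots''). Moreover, there is concrete reason to doubt the step goes through as described. Applying Pascal to $\binom{w+m}{w}$ gives $\binom{w+m-1}{w}+\binom{w+m-1}{w-1}$; the first term matches $L(m-1,n)$, but the second forces $w\mapsto w-1$, which propagates into the factor $\binom{w+n}{y}$ and breaks the standing constraint $w+x=y+z$ unless other parameters are shifted simultaneously --- so the left side does not decompose into instances of the identity with smaller parameters in the way your outline assumes. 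In fact the ratio $L(m,n)/L(m-1,n)=\frac{(w+m)(y+m)}{m\,(y-x+m)}$ is not of Pascal type, so matching the right side (a genuine double sum over $(i,j)$ with a six-entry multinomial whose parts are linearly coupled) against $R(m-1,n)$ and $R(m,n-1)$ would in practice require a creative-telescoping/WZ-style certificate, which you do not supply and whose existence you do not argue. The phrase ``the verification should be mechanical'' is exactly the gap.

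You have also misread what the advertised ``bike lock moves'' are: in the paper they are not reindexing moves inside an induction but cyclic row rotations of matrices used to build explicit bijections. The paper's proof is a direct double count: it defines a set $\mathcal S$ of $2\times(w+m+n)$ letter matrices whose cardinality is the right-hand side and a set $\mathcal V$ of $4\times(w+m+n)$ arrays of $0$s, $1$s, $\star$s whose cardinality is the left-hand side, canonicalizes each by a composition of bike lock moves ($BL^-$ and $BL^\star$), characterizes the images $\widetilde{\mathcal S}$ and $\widetilde{\mathcal V}$ by their allowed columns and one forbidden adjacency, and then matches the seven allowed column types of one with the seven of the other, yielding $|\mathcal S|=|\mathcal V|$. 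If you wish to salvage your route, the generating-function fallback you mention (or an explicit WZ certificate for the double sum) would have to be carried out in full; as written, the inductive core is missing.
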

We have thusfar not found this result in the literature, and it may stand alone as a worthwhile combinatorial identity, proved in Section~\ref{se:proofofidentity}.

\section{Background and Notation}\label{se:background}

\subsection{Flag varieties, Peterson varieties, and fixed points}\label{sse:background1}
Let $G=Gl(n;\IC)$, $B$ upper triangular invertible matrices, $B_-$ lower triangular invertible matrices, and 
$T$ the set of diagonal matrices in $G$. Recall $G/B$ is  naturally isomorphic to the set of complete flags
$$
Fl(n;\mathbb C) = \{V_\bullet := (V_1 \subseteq \cdots \subseteq V_{n-1} \subseteq \mathbb C^n)|
 \ V_i \text{ {\small is a subspace of} }\mathbb C^n,\dim_\mathbb C(V_i) = i\}.
$$ 
 The flag $V_\bullet $ corresponds to a a coset $gB$, where $g\in Gl(n,\IC)$ is any matrix whose first $k$ columns form a basis for $V_k$, for $k=1, \dots, n$. Note that right multiplication by an upper triangular matrix (in $B$) preserves the vector space spanned by the first $k$ columns, for all $k$.
The fixed points $(G/B)^T$ are isolated, and indexed by elements of the Weyl group, $W \cong S_n$. In particular,
$$
(G/B)^T = \{wB/B :\ w \in W\}.
$$

Following Tymoczko \cite{T1}, we describe Hessenberg varieties in $Fl(n;\IC)$ as a set of flags whose vector spaces satisfy linear conditions imposed by a principal nilpotent operator. The equivalence of this description with the original definition by Kostant is known to experts and proven in \cite{Goldin-Milhalcea-Singh}.
\begin{defi}
Let $h:\{1, \ldots, n\} \rightarrow \{1, \ldots, n\}$ be a function satisfying $i \leq h(i)$ for all  $i~ \in~ \{1, \ldots, n\}$ and let $M$ be any $n \times n$ matrix $M$.  The {\em Hessenberg variety} $H(h,M)$ corresponding to $h$ and $M$ is the collection of  flags $V_\bullet \in Fl(n;\IC)$ satisfying  $MV_i \subseteq V_{h(i)}$ for all $1 \leq i \leq n$.  
\end{defi}
The Peterson variety $Y$ is a specific Hessenberg variety, 
with $h$ given by:
\begin{equation}\label{phes}
h(i) =
\begin{cases}
i+1 & 1 \leq i \leq n-1 \\
n & i = n.
\end{cases}
\end{equation}
\begin{defi}\label{defi:Peterson}
The {\em Peterson variety} in $Fl(n;\IC)$
is the Hessenberg variety $Y = H(h, M)$ where $h$ is the function defined in Equation~\eqref{phes} and $M$ is a principal nilpotent operator.  Equivalently the Jordan canonical form for $M$ consists of one block and $M$ has eigenvalue 0.
\end{defi}

\begin{example} 
Let $n = 3$, $h(1) = 2$, $h(2) = 3$, $h(3) = 3$ and 
$
M = 
\begin{pmatrix}
0 & 1 & 0 \\
0 & 0 & 1 \\
0 & 0 & 0
\end{pmatrix}.
$
The Peterson variety in $Fl(\IC^3)$ consists of flags represented by matrices of the following forms:
\begin{equation} \label{n=2}
\begin{pmatrix}
a & b & 1 \\
b & 1 & 0 \\
1 & 0 & 0
\end{pmatrix},
\ \ \
\begin{pmatrix}
c & 1 & 0 \\
1 & 0 & 0 \\
0 & 0 & 1
\end{pmatrix},
\ \ \
\begin{pmatrix}
1 & 0 & 0 \\
0 & d & 1 \\
0 & 1 & 0
\end{pmatrix},
\ \ \
\begin{pmatrix}
1 & 0 & 0 \\
0 & 1 & 0 \\
0 & 0 & 1
\end{pmatrix}
\end{equation}
where $a,b,c,d  \in \mathbb C$.  We verify the condition that $MV_i \subseteq V_{h(i)}$ for the first matrix above.  We check that $MV_1 \subseteq V_2$ (clearly $MV_2 \subseteq V_3 = \mathbb C^3$):
$$
\begin{pmatrix}
0 & 1 & 0 \\
0 & 0 & 1 \\
0 & 0 & 0
\end{pmatrix}
\begin{pmatrix}
a \\
b \\
1
\end{pmatrix}
=
\begin{pmatrix}
b \\
1 \\
0
\end{pmatrix}
\in 
\text{span} \left \{
\begin{pmatrix}
a \\
b \\
1 
\end{pmatrix},
\begin{pmatrix}
b \\
1 \\
0
\end{pmatrix}
\right \}
=V_2.
$$
\end{example}

As $T$ consists of diagonal, unitary matrices, we write elements as $n$-tuples $(a_1,\dots a_n)$ listing the diagonal entires. The variety $Y$ is not $T$-stable, however it is stable under a subgroup isomorphic to 
$S^1$. 
Let
$$
S = 
\{(z^n, z^{n-1}, \dots, z^2, z):\  z \in \mathbb C^*, ||z||^2=1\}\subseteq T.
$$ 
We observe that $S$ preserves $Y$, as follows.  Let $e_i\in \IC^n$ be the vector with $1$ in the $i$th coordinate, and $0$ elsewhere. 
For any vector $v\in \IC^n$ given by $v = \sum_{i=1}^n a_i e_i,$ we have
$$Mv = \sum_{i=1}^{n-1} a_{i+1} e_i.$$ 
On the other hand, for each element $s$ of $S$ given by a diagonal matrix with entries $(z^n, z^{n-1}, \dots, z)$, we have $s\cdot v = \sum_{i=1}^n z^{n-i+1}a_i e_i.$
A quick calculation shows that $s\cdot Mv$ and  $M(s\cdot v)$ span the same line:
\begin{align*}
s\cdot Mv &= \sum_{i=1}^{n-1} z^{n-i+1}a_{i+1} e_i = z\sum_{i=1}^{n-1} z^{n-i}a_{i+1} e_i = zM(s\cdot v).
\end{align*}
It follows that $M(s\cdot V_k)$ is in the span of $s\cdot MV_k$. If $V_\bullet\in Y$, then $MV_k\subseteq V_{k+1}$ implies $M(s\cdot V_k)\subseteq s\cdot MV_k \subseteq s\cdot V_{k+1}$, and hence $s\cdot V_\bullet\in Y$. 

As $S$ is a regular one-parameter subgroup of $T$, the $S$-fixed points of $G/B$ are the same as the $T$-fixed points.  It follows that the fixed point set $Y^{S}$ may be described as the intersection $Y^{S} = Y\cap (G/B)^T.$

Explicitly, $Y^{S}$ consists of flags represented by block diagonal matrices where the diagonal blocks are anti-diagonal with $1$'s on the anti-diagonal:
$$
\begin{pmatrix}
0      & \cdots                & 1      &        &                       &        &        &        &                       &        \\
\vdots & \reflectbox{$\ddots$} & \vdots &        &                       &        &        &        &                       &        \\
1      & \cdots                & 0      &        &                       &        &        &        &                       &        \\
       &                       &        &        &                       &        & \ddots &        &                       &        \\
       &                       &        &        &                       &        &        & 0      & \cdots                & 1      \\
       &                       &        &        &                       &        &        & \vdots & \reflectbox{$\ddots$} & \vdots \\
       &                       &        &        &                       &        &        & 1      & \cdots                & 0     
\end{pmatrix}.
$$
For example if $n = 2$ then $Y^{S}$ consists of flags represented by matrices \eqref{n=2} in   the previous example with $a = b = c =d=0$.

Each simple root $\alpha_i$ corresponds to a simple reflection $s_i:=s_{\alpha_i}$ that interchanges $i$ and $i+1$.  
 Recall  an element $w\in S_n$ can be written as a product of simple reflections $s_1,\dots, s_{n-1}$, corresponding to the simple roots $\alpha_1,\dots, \alpha_{n-1},$ respectively. When $w = s_{i_1}s_{i_2}\cdots s_{i_{\ell(w)}}$ is written as a product with as few simple reflections as possible, $\ell(w)$ is called the {\em length} of $w$.  The expression
 $s_{i_1}s_{i_2}\cdots s_{i_{\ell(w)}}$ is called a {\em reduced word decomposition} for $w$. To distinguish the product (resulting in $w$) from a sequence of $\ell(w)$ simple reflections in a reduced word decomposition, we refer to the index sequence  $(i_1,i_2,\dots, i_{\ell(w)})$ as a {\em reduced word sequence} for $w$. Recall the Bruhat order for $u,v \in S_n$: we say $u\leq v$ if there exists a substring of a reduced word for $v$ whose corresponding product of reflections is $u$. There exists a unique element $w_0$ in $S_n$ with maximal length, and it satisfies $w\leq w_0$ for all $w\in S_n$.

Elements of $Y^{S}$ are represented by a specific set of permutations:
 \begin{equation}\label{Yfixedpoint}
 Y^{S} = \{w_A\in S_n: A\subseteq \{1, \ldots, n-1\} \},
 \end{equation}
 where the permutation $w_A$ associated to a subset $A$ is given as follows. 
 Let $A = A_1 \cup A_2 \cup \cdots \cup A_k$ where each $A_i$ is a maximal consecutive subset of $A$. For each $i$, denote by $w_{A_i}$ the long word of the subgroup $H_i$ of $S_n$ generated by reflections $s_j$ for $j\in A_i$, noting that $H_i\cong S_{|A_i|+1}$ is itself a permutation group. 
 Then 
 $$w_A = w_{A_1}w_{A_2}\cdots w_{A_k}$$
  is the long word of the subgroup $H_1 \times H_2 \times \cdots \times H_k\subseteq S_n$.  A matrix representing a $w_AB\in Y^{S}$ has anti-diagonal blocks of size $|A_i|+1$.

\subsection{The equivariant cohomology ring of $G/B$ and Schubert classes}\label{sse:background2}

Define $B$-invariant Schubert varieties $X^w := \overline{BwB}/B$ in $G/B$, and let $[X^w]$ denote the corresponding $T$-equivariant homology class, following \cite{Brion00}. 
We use Poincar\'e duality between equivariant homology and equivariant cohomology to define a dual basis $\{\sigma_w: \ w\in W\}$ of $H_T^*(G/B)$ to the equivariant homology basis $\{[X^w]:\ w\in W\}$.  These bases satisfy the property that $\langle \sigma_w, [X^v]\rangle = \delta_{wv}$, where $\langle\ , \ \rangle$ denotes the equivariant cap product, followed by the pushforward to a point. 

Alternatively, $\sigma_w$ is Poincar\'e dual to the equivariant homology class of the opposite Schubert variety $X_w:=\overline{B_-wB}/B$, which has finite codimension in the mixing space for $G/B$.

The inclusion $(G/B)^T \hookrightarrow G/B$ induces a map on cohomology 
\begin{equation} \label{rtfp}
H_T^*(G/B)\rightarrow H_T^*((G/B)^T)= \bigoplus_{w\in W} H_T^*(wB/B) = \bigoplus_{w\in W} 
\IC[x_1,\dots, x_{n}]
\end{equation}
that is known to be injective  \cite{CS}, \cite{GKM}.

Suppose $W=(i_1,\dots, i_\ell)$ is a reduced word sequence for  $w\in W$. 
If $U = (i_{j_1}, \dots, i_{j_d})$ with $\{j_1,\dots, j_d\}\subset \{1, \dots, \ell\}$ and $j_1< \dots < j_d$, we write  $U\subseteq W$. It is possible that $U\subseteq W$ in multiple ways, if $W$ has repeated indices. If $U$ is also is a reduced word sequence for $u = s_{i_{j_1}}\cdots s_{i_{j_d}}$, then clearly $u\leq w$; we say that  $U$ is a {\em reduced word for $u$  occurring as} a subword of $W$.

The image of Schubert class $\sigma _u$ under the map in Equation~\eqref{rtfp} may be computed using the AJS-Billey formula \cite{sbilley}, \cite{AJS}:

\begin{thm}  [\cite{AJS}, \cite{sbilley}, AJS-Billey Restriction Formula] \label{billey}
Given a fixed reduced word sequence  $V = (i_1, i_2,\dots i_{\ell(v)})$ for $v$, define 
$$r(k, V):= s_{i_1} \dots s_{i_{k-1}}(\alpha_{i_k}).$$
For   $U = (i_{j_1},{i_{j_2}},\cdots ,{i_{j_{\ell(u)}}})\subseteq V$, we write
\begin{align*}
\prod_{k\in U} r(k, V) := r(j_1, V)r(j_2, V)\cdots r(j_{\ell(u)}, V).
\end{align*}
Then for any $u,v\in S_n$, 
$$
 \sigma _u |_v = \sum _{U \subseteq V} \prod_{k\in U} r(k, V),
 $$
 where the sum is over reduced words $U$
 occurring as subwords of $V$. 
\end{thm}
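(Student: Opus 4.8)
The plan is to prove the formula by induction on $\ell(v)$: I fix the reduced word $V=(i_1,\dots,i_\ell)$ for $v$, peel off its \emph{last} letter $j:=i_\ell$, and match the resulting combinatorial recursion against the one satisfied by restrictions of Schubert classes under the \emph{right} divided difference operator. Throughout write $\Psi_V(u,v):=\sum_{U\subseteq V}\prod_{k\in U}r(k,V)$ for the right-hand side of the asserted identity (the sum over reduced words $U$ for $u$ occurring as subwords of $V$), so that the goal is exactly $\Psi_V(u,v)=\sigma_u|_v$; a byproduct of the argument will be that $\Psi_V(u,v)$ is independent of the choice of reduced word $V$.

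\textbf{The combinatorial recursion.} Let $V''=(i_1,\dots,i_{\ell-1})$, a reduced word for $vs_j$ of length $\ell-1$. I would split the subwords of $V$ that are reduced words for $u$ into those omitting the last position and those using it. In the latter case the remaining letters form a reduced word for $us_j$ (which forces $us_j<u$), and the last position contributes the factor $r(\ell,V)=s_{i_1}\cdots s_{i_{\ell-1}}(\alpha_j)=(vs_j)(\alpha_j)$, while $r(k,V)=r(k,V'')$ for $k<\ell$. This gives
\[ \Psi_V(u,v)=\Psi_{V''}(u,vs_j)+(vs_j)(\alpha_j)\,\Psi_{V''}(us_j,vs_j), \]
where the second summand is present only when $us_j<u$.

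\textbf{The Schubert-class recursion.} The heart of the proof is to establish, for all $u,v$ and every simple reflection $s_j$,
\[ \sigma_u|_v=\sigma_u|_{vs_j}+(vs_j)(\alpha_j)\,\sigma_{us_j}|_{vs_j}, \]
with the second summand again present only when $us_j<u$. I would derive this from the right divided difference operator $\partial_j=\pi_j^{*}\,(\pi_j)_{*}$ attached to the $\mathbb{P}^1$-bundle $\pi_j\colon G/B\to G/P_j$, using two classical facts: (a) $\partial_j\sigma_u=\sigma_{us_j}$ when $us_j<u$, and $\partial_j\sigma_u=0$ otherwise; and (b) the Atiyah--Bott localization of the fiber integral, which gives $(\partial_j f)|_v=\dfrac{f|_{vs_j}-f|_v}{v(\alpha_j)}$ for all $f\in H_T^{*}(G/B)$ (the $T$-invariant $\mathbb{P}^1$ through $vB$ and $vs_jB$ has tangent weights $-v(\alpha_j)$ and $v(\alpha_j)$ at its two fixed points). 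Rewriting (b) with $v$ replaced by $vs_j$ yields $f|_v=f|_{vs_j}+(vs_j)(\alpha_j)\,(\partial_j f)|_v$; taking $f=\sigma_u$ and applying (a) gives the displayed recursion, once one notes that $\sigma_{us_j}=\partial_j\sigma_u$ lies in $\operatorname{im}\pi_j^{*}$ and is therefore right-$s_j$-invariant, so $\sigma_{us_j}|_v=\sigma_{us_j}|_{vs_j}$.

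\textbf{Assembling the induction, and the main obstacle.} The base case $v=e$ is immediate: the empty word has only the empty subword, which is a reduced word for $e$ with empty product $1$, so $\Psi_{(\,)}(u,e)=\delta_{u,e}$, and this equals $\sigma_u|_e$ since $\sigma_e=1$ and the triangularity of Schubert classes forces $\sigma_u|_e=0$ for $u\neq e$. For the inductive step, given $\ell(v)\ge 1$ and $V$ with last letter $j$, apply the combinatorial recursion, then the inductive hypothesis at $vs_j$ (of strictly smaller length) for both $u$ and $us_j$, and finally the Schubert-class recursion; this identifies $\Psi_V(u,v)$ with $\sigma_u|_v$ and, since the right-hand side of the Schubert-class recursion makes no reference to $V$, simultaneously proves the reduced-word independence. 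The substantive input is concentrated entirely in facts (a) and (b) about $\partial_j$, both completely standard, and I expect the only real pitfall to be bookkeeping: making sure that deleting the \emph{last} letter of $V$ corresponds to \emph{right} multiplication $u\mapsto us_j$ and to $\partial_j$ acting on that side, and that the correction term carries the weight $(vs_j)(\alpha_j)=-v(\alpha_j)$ rather than $v(\alpha_j)$.
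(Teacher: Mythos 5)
Your proposal is a correct, self-contained proof. Note, however, that the paper does not prove this theorem at all: it is quoted verbatim from Andersen--Jantzen--Soergel and Billey, so there is no internal proof to compare against. Your route --- induction on $\ell(v)$ by stripping the last letter of $V$, matched against the recursion $\sigma_u|_v=\sigma_u|_{vs_j}+(vs_j)(\alpha_j)\,\sigma_{us_j}|_{vs_j}$ coming from the right divided difference operator $\partial_j=\pi_j^*(\pi_j)_*$ and Atiyah--Bott localization along the fiber $\mathbb{P}^1$ --- is the standard modern argument, and the details check out: $V''=(i_1,\dots,i_{\ell-1})$ is indeed reduced for $vs_j$, subwords using the last position biject with reduced subwords of $V''$ for $us_j$ (forcing $us_j<u$) with the extra factor $r(\ell,V)=(vs_j)(\alpha_j)$, the tangent weights $\mp v(\alpha_j)$ give exactly your formula (b), and the invariance $\sigma_{us_j}|_v=\sigma_{us_j}|_{vs_j}$ is justified since $us_j<u$ makes $\sigma_{us_j}$ a pullback from $G/P_j$; the base case $\sigma_u|_e=\delta_{u,e}$ and the well-foundedness of the induction (the hypothesis being asserted for every reduced word of the shorter element) are also handled correctly. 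The only inputs you take on faith, facts (a) and (b) about $\partial_j$, are genuinely classical, so the argument is complete at the level of rigor appropriate for a cited background result.
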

An immediate corollary is that $\sigma _u |_v = 0$ unless $u\leq v$.

\subsection{The equivariant cohomology of the Peterson $Y$ and Peterson Schubert classes}\label{sec:Petersonclasses}

The inclusion $S\hookrightarrow T$ given by $z\mapsto (z^n, z^{n-1}, z^{n-2},\dots, z)$ for $z$ a complex number with $|z|=1$, induces a map on Lie algebras, $\mathfrak{s}\rightarrow \mathfrak{t}$ given by 
$$1\mapsto (n, n-1, n-2,\dots, 2, 1).$$  
Using the dual coordinate basis $\{x_j\}$ of $\mathfrak{t}^*$ introduced above, 
the dual map $\mathfrak{t}^*\rightarrow \mathfrak{s}^*$ induced by the inclusion is given by
$x_j\mapsto (n-j+1)t$ for $j=1,\dots, n$, where $t \in \mathfrak s^*$ is the dual coordinate to $1\in \mathfrak s$. 
The inclusion $S\hookrightarrow T$ 

thus induces  a map $H_T^*\rightarrow H_{S}^*$ in which 
$$\alpha_i\mapsto t$$
 for 
 $i= 1,2, \dots, n-1$. This observation justifies the decision to  call 
$b\in H_{S}^*$ {\em positive} if it is a polynomial in $t$ with positive coefficients.

The map on equivariant cohomology in turn induces a map of modules for any $T$-space $X$, which we also denote by $\pi$:
$$
\begin{CD}
H_T^*(X) @>\pi>> H_{S}^*(X).
\end{CD}
$$
When $X=G/B$, this is a surjective map of free modules. 
The $S$-equivariant inclusion $\iota:Y\hookrightarrow G/B$ of the Peterson variety
induces a surjective map:
$$
\begin{CD}
H_{S}^*(G/B) @>\iota^*>> H_{S}^*(Y),
\end{CD}
$$
and these maps naturally commute with the restrictions to fixed points. We thus obtain a commutative diagram:
$$
\begin{CD}
H_T^*(G/B) @>\pi>> H_{S}^*(G/B) @>\iota^*>> H_{S}^*(Y) \\
@VVV @VVV @VVV\\
H_T^*((G/B)^T) @>>>  H_{S}^*((G/B)^{S}) @>\iota^*_{fps} >> H_{S}^*(Y^{S})\\
@| @| @| \\
\bigoplus_{w\in W} H_T^* @>\bigoplus_{w\in W} \pi >>  \bigoplus_{w\in W} H_{S}^* @>>> \bigoplus_{w_A \in S_n} H_{S}^*\\
\end{CD}
$$
where  $\iota ^*$ is the map induced by the inclusion $Y \hookrightarrow G/B$, and $\iota^*_{fps}$ is the induced map from the inclusion of fixed point sets on $Y$ to those on $G/B$. The kernel of $\iota ^*_{fps}$ consists of all copies of $H_{S}^*(wB/B)$ with $wB/B$ not in $Y$, i.e. $w
\neq w_A$ for any $A \subseteq \{1, \ldots, n-1\}$. 

 All vertical maps of the commutative diagram are obtained from the inclusion of fixed point sets. As discussed, the first two vertical maps are injective. In \cite{HT}, the authors prove that the third vertical map is injective, and that $H_S^*(Y)$ is a free module over the equivariant cohomology of a point.

\begin{thm}[\cite{HT}, Thoerem 3.2]\label{th:injection}
Let $S$ act on the Peterson variety $Y$ as described above. Then 
$H_{S}^*(Y)$ is a free module over $H_{S}^*$, and in particular,
$$
H_{S}^*(Y) \simeq H^*(Y) \otimes _{\mathbb C} H_{S}^*.
$$
In addition, the inclusion $Y^{S}\hookrightarrow Y$ induces an {\em injection}
$$
H_{S}^*(Y) \longrightarrow  H_{S}^*(Y^{S}).
$$
\end{thm}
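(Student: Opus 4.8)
The plan is to derive both assertions from a single geometric input, due to Tymoczko \cite{T1}: the Peterson variety $Y$ admits a paving by affine cells, and consequently $H^{\mathrm{odd}}(Y) = 0$ while $H^*(Y)$ is finite dimensional. Granting this, the statement becomes an instance of the standard ``equivariant formality'' package, and the rest of the argument is formal.

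First I would prove that $H_S^*(Y)$ is free over $H_S^*$ and splits. Form the Borel construction $Y_S := ES \times_S Y$, with its fibration $Y \hookrightarrow Y_S \to BS$. Since $S \cong S^1$ is connected, the local system on $BS$ with fiber $H^*(Y)$ is trivial and the Serre spectral sequence has the form $E_2^{p,q} = H^p(BS) \otimes_{\mathbb C} H^q(Y) \Rightarrow H^{p+q}_S(Y)$. Here $H^*(BS) = H_S^* = \mathbb C[t]$ with $t$ in degree $2$, and $H^q(Y) = 0$ for $q$ odd, so $E_2^{p,q} = 0$ unless $p$ and $q$ are both even; every differential $d_r$ changes total degree by $1$, hence changes parity, so all $d_r$ vanish and $E_2 = E_\infty$. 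In particular the edge map $H_S^*(Y) \to H^*(Y)$ (restriction to a fiber) is surjective, so one may choose homogeneous classes in $H_S^*(Y)$ restricting to a $\mathbb C$-basis of $H^*(Y)$; by the Leray--Hirsch theorem these classes form an $H_S^*$-module basis of $H_S^*(Y)$, which gives $H_S^*(Y) \cong H^*(Y) \otimes_{\mathbb C} H_S^*$.

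Next I would combine this freeness with the Borel--Atiyah--Bott--Quillen localization theorem to obtain injectivity of the restriction $H_S^*(Y) \to H_S^*(Y^S)$. The variety $Y$ is closed in the projective variety $Fl(n;\IC)$, hence compact, and $Y^S$ is the finite fixed-point set of \eqref{Yfixedpoint}. Localization says that the restriction map becomes an isomorphism after inverting $t$, i.e. after tensoring with the fraction field of $H_S^* = \mathbb C[t]$; therefore its kernel is exactly the $t$-torsion submodule of $H_S^*(Y)$. But $H_S^*(Y)$ is free over $\mathbb C[t]$ by the previous step, hence torsion free, so the kernel is zero and the map is injective.

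The only genuinely substantive ingredient is Tymoczko's affine paving, which we are free to cite; once it is in hand the argument above is routine, and the two places calling for a little care are the convergence of the Serre spectral sequence (unproblematic, since $Y$ has the homotopy type of a finite CW complex) and the Leray--Hirsch step promoting ``degeneration'' to an explicit free basis and splitting. An alternative to the spectral sequence would be to work directly with an $S$-stable refinement of the affine paving, using the long exact cohomology sequences of the closed pairs $(Y_{\le k}, Y_{\le k-1})$ and the vanishing of the connecting maps for parity reasons to build the module basis inductively; I expect the spectral-sequence route to be cleaner to write out.
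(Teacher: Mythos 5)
Your argument is correct; note that the paper itself does not prove this statement but quotes it as Theorem 3.2 of \cite{HT}. Your route — Tymoczko's affine paving \cite{T1} forcing $H^{\mathrm{odd}}(Y)=0$, degeneration of the Serre spectral sequence of $Y\hookrightarrow ES\times_S Y\to BS$ together with Leray--Hirsch to get freeness and the splitting $H_S^*(Y)\simeq H^*(Y)\otimes_{\mathbb C}H_S^*$, and then the localization theorem plus torsion-freeness of a free $\mathbb C[t]$-module for injectivity into $H_S^*(Y^S)$ — is essentially the same equivariant-formality argument given in the cited source, and the two points needing care (compactness of $Y$ as a closed subvariety of $Fl(n;\IC)$ and finiteness of $Y^S$) are exactly as you say.
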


The authors also discovered a basis of $H_{S}^*(Y)$ by mapping a subset of Schubert classes across the vertical arrows of the commuting diagram.  

For any subset $A\subseteq \{1,\dots, n-1\}$, define the {\em Peterson Schubert class} corresponding to $A \subseteq \{1, \ldots, n-1\}$ by
$$
p_A :=\iota^* \circ \pi (\sigma _{v_A}) \in H_{S}^*(Y),
$$ 
where $v_A =s_{a_1} s_{a_2} \cdots s_{a_k}$ with $a_i \in A$ and $a_i <a_j$ whenever $i <j$, and $\sigma _{v_A}\in H_T^*(G/B)$ is the corresponding Schubert class. The degree of $p_A$ is $2\ell(v_A) =2 |A|$.

\begin{thm}[\cite{HT}, Theorem 4.12]
The collection $\{p_A\}_{A \subseteq \{1, \ldots, n-1\}}$ form an $H_{S}^*$-module basis for $H_{S}^*(Y)$. We call this basis the {\em Peterson Schubert basis} of $H_S^*(Y)$. 
\end{thm}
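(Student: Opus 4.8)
The plan is to prove that $\{p_A\}$ is an $H_S^*$-module basis in two stages — first that the classes are $H_S^*$-linearly independent, then, using the freeness in Theorem~\ref{th:injection} together with a Hilbert-series count, that they span. Both stages run through the injective restriction map $H_S^*(Y)\hookrightarrow H_S^*(Y^S)=\bigoplus_{A}H_S^*$ of Theorem~\ref{th:injection}, so the central object is the square matrix $M=\bigl(p_A|_{w_B}\bigr)_{A,B\subseteq\{1,\dots,n-1\}}$ recording restrictions of Peterson Schubert classes to the $S$-fixed points of $Y$.

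First I would compute the entries of $M$. By the commuting diagram of Section~\ref{sec:Petersonclasses}, $p_A|_{w_B}$ equals the image under the substitution $\alpha_i\mapsto t$ of $\sigma_{v_A}|_{w_B}\in H_T^*$. The AJS-Billey formula (Theorem~\ref{billey}) writes $\sigma_{v_A}|_{w_B}$ as a sum, over reduced subwords, of products of positive roots; thus it is a homogeneous polynomial of degree $\ell(v_A)=|A|$ in the simple roots with nonnegative integer coefficients, it is nonzero exactly when $v_A\le w_B$, and — since each positive root maps to a positive integer multiple of $t$ and the summands do not cancel — we get $p_A|_{w_B}=c_{A,B}\,t^{|A|}$ with $c_{A,B}\in\IZ_{\ge0}$ positive iff $v_A\le w_B$. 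The remaining point is that $v_A\le w_B\iff A\subseteq B$: the word $s_{a_1}\cdots s_{a_k}$ defining $v_A$ uses each simple reflection at most once and is therefore reduced, so the support of $v_A$ equals $A$ and $v_A$ lies in the parabolic $W_A$; and since the Bruhat interval below $w_B$ is precisely the parabolic $W_B=\langle s_b:b\in B\rangle$, we have $v_A\le w_B$ iff $v_A\in W_B$ iff $A\subseteq B$. Ordering the subsets $A$ by cardinality (ties broken arbitrarily) then makes $M$ triangular with diagonal entries $p_A|_{w_A}=c_{A,A}t^{|A|}\neq0$; hence $\det M\neq 0$ in $\IC[t]$, the images of the $p_A$ under the injective restriction map are $H_S^*$-linearly independent, and so are the $p_A$ themselves.

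For spanning, put $N=\sum_A H_S^*\,p_A\subseteq H_S^*(Y)$. By the independence just established, $N$ is a free graded $H_S^*=\IC[t]$-module on the homogeneous generators $p_A$ of degree $2|A|$, so its Hilbert series is $\bigl(\sum_{A\subseteq\{1,\dots,n-1\}}q^{2|A|}\bigr)\big/(1-q^2)=(1+q^2)^{n-1}\big/(1-q^2)$. On the other hand, Theorem~\ref{th:injection} gives $H_S^*(Y)\cong H^*(Y)\otimes_\IC H_S^*$ as graded modules, so its Hilbert series is $P_Y(q)\big/(1-q^2)$ with $P_Y(q)=\sum_k\dim_\IC H^{2k}(Y)\,q^{2k}$; and the affine paving of $Y$ (Tymoczko \cite{T1}) gives $\dim_\IC H^{2k}(Y)=\binom{n-1}{k}$, so $P_Y(q)=(1+q^2)^{n-1}$. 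Thus $N$ and $H_S^*(Y)$ have equal Hilbert series; since $N$ is a graded subspace of $H_S^*(Y)$ with all graded pieces finite-dimensional, this forces $N=H_S^*(Y)$, and $\{p_A\}$ is an $H_S^*$-module basis.

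I expect the substantive work to be the two facts about fixed-point restrictions in the second paragraph: that $v_A\le w_B$ is controlled by $A\subseteq B$, and that $\sigma_{v_A}|_{w_B}$ neither vanishes (when $v_A\le w_B$) nor suffers cancellation under $\alpha_i\mapsto t$ — here the manifest positivity of the AJS-Billey summands does the work. The one genuinely external ingredient is the Poincar\'e polynomial $P_Y(q)=(1+q^2)^{n-1}$; $H_S^*$-linear independence together with freeness of $H_S^*(Y)$ does not by itself give a module basis (for instance $\{t\}\subset\IC[t]$ is linearly independent but not a basis), so the grading and the known Betti numbers of $Y$ are what upgrade independence to spanning.
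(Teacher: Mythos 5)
This theorem is quoted by the paper from \cite{HT} (their Theorem 4.12) and is not reproved here, so there is no internal proof to compare against; judged on its own, your argument is correct, and it is essentially the Harada--Tymoczko argument reassembled. The two pillars are the ones they use: (i) triangularity of the fixed-point restriction matrix --- $p_A|_{w_B}$ is a nonnegative integer multiple of $t^{|A|}$, nonzero exactly when $v_A\le w_B$, and $v_A\le w_B$ iff $A\subseteq B$ because a product of distinct simple reflections is reduced (so $v_A$ has support $A$) and the Bruhat interval below the longest element $w_B$ of the parabolic $W_B$ is exactly $W_B$ --- which, with the nonvanishing diagonal $p_A|_{w_A}$, gives $H_S^*$-linear independence; and (ii) a rank count that upgrades independence to spanning. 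Your packaging of (ii) as a Hilbert-series comparison, using the graded isomorphism $H_S^*(Y)\simeq H^*(Y)\otimes_{\IC}H_S^*$ of Theorem~\ref{th:injection} together with the Poincar\'e polynomial $(1+q^2)^{n-1}$ coming from Tymoczko's affine paving \cite{T1} (the cell through $w_A$ has complex dimension $|A|$), is sound: equal finite-dimensional graded pieces force the inclusion $\sum_A H_S^*\,p_A\subseteq H_S^*(Y)$ to be an equality. You are also right about where the content lies: the Betti numbers are the genuinely external ingredient (independence plus freeness alone cannot give spanning, as your $\{t\}\subset\IC[t]$ example shows), and the positivity of the AJS--Billey summands under $\alpha_i\mapsto t$ is what rules out cancellation in the restrictions; both facts are available from the cited sources, and the latter is exactly the mechanism behind Lemma~\ref{billeyp1} and Corollary~\ref{co:HTvanishing} in the paper.
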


\subsection{Peterson Schubert classes: basic properties}\label{sse:basicproperties}

Here we collect together a number of properties of Peterson Schubert classes, their products,  and their restrictions.

For $A \subseteq \{1, \ldots, n-1\}$   with $j \in A$, $\mathcal T_A(j)$ is the smallest integer in the maximal consecutive subset of $A$ containing $j$, and similarly, $\mathcal H_A(j) $ is the largest integer of the same set. 
Write $A= A_1\cup\cdots\cup A_k$ as a union of maximally consecutive sets. Consider the reduced word  sequence for the longest word $w_{A_i}$ given by 
\begin{align}\label{eq:redwordchoice}
W_{A_i} = (&\mathcal T_A(j),\mathcal T_A(j)+1, \dots, \mathcal H_A(j), \mathcal T_A(j), \mathcal T_A(j)+1,\dots \mathcal H_A(j)-1, \nonumber \\
& \dots, \mathcal T_A(j), \mathcal T_A(j)+1, \mathcal T_A(j)).
\end{align}
Observe that $W_{A_i}$ is independent of $j\in A_i$ since $A_i$ is consecutive. 
One reduced word seequence $W_A$ for $w_A$ is given by the concatenation of sequences $W_{A_i}$ for $i=1, \dots, k$, i.e. $W_A = W_{A_1}W_{A_2}\cdots W_{A_k}$.

 The following restriction formula is a tiny generalization of a formula proved in \cite[Proposition 5.9]{HT}.

\begin{lemm}\label{billeyp1}
Let $\sigma_u\in H_T^*(G/B)$ be a Schubert class and let $w_A$ be the $S$-fixed point of the Peterson variety $Y$ associated to $A\subseteq \{1, \ldots, n-1\}$.  Let $A= A_1\cup\cdots\cup A_k$ be written as a union of maximally consecutive sets, and let $W_A$ be the reduced word sequence for $w_A$ given by the concatenation $W_{A_1}W_{A_2}\cdots W_{A_k}$ of sequences $W_{A_i}$ given in Equation~\eqref{eq:redwordchoice} for $i=1, \dots, k$. 
 Then
\begin{equation}\label{eq:restrictproj}
\iota^* \circ \pi(\sigma_u) |_{w_A} =
\sum _{U} n_{W_A}(U) \left(\prod _{j \in U} (j - \mathcal T_A(j) +1)\right)t^{\ell(u)}
\end{equation}
where the sum is over distinct reduced words $U$ of $u$,  $n_{W_A}(U)$ is the number times the word  $U$ occurs as a subword of $W_A$. 
\end{lemm}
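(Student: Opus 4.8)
The plan is to reduce this to the AJS--Billey restriction formula (Theorem~\ref{billey}) applied to the restriction of $\sigma_u$ at the specific point $w_A$, using the specific reduced word sequence $W_A$ from Equation~\eqref{eq:redwordchoice}. First I would recall that the vertical maps in the large commutative diagram of Section~\ref{sec:Petersonclasses} are restrictions to fixed points, and that $\iota^*\circ\pi(\sigma_u)|_{w_A}$ equals the image of $\sigma_u|_{w_A}$ under the composite $H_T^* \to H_S^*$ that sends each simple root $\alpha_i \mapsto t$. So the computation splits into two independent pieces: (1) compute $\sigma_u|_{w_A} \in H_T^*$ via Theorem~\ref{billey} using the reduced word $W_A$; (2) apply $\pi$ by substituting $\alpha_i \mapsto t$ for all $i$.

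For step (1), Theorem~\ref{billey} gives $\sigma_u|_{w_A} = \sum_{U \subseteq W_A} \prod_{k \in U} r(k, W_A)$, the sum over occurrences of reduced words $U$ for $u$ as subwords of $W_A$. The key observation is about the root $r(k,W_A) = s_{i_1}\cdots s_{i_{k-1}}(\alpha_{i_k})$ when $i_k = j$ sits in a position of $W_A$: because $W_A$ is a concatenation of the blocks $W_{A_1},\dots,W_{A_k}$, and because the block $W_{A_i}$ only involves reflections $s_\ell$ with $\ell$ in the consecutive range $[\mathcal T_A(j), \mathcal H_A(j)]$, the prefix $s_{i_1}\cdots s_{i_{k-1}}$ acts on $\alpha_j$ only through those reflections. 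A direct induction on the structure of the standard reduced word for the longest element of a type $A$ parabolic (equivalently, a short computation as in \cite[Prop.~5.9]{HT}) shows that each such prefix sends $\alpha_j$ to a positive root of the form $x_p - x_q$ with $p \le q$, and more precisely that $r(k,W_A) \in \{x_{\mathcal T_A(j)} - x_{q} : \mathcal T_A(j) < q \le \mathcal H_A(j)+1\}$; under $\pi$ every positive root maps to $t$, so $\pi(r(k,W_A)) = t$ regardless. Hence $\pi\bigl(\prod_{k\in U} r(k,W_A)\bigr) = t^{\ell(u)}$ for every occurrence $U$, and $\iota^*\circ\pi(\sigma_u)|_{w_A}$ is $t^{\ell(u)}$ times the total number of subword occurrences of reduced words of $u$ in $W_A$. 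That count is exactly $\sum_U n_{W_A}(U)$, the sum over distinct reduced words $U$ of $u$ of the number of times $U$ occurs in $W_A$.

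The remaining discrepancy between what I have just described and the claimed formula \eqref{eq:restrictproj} is the factor $\prod_{j\in U}(j - \mathcal T_A(j)+1)$, which is absent from the naive ``just count subwords'' argument. This factor arises because $W_A$ has \emph{repeated} indices, so a given value $j$ occurring at a given position in the subword $U$ can be realized at several positions of $W_A$: one checks from the explicit form \eqref{eq:redwordchoice} that the letter $j$ appears exactly $\mathcal H_A(j) - j + 1$ times followed by $\cdots$, and the careful bookkeeping (done in \cite[Prop.~5.9]{HT} for a single $A_i$ with $u$ equal to a single reflection, and here verbatim the same argument applied blockwise) shows that when one fixes the \emph{values} of a reduced word $U$ of $u$ and asks how many subword embeddings $U \hookrightarrow W_A$ realize those values, the answer factors as $n_{W_A}(U)\prod_{j\in U}(j-\mathcal T_A(j)+1)$ — the product records the number of admissible choices of position for each letter once the combinatorial ``type'' of the embedding is fixed. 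I would therefore organize the proof as: (a) identify $\iota^*\circ\pi(\sigma_u)|_{w_A} = \pi(\sigma_u|_{w_A})$ from the diagram; (b) apply Theorem~\ref{billey} with the word $W_A$; (c) observe $\pi$ kills the root data, leaving $t^{\ell(u)}$ times a subword count; (d) reorganize the subword count by ``value pattern'' to extract the product factor, citing \cite[Prop.~5.9]{HT} for the one-block computation and noting the concatenated word $W_A = W_{A_1}\cdots W_{A_k}$ makes the general case an immediate blockwise assembly since reduced words of $u$ occurring in $W_A$ split uniquely across the blocks. The main obstacle is step (d): making the passage from ``count embeddings'' to ``count value-patterns times $\prod(j-\mathcal T_A(j)+1)$'' rigorous, i.e. verifying that the multiplicity of position-choices for each letter $j$ is independent across letters and equals $j - \mathcal T_A(j)+1$; this is where the precise shape of \eqref{eq:redwordchoice} is used, and it is essentially the content of \cite[Proposition~5.9]{HT} which I would invoke rather than re-derive.
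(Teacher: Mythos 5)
Your skeleton (restrict $\sigma_u$ at $w_A$ via Theorem~\ref{billey} using the word $W_A$, apply $\pi$, assemble blockwise along the concatenation $W_{A_1}\cdots W_{A_k}$) is the right one, but step (c) contains a genuine error which then forces the faulty bookkeeping of step (d). The map $\pi$ sends each \emph{simple} root $\alpha_i$ to $t$; a general positive root $x_p-x_q$ (with $p<q$) is a sum of $q-p$ simple roots and so maps to $(q-p)t$, not to $t$. Hence the assertion ``under $\pi$ every positive root maps to $t$, so $\pi(r(k,W_A))=t$ regardless'' is false, and it even contradicts your own description of the roots as $x_{\mathcal T_A(j)}-x_q$. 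The entire source of the factor $\prod_{j\in U}(j-\mathcal T_A(j)+1)$ is that, for the specific word \eqref{eq:redwordchoice}, the root $r(k,W_A)$ at any position carrying the letter $j$ is a sum of exactly $j-\mathcal T_A(j)+1$ consecutive simple roots, all supported in the block containing $j$ (reflections from other maximal consecutive blocks differ in index by at least $2$ and fix these roots), so it specializes to $(j-\mathcal T_A(j)+1)\,t$. A small check exposes the problem: for $A=\{1,2\}$, $W_A=(1,2,1)$, $u=s_2$, the unique occurrence of $(2)$ carries $r(2,W_A)=s_1(\alpha_2)=x_1-x_3\mapsto 2t$, which is the lemma's value $(2-1+1)t$, whereas ``count subword occurrences times $t^{\ell(u)}$'' gives $t$. (Also, your claimed form of the roots fails at position $3$ of $(1,2,1)$, where $r(3,W_A)=x_2-x_3$.)

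Consequently step (d) is not a repair but a misattribution: $n_{W_A}(U)$ is, by definition, the number of embeddings of $U$ as a subword of $W_A$, so there is no residual ``choice of position for each letter'' left to count — all positional multiplicity coming from repeated indices is already inside $n_{W_A}(U)$, and extracting an additional factor $\prod_{j\in U}(j-\mathcal T_A(j)+1)$ from embedding counts would be double counting. That factor is weight data (the $\pi$-image of the roots in the AJS--Billey sum), not combinatorics of embeddings. The correct completion of your argument is: keep (a)--(b); prove, by induction along \eqref{eq:redwordchoice} within each block and using that earlier blocks act trivially, that every position of $W_A$ with letter $j$ has $r(k,W_A)$ of height $j-\mathcal T_A(j)+1$; then each embedding of each reduced word $U$ contributes $\prod_{j\in U}(j-\mathcal T_A(j)+1)\,t^{\ell(u)}$, and summing over embeddings yields \eqref{eq:restrictproj}. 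That height computation is exactly what \cite[Proposition~5.9]{HT} supplies in the consecutive case (and what the paper implicitly extends blockwise); it is the piece your proposal replaces with a false statement.
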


Since the Peterson Schubert class $p_A = \iota^*\circ \pi (\sigma_{v_A})$,   Lemma~\ref{billeyp1} implies the following Corollary.

\begin{coro}[\cite{HT}, Theorem 4.12]\label{co:HTvanishing}
 $p_A|_{w_C} =0$ unless $A\subseteq C$.
\end{coro}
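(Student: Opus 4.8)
The plan is to deduce Corollary~\ref{co:HTvanishing} directly from the restriction formula in Lemma~\ref{billeyp1}, applied to the specific Schubert class $\sigma_u = \sigma_{v_A}$, so that $p_A|_{w_C} = \iota^*\circ\pi(\sigma_{v_A})|_{w_C}$. By Lemma~\ref{billeyp1}, this restriction is a sum over reduced words $U$ of $v_A$, each weighted by the number $n_{W_C}(U)$ of times $U$ occurs as a subword of the chosen reduced word sequence $W_C$ for $w_C$, together with a product of positive integers $\prod_{j\in U}(j-\mathcal T_C(j)+1)$ and a power of $t$. Since every term in this sum is nonnegative (indeed positive whenever $n_{W_C}(U)>0$), the restriction vanishes \emph{if and only if} no reduced word of $v_A$ occurs as a subword of $W_C$.

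First I would recall that $v_A = s_{a_1}s_{a_2}\cdots s_{a_k}$ with $a_1 < a_2 < \cdots < a_k$ is already a reduced word sequence for $v_A$ (the simple reflections are distinct, and in fact $v_A$ has length $|A|$ because distinct simple reflections whose indices are listed once give a reduced expression — this is standard, e.g. the associated permutation is a product of disjoint-support pieces). Next I would observe that any reduced word sequence $U$ for $v_A$ must use exactly the multiset of letters $\{a_1,\dots,a_k\}$: the length is $|A|$, and the set of letters appearing in any reduced word for a fixed element is an invariant (the \emph{support} of $w$), so every reduced word of $v_A$ uses precisely the letters in $A$, each once. Therefore, if $U \subseteq W_C$ for some reduced word $U$ of $v_A$, then every element of $A$ appears as a letter of $W_C$. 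But $W_C = W_{C_1}\cdots W_{C_m}$ is a reduced word sequence for $w_C$, whose set of letters is exactly $C$ (each $W_{C_i}$ uses precisely the letters of the maximal consecutive block $C_i$). Hence $A \subseteq C$ is forced.

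The converse direction — that $A\subseteq C$ implies $p_A|_{w_C}\neq 0$ — is not actually needed for the statement of Corollary~\ref{co:HTvanishing} as phrased (it only claims vanishing unless $A\subseteq C$), so I would stop after establishing the contrapositive: if $A\not\subseteq C$, pick $a\in A\setminus C$; since no reduced word $U$ of $v_A$ can avoid the letter $a$, and no letter of $W_C$ equals $a$, no such $U$ is a subword of $W_C$, so $n_{W_C}(U)=0$ for all reduced words $U$ of $v_A$, and the sum in \eqref{eq:restrictproj} is empty, giving $p_A|_{w_C}=0$.

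The only mildly delicate point — and the main thing to get right — is the claim that the set of letters occurring in a reduced word of $w$ depends only on $w$, not on the chosen reduced word; this follows from the fact that any two reduced words are connected by braid moves, which preserve the letter-set, or alternatively from the characterization of the support of $w$ via the parabolic subgroup it generates. Everything else is a direct quotation of Lemma~\ref{billeyp1} together with the nonnegativity (hence non-cancellation) of its terms. I would phrase the final proof in two or three sentences, citing Lemma~\ref{billeyp1} and this support invariance.
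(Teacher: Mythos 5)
Your proposal is correct and follows essentially the same route as the paper, which presents the corollary as an immediate consequence of Lemma~\ref{billeyp1}: since every reduced word of $v_A$ uses exactly the letters of $A$ while the chosen reduced word sequence $W_C$ uses only letters of $C$, the sum in \eqref{eq:restrictproj} is empty when $A\not\subseteq C$. Your added remarks on support invariance and nonnegativity are fine but not needed beyond the empty-sum observation.
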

Observe that in the poset of subsets ordered by inclusion, $C=A$ is the minimal subset for which $p_A|_{w_C}$ may not vanish. See Corollary \ref{cor:selfrestrict}.
As a consequence, the structure constants also satisfy support conditions:
\begin{lemm}\label{le:vanishingcoefficientsupport}
Let $A, B, C\subseteq \{1,2, \dots, n-1\}$. Then $b_{A,B}^C \neq 0$ implies $A\cup B\subseteq C$ and $|C|\leq |A|+|B|$.
\end{lemm}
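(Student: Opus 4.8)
The plan is to prove the two assertions separately, using different ideas. The degree bound $|C| \leq |A| + |B|$ is immediate from the grading: $H_S^*(Y)$ is graded over $H_S^* = \IC[t]$ with $\deg t = 2$, the class $p_A$ is homogeneous of degree $2|A|$, and $\{p_C\}$ is a homogeneous basis. Comparing degrees in \eqref{babc2} forces $b_{A,B}^C$ to be homogeneous in $\IC[t]$ of degree $2(|A|+|B|-|C|)$; a homogeneous polynomial of negative degree is zero, so $b_{A,B}^C = 0$ whenever $|C| > |A|+|B|$.

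For the support condition $A \cup B \subseteq C$, I would argue by strong induction on $|C|$, localizing at fixed points. Fix $A,B$, let $C$ be a subset with $A \cup B \not\subseteq C$, and assume the claim holds for all subsets of strictly smaller size. Applying the ring homomorphism $p \mapsto p|_{w_C}$ to \eqref{babc2} gives, in $H_S^*(w_C) = \IC[t]$,
$$p_A|_{w_C}\cdot p_B|_{w_C} = \sum_{D \subseteq C} b_{A,B}^D\, p_D|_{w_C},$$
where all terms with $D \not\subseteq C$ have been dropped by Corollary~\ref{co:HTvanishing}. Since $A \cup B \not\subseteq C$, one of $A,B$ is not contained in $C$, so the left-hand side vanishes, again by Corollary~\ref{co:HTvanishing}. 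For any $D \subsetneq C$ on the right with $b_{A,B}^D \neq 0$, the induction hypothesis gives $A \cup B \subseteq D \subseteq C$, a contradiction; hence every such term vanishes and we are left with $b_{A,B}^C\, p_C|_{w_C} = 0$ (the base case $C = \emptyset$ is the same computation with no proper-subset terms).

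It then suffices to show $p_C|_{w_C} \neq 0$ in $\IC[t]$, since $\IC[t]$ is an integral domain and this forces $b_{A,B}^C = 0$, closing the induction. For this I would invoke Lemma~\ref{billeyp1} with $u = v_C$: the restriction $p_C|_{w_C}$ is a sum over distinct reduced words $U$ of $v_C$ of the terms $n_{W_C}(U)\big(\prod_{j\in U}(j - \mathcal T_C(j)+1)\big)t^{|C|}$, each of which is a nonnegative multiple of $t^{|C|}$ because each factor $j - \mathcal T_C(j)+1 \geq 1$. Taking $U$ to be the increasing listing of the elements of $C$ — which is precisely the reduced word for $v_C$ coming from its definition — this sequence occurs inside $W_C$, since the chosen reduced word $W_{C_i}$ of \eqref{eq:redwordchoice} begins with the increasing listing of $C_i$ and $W_C = W_{C_1}\cdots W_{C_m}$. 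Hence $n_{W_C}(U) \geq 1$, so $p_C|_{w_C}$ is a positive multiple of $t^{|C|}$, in particular nonzero. (Alternatively one may simply cite Corollary~\ref{cor:selfrestrict}.)

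The only step that is not purely formal is this last combinatorial point: identifying one reduced word of $v_C$ as a subword of the specific word $W_C$, together with the observation that all summands in Lemma~\ref{billeyp1} have the same sign so that no cancellation can occur. Everything else is a standard localization-plus-triangularity argument over the poset of subsets.
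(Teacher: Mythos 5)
Your proof is correct and follows essentially the same route as the paper: the degree bound comes from the grading of the expansion \eqref{babc2}, and the support condition $A\cup B\subseteq C$ is proved by restricting to the fixed point $w_C$, inducting on $|C|$ with Corollary~\ref{co:HTvanishing}, and using $p_C|_{w_C}\neq 0$ (which the paper cites from Lemma~\ref{billeyp1} and you verify in slightly more detail, correctly, via the positivity and occurrence of the increasing word inside $W_C$).
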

\begin{proof} Assume $A\cup B\not\subseteq C$, then either $A\not\subseteq C$ or $B\not\subseteq C$, 
so the product $p_Ap_B|_{w_C}$ vanishes by Corollary~\ref{co:HTvanishing}. Similarly, $p_D|_{w_C}=0$ unless $D\subseteq C$. Thus
\begin{equation}\label{eq:sumD}
p_Ap_B|_{w_C} =\sum_{D\subseteq C} b_{A,B}^Dp_D|_{w_C}=0.
\end{equation}
Note that $D\subseteq C$ implies $A\cup B\not\subseteq C$, else $A\cup B\subseteq C$.
If $|C|=0$, the sum is over a single term $C=\emptyset$, so $b_{A,B}^C p_C|_{w_C}=0$. However $p_C|_{w_C}\neq 0$ by Lemma~\ref{billeyp1}, so $b_{A,B}^C=0$. Make the inductive assumption that $A\cup B\not \subseteq C$ implies  $b_{A,B}^C =0$ for  $|C|\leq k$. Then for $|C|=k+1$, Equation~\eqref{eq:sumD} may be written
$$
p_Ap_B|_{w_C} =\sum_{D\subsetneq C} b_{A,B}^Dp_D|_{w_C}+ b_{A,B}^C p_C|_{w_C}=0.
$$
If $D$ is a proper subset of $C$, if $|D|\leq k$, and by the inductive assumption, $b_{A,B}^D=0$. Thus as before, we conclude $b_{A,B}^C p_C|_{w_C}=0$ and, since $p_C|_{w_C}\neq 0$ that $b_{A,B}^C=0$. Since $\deg(p_Ap_B)=|A|+|B|$ (as a polynomial), each summand $b_{A,B}^Cp_C$ in the product $p_A p_B$ has degree $|A|+|B|$, and therefore $b_{A,B}^C\neq 0$ implies that $|C|=\deg(p_C) \leq |A|+|B|$.
  \end{proof}

Lemma~\ref{billeyp1} also implies that the restrictions of Peterson Schubert classes remain constant when nonconsecutive elements are added to a fixed point.

\begin{coro} \label{nconsec}
Let $A \subseteq C^0$ with $C^0$ consecutive, and let $C\supset C^0$ be any set so that $C\setminus C^0$ is not consecutive with $C^0$.  Then
$$
p_A |_{w_{C^0}} = p_A |_{w_{C}}.
$$
\end{coro}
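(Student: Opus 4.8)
\textbf{Proof proposal for Corollary~\ref{nconsec}.}

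The plan is to apply the restriction formula of Lemma~\ref{billeyp1} to $p_A = \iota^*\circ\pi(\sigma_{v_A})$ at both fixed points $w_{C^0}$ and $w_C$, and to observe that the right-hand sides coincide term by term. First I would decompose $C$ into its maximal consecutive subsets. Since $A\subseteq C^0$ and $C^0$ is a consecutive set that is not consecutive with $C\setminus C^0$, one of the maximal consecutive subsets of $C$ — call it $D_1$ — contains $C^0$, hence contains $A$; write $C = D_1\cup D_2\cup\cdots\cup D_r$ with the $D_i$ maximal consecutive and the $D_i$ for $i\geq 2$ comprising $C\setminus C^0$ together with anything in $D_1\setminus C^0$... more carefully, since $C\supset C^0$ with $C\setminus C^0$ not consecutive with $C^0$, the maximal consecutive subset of $C$ meeting $C^0$ is precisely $C^0$ itself, so $D_1 = C^0$ and $C = C^0 \cup D_2\cup\cdots\cup D_r$ where each $D_i$ ($i\geq 2$) is disjoint from and not consecutive with $C^0$.

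Next I would write down the reduced word sequence $W_C = W_{C^0}W_{D_2}\cdots W_{D_r}$ as prescribed in Equation~\eqref{eq:redwordchoice}, and similarly $W_{C^0}$ for the fixed point $w_{C^0}$ (here the union has a single maximal consecutive piece, namely $C^0$ itself, so $W_{C^0}$ is just the one block). Then Lemma~\ref{billeyp1} gives
\begin{align*}
p_A|_{w_{C^0}} &= \sum_U n_{W_{C^0}}(U)\left(\prod_{j\in U}(j - \mathcal T_{C^0}(j)+1)\right)t^{\ell(v_A)},\\
p_A|_{w_C} &= \sum_U n_{W_C}(U)\left(\prod_{j\in U}(j - \mathcal T_C(j)+1)\right)t^{\ell(v_A)},
\end{align*}
where both sums range over the distinct reduced words $U$ of $v_A$. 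The key combinatorial observation is twofold. First, any reduced word $U$ of $v_A$ uses only indices in $A\subseteq C^0$; since the blocks $W_{D_i}$ for $i\geq 2$ use only indices in $D_i$, which are disjoint from $C^0$ and in fact differ from every element of $C^0$ by at least $2$, no subword of $W_C$ that is spread across more than one block can be a reduced word for $v_A$ — a reduced word sequence must have its consecutive-in-value structure respected, and more simply, an occurrence of $U$ in $W_C$ uses only letters from $A$, all of which lie in the single block $W_{C^0}$. Hence $n_{W_C}(U) = n_{W_{C^0}}(U)$ for every reduced word $U$ of $v_A$. Second, for $j\in A\subseteq C^0$ we have $\mathcal T_C(j) = \mathcal T_{C^0}(j)$, because the maximal consecutive subset of $C$ containing $j$ equals the maximal consecutive subset of $C^0$ containing $j$ (both equal $C^0$). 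Therefore the two sums agree term by term, and $p_A|_{w_{C^0}} = p_A|_{w_C}$.

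The main obstacle, and the only point requiring care, is the first combinatorial observation: justifying that no reduced word for $v_A$ occurring as a subword of $W_C$ can pick up letters from the blocks $W_{D_i}$, $i\geq 2$. This follows because every letter appearing in such a block lies in $D_i$, and $D_i\cap A=\emptyset$ since $A\subseteq C^0$ and $D_i$ is a distinct maximal consecutive component; any subword equal (as a word) to a reduced word for $v_A$ uses exactly the letters of $A$ (with multiplicity), so it lies entirely within $W_{C^0}$. Once this is in hand, the rest is the bookkeeping above. One should also note the degenerate case $A=\emptyset$: then $p_A=p_\emptyset = 1$ and both restrictions equal $1$, so the statement holds trivially.
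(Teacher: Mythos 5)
Your proposal is correct and follows essentially the same route as the paper: choose the concatenated reduced word sequence $W_C = W_{C^0}W_{C\setminus C^0}$, apply Lemma~\ref{billeyp1} at both fixed points, and note that every occurrence of a reduced word for $v_A$ lies inside the block $W_{C^0}$ (so the counts agree) while $\mathcal T_C(j)=\mathcal T_{C^0}(j)$ for $j\in A$ (so the factors agree). Your version is if anything slightly more careful, since you keep the sum over all reduced words of $v_A$ rather than asserting uniqueness, which matters when $A$ is not consecutive.
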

\begin{proof}
Let $A = \{a_1,\dots, a_k\}$ with $a_i< a_j$ for $i<j$. There is only one reduced word decomposition $v_A = s_{a_1}s_{a_1}\dots  s_{a_k}$ and thus one reduced word sequence $V_A = (a_1, \dots, a_k)$.
Neither $\mathcal T_{C^0} - 1$ nor $\mathcal H_{C^0}+1$ are in $C$,  so we may choose $W_{C} = W_{C^0} W_{C\setminus C^0}$ for some choice $W_{C\setminus C^0}$. Lemma~\ref{billeyp1} therefore implies
\begin{align*}
p_A |_{w_{C^0}} &= n_{W_{C^0}}(V_A) \left(\prod_{j\in V_A} (j-\mathcal T_{C^0}(j)+1)\right)t^{|A|},\mbox{ and}\\
 p_A |_{w_{C} }&= n_{W_{C} }(V_A)\left( \prod_{j\in V_A}  (j-\mathcal T_{C} (j)+1)\right)t^{|A|}.
\end{align*}
 As $A\subseteq C^0$ and $W_{C} = W_{C^0} W_{C\setminus C^0}$,
$n_{W_{C}}(V_A)=n_{W_{C^0}}(V_A)$.  Note that the product over the entries $j$ of $V_A$ consists of a single factor for each $j\in A$. Furthermore, $j\in A$ implies $\mathcal T_{C}(j)=\mathcal T_{C^0}(j)$ since $C$ does not contain $\mathcal T_{C^0}-1$. Thus the products have identical factors.
  \end{proof}

\begin{lemm}\label{le:vanishingb}
Suppose $A\cup B\subseteq C^0$ (not necessarily consecutive) and $C\supset C^0$ is any set so that $C\setminus C^0$ is nonempty and not consecutive with $C^0$.  Then $b_{A,B}^{C}=0$.
\end{lemm}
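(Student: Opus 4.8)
The plan is to proceed by induction on $|C|$, exactly mirroring the structure of the proof of Lemma~\ref{le:vanishingcoefficientsupport}, but now using Corollary~\ref{nconsec} to control the restrictions. First I would fix $C$ with the stated property and restrict the defining equation \eqref{babc2} to the fixed point $w_C$. By Lemma~\ref{le:vanishingcoefficientsupport} the only subsets $D$ contributing to $p_Ap_B|_{w_C}=\sum_{D\subseteq C}b_{A,B}^D\,p_D|_{w_C}$ are those with $A\cup B\subseteq D\subseteq C$, so in particular every such $D$ contains $C^0\supseteq A\cup B$ only if $D$ actually contains $C^0$; more carefully, I would observe that among the $D$ with $A\cup B\subseteq D\subseteq C$, those that do \emph{not} contain all of $C^0$ have strictly smaller cardinality than $C$ and still have $C\setminus D$ nonempty, so they fall under a suitable inductive hypothesis — but the cleaner route is to note that any $D$ with $A\cup B\subseteq D\subsetneq C$ again has $D$ of the form ``$D^0$ union a piece not consecutive with $D^0$'' only in the relevant sub-case, so I would instead argue as follows.

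The key computational input is that $p_A p_B|_{w_{C^0}} = p_A p_B|_{w_C}$: indeed $p_A|_{w_{C^0}}=p_A|_{w_C}$ and $p_B|_{w_{C^0}}=p_B|_{w_C}$ by Corollary~\ref{nconsec} (the corollary applies since $A\subseteq C^0$, $B\subseteq C^0$, $C^0$ is consecutive — wait, $C^0$ need not be consecutive here, so I must first reduce to the connected components of $C^0$; since $C\setminus C^0$ is not consecutive with $C^0$, it is in particular not consecutive with the maximal consecutive piece of $C^0$ adjacent to it, and Corollary~\ref{nconsec} is really about the component containing a given index, so the hypothesis ``$A\subseteq C^0$'' together with applying the corollary componentwise gives $p_A|_{w_{C^0}}=p_A|_{w_C}$). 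Then
\begin{equation*}
\sum_{D\subseteq C^0} b_{A,B}^D\,p_D|_{w_{C^0}} = p_Ap_B|_{w_{C^0}} = p_Ap_B|_{w_C} = \sum_{D\subseteq C} b_{A,B}^D\,p_D|_{w_C}.
\end{equation*}
For $D\subseteq C^0$, Corollary~\ref{nconsec} (applied componentwise as above, using $D\subseteq C^0$) gives $p_D|_{w_{C^0}}=p_D|_{w_C}$, so the $D\subseteq C^0$ terms cancel, leaving $\sum_{D\subseteq C,\ D\not\subseteq C^0} b_{A,B}^D\,p_D|_{w_C}=0$. Now I would induct on $|C|$: among the $D$ appearing, $D=C$ is the unique maximal one, every other $D$ in the sum is a proper subset of $C$ satisfying $A\cup B\subseteq D$ (by Lemma~\ref{le:vanishingcoefficientsupport}) and $D\not\subseteq C^0$, so $D\cap C^0 \subsetneq C^0$ is itself not equal to $C^0$; writing $D = (D\cap C^0)\cup(D\setminus C^0)$ and noting $D\setminus C^0\subseteq C\setminus C^0$ is nonempty and not consecutive with $C^0\supseteq D\cap C^0$, the inductive hypothesis forces $b_{A,B}^D=0$ for all such $D\subsetneq C$. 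Hence $b_{A,B}^C\,p_C|_{w_C}=0$, and since $p_C|_{w_C}\neq 0$ by Lemma~\ref{billeyp1}, we conclude $b_{A,B}^C=0$; the base case $|C|$ minimal (where the sum over proper $D\not\subseteq C^0$ is empty) is immediate.

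The main obstacle is getting the induction bookkeeping right: I need the inductive statement to be uniform enough that ``$D$ a proper subset of $C$ with $D\not\subseteq C^0$'' genuinely falls under it. The safe formulation is to prove, by induction on $|C|$, the statement ``for all $C^0$ consecutive-free-to-exterior pairs with $A\cup B\subseteq C^0$ and $C\supsetneq C^0$ with $C\setminus C^0$ nonempty and not consecutive with $C^0$, one has $b_{A,B}^C=0$'' — but since $C^0$ here is allowed to be non-consecutive, I should double-check Corollary~\ref{nconsec} really does yield $p_A|_{w_{C^0}}=p_A|_{w_C}$ in that generality; it does, because the restriction formula of Lemma~\ref{billeyp1} is multiplicative over the maximal consecutive pieces of the fixed-point subscript, and adding a piece not consecutive with \emph{any} component of $C^0$ that meets $A$ changes none of the relevant factors $n_{W}(V_A)$ or $(j-\mathcal T(j)+1)$. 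This componentwise reduction is the one place where I would write out the details rather than just cite the corollary.
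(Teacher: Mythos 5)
Your proposal is correct and follows essentially the same route as the paper's own proof: restrict \eqref{babc2} to $w_C$, use Corollary~\ref{nconsec} to identify $p_Ap_B|_{w_C}$ and the $D\subseteq C^0$ terms with their restrictions at $w_{C^0}$, and then induct so that only $b_{A,B}^{C}\,p_C|_{w_C}=0$ survives. If anything, you are more careful than the paper on two points it glosses over — applying Corollary~\ref{nconsec} componentwise since $C^0$ need not be consecutive, and explaining via Lemma~\ref{le:vanishingcoefficientsupport} why the proper subsets $D\not\subseteq C^0$ (including those not containing all of $C^0$) fall under the inductive hypothesis.
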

\begin{proof}
By Corollary~\ref{nconsec},
$p_{A}p_{B}|_{w_C}= p_{A}p_{B}|_{w_{C^0}}.$
Since the restrictions are the same, 
$$
 \sum_{D\subseteq C} b_{A,B}^{D} p_{D}|_{w_C}=\sum_{D\subseteq C^0} b_{A,B}^{D} p_{D}|_{w_{C^0}}
$$
and in particular also by Corollary~\ref{nconsec}, 
\begin{equation}\label{eq:sumsto0}
 \sum_{D: D\subsetneq C^0, D \subseteq C } b_{A,B}^{D} p_{D}|_{w_C} =0.
\end{equation}

We proceed inductively on $|C'|$. If $C'=\{m\}$ consists of one element, the sum is over one set $D=C$, so $b_{A,B}^{C} p_{C}|_{w_C} =0$. Since $p_c|_{w_C}\neq 0$, we conclude $b_{A,B}^{C} =0$. More generally, the sum \eqref{eq:sumsto0} is
$$
 \sum_{D: C^0\subsetneq D \subsetneq C} b_{A,B}^{D} p_{D}|_{w_C} + b_{A,B}^{C} p_{C}|_{w_C} =0
 $$
 where the first sum is 0 by the inductive assumption. Thus  $b_{A,B}^{C} =0.$
  \end{proof}

Lemma~\ref{billeyp1} also implies an easy formula for the restriction of any Peterson Schubert class $p_A$ to its minimal fixed point $w_A$. 
\begin{coro}\label{cor:selfrestrict} Let $A$ be consecutive. Then 
$$
p_A |_{w_A} =  |A|!\ t^{|A|}.
$$
\end{coro}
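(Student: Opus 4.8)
The plan is to apply Lemma~\ref{billeyp1} directly with $u = v_A$ and the fixed point $w_A$, so that the sum in Equation~\eqref{eq:restrictproj} runs over the distinct reduced words $U$ of $v_A$, weighted by $n_{W_A}(U)$ and by $\prod_{j\in U}(j-\mathcal T_A(j)+1)$, with an overall factor $t^{\ell(v_A)} = t^{|A|}$. Since $A$ is consecutive, say $A = \{\mathcal T_A, \mathcal T_A+1, \dots, \mathcal H_A\}$, we have $\mathcal T_A(j) = \mathcal T_A$ for every $j\in A$, so the weight $\prod_{j\in U}(j-\mathcal T_A(j)+1)$ depends only on the multiset of indices appearing in $U$. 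But $v_A = s_{\mathcal T_A}s_{\mathcal T_A+1}\cdots s_{\mathcal H_A}$ is a product of $|A|$ distinct commuting-up-to-braid simple reflections; more precisely it has length $|A|$ and every reduced word for it is a permutation of the index sequence $(\mathcal T_A, \mathcal T_A+1, \dots, \mathcal H_A)$, since each $s_i$ must appear exactly once. Hence for every reduced word $U$ of $v_A$, the multiset of indices in $U$ is exactly $\{\mathcal T_A, \dots, \mathcal H_A\}$, and the weight is the constant
\[
\prod_{j=\mathcal T_A}^{\mathcal H_A}(j-\mathcal T_A+1) = \prod_{r=1}^{|A|} r = |A|!.
\]

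It then remains to identify $\sum_U n_{W_A}(U)$, the total number of occurrences of reduced words of $v_A$ as subwords of the chosen reduced word sequence $W_A$ for the longest element $w_A$. Here $W_A$ is the sequence in Equation~\eqref{eq:redwordchoice} (a single block since $A$ is consecutive), which has length $\ell(w_A) = \binom{|A|+1}{2}$. I would argue that this count equals $1$: equivalently, there is exactly one length-$|A|$ subword of $W_A$ that is a reduced word for $v_A$. Concretely, $W_A$ begins with the segment $(\mathcal T_A, \mathcal T_A+1, \dots, \mathcal H_A)$, which is itself a reduced word for $v_A$, giving at least one occurrence; one then checks that no other choice of positions yields a reduced expression for $v_A$. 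This is a standard-type fact about subwords of the "staircase" reduced word for $w_0$ in $S_{|A|+1}$, and the cleanest route is probably to observe that a reduced subword of $W_A$ of the product $v_A$ must use, for each index $i \in \{\mathcal T_A,\dots,\mathcal H_A\}$, exactly one of the positions where $i$ occurs, and that the increasing-index condition forces the selection to be the initial segment.

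Assembling these pieces, Lemma~\ref{billeyp1} gives
\[
p_A|_{w_A} = \Bigl(\sum_U n_{W_A}(U)\Bigr)\cdot |A|!\cdot t^{|A|} = 1\cdot |A|!\cdot t^{|A|} = |A|!\, t^{|A|},
\]
as claimed. The main obstacle I anticipate is the combinatorial claim that $\sum_U n_{W_A}(U) = 1$, i.e.\ that the staircase word $W_A$ contains the reduced word of $v_A$ as a subword in exactly one way; everything else is a direct substitution. An alternative that sidesteps even this is to note that $p_A|_{w_A}$ is the leading-order ($C = A$) term, so $b_{A,\emptyset}^A = 1$ forces $p_A|_{w_A}$ to equal $p_A|_{w_A}$ computed via any valid reduced word — but the subword count still has to be pinned down, so I would present the direct argument above. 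One could also verify the count by induction on $|A|$, peeling off the final simple reflection $s_{\mathcal H_A}$ of the first segment of $W_A$ and using that $v_A$ restricted to the sub-staircase is $v_{A\setminus\{\mathcal H_A\}}$ in $S_{|A|}$, but the base case and the bookkeeping of which occurrences survive make the direct positional argument cleaner.
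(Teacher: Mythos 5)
Your proposal is correct and takes essentially the same route as the paper: its proof of Corollary~\ref{cor:selfrestrict} likewise applies Lemma~\ref{billeyp1} with $u=v_A$, observes that the weight $\prod_{j\in A}(j-\mathcal T_A+1)$ equals $|A|!$, and asserts that $V_A$ occurs in $W_A$ exactly once. Your positional argument that $n_{W_A}(V_A)=1$ is precisely the justification the paper leaves implicit (and agrees with the count $\binom{\mathcal H_A-\mathcal T_A+1}{|A|}=1$ obtained from Lemma~\ref{lem:restrictAuB}).
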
 
\begin{proof}
We calculate directly using the Peterson Schubert restriction formula.
$$p_A |_{w_{A} }= \iota^*\pi(\sigma_{v_A}|_{w_A })=n_{W_A}(V_A) \left(\prod_{j\in A} (j-\mathcal T_A+1)\right)t^{|A|}.
$$
Then $V_A$ occurs in $W_A$ exactly one time, so the restriction is
$$
 \prod_{j\in A} (j-\mathcal T_A+1)t = |A|!\ t^{|A|}.$$

\vspace{-.4in}
 \end{proof}

A fundamental observation is that $p_{A\cup B} = p_A p_B$ when $A$ and $B$ are disjoint strings of consecutive integers separated by at least one number. 

\begin{lemm} [\cite {HT}, Lemma 6.7]\label{breakup}
Let $A \subseteq \{1, \ldots, n-1\}$ and suppose 
$$A = A_1 \cup A_2 \cup \ldots \cup A_k$$
 where each $A_i$ is a nonempty maximal consecutive string of integers and $A_i \neq A_j$ for $i\neq j$.  Then
$$
p_A = \prod _{1 \leq i \leq k} p_{A_i}.
$$
\end{lemm}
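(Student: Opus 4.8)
The plan is to prove $p_A = \prod_{i=1}^k p_{A_i}$ by checking equality after restriction to every $S$-fixed point $w_C$ of $Y$, which suffices by the injectivity of $H_S^*(Y)\hookrightarrow H_S^*(Y^S)$ (Theorem~\ref{th:injection}). So I must show
\[
p_A|_{w_C} = \prod_{i=1}^k p_{A_i}|_{w_C}
\]
for every $C\subseteq\{1,\dots,n-1\}$. The right-hand side is a product of restrictions, each of which I can compute with the Peterson Schubert restriction formula (Lemma~\ref{billeyp1}).

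First I would dispose of the vanishing case: by Corollary~\ref{co:HTvanishing}, $p_{A_i}|_{w_C}=0$ unless $A_i\subseteq C$, and $p_A|_{w_C}=0$ unless $A\subseteq C$; since $A = A_1\cup\dots\cup A_k$, we have $A\subseteq C \iff$ every $A_i\subseteq C$, so both sides vanish unless $A\subseteq C$. Hence assume $A\subseteq C$. Now write $C = C_1\cup\dots\cup C_m$ as a union of maximal consecutive blocks. Each $A_i$, being consecutive and contained in $C$, lies in a unique block $C_{f(i)}$. The key combinatorial point is that $v_A = s_{a_1}\cdots s_{a_k}$ (the $a_j$ increasing) is the \emph{unique} reduced word for $v_A$ when $A$ is a union of separated consecutive strings — actually here we only need that the reduced words of $v_A$ are exactly shuffles/products of reduced words of the $v_{A_i}$, because reflections $s_a, s_b$ with $|a-b|\geq 2$ commute and those within different $A_i$'s all commute. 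Then I apply Lemma~\ref{billeyp1} to the left side using the reduced word sequence $W_A = W_{A_1}\cdots W_{A_k}$ (legitimate since the $A_i$ are the maximal consecutive subsets of $A$), and to each factor on the right side using $W_{C}$ built as a concatenation over the blocks $C_j$. Comparing the two formulas, each sum over reduced words $U$ of $v_A$ factors as a product over $i$ of sums over reduced words $U_i$ of $v_{A_i}$, the subword-count $n_{W_C}(U)$ factors as $\prod_i n_{W_{C_{f(i)}}}(U_i)$ because reflections from distinct $A_i$ occupy disjoint, non-interacting index ranges, and the product $\prod_{j\in U}(j-\mathcal T_C(j)+1)$ splits along the $A_i$ with $\mathcal T_C(j) = \mathcal T_{C_{f(i)}}(j)$ for $j\in A_i$. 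The power of $t$ matches since $\ell(v_A) = \sum_i \ell(v_{A_i}) = |A|$.

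The main obstacle is the bookkeeping around subword counts: I must argue carefully that when $W_A = W_{A_1}\cdots W_{A_k}$ and $W_C$ is a concatenation of block words $W_{C_j}$, every reduced word $U$ of $v_A$ occurring as a subword of $W_C$ has its letters cleanly partitioned — the letters that form a reduced word for $v_{A_i}$ must be drawn from the indices belonging to $C_{f(i)}$ and cannot "leak" into a neighboring block, because the $A_i$ are separated by a gap of at least one integer not in $A$ and the reflections involved commute past each other. This is where the hypothesis that the $A_i$ are \emph{maximal} consecutive and mutually nonadjacent is essential, and is exactly the content that makes $n_{W_C}(U) = \prod_i n_{W_{C_{f(i)}}}(U_i)$. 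Once that factorization of the count is established, the identity
\[
\sum_U n_{W_C}(U)\Bigl(\prod_{j\in U}(j-\mathcal T_C(j)+1)\Bigr)t^{|A|} = \prod_{i=1}^k\Bigl(\sum_{U_i} n_{W_{C_{f(i)}}}(U_i)\bigl(\prod_{j\in U_i}(j-\mathcal T_{C}(j)+1)\bigr)t^{|A_i|}\Bigr)
\]
is a formal distribution of a product of sums, and the right-hand side is precisely $\prod_i p_{A_i}|_{w_C}$ by Lemma~\ref{billeyp1}. Since this holds for all $C$, injectivity gives $p_A = \prod_i p_{A_i}$.
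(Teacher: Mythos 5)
The paper itself contains no proof of Lemma~\ref{breakup} --- it is quoted from [HT, Lemma 6.7] --- so your argument is a reconstruction rather than a parallel of anything in the text. Its skeleton is sound and is the natural proof with the paper's own tools: restrict to every fixed point $w_C$, dispose of the case $A\not\subseteq C$ by Corollary~\ref{co:HTvanishing}, compute both sides with Lemma~\ref{billeyp1}, and conclude by the injectivity of $H_S^*(Y)\to H_S^*(Y^S)$ from Theorem~\ref{th:injection}.

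There is, however, a genuine flaw in the central counting step. The claim that for a \emph{fixed} reduced word $U$ of $v_A$ the count factors, $n_{W_C}(U)=\prod_i n_{W_{C_{f(i)}}}(U_i)$, is false, and the justification aims at the wrong phenomenon: letters cannot ``leak'' between blocks of $C$ anyway (the blocks of $W_C$ use disjoint alphabets), whereas the real issue arises when two components $A_i, A_{i'}$ lie in the \emph{same} block $C_{f(i)}=C_{f(i')}$, where their letters interleave inside one block word. For instance, take $A=\{1\}\cup\{3\}$ and $C=\{1,2,3\}$, so $W_C=(1,2,3,1,2,1)$ by~\eqref{eq:redwordchoice}: then $n_{W_C}((1,3))=1$ and $n_{W_C}((3,1))=2$, while $n_{W_C}((1))\,n_{W_C}((3))=3$, so neither individual count factors, and summing your claimed per-$U$ factorization over the two shuffles would give $6$ rather than the correct $3$. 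What is true, and is all you need, is the aggregate statement: every reduced word of $v_A$ is a shuffle of the unique reduced words $V_{A_i}$ (this is where maximality/nonadjacency of the $A_i$ enters, via commutation), the weight $\prod_{j\in U}(j-\mathcal T_C(j)+1)$ depends only on the multiset of letters and hence is the same for every reduced word of $v_A$, and the map sending an occurrence of a shuffle in $W_C$ to the tuple of its restrictions to the letters of each $A_i$ is a bijection onto tuples of occurrences of the $V_{A_i}$; therefore $\sum_U n_{W_C}(U)=\prod_i n_{W_C}(V_{A_i})$, which yields your displayed identity and completes the proof. Two smaller corrections: Lemma~\ref{billeyp1} is applied at the fixed point $w_C$, so the ambient word must be $W_C$ throughout (your sentence invoking $W_A=W_{A_1}\cdots W_{A_k}$ for the left-hand side is a slip, though your final display is stated correctly), and on the right-hand side you should note $n_{W_C}(U_i)=n_{W_{C_{f(i)}}}(U_i)$ (by disjointness of block alphabets) if you wish to quote the block counts rather than counts in all of $W_C$.
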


\section{Proof of main theorems and lemmas}\label{se:proofs}
Here we prove Theorems \ref{general}, \ref{thm:AuBconsec}, \ref{thm:AuBnotconsec}, \ref{thm:disconnectedgeneral}, and \ref{thm:positive}.  There are two substantial cases required to prove Theorem~\ref{general}, recalling that $A$ and $B$ are consecutive by hypothesis.
In the first case,
either $A\cap B$ is nontrival but neither set contains the other, or the two sets are consecutive to each other. In the second case, one set is contained in the other.

\begin{defi} 
Let $A$ and $B$ be consecutive sequences of $\{1,2,\dots, n-1\}$. We say that $A$ and $B$ are {\em intertwined} if $\mathcal T_A \leq \mathcal T_B \leq \mathcal H_A \leq \mathcal H_B$ or $\mathcal T_B\leq \mathcal T_A \leq \mathcal H_B \leq \mathcal H_A$.
\end{defi}

\begin{lemm}\label{lem:restrictAuB} Suppose $A, B$  and $A\cup B$ are consecutive. Then
$$
p_A |_{w_{A\cup B}} = {\mathcal H_{A\cup B} -\mathcal T_A +1 \choose |A|}\frac{(\mathcal H_A - \mathcal T_{A\cup B} +1)!}{(\mathcal T_A-\mathcal T_{A\cup B})!} t^{|A|} .
$$ 
In particular, if $A$ and $B$ are intertwined or if $A$ and $B$ are consecutive to each other and nonintersecting,
$$
p_A |_{w_{A\cup B}} =  \frac{|A\cup B|!}{|B\setminus A|!} t^{|A|}.
$$
\end{lemm}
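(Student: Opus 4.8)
The plan is to read $p_A|_{w_{A\cup B}}$ straight off the Peterson restriction formula of Lemma~\ref{billeyp1} and then reduce the entire statement to one combinatorial count. Taking $u=v_A$, which has the unique reduced word $V_A=(\mathcal T_A,\mathcal T_A+1,\dots,\mathcal H_A)$, and using that $A\cup B$ is consecutive so that $\mathcal T_{A\cup B}(j)=\mathcal T_{A\cup B}$ for every $j$, Lemma~\ref{billeyp1} collapses to
\[
p_A|_{w_{A\cup B}} \;=\; n_{W_{A\cup B}}(V_A)\;\Bigl(\prod_{j=\mathcal T_A}^{\mathcal H_A}\bigl(j-\mathcal T_{A\cup B}+1\bigr)\Bigr)\;t^{|A|},
\]
and the product telescopes to $(\mathcal H_A-\mathcal T_{A\cup B}+1)!\big/(\mathcal T_A-\mathcal T_{A\cup B})!$. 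So the first displayed formula is equivalent to the single assertion
\[
n_{W_{A\cup B}}(V_A)\;=\;\binom{\mathcal H_{A\cup B}-\mathcal T_A+1}{|A|}.
\]

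To prove that I would normalize indices so that $\mathcal T_{A\cup B}=1$; then $W_{A\cup B}$ (from Equation~\eqref{eq:redwordchoice}) is the staircase word $(1,2,\dots,m)(1,2,\dots,m-1)\cdots(1,2)(1)$ with $m=|A\cup B|$, and $V_A$ becomes the increasing run $(p,p+1,\dots,q)$ with $p=\mathcal T_A$, $q=\mathcal H_A$, $q-p+1=|A|$. Writing $B_r=(1,2,\dots,m-r)$ for the $r$th block ($r=0,\dots,m-1$), the letter $i$ occurs exactly once in each $B_r$ with $r\le m-i$ and nowhere else, so an occurrence of $(p,\dots,q)$ as a subword is precisely a choice of blocks $B_{r_0},\dots,B_{r_{q-p}}$ from which the letters $p,p+1,\dots,q$ are taken (forcing $r_\ell\le m-p-\ell$) for which the chosen occurrences read left to right. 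The key observation is that, since letters increase within a block and $B_r$ precedes $B_{r'}$ iff $r<r'$, for $i<i'$ the occurrence of $i$ in $B_r$ precedes that of $i'$ in $B_{r'}$ exactly when $r\le r'$; hence the left-to-right condition is just $r_0\le r_1\le\dots\le r_{q-p}$. The substitution $s_\ell=(m-p-\ell)-r_\ell$ converts ``$0\le r_\ell\le m-p-\ell$ and $r_\ell\le r_{\ell+1}$'' into ``$m-p\ge s_0>s_1>\dots>s_{q-p}\ge 0$'', i.e. a choice of $|A|$ distinct elements of $\{0,1,\dots,m-p\}$, giving $n_{W_{A\cup B}}(V_A)=\binom{m-p+1}{|A|}=\binom{\mathcal H_{A\cup B}-\mathcal T_A+1}{|A|}$.

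For the ``in particular'' clause I would simply specialize. In each of the listed situations one has either (i) $\mathcal T_A\le\mathcal T_B$ and $\mathcal H_A\le\mathcal H_B$, whence $\mathcal T_{A\cup B}=\mathcal T_A$ and $\mathcal H_{A\cup B}=\mathcal H_B$; or (ii) $\mathcal T_B\le\mathcal T_A$ and $\mathcal H_B\le\mathcal H_A$, whence $\mathcal T_{A\cup B}=\mathcal T_B$ and $\mathcal H_{A\cup B}=\mathcal H_A$. In case (i) the formula reads $\binom{|A\cup B|}{|A|}\,|A|!\,t^{|A|}$, and in case (ii) it reads $\binom{|A|}{|A|}\frac{|A\cup B|!}{(\mathcal T_A-\mathcal T_B)!}\,t^{|A|}$; since $|A\cup B|-|A|=|B\setminus A|$ and, in case (ii), $\mathcal T_A-\mathcal T_B=|A\cup B|-|A|$, both reduce to $\frac{|A\cup B|!}{|B\setminus A|!}\,t^{|A|}$.

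The hard part is the count in the second paragraph; everything else is bookkeeping. The only delicate point there is the equivalence ``occurrence of $i$ before occurrence of $i'$ $\iff$ $r\le r'$'', which must be checked in tandem with the requirement that the chosen letter genuinely appears in the chosen block (the subcase $r=r'$), after which the change of variables to a strictly decreasing sequence is immediate.
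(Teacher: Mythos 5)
Your proposal is correct and takes essentially the same route as the paper: reduce via Lemma~\ref{billeyp1} (using the unique reduced word $V_A$) to the telescoping product times the single count $n_{W_{A\cup B}}(V_A)$, show $n_{W_{A\cup B}}(V_A)=\binom{\mathcal H_{A\cup B}-\mathcal T_A+1}{|A|}$, and then specialize to the two intertwined/adjacent cases. The only cosmetic difference is that you obtain the count by recording weakly increasing block indices and converting them to strictly decreasing sequences, whereas the paper phrases the same enumeration as a lattice-path count in a grid.
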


\begin{proof}
According to Lemma \ref{billeyp1},
\begin{equation}\label{restrict}
p_A |_{w_{A\cup B} }= 
n_{W_{A\cup B}}(V_A)\left( \prod_{j\in V_A} (j-\mathcal T_{A\cup B}(j)+1)\right)t^{|A|}.
\end{equation}
We claim that $n_{W_{A\cup B}}(V_A) =  {\mathcal H_{A\cup B} -\mathcal T_A +1 \choose |A|}$, where  $V_A= (\mathcal T_A,\mathcal T_A +1,\cdots ,\mathcal H_A)$.
Choose the reduced decomposition $W_{A\cup B}$ of $w_{A\cup B}$ given by the sequence (read from left to right and top to bottom) in Figure~\ref{fig:reducedwords} (left panel), ignoring the grid and path within.

Each increasing consecutive string of $W_{A \cup B}$ is written on its own line, all left aligned. 
 All rows finishing in numbers $\mathcal H_A$ or larger contain the string $\mathcal T_A \mathcal T_A +1 \cdots \mathcal H_A$.  To count the number of occurrences of $V_A$ in this product, we first
 draw a grid around all of these strings except for the one appearing in the first row. The grid has $\mathcal H_{A \cup B} - \mathcal H_A$ rows and $\mathcal H_A - \mathcal T_A +1$ columns.

For example, suppose $A = \{2, 3\}$ and $A\cup B = \{1, \dots, 6\}$. Let $w_{A\cup B}$ be the longest word for the permutations group generated by $\{s_i: i\in A\cup B\}$. Then
$$
W_{A\cup B}= (1, 2, 3, 4, 5, 6, 1, 2, 3, 4, 5, 1, 2, 3, 4, 1, 2, 3, 1, 2, 1),
$$ and $V_A = (2, 3)$. We have the grid containing the $2$ and $3$ in the second, third and fourth rows of $W_{A\cup B}$, pictured in Figure~\ref{fig:reducedwords}, (right panel).

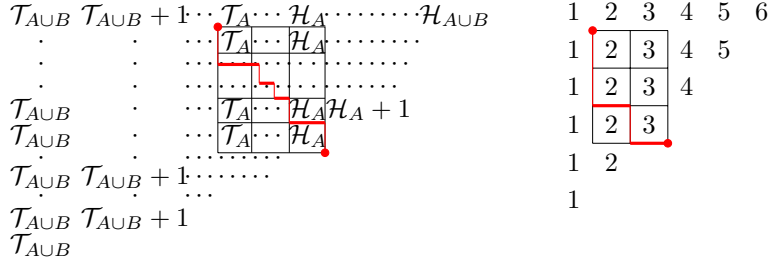
\begin{figure}\label{fig:reducedwords}\caption{Finding reduced words $V_A$ occurring in $W_A$ (left panel) and an example (right panel)}
\begin{tikzpicture}[scale=0.50]
\node at (-2.5, 3.5) {$\mathcal T_{A\cup B}$};
\node at (0, 3.5) {$\mathcal T_{A\cup B}+1$};
\node at (1.4, 3.5) {$\cdot$};
\node at (1.7, 3.5) {$\cdot$};
\node at (2, 3.5) {$\cdot$};
\node at (2.7, 3.5) {$\mathcal T_A$};
\node at (3.2, 3.5) {$\cdot$};
\node at (3.5, 3.5) {$\cdot$};
\node at (3.8, 3.5) {$\cdot$};
\node at (4.6, 3.5) {$\mathcal H_A$};
\node at (5.1, 3.5) {$\cdot$};
\node at (5.4, 3.5) {$\cdot$};
\node at (5.7, 3.5) {$\cdot$};
\node at (6, 3.5) {$\cdot$};
\node at (6.3, 3.5) {$\cdot$};
\node at (6.6, 3.5) {$\cdot$};
\node at (6.9, 3.5) {$\cdot$};
\node at (7.2, 3.5) {$\cdot$};
\node at (7.5, 3.5) {$\cdot$};
\node at (8.5, 3.5) {$\mathcal H_{A\cup B}$};
\node at (-2.5, 2.8) {$\cdot$};
\node at (0, 2.8) {$\cdot$};
\node at (1.4, 2.8) {$\cdot$};
\node at (1.7, 2.8) {$\cdot$};
\node at (2, 2.8) {$\cdot$};
\node at (2.7, 2.8) {$\mathcal T_A$};
\node at (3.2, 2.8) {$\cdot$};
\node at (3.5, 2.8) {$\cdot$};
\node at (3.8, 2.8) {$\cdot$};
\node at (4.6, 2.8) {$\mathcal H_A$};
\node at (5.1, 2.8) {$\cdot$};
\node at (5.4, 2.8) {$\cdot$};
\node at (5.7, 2.8) {$\cdot$};
\node at (6, 2.8) {$\cdot$};
\node at (6.3, 2.8) {$\cdot$};
\node at (6.6, 2.8) {$\cdot$};
\node at (6.9, 2.8) {$\cdot$};
\node at (7.2, 2.8) {$\cdot$};
\node at (7.5, 2.8) {$\cdot$};
\node at (-2.5, 2.2) {$\cdot$};
\node at (0, 2.2) {$\cdot$};
\node at (1.4, 2.2) {$\cdot$};
\node at (1.7, 2.2) {$\cdot$};
\node at (2, 2.2) {$\cdot$};
\node at (2.3, 2.2) {$\cdot$};
\node at (2.6, 2.2) {$\cdot$};
\node at (2.9, 2.2) {$\cdot$};
\node at (3.2, 2.2) {$\cdot$};
\node at (3.5, 2.2) {$\cdot$};
\node at (3.8, 2.2) {$\cdot$};
\node at (4.1, 2.2) {$\cdot$};
\node at (4.4, 2.2) {$\cdot$};
\node at (4.75, 2.2) {$\cdot$};
\node at (5.1, 2.2) {$\cdot$};
\node at (5.4, 2.2) {$\cdot$};
\node at (5.7, 2.2) {$\cdot$};
\node at (6, 2.2) {$\cdot$};
\node at (6.3, 2.2) {$\cdot$};
\node at (6.6, 2.2) {$\cdot$};
\node at (6.9, 2.2) {$\cdot$};
\node at (7.2, 2.2) {$\cdot$};
\node at (-2.5, 1.6) {$\cdot$};
\node at (0, 1.6) {$\cdot$};
\node at (1.4, 1.6) {$\cdot$};
\node at (1.7, 1.6) {$\cdot$};
\node at (2, 1.6) {$\cdot$};
\node at (2.3, 1.6) {$\cdot$};
\node at (2.6, 1.6) {$\cdot$};
\node at (2.9, 1.6) {$\cdot$};
\node at (3.2, 1.6) {$\cdot$};
\node at (3.5, 1.6) {$\cdot$};
\node at (3.8, 1.6) {$\cdot$};
\node at (4.1, 1.6) {$\cdot$};
\node at (4.4, 1.6) {$\cdot$};
\node at (4.75, 1.6) {$\cdot$};
\node at (5.1, 1.6) {$\cdot$};
\node at (5.4, 1.6) {$\cdot$};
\node at (5.7, 1.6) {$\cdot$};
\node at (6, 1.6) {$\cdot$};
\node at (6.3, 1.6) {$\cdot$};
\node at (6.6, 1.6) {$\cdot$};
\node at (6.9, 1.6) {$\cdot$};
\node at (-2.5, 1) {$\mathcal T_{A\cup B}$};
\node at (0, 1) {$\cdot$};
\node at (1.4, 1) {$\cdot$};
\node at (1.7, 1) {$\cdot$};
\node at (2, 1) {$\cdot$};
\node at (2.7, 1) {$\mathcal T_A$};
\node at (3.2, 1) {$\cdot$};
\node at (3.5, 1) {$\cdot$};
\node at (3.8, 1) {$\cdot$};
\node at (4.6, 1) {$\mathcal H_A$};
\node at (6.2, 1) {$\mathcal H_A+1$};
\node at (-2.5, .3) {$\mathcal T_{A\cup B}$};
\node at (0, .3) {$\cdot$};
\node at (1.4, .3) {$\cdot$};
\node at (1.7, .3) {$\cdot$};
\node at (2, .3) {$\cdot$};
\node at (2.7, .3) {$\mathcal T_A$};
\node at (3.2, .3) {$\cdot$};
\node at (3.5, .3) {$\cdot$};
\node at (3.8, .3) {$\cdot$};
\node at (4.6, .3) {$\mathcal H_A$};
\node at (-2.5, -.3) {$\cdot$};
\node at (0, -.3) {$\cdot$};
\node at (1.4, -.3) {$\cdot$};
\node at (1.7, -.3) {$\cdot$};
\node at (2,-.3) {$\cdot$};
\node at (2.3, -.3) {$\cdot$};
\node at (2.6, -.3) {$\cdot$};
\node at (2.9,-.3) {$\cdot$};
\node at (3.2, -.3) {$\cdot$};
\node at (3.5, -.3) {$\cdot$};
\node at (3.8, -.3) {$\cdot$};
\node at (-2.5, -.8) {$\mathcal T_{A\cup B}$};
\node at (0, -.8) {$\mathcal T_{A\cup B}+1$};
\node at (1.4, -.8) {$\cdot$};
\node at (1.7, -.8) {$\cdot$};
\node at (2, -.8) {$\cdot$};
\node at (2.3, -.8) {$\cdot$};
\node at (2.6, -.8) {$\cdot$};
\node at (2.9, -.8) {$\cdot$};
\node at (3.2, -.8) {$\cdot$};
\node at (3.5, -.8) {$\cdot$};
\node at (-2.5, -1.3) {$\cdot$};
\node at (0, -1.3) {$\cdot$};
\node at (1.4, -1.3) {$\cdot$};
\node at (1.7, -1.3) {$\cdot$};
\node at (2,-1.3) {$\cdot$};
\node at (-2.5, -1.9) {$\mathcal T_{A\cup B}$};
\node  at (0, -1.9) {$\mathcal T_{A\cup B}+1$};
\node at (-2.5,-2.6) {$\mathcal T_{A\cup B}$};

\draw[very thin] (2.2,-.15)--(5.05, -.15)--(5.05, 3.2)--(2.2,3.2)--(2.2,-.15);
\draw[very thin] (2.2,.65)--(5.05, .65);
\draw[very thin] (2.2,1.3)--(5.05, 1.3);
\draw[very thin] (2.2,2.5)--(5.05, 2.5);
\draw[very thin] (3.1,-.15)--(3.1, 3.2);
\draw[very thin] (4.1,-.15)--(4.1, 3.2);
\draw[color=red,fill=red] (2.2,3.2) circle (.1cm);
\draw[color=red,fill=red] (5.05,-.15) circle (.1cm);
\draw[color=red] (2.2,3.2)--(2.2,2.2); 
\draw[color=red,very thick] (2.2,2.2)--(3.3, 2.2); 
\draw[color=red] (3.3,2.2)--(3.3, 1.7); 
\draw[color=red, very thick] (3.3,1.7)--(3.7, 1.7);
\draw[color=red] (3.7, 1.7)--(3.7, 1.3); 
\draw[color=red, very thick] (3.7, 1.3)--(4.1,1.3); 
\draw[color=red] (4.1,1.3)--(4.1,.65);
\draw[color=red,very thick] (4.1,.65)--(5.05,.65);
\draw[color=red] (5.05,.65)--(5.05,-.15);
\end{tikzpicture}
\qquad
\begin{tikzpicture}[scale=0.50]
\node at (-2.5, -2) {};
\node at (-2.5, -.5) {$1$};
\node at (-2.5, 0.5) {$1$};
\node at (-2.5, 1.5) {$1$};
\node at (-2.5, 2.5) {$1$};
\node at (-2.5, 3.5) {$1$};
\node at (-2.5, 4.5) {$1$};
\node at (-1.5, .5) {$2$};
\node at (-1.5, 1.5) {$2$};
\node at (-1.5, 2.5) {$2$};
\node at (-1.5, 3.5) {$2$};
\node at (-1.5, 4.5) {$2$};
\node at (-.5, 1.5) {$3$};
\node at (-.5, 2.5) {$3$};
\node at (-.5, 3.5) {$3$};
\node at (-.5, 4.5) {$3$};
\node at (0.5, 2.5) {$4$};
\node at (0.5, 3.5) {$4$};
\node at (0.5, 4.5) {$4$};
\node at (1.5, 3.5) {$5$};
\node at (1.5, 4.5) {$5$};
\node at (2.5, 4.5) {$6$};
\draw[step=1cm,black,very thin] (-2,1) grid (0,4);
\draw[color=red,fill=red] (-2,4) circle (.1cm);
\draw[color=red,fill=red] (0,1) circle (.1cm);
\draw[color=red] (-2, 4) -- (-2, 2);
\draw[color = red, very thick] (-2,2) -- (-1, 2);
\draw[color=red] (-1, 2)--(-1, 1);
\draw[color = red, very thick] (-1, 1)--(0,1); 
\end{tikzpicture}
\end{figure}

There is a one-to-one correspondence between paths from the top left corner to the bottom right corner of this grid (moving only right and down) and occurrences of $V_A$ inside of $W_A$.  Each instance of $V_A$ inside of $W_{A \cup B}$ is  ``underlined'' by the horizontal components of a path,  as indicated with the red path in Figure~\ref{fig:reducedwords} (left panel).  For example, $V_A$ is given by the subset of $W_{A \cup B}$ underlined by the path in Figure~\ref{fig:reducedwords} (right panel), it selects the subset indicated by boxed elements:
$$(1, 2, 3, 4, 5, 6, 1, 2, 3, 4, 5, 1, \boxed{2}, 3, 4, 1, 2, \boxed{3}, 1, 2, 1).
$$

The dimensions of the grid are $(\mathcal H_{A \cup B} - \mathcal H_A) \times (\mathcal H_A - \mathcal T_A +1)$ and hence the number of reduced words for $v_A$ inside of $W_{A \cup B}$ is the count of such paths, known to be the number of ``right" (or ``down") moves among the total moves given by the sum of the row and column lengths. Therefore,
$$
n_{W_{A\cup B}}(V_A)={\mathcal H_{A \cup B} - \mathcal T_{A} + 1 \choose \mathcal H_A - \mathcal T_A +1} = {\mathcal H_{A \cup B} - \mathcal T_{A} + 1 \choose |A|}.
$$

We turn our attention to the factor $\left(\prod_{j\in V_A} (j-\mathcal T_{A\cup B}(j)+1)\right)t^{|A|}$ in Equation~\ref{restrict}. Since $A\cup B$ is consecutive and the product is over $|A|$ elements with the highest $j$ occurring at $j=\mathcal H_A$, but only descending $|A|$ terms:

\begin{align*}
\left(\prod_{j\in V_A} (j-\mathcal T_{A\cup B}+1)\right)t^{|A|} &=\frac{(\mathcal H_A-\mathcal T_{A\cup B} +1)!}{(\mathcal H_A-\mathcal T_{A\cup B} +1-|A|)!} t^{|A|} \\
 &= \frac{(\mathcal H_A-\mathcal T_{A\cup B} +1)!}{(\mathcal T_A-\mathcal T_{A\cup B})!} t^{|A|}.
 \end{align*}
We put the two terms together to get the formula. 

If  $A$ and $B$ are intertwined, then if
 $\mathcal T_A=\mathcal T_{A\cup B}$ and $\mathcal H_{B}= \mathcal H_{A\cup B}$, 
 $$
{\mathcal H_{A\cup B} -\mathcal T_A +1 \choose |A|}={|A\cup B|\choose |A|}, 
\qquad
\frac{(\mathcal H_A -\mathcal T_{A\cup B} +1)!}{(\mathcal T_A-\mathcal T_{A\cup B})!}=|A|!
$$
so the product is $\frac{|A\cup B|!}{(|A\cup B|-|A|)!}= \frac{|A\cup B|!}{|B\setminus A|!}$.
If $\mathcal T_B=\mathcal T_{A\cup B}$ and $\mathcal H_{A}= \mathcal H_{A\cup B}$, 
$$
{\mathcal H_{A\cup B} -\mathcal T_A +1 \choose |A|}={|A|\choose |A|} =1,\qquad
\frac{(\mathcal H_A -\mathcal T_{A\cup B} +1)!}{(\mathcal T_A-\mathcal T_{A\cup B})!}=\frac{|A\cup B|!}{|B\setminus A|!},
$$
resulting in the same product.  
  \end{proof}

The following Lemma serves as the base case for an inductive argument in the proof of Theorem~\ref{general}.

\begin{lemm}\label{le:cor:intertwined} Suppose $A, B$  are consecutive. When $A$ and $B$ are intertwined, or when $A$ and $B$ are consecutive to each other and nonintersecting,
$$
b_{A,B}^{A\cup B}= \frac{|A\cup B|!}{|B\setminus A|!|A\setminus B|!}t^{|A\cap B|}.
$$
\end{lemm}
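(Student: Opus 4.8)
The plan is to evaluate the defining relation $p_A p_B = \sum_C b_{A,B}^C\, p_C$ at the single $S$-fixed point $w_{A\cup B}$, where the sum collapses to one term. First I would restrict both sides to $w_{A\cup B}$, obtaining the identity
$$
p_A|_{w_{A\cup B}}\cdot p_B|_{w_{A\cup B}} = \sum_{C} b_{A,B}^C\, p_C|_{w_{A\cup B}}
$$
in $H_S^* = \IC[t]$.

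For the collapse: by Corollary~\ref{co:HTvanishing}, $p_C|_{w_{A\cup B}} = 0$ unless $C \subseteq A\cup B$, while by Lemma~\ref{le:vanishingcoefficientsupport}, $b_{A,B}^C \neq 0$ forces $A\cup B\subseteq C$. Hence the only index contributing on the right-hand side is $C = A\cup B$, and the relation becomes $p_A|_{w_{A\cup B}}\cdot p_B|_{w_{A\cup B}} = b_{A,B}^{A\cup B}\, p_{A\cup B}|_{w_{A\cup B}}$.

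Then I would substitute the restrictions already computed in the excerpt. Since $A$, $B$, and $A\cup B$ are consecutive and $A, B$ are intertwined (or consecutive to each other and nonintersecting), Lemma~\ref{lem:restrictAuB} gives $p_A|_{w_{A\cup B}} = \frac{|A\cup B|!}{|B\setminus A|!}\, t^{|A|}$; both the "intertwined" condition and the "consecutive to each other and nonintersecting" condition are symmetric in $A$ and $B$, so the same lemma with the roles swapped yields $p_B|_{w_{A\cup B}} = \frac{|A\cup B|!}{|A\setminus B|!}\, t^{|B|}$. By Corollary~\ref{cor:selfrestrict}, $p_{A\cup B}|_{w_{A\cup B}} = |A\cup B|!\, t^{|A\cup B|}$, which is a nonzerodivisor in $\IC[t]$, so I may divide and solve:
$$
b_{A,B}^{A\cup B} = \frac{|A\cup B|!}{|B\setminus A|!\,|A\setminus B|!}\, t^{\,|A|+|B|-|A\cup B|},
$$
and conclude using $|A|+|B|-|A\cup B| = |A\cap B|$.

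I do not expect a genuine obstacle here: the analytic content has already been absorbed into Lemma~\ref{lem:restrictAuB}. The only points needing care are (i) noting that both hypotheses are symmetric in $A,B$, so that the restriction formula is legitimately applied to $p_B$ as well as $p_A$, and (ii) the bookkeeping $|B\setminus A| = |A\cup B|-|A|$, $|A\setminus B| = |A\cup B|-|B|$, $|A|+|B|-|A\cup B| = |A\cap B|$, all of which are immediate for consecutive, intertwined (or disjoint adjacent) sets. This lemma is then exactly the base case invoked in the inductive proof of Theorem~\ref{general}.
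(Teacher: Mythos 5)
Your proposal is correct and follows essentially the same route as the paper: restrict the defining relation at $w_{A\cup B}$, collapse the sum to the single term $C=A\cup B$ via the support conditions, substitute the restrictions from Lemma~\ref{lem:restrictAuB} and Corollary~\ref{cor:selfrestrict}, and solve. The only difference is that you spell out the symmetry in $A,B$ and the use of Lemma~\ref{le:vanishingcoefficientsupport} explicitly, which the paper leaves implicit.
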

\begin{proof} 
Restrict the product
$$
(p_A p_B)|_{w_{A\cup B}} = \sum_{C: A\cup B\subseteq C}  b_{A,B}^C p_C|_{w_{A\cup B}} = b_{A,B}^{A\cup B} p_{A\cup B}|_{w_{A\cup B}},
$$
since $p_C|_{w_{A\cup B}}=0$ unless $C\subseteq A\cup B$.  
 By Lemma~\ref{lem:restrictAuB},
\begin{align*}
p_A|_{w_{A\cup B}} p_B|_{w_{A\cup B}} & = \frac{|A\cup B|!}{|B\setminus A|!} \frac{|A\cup B|!}{|A\setminus B|!} t^{|A|+|B|}.
\end{align*}
By Corollary~\ref{cor:selfrestrict}, $p_{A\cup B}|_{w_{A\cup B}} = |A\cup B|!t^{|A\cup B|}$. We then solve:
$$
b_{A,B}^{A\cup B}= \frac{1}{|A\cup B|! t^{|A\cup B|}}  \frac{|A\cup B|!}{|B\setminus A|!} \frac{|A\cup B|!}{|A\setminus B|!} t^{|A|+|B|} = \frac{|A\cup B|!}{|B\setminus A|!|A\setminus B|!}t^{|A\cap B|}.
$$

\vspace{-.2in}
  \end{proof}

When $B\subseteq A$, the structure constant $b_{A,B}^{C}$ can be recast in terms of another structure constant with intertwined sets.
\begin{lemm}\label{le:bconversion} Suppose $A, B$ are consecutive and $C$ any set with $B\subseteq A\subseteq C$. Then
$$
|A|!\ |B|!\  b_{A,B}^{C} = |A'|!\ |B'|!\  b_{A', B'}^{C},
$$
where $A' = \{a\in A:\ a\leq \mathcal H_B\}$ and $B' =\{b\in A:\ b\geq \mathcal T_B\}$.
\end{lemm}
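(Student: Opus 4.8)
The plan is to recast the claimed numerical identity as a single equation between cohomology classes and then verify that equation one fixed point at a time. Since the Peterson Schubert classes $\{p_C\}$ form an $H_S^*$-module basis, the equalities $|A|!\,|B|!\,b_{A,B}^C=|A'|!\,|B'|!\,b_{A',B'}^C$ holding for all $C$ are equivalent to the single identity $|A|!\,|B|!\,p_A p_B=|A'|!\,|B'|!\,p_{A'}p_{B'}$ in $H_S^*(Y)$. By Theorem~\ref{th:injection} the restriction map $H_S^*(Y)\to H_S^*(Y^S)=\bigoplus_D H_S^*$ is injective, so it suffices to check this identity after restricting to each $S$-fixed point $w_D$, $D\subseteq\{1,\dots,n-1\}$.

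First I would record the elementary facts about the two new sets. Since $B\subseteq A$ we may write $A=\{\mathcal T_A,\dots,\mathcal H_A\}$ and $B=\{\mathcal T_B,\dots,\mathcal H_B\}$ with $\mathcal T_A\le\mathcal T_B\le\mathcal H_B\le\mathcal H_A$; then $A'=\{\mathcal T_A,\dots,\mathcal H_B\}$ and $B'=\{\mathcal T_B,\dots,\mathcal H_A\}$ are consecutive, both contained in $A$, satisfy $A'\cup B'=A$, and have $\mathcal T_{A'}=\mathcal T_A$, $\mathcal H_{A'}=\mathcal H_B$, $\mathcal T_{B'}=\mathcal T_B$, $\mathcal H_{B'}=\mathcal H_A$, with $|A|+|B|=|A'|+|B'|$. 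Because $A'\cup B'=A$, Corollary~\ref{co:HTvanishing} shows that both $(p_Ap_B)|_{w_D}=p_A|_{w_D}\cdot p_B|_{w_D}$ and $(p_{A'}p_{B'})|_{w_D}=p_{A'}|_{w_D}\cdot p_{B'}|_{w_D}$ vanish unless $A\subseteq D$. When $A\subseteq D$, let $D^0$ be the maximal consecutive subset of $D$ containing the consecutive set $A$; then $B,A',B'\subseteq A\subseteq D^0$ as well, so Corollary~\ref{nconsec} (trivially if $D=D^0$) yields $p_E|_{w_D}=p_E|_{w_{D^0}}$ for each $E\in\{A,B,A',B'\}$. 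Hence it remains to prove
\[
|A|!\,|B|!\,\bigl(p_A p_B\bigr)\big|_{w_{D^0}}=|A'|!\,|B'|!\,\bigl(p_{A'}p_{B'}\bigr)\big|_{w_{D^0}}
\]
for every consecutive $D^0$ with $A\subseteq D^0$.

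For such a $D^0$ and any consecutive $E\subseteq D^0$, Lemma~\ref{lem:restrictAuB}, applied with $E$ and $D^0$ in the roles of $A$ and $A\cup B$ there (e.g.\ taking the lemma's $B$ to be $D^0$), gives
\[
|E|!\,p_E\big|_{w_{D^0}}=\frac{(\mathcal H_{D^0}-\mathcal T_E+1)!\,(\mathcal H_E-\mathcal T_{D^0}+1)!}{(\mathcal H_{D^0}-\mathcal H_E)!\,(\mathcal T_E-\mathcal T_{D^0})!}\,t^{|E|}=g(\mathcal T_E)\,h(\mathcal H_E)\,t^{|E|},
\]
where $g(s):=(\mathcal H_{D^0}-s+1)!/(s-\mathcal T_{D^0})!$ depends only on $s$ and $h(r):=(r-\mathcal T_{D^0}+1)!/(\mathcal H_{D^0}-r)!$ depends only on $r$. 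Substituting $E=A,B,A',B'$ and using $\mathcal T_{A'}=\mathcal T_A$, $\mathcal H_{A'}=\mathcal H_B$, $\mathcal T_{B'}=\mathcal T_B$, $\mathcal H_{B'}=\mathcal H_A$ together with $|A|+|B|=|A'|+|B'|$, both sides collapse to $g(\mathcal T_A)\,g(\mathcal T_B)\,h(\mathcal H_A)\,h(\mathcal H_B)\,t^{|A|+|B|}$, hence agree. By injectivity this establishes the class identity, and therefore the lemma.

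The argument is essentially bookkeeping, so I do not anticipate a real obstacle; the two points that need care are (i) that Corollary~\ref{nconsec} may be invoked for all four classes $p_A,p_B,p_{A'},p_{B'}$ simultaneously in order to replace $D$ by $D^0$, which relies on $A'\cup B'=A$ and the consecutiveness of $A$; and (ii) that Lemma~\ref{lem:restrictAuB} genuinely applies to an arbitrary consecutive $D^0\supseteq E$ and produces a restriction that separates as (a function of $\mathcal T_E$) times (a function of $\mathcal H_E$) times $t^{|E|}$. This separation of variables is exactly the symmetry exploited by the passage from $(A,B)$ to $(A',B')$.
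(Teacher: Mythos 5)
Your proposal is correct and follows essentially the same route as the paper: both reduce the coefficient identity to the class identity $|A|!\,|B|!\,p_Ap_B=|A'|!\,|B'|!\,p_{A'}p_{B'}$, verify it fixed point by fixed point using Lemma~\ref{lem:restrictAuB} (together with Corollary~\ref{nconsec} for non-consecutive fixed points) and the head/tail relations $\mathcal T_{A'}=\mathcal T_A$, $\mathcal H_{A'}=\mathcal H_B$, $\mathcal T_{B'}=\mathcal T_B$, $\mathcal H_{B'}=\mathcal H_A$ with $|A|+|B|=|A'|+|B'|$, and conclude by the injectivity of restriction to $Y^S$ (Theorem~\ref{th:injection}). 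Your explicit factorization $|E|!\,p_E|_{w_{D^0}}=g(\mathcal T_E)h(\mathcal H_E)t^{|E|}$ is just a tidy repackaging of the same cancellation the paper carries out, and your handling of fixed points not containing $A$ is a minor (welcome) extra bit of care.
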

\begin{proof}
We show that 
\begin{equation}\label{eq:prodintersectunion}
|A|!\ |B|!\ p_A p_B =  |A'|!\ |B'|!\ \ p_{A'} p_{B'},
\end{equation}
 which implies that the coefficients have the desired relationship since $\{p_C\}$ forms a basis of $H_S^*(Y)$.

By Lemma~\ref{lem:restrictAuB} if $C$ is consecutive, and Lemma~\ref{nconsec} otherwise, 
\begin{align*}
p_A|_{w_{C}} &=  {\mathcal H_C - \mathcal T_A +1\choose |A|}\frac{(\mathcal H_A - \mathcal T_C +1)!}{(\mathcal T_A - \mathcal T_C)!} t^{|A|}  \\
&= \frac{(\mathcal H_C - \mathcal T_A +1)!}{|A|!\ (H_C-H_A)!}\frac{(\mathcal H_A - \mathcal T_C +1)!}{(\mathcal T_A - \mathcal T_C)!} t^{|A|}
\end{align*}
with a similar formula for $p_B|_{w_C}$. 
Using the relationships 
\begin{equation}\label{eq:intersectunion}
\mathcal T_{A'}=\mathcal T_A, \quad \mathcal H_{A'}=\mathcal H_B,\quad \mathcal T_{B'}=\mathcal T_B, \quad \mathcal H_{B'}=\mathcal H_A
\end{equation}
and simplifying as above,
$$
p_{A'}|_{w_{C}} =  \frac{(\mathcal H_C - \mathcal T_A +1)!}{|A'|!\ (\mathcal H_C-\mathcal H_B)!}\frac{(\mathcal H_B - \mathcal T_C +1)!}{(\mathcal T_A - \mathcal T_C)!} t^{|A'|} 
$$
with a similar formula for $p_{B'}|_{w_C}$.
Since $|A|+|B|=|A'|+|B'|$, we conclude
\begin{equation}\label{eq:unionintersectrestrict}
|A|!\ |B|!\ p_A|_{w_C} p_B|_{w_C} =  |A'|!\ |B'|!\ \ p_{A'}|_{w_C}  p_{B'}|_{w_C} 
\end{equation}
for all sets $C$ containing $A\supseteq B$. By 
Theorem~\ref{th:injection}, the equality at every fixed point implies Equation \eqref{eq:prodintersectunion} holds.~  \end{proof}

Finally, we state the crucial lemma for the proof of Theorem~\ref{general}. 

\begin{defi}
Let $A, B,$ and $A \cup B$ be consecutive.  Let $D= A\cup B$ and define 
$$
_i D _j := \{\mathcal T_{A \cup B} - i, \mathcal T_{A \cup B} - i+1, \ldots, \mathcal H_{A \cup B} + j-1, \mathcal H_{A \cup B}+j\}.
$$
\end{defi}

\begin{lemm}\label{le:TATBHAHB} Let $A, B$ and ${_m D _n}$ be consecutive for $m=0,1,\dots, \ell$, $n=0, 1\dots, r$, with $D={_0D_0}=A\cup B$, and $|A\cap B|=\ell+r$. If $A$ and $B$ are intertwined or if $A$ and $B$ are consecutive to each other and disjoint, 
$$
b_{A,B}^{_mD _n} =
\frac{
|A\cup B|!
|A\cap B|!
}
{(|A\cap B|-m-n)!\
m!\ 
n!\ 
(|A\setminus B| +m)!\ 
(|B\setminus A| +n)!
}
t^{|A\cap B|-m-n}. 
$$
\end{lemm}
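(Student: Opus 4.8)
\textbf{Proof proposal for Lemma~\ref{le:TATBHAHB}.}
The plan is to compute the structure constant $b_{A,B}^{_mD_n}$ by restricting the product $p_A p_B$ to the fixed point $w_{_mD_n}$ and extracting the coefficient, using exactly the strategy that worked in Lemma~\ref{le:cor:intertwined}. The key observation is that $_mD_n$ is itself a consecutive set containing $A\cup B$, so $p_C|_{w_{_mD_n}}=0$ for any $C\not\subseteq {_mD_n}$ by Corollary~\ref{co:HTvanishing}, and one writes
\[
(p_Ap_B)|_{w_{_mD_n}}=\sum_{C:\ A\cup B\subseteq C\subseteq {_mD_n}} b_{A,B}^C\, p_C|_{w_{_mD_n}}.
\]
Unlike the base case, the right-hand side now has many terms, one for each consecutive $C$ between $A\cup B$ and $_mD_n$. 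The main idea is to run an induction on $m+n$: the case $m=n=0$ is precisely Lemma~\ref{le:cor:intertwined}, giving $b_{A,B}^{D}=\tfrac{|A\cup B|!}{|B\setminus A|!\,|A\setminus B|!}t^{|A\cap B|}$, which matches the claimed formula. For the inductive step one peels off the top term $b_{A,B}^{_mD_n}\,p_{_mD_n}|_{w_{_mD_n}}$, and the remaining sum over $C\subsetneq {_mD_n}$ is handled by the inductive hypothesis (every such consecutive $C$ strictly between $A\cup B$ and $_mD_n$ is of the form $_{m'}D_{n'}$ with $m'\le m$, $n'\le n$, $m'+n'<m+n$, and with $A,B$ still intertwined or consecutive-disjoint relative to it — crucially $A,B$ themselves do not change, only the ambient set).

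The computational ingredients are all already available. First, $p_{_mD_n}|_{w_{_mD_n}} = (|A\cup B|+m+n)!\,t^{|A\cup B|+m+n}$ by Corollary~\ref{cor:selfrestrict}, since $_mD_n$ is consecutive of size $|A\cup B|+m+n$. Second, the restrictions $p_A|_{w_{_mD_n}}$ and $p_B|_{w_{_mD_n}}$ are given by Lemma~\ref{lem:restrictAuB} applied with $_mD_n$ in the role of $A\cup B$: since $\mathcal T_{_mD_n}=\mathcal T_{A\cup B}-m$ and $\mathcal H_{_mD_n}=\mathcal H_{A\cup B}+n$, one gets explicit products of a binomial coefficient and a falling factorial in $m,n$. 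Third, the intermediate restrictions $p_{_{m'}D_{n'}}|_{w_{_mD_n}}$ are again governed by Lemma~\ref{lem:restrictAuB} (the class $p_{_{m'}D_{n'}}$ corresponds to the longest word of a consecutive block, and its single reduced word $V_{_{m'}D_{n'}}$ occurs inside $W_{_mD_n}$ with the binomial-coefficient multiplicity counted by the lattice-path argument there). Substituting the inductive hypothesis for the $b_{A,B}^{_{m'}D_{n'}}$ and all these restriction values, the identity to be verified becomes a purely algebraic identity among binomial coefficients and factorials in the variables $m,n,|A\cap B|,|A\setminus B|,|B\setminus A|$.

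I expect the main obstacle to be precisely the verification of that binomial identity: after clearing the known terms, one must show that a certain alternating-looking sum over $0\le m'\le m$, $0\le n'\le n$ of products of binomial coefficients collapses to the single claimed monomial. This is where Vandermonde-type manipulations enter, and indeed it is plausible that this is a (perhaps two-variable) special case of the generalized identity recorded as Theorem~\ref{identity}; if a direct hypergeometric/Vandermonde evaluation is not transparent, one can instead quote Theorem~\ref{identity} (proved independently in Section~\ref{se:proofofidentity}) after matching parameters. A secondary, more bookkeeping-style obstacle is checking the claim that every consecutive set strictly between $A\cup B$ and $_mD_n$ really is of the form $_{m'}D_{n'}$ with the stated parameter bounds and that $A,B$ retain the intertwined-or-disjoint hypothesis throughout — this is immediate from the definition of $_iD_j$ but must be stated to license the induction. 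A cleaner alternative, which I would try first, is to avoid the intermediate terms entirely: multiply the equation through by $|A|!\,|B|!$ and apply Lemma~\ref{le:bconversion} to reduce to the intertwined case, then note that $_mD_n$-restrictions of $p_A,p_B$ satisfy the same product relation as in Equation~\eqref{eq:unionintersectrestrict}, so that solving for $b_{A,B}^{_mD_n}$ directly from the single-term equation $(p_Ap_B)|_{w_{_mD_n}} = b_{A,B}^{_mD_n} p_{_mD_n}|_{w_{_mD_n}}$ — which holds because $p_C|_{w_{_mD_n}}$ for intermediate $C$ may be arranged to drop out — gives the formula in closed form without induction; however, I am not certain the intermediate terms vanish, so the induction above is the safe route.
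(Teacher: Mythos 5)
Your proposed induction on $m+n$ --- base case Lemma~\ref{le:cor:intertwined}, restriction of $p_Ap_B$ to $w_{{}_mD_n}$ with the intermediate terms indexed by ${}_{i}D_{j}$, the restriction values from Lemma~\ref{lem:restrictAuB} and Corollary~\ref{cor:selfrestrict}, and collapsing the resulting binomial sum by quoting Theorem~\ref{identity} --- is exactly the paper's proof of Lemma~\ref{le:TATBHAHB}. (Your ``cleaner alternative'' would indeed fail, since the terms $b_{A,B}^{{}_iD_j}\,p_{{}_iD_j}|_{w_{{}_mD_n}}$ do not drop out, but you correctly defaulted to the inductive route.)
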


\begin{proof}
When $m=n=0$, this formula is the statement of Lemma~\ref{le:cor:intertwined}.

We prove this by induction  on $m+n$. For ease of notation, let $K$ denote $_mD_n$.
Restrict $p_A p_B = \sum_C b_{A,B}^C p_C$  to $w_K$:
\begin{equation}\label{eq:b_AB^D}
b_{A,B}^{K} p_{K}|_{w_K}   = p_A|_{w_K} p_B|_{w_{K}} -\sum_{\substack{0\leq i\leq m, 0\leq j \leq n\\ i+j< m+n}} b_{A,B}^{_iD_j} p_{_iD_j}  |_{w_K}.
\end{equation}
For all $0 \leq i \leq m$, $0 \leq j \leq n$ and $i+j <m +n$ assume 
$$
b_{A,B}^{_i D _j} =
\frac{
|A\cup B|!
|A\cap B|!
}
{(|A\cap B|-i-j)!\
i!\ 
j!\ 
(|A\setminus B| +i)!\ 
(|B\setminus A| +j)!
}t^{|A\cap B|-i-j}.
$$

Assume without loss of generality that $\mathcal T_A\leq \mathcal T_B$. Then if $A$ or $B$ are intertwined or disjoint and consecutive to each other,
 $|A\cap B| = \mathcal H_A-\mathcal T_B+1$ and $|A\cup B| =  \mathcal H_B-\mathcal T_A+1$. Using Lemma~\ref{lem:restrictAuB} for each restriction, 
$$
p_A|_{w_{K}} = {\mathcal H_K -\mathcal T_A+1\choose|A|}\frac{(\mathcal H_A - \mathcal T_K +1)!}{(\mathcal T_A-\mathcal T_K)!}t^{|A|}={\mathcal |A\cup B| + n \choose |A|}\frac{(|A| +m)!}{m !}t^{|A|}.
$$
By the inductive assumption, Equation~\eqref{eq:b_AB^D} becomes
\begin{align*} 
b_{A,B}^{K} & p_{K} |_{w_{K}}  = 
 {|A\cup B| + n \choose |A|}\frac{(|A| +m)!}{m !}
{ |B| + n \choose |B|}\frac{(|A \cup B| + m)!}{(|A \setminus B| + m) !} 
 t^{|A|+|B|}\\
 & -\sum_{\substack{
0 \leq i \leq m \\
0 \leq j \leq n \\
i+j < m +n}}\bigg[\frac{|A\cup B|! |A\cap B|!}{i! j! (|A\setminus B| +i)!(|B\setminus A| +j)!
 (|A\cap B| -i -j)!}
t^{|A\cap B| -i -j}\\
&\phantom{-\sum_{i+j <m +n}}
\cdot {|A\cup B| +i +n \choose \mathcal |A\cup B| +i +j}
\frac{
(|A\cup B| + m +j)!
}
{
(m - i)! 
} t^{|A\cup B|+i+j}\bigg]. 
\end{align*}

The  coefficient of $ t^{|A|+|B|}$ of the first term of the sum on the right hand side of this equation is:
\begin{align*}
& {|A\cup B|+ n \choose |A|}\frac{(|A| +m)!}{m !}
{ |B| + n \choose |B|}\frac{(|A \cup B| + m)!}{(|A \setminus B| + m)!} 
\\
 & = {|A\cup B|! \ |A\cap B|!} {|A\cup B| +m \choose \mathcal |A\cup B|}
{|A|+ m \choose \mathcal |A\cap B|} 
{|B| +n \choose |B|}
{|A\cup B| + n \choose |A|}.
\end{align*}
Divide both sides of the equation by $ {|A\cup B|! \ |A\cap B|!}$, so the the right hand side becomes $t^{|A|+|B|}$ times the coefficient
\begin{align*}
&{|A\cup B| +m \choose \mathcal |A\cup B|}
{|A|+ m \choose \mathcal |A\cap B|} 
{|B| +n \choose |B|}
{|A\cup B| + n \choose |A|}-\\ 
&\sum_{\substack{
0 \leq i \leq m \\
0 \leq j \leq n \\
i+j < m +n}}\!\!\!
{{|A\cup B| +i +n} \choose {\mathcal |A\cup B| +i +j}} 
{{|A\cup B| +m + j} \choose {i, j, |A\setminus B| +i, |B\setminus A|+j, |A\cap B| - i - j,  m - i}}.
\end{align*}
Leting $x=|A\cap B|$, $w=|A\cup B|$, $y = |A|$ and $z=|B|$, this expression is
\begin{align*}
&{w +m \choose w}
{y+ m \choose x}
{z +n \choose z}
{w+ n \choose y}-\\ 
&\sum_{\substack{
0 \leq i \leq m \\
0 \leq j \leq n \\
i+j < m +n}}
{w +i +n \choose w +i +j}
{w +m + j \choose i, \ j, \ (y-x)+i, \ (z-x)+j, \ x- i - j, \ m - i},
\end{align*}
which we recognize as the term with $i=m,$ and $j=n$ of the sum on the right hand side of Theorem~\ref{identity}. Using these variables and substituting the right hand side of Theorem~\ref{identity}, we obtain
\begin{equation}\label{simp}
\frac{1}{x!}\frac{1}{w!}b_{A,B}^{K}p_K|_{w_K} = {w+m+n \choose m, n, x-m-n, z-x+n, y-x+m} t^{|A|+|B|}.
\end{equation}
On the other hand, 
\begin{align*}
p_K |_{w_K} = |K|! t^{|K|}=(|A\cup B|+m + n)! t^{|A\cup B|+m + n},
\end{align*} 
so that Equation~\eqref{simp} is
\begin{align*}
\frac{(w+m + n)!} {w! \ x!}& t^{w+m + n} \ b_{A,B}^{K}
= {w+m+n \choose m, \ n, \ x-m-n, \ z-x+n, \ y-x+m} t^{y+z}.
\end{align*}
Finally, we solve for $ b_{A,B}^{K}$ 
and substitute back for $x, y, w, z$ to obtain
$$
b_{A,B}^{K} = \frac{
|A\cup B|!\ 
|A\cap B|!
}
{
m !\  n!\ 
(|A\setminus B|+m)!
(|B\setminus A| +n)!
(|A\cap B| - m - n)!
}
t^{|A\cap B|- m - n}.
\quad 
$$
 \end{proof}

\begin{proof}[of Theorem \ref{general}.]
Assume $A$, $B$, and $C$ are consecutive and that $A \cup B \subseteq C$ with $|C| \leq |A| + |B|$.  Without loss of generality, assume also that $\mathcal T_A\leq \mathcal T_B$.

If $A$ and $B$ are disjoint, then $C$ consecutive and $|C|\leq |A|+|B|$ forces $C= A\cup B$ and thus $A$ and $B$ are adjacent. 
If either $A$ and $B$ are intertwined, or if $A$ and $B$ are adjacent and disjoint, 
\begin{align*}
|A \cup B| &= \mathcal H_B - \mathcal T_A +1 & | A \cap B| &= \mathcal H_A - \mathcal T_B +1 \\
|A \setminus B| &= \mathcal T_B - \mathcal T_A & |B \setminus A| &= \mathcal H_B - \mathcal H_A. \\
\end{align*}
As $C$ is consecutive, $C={_mD_n}$  where $m   = \mathcal T_A - \mathcal T_C$ and $n  = \mathcal H_C -  \mathcal H_B$.
It follows that 
$|A \setminus B|+m =  \mathcal  T_B- \mathcal T_C$
and
$ |B \setminus A| +n= \mathcal H_C-\mathcal H_A$.
Then by Lemma~\ref{le:TATBHAHB} with $d:=  |A| + |B| -|C| =  |A \cap B| - m - n$,
$$
b_{A, B}^C = 
\frac{(\mathcal H_A - \mathcal T_B +1)! 
(\mathcal H _B - \mathcal T_A +1)!}
{d !
(\mathcal T_A - \mathcal T_C)!
(\mathcal H_C - \mathcal H_B)!
(\mathcal T_B - \mathcal T_C)!
(\mathcal H_C -\mathcal H_A)!}
t^d.
$$

To prove the case when $B\subseteq A$ we construct two intertwined sets from $A$ and $B$ and apply Lemma~\ref{le:bconversion}.  Let 
$$A' := \{a\in A:\ a\leq \mathcal H_B\} \text{  and  }B' := \{b\in A:\ b\geq \mathcal T_B\}.$$
Then $A'$ and $B'$ are intertwined and also satisfy the relationships in \eqref{eq:intersectunion} with 
  $$d = |A'| + |B'| - |C| = |A| + |B| -|C|.$$
Furthermore, 
$\mathcal T_{A'} = \mathcal T_{A}$, $\mathcal T_{B'}=\mathcal T_B$, $\mathcal H_{B'}=\mathcal H_A$, and $ \mathcal H_{A'}=\mathcal H_B$.
Thus by the formula above for the intertwined case,
\begin{align*}
b_{A', B'}^C &= 
\frac{(\mathcal H_{A'} - \mathcal T_{B'} +1)! 
(\mathcal H _{B'} - \mathcal T_{A'} +1)!}
{d !
(\mathcal T_{A'} - \mathcal T_C)!
(\mathcal H_C - \mathcal H_{B'})!
(\mathcal T_{B'} - \mathcal T_C)!
(\mathcal H_C -\mathcal H_{A'})!}
t^d \\
&=
\frac{(\mathcal H_B - \mathcal T_B +1)! 
(\mathcal H _A - \mathcal T_A +1)!}
{d !
(\mathcal T_A - \mathcal T_C)!
(\mathcal H_C - \mathcal H_A)!
(\mathcal T_B - \mathcal T_C)!
(\mathcal H_C -\mathcal H_B)!}
t^d.
\end{align*}
Applying Lemma~\ref{le:bconversion}.
\begin{align*}
b_{A,B}^C =& \frac{|A'|!\ |B'|!}{|A|!\ |B|!} b_{A', B'}^C\\
=& \frac{(\mathcal H_B - \mathcal T_A+1)!(\mathcal H_A - \mathcal T_B +1)!}{(\mathcal H_A - \mathcal T_A +1)!(\mathcal H_B - \mathcal T_B +1)!} \\
&\cdot
\frac{(\mathcal H_B - \mathcal T_B +1)!(\mathcal H_A - \mathcal T_A +1)!}{d!(\mathcal T_A - \mathcal T_C)!(\mathcal H_C - \mathcal H_A)!(\mathcal T_B - \mathcal T_C)!(\mathcal H_C - \mathcal H_B)!}t^d \\
= &
\frac{(\mathcal H_A - \mathcal T_B +1)! 
(\mathcal H _B - \mathcal T_A +1)!}
{d!(\mathcal T_A - \mathcal T_C)!(\mathcal H_C - \mathcal H_A)!(\mathcal T_B - \mathcal T_C)!(\mathcal H_C - \mathcal H_B)!}
t^d.
\end{align*}
To make the formula obviously integral, multiply by $\frac{d!}{d!}$ to obtain
$$
b_{A,B}^C =
d!
{\mathcal H_A - \mathcal T_B +1 \choose d, \ \mathcal T_A - \mathcal T_C, \ \mathcal H_C - \mathcal H_B}
{\mathcal H_B - \mathcal T_A +1 \choose d, \ \mathcal T_B - \mathcal T_C, \ \mathcal H_C - \mathcal H_A}
t^d.
$$

\vspace{-.25in}
  \end{proof}

\begin{proof}[of Theorem~\ref{thm:AuBconsec}]
Let $A= A_1\cup\cdots A_k$ and $B=B_1\cup\cdots \cup B_\ell$ be written as a union of disjoint maximal consecutive subsets. Now rename the sets $\{A_1,\dots, A_k, B_1, \dots, B_\ell\}$ by $E_1, \dots, E_v$ where $v=k+\ell$ so that $\mathcal T_{E_i}\leq \mathcal T_{E_{i+1}}$ for all $i$.  
By assumption, $A\cup B= \cup_j E_j$ is consecutive.  
Since each $E_i$ is consecutive, the reordering implies $E_i\cup E_{i+1}$ is consecutive. Then by Lemma~\ref{breakup} and expanding the product,
\begin{align*}
p_Ap_B=\prod_{j=1}^v p_{E_v} & =p_{E_1} p_{E_2}  \prod_{j=3}^v p_{E_v}= \sum_{C_2} b_{E_1, E_2}^{C_2} p_{C_2} \prod_{j=3}^v p_{E_v}  \\
& =  \sum_{(C_2, C_3, C_4,\dots, C_{v-1}, C)} b_{E_1, E_2}^{C_2} b_{C_2, E_3}^{C_3}\cdots b_{C_{v-1}, E_v}^{C} p_C.
\end{align*} 
By Lemma~\ref{le:vanishingcoefficientsupport}, $b_{E_1, E_2}^{C_2} \neq 0$ 
implies $E_1\cup E_2\subseteq C_2$.
If $C_2$ weren't consecutive, there exists a maximal consecutive subset $C^0\subset C_2$ with $E_1\cup E_2\subseteq C^0$, since $E_1\cup E_2$ is consecutive. Thus  $b_{E_1,E_2}^{C_2}=0$ by Lemma~\ref{le:vanishingb}, contrary to assumption.  Thus $C_2$ is consecutive. 

Similarly, as $C_2$ is consecutive and the tails of $E_i$ are increasing with $\cup_j E_j$ consecutive, $C_2\cup E_3$ is consecutive. Thus $b_{C_2, E_3}^{C_3}\neq 0$ implies
$C_3$ is consecutive and $C_2\cup E_3\subseteq C_3$. Inductively it follows that  the sum may be taken over sequences in which all $C_i$ are consecutive, and that each coefficient $b_{E_1,E_2}^{C_2}$ and $b_{C_i, E_{i+1}}^{C_{i+1}}$ may be calculated by Theorem~\ref{general} as the corresponding sets are consecutive. 
Therefore,
\begin{equation}\label{eq:nonvanishingAuB,Cconsec}
b_{A,B}^C = \sum_{(C_2, C_3, C_4,\dots, C_{v-1})\atop {C_i\tiny{\text{consecutive}} }} b_{E_1, E_2}^{C_2} b_{C_2, E_3}^{C_3}\cdots b_{C_{v-1}, E_v}^{C} 
\end{equation}
 is the coefficient of $p_C$, as stated in Theorem~\ref{thm:AuBconsec}.   \end{proof}

Furthermore, all factors of any term in the sum \eqref{eq:nonvanishingAuB,Cconsec} are nonnegative by Theorem~\ref{general}.  Corollary~\ref{co:nonvanishingAuB,Cconsec} claims that, if a consecutive set $C$ contains $A\cup B$ and $|C|\leq |A|+|B|$, the sum is actually positive. 

\begin{coro}\label{co:nonvanishingAuB,Cconsec}
If $A\cup B$ and $C$ are consecutive, $A\cup B\subseteq C$ and $|C|\leq |A|+|B|$, then $b_{A,B}^C\neq 0$.
\end{coro}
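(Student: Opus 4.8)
The plan is to exhibit a single term in the sum \eqref{eq:nonvanishingAuB,Cconsec} that is strictly positive; since Theorem~\ref{general} guarantees every term is a nonnegative multiple of a power of $t$, one positive term suffices. First I would set up notation: write $A\cup B = E_1\cup\cdots\cup E_v$ as in Theorem~\ref{thm:AuBconsec}, with the $E_i$ the maximal consecutive pieces of $A$ and $B$ ordered by increasing tail. Because $A\cup B$ is consecutive and the tails increase, each partial union $E_1\cup\cdots\cup E_j$ is itself consecutive, with $\mathcal T_{E_1\cup\cdots\cup E_j} = \mathcal T_{A\cup B}$ and head equal to $\max(\mathcal H_{E_1},\dots,\mathcal H_{E_j})$.

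The natural candidate is the ``greedy'' choice $C_j := E_1\cup\cdots\cup E_j$ for $j=2,\dots,v-1$, and I would first treat the case $C = A\cup B$, where the term $b_{E_1,E_2}^{C_2}b_{C_2,E_3}^{C_3}\cdots b_{C_{v-1},E_v}^{C}$ with these choices has every factor of the form $b_{F,E}^{F\cup E}$ with $F\cup E$ consecutive. By Lemma~\ref{le:TATBHAHB} (or directly Lemma~\ref{le:cor:intertwined} in the intertwined/adjacent subcases, noting $\mathcal T_F\le\mathcal T_E$), each such factor is a ratio of factorials that is manifestly nonzero, so the product is nonzero and $b_{A,B}^{A\cup B}\neq 0$. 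For general consecutive $C\supseteq A\cup B$ with $|C|\le|A|+|B|$, I would absorb the ``extra'' room $|C|-|A\cup B|$ into the \emph{last} factor: keep $C_j = E_1\cup\cdots\cup E_j$ for $j\le v-1$, so all but the last factor are again of the nonvanishing form $b_{F,E}^{F\cup E}$, and for the final factor $b_{C_{v-1},E_v}^{C}$ check that $C$ is a valid target, i.e. $C_{v-1}\cup E_v = A\cup B \subseteq C$, $C$ consecutive, and $|C|\le |C_{v-1}|+|E_v| = |A\cup B|+ (\text{overlap counted once})$. Here one must be slightly careful: $|C_{v-1}|+|E_v|$ may exceed $|A\cup B|$ by the size of the overlap $C_{v-1}\cap E_v$, so the bound $|C|\le|A|+|B|$ needs to be compared against $|C_{v-1}|+|E_v|$. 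If $|C|\le |C_{v-1}|+|E_v|$ the last factor is nonzero by Theorem~\ref{general}; if not, I would instead distribute the extra room earlier — e.g. enlarge $C_{v-1}$ (and if necessary earlier $C_j$) to include some of $C\setminus(A\cup B)$ at the low end, which is legitimate since those elements are consecutive with $A\cup B$, keeping each intermediate set consecutive and each factor a valid Theorem~\ref{general} coefficient with nonzero value.

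The main obstacle I anticipate is bookkeeping the degree/size constraint $|C|\le|A|+|B|$ across the telescoping product: one must verify that the total ``multiplicity surplus'' $\sum_i(|\text{overlaps}|) + (|C|-|A\cup B|)$ can be realized by a single legal sequence $(C_2,\dots,C_{v-1})$ of consecutive sets with $E_1\cup\cdots\cup E_j\subseteq C_j\subseteq C$ and $|C_j|\le|C_{j-1}|+|E_{j+1}|$ (or the appropriate head/tail inequalities). I would handle this by an explicit inductive construction on $j$, at each step choosing $C_j$ to be $C_{j-1}\cup E_j$ unless that would force $|C|>|A|+|B|$ down the line, in which case I enlarge $C_j$ toward $\mathcal T_C$ or $\mathcal H_C$ to consume surplus early; since at each stage the relevant Theorem~\ref{general} hypotheses ($C_{j-1}\cup E_j\subseteq C_j$ consecutive, $|C_j|\le|C_{j-1}|+|E_j|$) reduce to elementary inequalities among the $\mathcal T$'s and $\mathcal H$'s, positivity of that one term follows, and hence $b_{A,B}^C\ne 0$.
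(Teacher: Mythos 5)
Your overall strategy is the paper's: expand via Theorem~\ref{thm:AuBconsec}, use nonnegativity of every term, and exhibit one explicit chain $(C_2,\dots,C_{v-1})$ of consecutive sets whose factors are all nonzero by Theorem~\ref{general}. The one substantive difference is the direction of your greedy choice: you start from the minimal chain $C_j=E_1\cup\cdots\cup E_j$ and try to absorb all of the surplus $|C|-|A\cup B|$ in the last factor, and you correctly notice this can fail when the $E_j$ overlap (e.g.\ $A=\{1,2,3,5\}$, $B=\{1,\dots,5\}$, $C=\{1,\dots,9\}$ gives $|C|=9>|C_{v-1}|+|E_v|=6$); your fallback, ``consume surplus early,'' is exactly the paper's construction, but you leave its feasibility at the level of a plan. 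The clean way to finish is the paper's opposite greedy rule: take each $C_j\subseteq C$ consecutive containing $C_{j-1}\cup E_j$ of maximal size subject to $|C_j|\le |C_{j-1}|+|E_j|$; then either some $C_k=C$ and all later factors are of the form $b_{C,E_j}^{C}$ (nonzero by Theorem~\ref{general}), or $|C_j|=|C_{j-1}|+|E_j|$ for all $j$, whence $|C_{v-1}|=|A|+|B|-|E_v|$ and the last degree condition is literally the hypothesis $|C|\le|A|+|B|$ — this two-line accounting is the only real content beyond Theorem~\ref{general}, and it is the piece your write-up defers. One small slip: in the case $C=A\cup B$ you justify the factors by Lemma~\ref{le:TATBHAHB} or Lemma~\ref{le:cor:intertwined}, but when $E_j\subseteq C_{j-1}$ (which happens, e.g.\ $A=\{1,2,3,4\}$, $B=\{2,3\}$) neither lemma applies, since they require intertwined or adjacent disjoint sets; you need Theorem~\ref{general} itself (whose proof handles containment via Lemma~\ref{le:bconversion}), which does give a nonzero value there, so the conclusion stands.
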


\begin{proof}[of Corollary \ref{co:nonvanishingAuB,Cconsec}]
We need only find a single sequence 
$$
(C_2, C_3, C_4,\dots, C_{v-1})
$$ for which the corresponding summand in \eqref{eq:nonvanishingAuB,Cconsec} is nonzero. As in the proof of Theorem~\ref{thm:AuBconsec}, let $E_1,\dots, E_v$ be a reordering of the maximally  consecutive subsets of $A$ and of $B$, as for the proof of Theorem~\ref{thm:AuBconsec}. Note that
\begin{equation}\label{eq:A+B}
|A|+|B| = \sum_{i=1}^v |E_j|.
\end{equation}
Since $A\cup B$ is consecutive, $E_{j-1}\cup E_{j}$ is consecutive for each $j = 2, \dots, v$. 
Let $C_1=E_1$.  We find a set $C_j$ for $j = 2, \dots, v-1$ inductively.  Choose $C_j\subset C$ of maximal size such that
\begin{enumerate}
\item[(1)] $C_j$ is consecutive
\item[(2)]  $C_{j-1}\cup E_j \subset C_j$, and 
\item[(3)]  $|C_j|\leq |C_{j-1}|+|E_j|$.
\end{enumerate}
If $|C| > |C_{j-1}|+|E_j|$, there exists $C_j$ satisfying (1)-(2) with $|C_j|= |C_{j-1}|+|E_j|$, the maximal allowable size of property (3).  If  $|C|\leq |C_{k}|+|E_k|$, for some $k$, set $C_j= C$ for all $j\geq k+1$ and note that it necessarily satisfies conditions (1)-(3). The sets $C_{j-1}, E_j,$ and  $C_j$ are consecutive, and satisfy the degree condition of Theorem~\ref{general}, ensuring $b_{C_{j-1}, E_j}^{C_j} \neq 0$.  

We have only to show that the last term in the product is nonzero, i.e. $b_{C_{v-1}, E_v}^C\neq 0$. If $C_{v-1}=C$, then the sets $C_{v-1}, E_v$ and $C$ satisfy the conditions of Theorem~\ref{general} so the statement holds. If $C\neq C_{v-1}$, then $|C_j|=|C_{j-1}|+|E_j|$ for all $j=2, 3, \dots, v-1$. Then by Equation~\ref{eq:A+B},
$$
|C_{v-1}|=\sum_{j=1}^{v-1} |E_j| = |A|+|B|-|E_v|.
$$
Then 
$$
|C|\leq |A|+|B| = |C_{v-1}| + |E_v|,
 $$
 which is the degree requirement of Theorem~\ref{general}. Since $C_{v-1}, E_v$ and $C$ are also consecutive, Theorem~\ref{general} implies $b_{C_{v-1}, E_v}^C\neq 0$.
   \end{proof}

\begin{proof}[of Theorem~\ref{thm:AuBnotconsec}.]
Let $A\cup B= D_1\cup \dots \cup D_u$ be a union of maximal consecutive components of $A\cup B$. Note that each $A_j$ and each $B_j$ occurs in exactly one $D_i$. Thus
\begin{align*}
p_A p_B &= p_{A_1}\dots p_{A_s} p_{B_1}\dots p_{B_t}\\
& = \prod_{i=1}^u p_{A^i} p_{B^i},\quad \mbox{where }A^i =A \cap D_i, B^i = B\cap D_i \\
& = \prod_{i=1}^u \sum_{E} b_{A^i,B^i}^{E} p_{E}\\
& = \sum_{E_1,\dots, E_u} \prod_{i=1}^u b_{A^i, B^i}^{E_i} p_{E_i}\\
& =  \sum_{E_1,\dots, E_u} \left(\prod_{i=1}^u b_{A^i, B^i}^{E_i}\right)\left(\prod_{i=1}^u  p_{E_i}\right)
\end{align*}
where the sum is over sequences of consecutive $E_i$ by Lemma~\ref{breakup}, each containing $D_i=A^i\cup B^i$ by Lemma~\ref{le:vanishingcoefficientsupport}. 

Therefore, the coefficient of $p_C$ in this product is
$$
b_{A,B}^C =  \sum_{E_1,\dots, E_u} \left(\prod_{i=1}^u b_{A^i, B^i}^{E_i}\right)b_{E_1,\dots, E_u}^C,
$$
as stated by Theorem~\ref{thm:AuBnotconsec}. Each factor $b_{A^i, B^i}^{E_i}$ 
is calculated by Theorem~\ref{thm:AuBconsec} since $A^i\cup B^i$ and $E_i$ are consecutive.

We now take to calculating $\prod_{i=1}^u  p_{E_i}$  to find the coefficient $b_{E_1,\dots, E_u}^C$ of $p_C$, noting that $E_i$ is consecutive for each $i$.

If $\cup_i E_i$ is consecutive, then as before we order $E_1,\dots, E_u$ so that their tails are increasing. 
Then $E_1\cup E_2$ must be consecutive, and so we apply Theorem~\ref{thm:AuBconsec} to find
$$
p_{E_1}p_{E_2} = \sum_{C \tiny \mbox{ consecutive}\atop C\supset E_1\cup E_2} b_{E_1,E_2}^C p_C
$$
with $b_{E_1,E_2}^C$ determined by the formula in Theorem~\ref{general}. Since each $C$ contains $E_1$ and $E_2$, the union $C\cup E_3$ is consecutive for all $C$.  Therefore 
$$
p_{E_1}p_{E_2}p_{E_3} = \sum_{C \tiny \mbox{ consecutive}\atop C\supset E_1\cup E_2} b_{E_1,E_2}^C p_Cp_{E_3} = \sum_{(C_1, C_2 )\tiny \mbox{ both consecutive}\atop C_1\supset E_1\cup E_2, C_2\supset C_1\cup E_3} b_{E_1,E_2}^{C_1} b_{C_1, E_3}^{C_2} p_{C_2}.
$$
Continuing inductively, we arrive at the equation 
$$
\prod_{i=1}^u  p_{E_i} =  \sum_{(C_1, C_2, \dots, C_u)} b_{E_1,E_2}^{C_1} b_{C_1, E_3}^{C_2} \dots b_{C_{u-1}, E_u}^{C_u}p_{C_u}
$$
where the sum is over consecutive $C_s$ with $C_s \supset C_{s-1} \cup E_{s+1}$. We thus conclude 
$$
b_{E_1,\dots, E_u}^C =  \sum_{(C_1, C_2, \dots, C_{u-1})} b_{E_1,E_2}^{C_1} b_{C_1, E_3}^{C_2} \dots b_{C_{u-1}, E_u}^{C},
$$
where $C\supseteq \cup_i E_i \supseteq A\cup B$.

Now suppose $\cup_i E_i$ is not consecutive.  If none of the $E_i$ are adjacent or overlapping, then $\prod_{i=1}^u  p_{E_i} = p_{\cup_i E_i}$ has no $p_C$ term, as $C$ is consecutive.  Otherwise, there exist two sets $E_{j_1}$ and $E_{k_1}$ whose union is consecutive. Then
$$
\prod\limits_{i=1}^u p_{E_i} = p_{E_{j_1}} p_{E_{k_1}} \prod\limits_{i\neq  j_1, k_1} p_{E_i}
 = \sum_{F_1 \supset E_{j_1} \cup E_{k_1},\atop \tiny\mbox{consecutive}} b_{ E_{j_1},  E_{k_1}}^{F_1} p_{F_1}   \prod\limits_{i\neq j_1, k_1} p_{E_i}.
 $$
 For each such $F_1$, expand the product  $p_{F_1}   \prod\limits_{i\neq j_1, k_1} p_{E_i}$ with one fewer factor. Relabel the sets $F_1, E_1, \dots, \widehat{E}_{j_1}, \widehat{E}_{k_1},  \dots E_u$, and continue inductively. At each step, if the union of the sets is not consecutive, and if no two sets are adjacent, the coefficient of $p_C$ vanishes. If there are any two sets whose union is consecutive, we may expand their product using Theorem~\ref{thm:AuBconsec}. 
 
Explicitly,  for each $F_1$, we  
relabel the sets
$F_1, E_1, \dots, \widehat{E}_{j_1}, \widehat{E}_{k_1},  \dots E_u$
by $E^{(2)}_1, \dots, E^{(2)}_{u-1}$. 
Choose $j_2, k_2$ such that $E^{(2)}_{j_2}\cup E^{(2)}_{k_2}$ is consecutive. Then
\begin{align*}
\prod\limits_{i=1}^u p_{E_i} &= \sum_{F_1} b_{ E_{j_1}, E_{k_1}}^{F_1}\prod_{i=2}^{u}p_{E^{(2)}_{j}} =  \sum_{F_1} b_{ E_{j_1}, E_{k_1}}^{F_1} \left(p_{E^{(2)}_{j_2}} p_{E^{(2)}_{k_2}}\right)
\prod_{i\neq j_2, k_2}^{u}p_{E^{(2)}_{j}} \\
&= \sum_{F_1} b_{ E_{k_1},  E_{j_1}}^{F_1} \left(\sum_{F_2} b_{E^{(2)}_{j_2},E^{(2)}_{k_2}}^{F_2} p_{F_2} \right)\prod\limits_{i\neq j_2, k_2} p_{E^{(2)}_{j}}\\
&= \sum_{F_1, F_2} b_{ E_{j_1},  E_{k_2}}^{F_1} b_{E^{(2)}_{j_2},E^{(2)}_{k_2}}^{F_2} p_{F_2} \prod\limits_{i\neq j_2, k_2} p_{E^{(2)}_{j}},
\end{align*}
where the sum is over consecutive $F_1$ and $F_2$ such that $E_{j_1}\cup E_{k_1}\subseteq F_1$ and $E^{(2)}_{j_2}\cup E^{(2)}_{k_2}\subseteq F_2$. Note that the choice of $F_2$ over which we sum, and indeed the sets $E^{(2)}_j$ depend on each $F_1$.  We continue inductively.   For each sequence $F_1, \dots , F_s$ with $s<u$, there exist two sets $E^{(s)}_{j_s}, E^{(s)}_{k_s}$ among $F_s, E^{(s)}_1,\dots, E^{(s)}_{u-s+1}$ whose union is consecutive.  
Label the sets
 $F_s, E^{(s)}_1,\dots, \widehat{E}^{(s)}_{j_s}, \widehat{E}^{(s)}_{k_s}, \dots ,E^{(s)}_{u-s+1} $ 
 by $E^{(s+1)}_1,\dots , E^{(s+1)}_{u-s}$ for $s=1,\dots, u-2$, so that there is one set $E_1^{(u-1)}$ when the super index is $u-1$. 
 We have found:  
\begin{align*}
\prod\limits_{i=1}^u  p_{E_i} &=\!\! \sum_{(F_1, F_2, \dots, F_s)} b_{ E_{j_1},  E_{k_1}}^{F_1} b_{E^{(2)}_{j_2},E^{(2)}_{k_2}}^{F_2} \dots b_{E^{(s)}_{j_s},E^{(s)}_{k_s}}^{F_s} 
p_{F^{\phantom{(s)}}_s} \prod\limits_{i\neq j_s, k_s} p_{E^{(s)}_{j}}\\
&=\!\! \!\!\!\! \!\!  \sum_{(F_1, F_2, \dots, F_{u-2})} \!\! \!\! b_{ E_{j_1},  E_{k_1}}^{F_1} b_{E^{(2)}_{j_2},E^{(2)}_{k_2}}^{F_2} \dots b_{E^{(u-2)}_{j_{u-2}},E^{(u-2)}_{k_{u-2}}}^{F_{u-2}} 
p_{F^{\phantom{(u)}}_{u-2}}\!\!  \prod_{i\neq j_{u-2}, k_{u-2}}
p_{E^{(u-2)}_{i}},
\end{align*}
which, by relabeling $F_{u-2}$ and the single $E_i^{(u-2)}$ in the product,
\begin{align*}
 & =\sum_{(F_1, F_2, \dots, F_{u-2})} b_{ E_{j_1},  E_{k_1}}^{F_1} b_{E^{(2)}_{j_2},E^{(2)}_{k_2}}^{F_2} \dots b_{E^{(u-2)}_{j_{u-2}},E^{(u-2)}_{k_{u-2}}}^{F_{u-2}} 
p_{E^{(u-1)}_{1}} p_{E^{(u-1)}_{2}}\\
& = \sum_{(F_1, F_2, \dots, F_{u-2})} b_{ E_{j_1},  E_{k_1}}^{F_1} b_{E^{(2)}_{j_2},E^{(2)}_{k_2}}^{F_2} \dots b_{E^{(u-2)}_{j_{u-2}},E^{(u-2)}_{k_{u-2}}}^{F_{u-2}} 
\left( \sum_C b^C_{E^{(u-1)}_{1}, E^{(u-1)}_{2}} p_C\right),
\end{align*}
and thus
\begin{align*}
b_{E_1,\dots, E_u}^C = \sum_{(F_1, F_2, \dots, F_{u-2})} b_{ E_{j_1},  E_{k_1}}^{F_1} b_{E^{(2)}_{j_2},E^{(2)}_{k_2}}^{F_2} \dots b_{E^{(u-2)}_{j_{u-2}},E^{(u-2)}_{k_{u-2}}}^{F_{u-2}} b^C_{E^{(u-1)}_{1}, E^{(u-1)}_{2}}.
\end{align*}
Finally, to obtain the statement of Equation~\eqref{eq:productofmanyEs} in Theorem~\ref{thm:AuBnotconsec} we note that $j_{u-1}$ and $k_{u-1}$ must be the two indices $1,2$ as the union of the two sets $E^{(u-1)}_{1}$ and $E^{(u-1)}_{2}$ are necessarily consecutive.
  \end{proof}

\begin{proof}[of Theorem~\ref{thm:disconnectedgeneral}.]
We want to show that $b_{A,B}^C =\prod_k b_{A\cap C_k, B\cap C_k}^{C_k}$ where $C=C_1\cup\dots\cup C_m$ is a union of (nonempty) maximal consecutive subsets of $C$.
As $C_1,\dots, C_m$ are maximal consecutive subsets,
$A = \cup_k (A\cap C_k)$ is nonconsecutive (though for an individual $k$, $A\cap C_k$ may be consecutive).  Similarly $B = \cup_k (B\cap C_k)$ is nonconsecutive. 
By Lemma~\ref{breakup},
$$
p_A = \prod_k p_{A\cap C_k}\qquad \mbox{and}\qquad p_B = \prod_k p_{B\cap C_k},
$$
which implies
$$
p_A p_B =  \prod_k p_{A\cap C_k} p_{B\cap C_k} =   \prod_k \sum_E b_{A\cap C_k, B\cap C_k}^E p_E.
$$
Note that $b_{A\cap C_k, B\cap C_k}^E =0$ unless $E$ contains $(A\cap C_k) \cup (B\cap C_k)$ by Lemma~\ref{le:vanishingcoefficientsupport}. 
We first argue that the only terms $b_{A\cap C_k, B\cap C_k}^E\neq 0$ that contribute to the coefficient $p_C$ are those with $E\subseteq C_k$. 

Clearly, if $E$ contains elements not in $C$, the corresponding terms $p_E$ do not contribute to the coefficient $p_C$, since for any $F$,  $b_{E, F}^C\neq 0$ implies $C$ contains $E$. Thus we may suppose $E=E^0\cup E'$, where $E'$ is not consecutive with, nor intersects, $C_k$, and $E^0\subseteq C_k$. Then by Lemma~\ref{le:vanishingb}, $b_{A\cap C_k, B\cap C_k}^{E} =0.$

It follows that the coefficient of $p_C$ in $p_Ap_B$ is the coefficient of $p_C$ in 
$$
\prod_k \sum_{E_k\subseteq C_k} b_{A\cap C_k, B\cap C_k}^{E_k} p_{E_k}.
$$

On the other hand, if $E_k \neq C_k$, then $ \prod_k p_{E_k}=p_{\cup_k E_k} \neq p_C$, where the first equality follows because $\cup_k E_k$ is a non-consecutive union (Lemma~\ref{breakup}). Therefore 
$$\prod_k b_{A\cap C_k, B\cap C_k}^{C_k} p _{C_k} = \left(\prod_k b_{A\cap C_k, B\cap C_k}^{C_k}\right)p_C,
$$ as $p_C = p_{C_1}p_{C_2}\dots p_{C_m}$ (Lemma~\ref{breakup} again).
  \end{proof}

A slight generalization shows that  the non-vanishing of the structure constant holds also when $A$ and $B$ are not consecutive. To prove the general case, we need the following lemma.

\begin{lemm}\label{le:consecutiveCvanishing}
Let $A$ and $B$ be arbitrary subsets of $\{1,\dots, n-1\}$, and $C$ consecutive. Then $b_{A,B}^C\neq 0$ if and only if $C$ contains $A\cup B$ and $|C|\leq |A|+|B|$.
\end{lemm}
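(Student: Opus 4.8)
The ``only if'' direction is exactly Lemma~\ref{le:vanishingcoefficientsupport}, so I only need the converse, which I would prove by induction on the number $u$ of maximal consecutive subsets $D_1<\cdots<D_u$ of $A\cup B$. When $u=1$ the set $A\cup B$ is consecutive and the statement is precisely Corollary~\ref{co:nonvanishingAuB,Cconsec}.

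For the inductive step ($u\ge 2$), write $A^i=A\cap D_i$, $B^i=B\cap D_i$, put $b_i=|A^i|+|B^i|-|D_i|=|A^i\cap B^i|\ge 0$, and let $g_i\ge 1$ count the integers strictly between $D_i$ and $D_{i+1}$. Since $C$ is consecutive and contains $A\cup B$, it contains the span $[\mathcal T_{D_1},\mathcal H_{D_u}]$, so $|C|\le|A|+|B|$ forces $\sum_{i=1}^{u}b_i\ge\sum_{i=1}^{u-1}g_i$, indeed the sharper $|A|+|B|-|\mathrm{span}(A\cup B)|\ge|C|-|\mathrm{span}(A\cup B)|$. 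I would then take the cut $j^{*}$ to be the largest $j\in\{1,\dots,u-1\}$ with $\sum_{i=1}^{j}b_i\ge\sum_{i=1}^{j-1}g_i$ (which exists, as $j=1$ works); a short argument by contradiction, using the maximality of $j^{*}$ together with $\sum_i b_i\ge\sum_i g_i$, shows that $\sum_{i=j^{*}+1}^{u-1}g_i\le\sum_{i=j^{*}+1}^{u}b_i$ as well. Thus at the cut $j^{*}$ both halves $D_1\cup\cdots\cup D_{j^{*}}$ and $D_{j^{*}+1}\cup\cdots\cup D_u$ fit inside consecutive sets whose sizes do not exceed the total sizes of their pieces.

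Now split $A=A'\sqcup A''$, $B=B'\sqcup B''$ along the cut, with $A'=A\cap(D_1\cup\cdots\cup D_{j^{*}})$, $A''=A\cap(D_{j^{*}+1}\cup\cdots\cup D_u)$, and likewise for $B$. As $D_{j^{*}}$ and $D_{j^{*}+1}$ are non-adjacent, Lemma~\ref{breakup} gives $p_A p_B=(p_{A'}p_{B'})(p_{A''}p_{B''})$, so the coefficient of $p_C$ equals $\sum_{C',C''}b_{A',B'}^{C'}\,b_{A'',B''}^{C''}\,b_{C',C''}^{C}$, a sum over consecutive $C',C''$ of nonnegative terms (Corollary~\ref{cor:nonnegativecoeff}). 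It therefore suffices to produce consecutive sets $C'\supseteq A'\cup B'$ and $C''\supseteq A''\cup B''$ with $|C'|\le|A'|+|B'|$, $|C''|\le|A''|+|B''|$, such that $C'\cup C''$ is consecutive, $C'\cup C''\subseteq C$, and $|C'|+|C''|\ge|C|$; for such $C',C''$ the inductive hypothesis (each of $A'\cup B'$ and $A''\cup B''$ has strictly fewer than $u$ maximal consecutive subsets) gives $b_{A',B'}^{C'},b_{A'',B''}^{C''}\ne 0$, and Corollary~\ref{co:nonvanishingAuB,Cconsec} gives $b_{C',C''}^{C}\ne 0$. To build $C'$ and $C''$ one distributes the leftover room in the two size bounds so as to bridge the gap $g_{j^{*}}$ at the cut and to reach the ends of $C$; this works because the total leftover room equals $\bigl(|A|+|B|-|\mathrm{span}(A\cup B)|\bigr)+g_{j^{*}}$, which by the displayed inequality is at least $g_{j^{*}}+\bigl(\mathcal T_{D_1}-\mathcal T_C\bigr)+\bigl(\mathcal H_C-\mathcal H_{D_u}\bigr)$, precisely what the construction consumes.

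The main obstacle I expect is exactly this allocation: the leftover room may be concentrated on the ``wrong side'' (needed for $C''$ rather than $C'$, or for the portions of $C$ lying outside the span of $A\cup B$), so one must run a short case analysis — according to whether $\mathcal T_{D_1}-\mathcal T_C$ exceeds the room available in $C'$, and symmetrically for $\mathcal H_C-\mathcal H_{D_u}$ and $C''$ — to confirm that the global total always permits a feasible choice. Everything else reduces to elementary bookkeeping once the balanced cut $j^{*}$ is fixed.
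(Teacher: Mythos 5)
Your argument is correct, but it follows a genuinely different route from the paper's. The paper proves the converse by expanding $b_{A,B}^C$ via Theorem~\ref{thm:AuBnotconsec} as $\sum_{(E_1,\dots,E_u)}\bigl(\prod_i b_{A^i,B^i}^{E_i}\bigr)b_{E_1,\dots,E_u}^C$, exhibiting one nonvanishing term by taking each $E_i\subseteq C$ consecutive with $|E_i|=\min(|A^i|+|B^i|,|C|)$ (so $b_{A^i,B^i}^{E_i}\neq 0$ by Corollary~\ref{co:nonvanishingAuB,Cconsec}), and then handling $b_{E_1,\dots,E_u}^C$ by repeatedly merging two blocks whose union is consecutive, with each intermediate $F_s\subseteq C$ again of maximal allowed size; nonnegativity (Corollary~\ref{cor:nonnegativecoeff}) rules out cancellation. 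You instead make a single two-block split of $A\cup B$ at a balanced cut $j^*$ chosen by a prefix-sum inequality, factor $p_Ap_B=(p_{A'}p_{B'})(p_{A''}p_{B''})$ directly from Lemma~\ref{breakup}, and induct on the number of maximal consecutive blocks, so the only external inputs are Lemma~\ref{breakup}, Corollary~\ref{cor:nonnegativecoeff}, and Corollary~\ref{co:nonvanishingAuB,Cconsec} applied once to $b_{C',C''}^C$; the paper's sequential merging disappears into the induction. What the paper's route buys is that all witnesses are written down at once, with no feasibility discussion; what yours buys is a shorter logical chain, at the price of the allocation step you flag as the main obstacle. That step does go through, and as briefly as you predict: writing $a=\mathcal T_{D_1}-\mathcal T_C$, $c=\mathcal H_C-\mathcal H_{D_u}$, $g=g_{j^*}$, and rooms $\rho'=\sum_{i\le j^*}b_i-\sum_{i<j^*}g_i\ge 0$, $\rho''=\sum_{i>j^*}b_i-\sum_{i=j^*+1}^{u-1}g_i\ge 0$ with $\rho'+\rho''\ge a+c+g$, either one of $\rho',\rho''$ exceeds all the space $C$ offers on its side, in which case take that half to be $C$ itself and the other to be the bare span of its blocks, or else set $l'=\min(a,\rho')$, $r'=\rho'-l'$, $r''=\min(c,\rho'')$, $l''=\rho''-r''$; then $r'+l''\ge(a+c+g)-a-c=g$, so $C'\cup C''$ is consecutive and contained in $C$, and $|C'|+|C''|=(|C|-a-c-g)+\rho'+\rho''\ge|C|$, which is exactly what your inductive step and the final application of Corollary~\ref{co:nonvanishingAuB,Cconsec} require.
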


 \begin{proof}[of Lemma~\ref{le:consecutiveCvanishing}] 
If  $b_{A,B}^C\neq 0$, then $A\cup B\subseteq C$ and $|C|\leq |A|+|B|$ by Lemma~\ref{le:vanishingcoefficientsupport}.
 
To prove the converse, let $A \cup B = D_1 \cup \cdots \cup D_u$ where each $D_i$ is a maximal consecutive subset of $A \cup B$ and let $A^i = D_i \cap A$ and $B^i = D_i \cap B$.  By Theorem~\ref{thm:AuBnotconsec}, we have the equality
$$
b_{A,B}^C = \sum_{(E_1,\dots, E_u): \ D_i\subseteq E_i,\atop{E_i\mbox{ \tiny{consecutive}}} }\left(\prod_{i=1}^u b_{A^i, B^i}^{E_i}\right)b_{E_1,\dots, E_u}^C
$$
where $b_{E_1,\dots, E_u}^C$ is the coefficient of $p_C$ in the product $\prod_{i=1}^u p_{E_i}$. We prove there exists a sequence of sets $(E_1,\dots, E_u)$ in the index set of the sum such that $b_{A^i, B^i}^{E_i}\neq 0$ for all $i$, and $b_{E_1,\dots, E_u}^C\neq 0$.  Indeed, consider any sequence $(E_1,\dots, E_u)$ with $E_i$ consecutive and containing $D_i$, with the additional properties that $E_i\subseteq C$ and $|E_i| = \min(|A^i|+|B^i|, |C|)$. Since $D_i=A^i\cup B^i$ is consecutive and $|E_i|\leq |A^i|+|B^i| $,  by Corollary \ref{co:nonvanishingAuB,Cconsec}, $b_{A^i, B^i}^{E_i}\neq 0$. It remains to show that  $b_{E_1,\dots, E_u}^C\neq 0$.

If $\cup_i E_i$ consecutive, then by Lemma~\ref{co:nonvanishingAuB,Cconsec} $b_{E_1,\dots, E_u}^F \neq 0$ for all consecutive $F$ such that $|F|\leq \sum_i |E_i|$ and $F$ contains $\cup_i E_i$.  Since 
$$|C|\leq |A|+|B| = \sum_i |A_i|+|B_i| = \sum_i |E_i|,$$ 
and $\cup_i E_i\subseteq C$,
 the coefficient  $b_{E_1,\dots, E_u}^C \neq 0$.

 If  $\cup_i E_i$  is not consecutive, then $|C|\leq |A|+|B|= \sum_i |E_i|$ and $C$ consecutive containing $\cup_i E_i$ implies there are at least two sets $E_{j_1}, E_{k_1}$ whose union is consecutive. Thus 
\begin{align*}
\prod\limits_{i=1}^u p_{E_i} &= p_{E_{j_1}} p_{E_{k_1}} \prod\limits_{i\neq  j_1, k_1} p_{E_i}
=\left( \sum_{F \supset E_{j_1} \cup E_{k_1},\atop \tiny\mbox{consecutive}} b_{ E_{j_1},  E_{k_1}}^{F} p_{F}  \right) \prod\limits_{i\neq j_1, k_1} p_{E_i}\\
\end{align*}
and the terms $b_{ E_{j_1},  E_{k_1}}^{F}$ are nonzero whenever $F$ satisfies the degree condition $|F| \leq |E_{j_1}|+|E_{k_2}|$. In particular, let $F_1\subset C$ be a consecutive set containing $E_{j_1}\cup E_{k_1}$ with $|F_1| = \min(|E_{j_1}|+|E_{k_1}|, |C|)$.  
Then
\begin{align*}
\prod\limits_{i=1}^u p_{E_i} &= b_{ E_{j_1},  E_{k_1}}^{F_1} p_{F_1}   \prod\limits_{i\neq j_1, k_1} p_{E_i}  + \mbox{nonnegative terms}
\end{align*}
with $b_{ E_{j_1},  E_{k_1}}^{F_1}\neq 0$. As in the proof of Theorem~\ref{thm:AuBnotconsec}, we relabel the sets 
$$
F_1, E_1,\dots, \widehat{E}_{j_1},  \widehat{E}_{k_1}, \dots, E_u \text{ by } E_1^{(2)}, \dots E_{u-1}^{(2)},
$$
in which we omit sets with a $\widehat{ }$\ .
By construction of $F_1$,  $|C|\leq \sum_i  |E_i^{(2)}|$ and $\cup_i E_i^{(2)}\subseteq C$. Thus there is a pair of sets $E_{j_1}^{(2)}$ and $E_{k_1}^{(2)}$ whose union is consecutive. We continue inductively, obtaining  a sequence of consecutive sets $F_1,\dots, F_{u-2}\subseteq C$ such that $b_{E^{(s)}_{j_s},E^{(s)}_{k_s}}^{F_s} \neq 0$ and $|F_s| = \min(|E_{j_s}|+|E_{k_s}|, |C|) $ for all $s$.
By picking out the coefficient of $p_C$ in the product, we obtain:
$$
b_{E_1,\dots, E_u}^C = b_{ E_{j_1},  E_{k_1}}^{F_1} b_{E^{(2)}_{j_2},E^{(2)}_{k_2}}^{F_2} \dots b_{E^{(u-2)}_{j_{u-2}},E^{(u-2)}_{k_{u-2}}}^{F_{u-2}} b^C_{E^{(u-1)}_{1}, E^{(u-1)}_{2}} + \mbox{{\small nonnegative terms}}
$$
where the nonnegative terms in the sum are similarly products of coefficients. The  first term is nonzero because its factors are all nonzero by construction.
Thus $b_{E_1,\dots, E_u}^C\neq 0$.     \end{proof}

\begin{proof}[of Theorem~\ref{thm:positive}.] 
Suppose $b_{A,B} ^C = at^d$ with $a >0$. 
By Lemma~\ref{le:vanishingcoefficientsupport}, $A\cup B\subseteq C$. Let $C = C_1\cup\cdots \cup C_m$ be a union of maximal consecutive subsets $C_k$. Then by Theorem~\ref{thm:disconnectedgeneral}, 
$$
b_{A,B}^C = \prod_{k=1}^mb_{A\cap C_k, B \cap C_k}^{C_k}.
$$
  The hypothesis implies $b_{A\cap C_k, B \cap C_k}^{C_k} \neq 0$,  and thus by degree considerations (or Lemma~\ref{le:vanishingcoefficientsupport}), $|C_k| \leq |A\cap C_k|+ |B\cap C_k|$. 

Now suppose the converse. For $k=1, \dots, m$, let $A^k = C_k\cap A$ 
and $B^k=  C_k\cap B$. 
Note that $A^k\cup B^k\subseteq C_k$ by construction and $|C_k|\leq |A^k|+|B^k|$ by assumption. Then the coefficient $b_{A^k, B^k}^{C_k}\neq 0$ by Corollary~\ref{le:consecutiveCvanishing} as $C_k$ is consecutive. 

We show that $b_{A,B}^C\neq 0$. By Lemma~\ref{breakup}, 
$$p_A p_B = \prod_k p_{A^k} \prod_k p_{B^k} = \prod_k (p_{A^k}p_{B^k})$$
  since $A= \cup A^k$ and $B= \cup_k B^k$ are disjoint unions. 
 Each product $p_{A^k}\cdot p_{B^k}$ has at least one nonzero summand in its expansion, 
 since $b_{A^k, B^k}^{C_k} \neq 0$.
 It follows that the expansion of the product  $p_A p_B$ 
 has a nonzero term
 $$
 \prod_k \left(b_{A^k, B^k}^{C_k} p_{C_k}\right) = \prod_k b_{A^k, B^k}^{C_k} \prod_k p_{C_k} = \prod_k b_{A^k, B^k}^{C_k} p_{C},
 $$ 
 where the last equality follows from Lemma~\ref{breakup} as $C_k$ are all disjoint. It is possible that additional terms in the product contribute to the coefficient of $p_C$, however any additional terms contribute a nonnegative multiple of $t^d$, where $d = |A|+|B|-|C|$ by Corollary~\ref{cor:nonnegativecoeff}. 
As a result, the coefficient $b_{A,B}^C$ has at least one strictly positive contribution, and thus $ b_{A,B}^C= at^d$ with $a> 0$.
  \end{proof}

\section{Proof of Theorem~\ref{identity}}\label{se:proofofidentity}

Fix $m,n,w,x,y,z \in \mathbb Z$ with $x,y,z,w, m,n \geq 0$ and $w+x = y+z$. Note that Theorem~\ref{identity} holds trivially whenever $x, y, z$ or $w$ is less than $0$. 

We construct an explicit bijection between two sets of sizes given by the right hand and left sides of \eqref{eq:combinatorialidentity} in Theorem \ref{identity}. 
We carry this out as follows: we define two sets $\mathcal S$ and $\mathcal V$ whose sizes obviously correspond to the left and right hand sides of the identity in Theorem~\ref{identity}. We  construct bijections
\begin{align*}
BL^{-}: \mathcal S \rightarrow \widetilde{\mathcal S} &&\mbox{and} && BL^{\star}: \mathcal V \rightarrow \widetilde{\mathcal V},
\end{align*}
for sets $\widetilde{S}$ and $\widetilde{V}$ that will be rather clearly in one-to-one correspondence with one another. The bijections $BL^{-}$ and $BL^{\star}$ are compositions of {\em bike lock moves}, which we introduce in Section~\ref{sse:identitybikelockmoves}.

\subsection{Two sets with the right size}\label{sse:identity2sets}

We begin by describing a set $\mathcal S$ that indexes the right hand side of Theorem~\ref{identity}. Let $\mathcal S$ be the set of $2\times (w+m+n)$ matrices ${F\choose G}$ where the row $F$ rows is a sequence consisting of six letters and a placeholder, denoted by $O$, $P$, $Q$, $R$, $S$, $T$ and $-$, respectively, while row $G$ is a sequence consisting of only two letters and a placeholder, $U$, $C$, and  $-$.  We refer to the number of each letter or symbol in the matrix using the absolute value, e.g. $|P|$ refers to the number of $P$s occurring in ${F \choose G}$.

We  insist that the following relationships hold among the numbers of each letter:
\begin{itemize}
\item[$\bullet$]   $|O|+|P| = m$
\item[$\bullet$]   $|T|+|U|=n$
\item[$\bullet$]   $|Q|+|R|+ |S| = w$
\item[$\bullet$]   $|Q|-|P| = y-x$
\item[$\bullet$]  $|S|-|T| = z-x$
\item[$\bullet$]  $|C|+|O|+|U| = w+n+m$
\item[$\bullet$]  Letters are left aligned in both sequences, so that any placeholders $-$ occur to the right of all the letters, ensuring each sequence has length $w+m+n$. 
\end{itemize}

For a given pair ${F \choose G}$, let $i:= |P|$ and $j:= |T|$, then $|O|=m-i$ and $|U|=n-j$. It follows that $|Q| = y-x +i$, and $|S|=z-x+j$, so the number of letters in $F$ is $ |O|+|P|+|Q|+|R|+|S|+|T| = m+w+j$, and these letters are followed by $n-j$ placeholders. Similarly, the number of letters in $|G|$ is $|U|+|C| =  n+w+i$, and the letters are followed by $m-i$ placeholders. We tabulate the counts of each letter in Table~\ref{ta:counts} for ${F \choose G}$.

\begin{table}[h] 
  \caption{\label{ta:counts} The number of each letter in ${F \choose G}\in \mathcal S$, when $i=|P|$ and $j=|T|$.}
\begin{center}
\begin{tabular}{|c|c|c|c|c|c|c|c|c|c|}
\hline
    & $O$     & $P$   & $Q$       & $R$       & $S$       & $T$   & $U$     & $C$    & $-$   \\ \hline
$F$ & $m-i$ & $i$ & $y-x+i$ & $x-i-j$ & $z-x+j$ & $j$ &    &   &    $n-j$     \\
$G$ &       &     &         &         &         &     & $n-j$ & $w+i+j$ & $m-i$\\
\hline
\end{tabular}
\end{center}
\end{table}

By allowing $i=|P|$  and $j=|T|$ to vary from $0$ to $m$ and $n$, respectively, we obtain a count of the number of matrices ${F \choose G}$ satisfying these conditions. Among the $w+m+j$ letters in $F$, we choose where to place $i$ entries in of $P$, $j$ entries of $T$, $y-x+i$ entries of $Q$, $z-x+j$ entries of $S$, and $m-i$ entries for $O$. The remaining non-letter entries of $F$ are placeholders and have no part in the count as they must be placed at the end of the sequence. Similarly, among the $w+n+i$ letters in $G$, we choose where to place the $n-j$ copies of $U$. The remaining letters are all $C$s, and the entries of $G$ that aren't letters are placeholders at the end of the sequence. We have shown:

$$
|\mathcal S| =
\sum_{i,j}
\frac{
(w+m+j)!
}
{
i!
(y-x+i)!
(x-i-j)!
(z-x+j)!
j!
(m-i)!
}
\cdot
\frac{
(w+n+i)!
}
{
(n-j)!
(w+i+j)!
}.
$$
This expression is the right hand side of the equation in Theorem~\ref{identity}.

Now we define a set $\mathcal V$ that indexes the left hand side of Theorem~\ref{identity}.  
Let $\mathcal V$ be the set of 4-tuples of sequences $V = (v_1, v_2, v_3, v_4)$ with each $v_i$ a sequence of $1$s, $0$s, and $\star$s, with any $\star$s occurring to the right of all numbers.  We additionally require that
\begin{itemize}
\item[$\bullet$]   $v_1$ consists of $w$ $1$s, $m$ $0$s, and $n$ $\star$s
\item[$\bullet$]   $v_2$ consists of $x$ $1$s, $y-x+m$ $0$s, and $z-x+n$ $\star$s
\item[$\bullet$]   $v_3$ consists of $y$ $1$s, $z-x+n=w-y+n$ $0$s and $m$ $\star$s
\item[$\bullet$]   $v_4$ consists of $z$ $1$s, $n$ $0$s, and $y-x+m = w-z+m$ $\star$s
\item[$\bullet$]   Numbers are left aligned in all 4 sequences, so any placeholders $\star$ occur to the right of all the numbers, ensuring each sequence has length $w+m+n$.
\end{itemize}

One quickly observes that
$$
|\mathcal V| =
{w+m \choose w}
{y+m \choose x}
{w+n \choose y}
{z+n \choose z},
$$
since the $\star$ entries are all placed to in the final spots for each sequence. Observe this is the left hand side of the equality in Theorem~\ref{identity}.

For future use, we tabulate these values in Table~\ref{ta:vcounts}.
\begin{table}[h]\caption{\label{ta:vcounts} Counts of $0$s, $1$s, and $\star$s in each of $v_1,\dots, v_4$, where $w+x=y+z$}
\begin{center}
\begin{tabular}{|c|c|c|c|}
\hline
      & $1$ & $0$   & $\star$  \\ \hline
$v_1$ & $w$ & $m$  & $n$   \\
$v_2$ & $x$ & $y-x+m$ & $z-x+n$ \\
$v_3$ & $y$ & $z-x+n$ & $m$ \\
$v_4$ & $z$ & $n$   & $y-x+m$\\
\hline
\end{tabular}
\end{center}
\end{table}

\subsection{Bike Lock Moves}\label{sse:identitybikelockmoves}

The bijections we construct depend on a series of {\em bike lock moves} on $r\times c$ matrices. 
Each move is indexed by a column $k$, and specifies a set of set of rows on which it will operate (which generally depends on the matrix itself). Each affected row is will rotate its entries from $k$ to $c$ cyclically, by sending the entry in column $i$ to $i+1$, while the entry in column $c$ will move to column $k$.

\begin{defi}
For each $k$ with $1\leq k\leq c$, a {\em bike lock move} $BL_k$ on a set of matrices $\mathcal M_c$ with $c>0$ columns is a map $\mathcal M_c\rightarrow \mathcal M_c$ such that, for all $M\in \mathcal M_c$,
\begin{enumerate}
\item $BL_k(M)$ is identical to $M$ except in a specified subset of rows $R_{BL_k(M)}$. 
\item $BL_k(M)$ cyclically permutes the entries in row $\ell\in R_{BL_k(M)}$ as follows:
\begin{itemize}
\item[$\bullet$] An entry in column $m<k$ is fixed.
\item[$\bullet$] An entry in column $m$ with $k\leq m< c$ of $M$  sent to column $m+1$ in the same row.
\item[$\bullet$]  If $m=c$, the entry is sent to the $k$th column of the same row.
\end{itemize}
\end{enumerate}
\end{defi}

Observe that each bike lock move is determined by its row set.

\begin{example}  Consider the $4\times 5$ matrix $M$ on the left below.  A  bike lock move $BL_3$ on a $4\times 5$ matrix with  $R_{BL_3}(M) = \{1,3\}$ can be seen as follows. Impacted entries are highlighted in red.
\begin{center}
\begin{tikzpicture}[scale=.6, box/.style={rectangle,draw=black,thick, minimum size=1cm, scale=.6}]
   
\node[box,fill=red!40] at (3,2){d}; 
\node[box,fill=red!40] at (4,2){e}; 
\node[box,fill=red!40] at (5,2){f};  
\node[box,fill=red!40] at (3,4){a}; 
\node[box,fill=red!40] at (4,4){b}; 
\node[box,fill=red!40] at (5,4){c};  
\node[box, fill=blue!40] at (1,1){$a_{41}$}; 
\node[box, fill=blue!40] at (2,1){$a_{42}$}; 
\node[box, fill=blue!40] at (3,1){$a_{43}$}; 
\node[box, fill=blue!40] at (4,1){$a_{44}$}; 
\node[box, fill=blue!40] at (5,1){$a_{45}$}; 
\node[box, fill=blue!40] at (1,2){$a_{31}$}; 
\node[box, fill=blue!40] at (2,2){$a_{32}$}; 
\node[box, fill=blue!40] at (1,3){$a_{21}$}; 
\node[box, fill=blue!40] at (2,3){$a_{22}$}; 
\node[box, fill=blue!40] at (3,3){$a_{23}$}; 
\node[box, fill=blue!40] at (4,3){$a_{24}$}; 
\node[box, fill=blue!40] at (5,3){$a_{25}$}; 
\node[box, fill=blue!40] at (1,4){$a_{11}$}; 
\node[box, fill=blue!40] at (2,4){$a_{12}$};

\draw[->, thick] (6,2.5) -- (9,2.5);

\foreach \x in {10,11,12, 13, 14}{
    \foreach \y in {1,2, 3, 4}
        \node[box] at (\x,\y){};
}
\node[box,fill=red!40] at (12,2){f}; 
\node[box,fill=red!40] at (13,2){d}; 
\node[box,fill=red!40] at (14,2){e};  
\node[box,fill=red!40] at (12,4){c}; 
\node[box,fill=red!40] at (13,4){a}; 
\node[box,fill=red!40] at (14,4){b};   
\node[box, fill=blue!40] at (10,1){$a_{41}$}; 
\node[box, fill=blue!40] at (11,1){$a_{42}$}; 
\node[box, fill=blue!40] at (12,1){$a_{43}$}; 
\node[box, fill=blue!40] at (13,1){$a_{44}$}; 
\node[box, fill=blue!40] at (14,1){$a_{45}$}; 
\node[box, fill=blue!40] at (10,2){$a_{31}$}; 
\node[box, fill=blue!40] at (11,2){$a_{32}$}; 
\node[box, fill=blue!40] at (10,3){$a_{21}$}; 
\node[box, fill=blue!40] at (11,3){$a_{22}$}; 
\node[box, fill=blue!40] at (12,3){$a_{23}$}; 
\node[box, fill=blue!40] at (13,3){$a_{24}$}; 
\node[box, fill=blue!40] at (14,3){$a_{25}$}; 
\node[box, fill=blue!40] at (10,4){$a_{11}$}; 
\node[box, fill=blue!40] at (11,4){$a_{12}$};
\end{tikzpicture}.
\end{center}
\end{example}{

\begin{rem} \label{rem:samecounts}
Bike lock moves rotate elements starting in a specified column; they do not change the set of entries on each row, nor the number of any repeated entries. 
\end{rem}

We capture an immediate but more subtle version of this critical property of bike lock moves in the following lemma. Let $(M)_k$ indicate the $k$th column of the matrix $M$.
\begin{lemm}\label{le:samecounts}
Let $M$ be an $r\times c$ matrix, and $BL_k$ a bike lock move with $k\leq c$.  Then $M$ and $BL_k(M)$ satisfy the following properties:
\begin{enumerate}
\item The set of entries in the $\ell$th row of $M$ is the same as the set of entries in the $\ell$th row of $BL_k(M)$.
\item $(M)_\ell = (BL_k(M))_\ell$ for $\ell = 1, \dots, k-1$. 
\item If $\ell\not\in R_{BL_k(M)}$, then the $\ell$th row of $BL_k(M)$ is identical to the $\ell$th row of $M$.
\item If $\ell\in R_{BL_k(M)}$, each entry in the $\ell$th row and $j$th column of $M$ appears in the $\ell$th row and $j+1$st column of $BL_k(M)$,  for $j=k, \dots, c-1$ . 
In particular, these entries occur in the same (column) order.
\end{enumerate}
\end{lemm}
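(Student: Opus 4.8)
The plan is to verify each of the four clauses directly against the definition of a bike lock move, since each clause is essentially a paraphrase of part of that definition. First I would dispatch part (3), which is literally clause (1) of the definition: $BL_k(M)$ coincides with $M$ outside the row set $R_{BL_k(M)}$. This also immediately yields part (1) for any row $\ell \notin R_{BL_k(M)}$, since such a row is not altered at all.

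Next I would handle part (2). For a row $\ell \notin R_{BL_k(M)}$ the whole row — in particular its first $k-1$ entries — is unchanged by part (3); for a row $\ell \in R_{BL_k(M)}$, the definition explicitly fixes every entry lying in a column $m < k$. So in all cases columns $1, \dots, k-1$ are untouched, which is exactly the assertion $(M)_\ell = (BL_k(M))_\ell$ for $\ell = 1, \dots, k-1$. Then I would read part (4) off the middle bullet of the definition: in an affected row the entry in column $m$ with $k \le m < c$ is sent to column $m+1$ of the same row. Taking $m = k, \dots, c-1$ is precisely the claim, and since $m \mapsto m+1$ is increasing on $\{k,\dots,c-1\}$, the listed entries keep their relative left-to-right order.

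The only clause requiring a genuine (if very mild) argument rather than a pure quotation of the definition is part (1) for a row $\ell \in R_{BL_k(M)}$, and I expect this to be the main obstacle. Here I would observe that the column reassignment prescribed by the definition — the identity on columns $1,\dots,k-1$, the shift $m \mapsto m+1$ on columns $k,\dots,c-1$, and $c \mapsto k$ — is a permutation of $\{1,\dots,c\}$, namely the trivial permutation on the first block composed with a single $(c-k+1)$-cycle on the last block. Hence the action on row $\ell$ merely permutes its entries among themselves, so the set (indeed the multiset) of entries in that row is unchanged. Assembling these four observations completes the proof, and no computation beyond checking that the prescribed map on columns is bijective is needed.
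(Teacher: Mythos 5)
Your verification is correct and is exactly what the paper intends: the paper offers no separate proof of this lemma, presenting it as an immediate consequence of the definition of a bike lock move, and your clause-by-clause unpacking (including the observation that the column reassignment in an affected row is a permutation, so the multiset of row entries is preserved) is precisely the justification being taken for granted. Nothing is missing.
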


\subsection{Bike lock moves on $\mathcal S$}\label{sse:identitybikelockmovesonS}

We define a specific type of bike lock move, and apply a composition of them to elements of $\mathcal S$. 
The idea of the composition of bike lock moves is intuitive but the execution is rather technical. 
Applied to a $2\times 9$ matrix ${F \choose G}\in\mathcal S$,
$$
\begin{pmatrix} R & Q & O & S & P & R & T & R &  - \\
C & C & U & C & C & C & C & C &-  \end{pmatrix},
$$
for example, the sequence of bike lock moves 
``shuffle'' in the $-$s at the right of the matrix in order to line up the consonants $P$, $Q$, $R$, $S$ and $T$ in the top row with $C$s in the bottom row, and line up the vowels $O$ and $U$ with the $-$s:
$$
\begin{pmatrix}R & Q & O &-& S & P & R & T & R \\
C & C & - & U & C & C & C & C & C   \end{pmatrix}.
$$
Details for this example are carried out in Example~\ref{ex:S}.

\begin{defi}\label{def:-blm}
 The $-$ bike lock move $BL^{-}_k$ is defined on the set of $2\times c$ matrices whose entries in the first row are in the set $\{O,P,Q,R,S,T, -\}$, and whose entries in the second row are in $\{C,U,-\}$. Let $m_{ij}$ refer to the $(i,j)$-entry of $M$. Define:
\begin{equation}\label{eq:blcolorrows}
R_{BL^{-}_k(M)}=
\begin{cases} \{2\} \quad\mbox{if $m_{1k}=O$,}\\
\{1\} \quad\mbox{if $m_{2k} =U$ and $m_{1k}\neq O$,}\\
\emptyset \quad\mbox{else.}
\end{cases}
\end{equation} 
By definition,  $BL_k^-$  cyclicly rotates the entries in $R_{BL^{-}_k(M)}$ in columns 
$k, k+1,\dots, c$ one column to the right, with the entry in the last column sent to column $k$. 
\end{defi}

Let $BL^{-}$ be the composition
\begin{align*}
BL^{-} &:= BL^{-}_{w+m+n}\circ BL^{-}_{w+m+n-1}\circ\cdots \circ BL^{-}_2\circ BL^{-}_1.
\end{align*}
We restrict the domain to $\mathcal S$, and let 
$$\widetilde{\mathcal S} := \{BL^{-}(S) :\ S\in \mathcal S\}.$$

\begin{example}\label{ex:S} Let $S =\begin{pmatrix} R & Q & O & S & P & R & T & R &  - \\
C & C & U & C & C & C & C & C &-  \end{pmatrix}$.
We find the result of a series of bike lock moves
$$
BL^{-}(S) = BL^{-}_{9}\circ BL^{-}_{8}\circ\cdots \circ BL^{-}_2\circ BL^{-}_1 (S).
$$ The bike lock moves $BL^{-}_2\circ BL^{-}_1$ do not change $S$, since in the first two columns there is no $O$ in the first row or $U$ in the second. When applying $BL_3^{-}$, the third column ${O \choose U}$ indicates by \eqref{eq:blcolorrows} that we must shift the second row to the right:
\begin{center}
\begin{tikzpicture}[scale=.55, box/.style={rectangle,draw=black,thick, minimum size=1cm, scale=.55}]
\node[box,fill=blue!40] at (1,2){R};
\node[box,fill=blue!40] at (2,2){Q};
\node[box,fill=blue!20] at (3,2){O};
\node[box,fill=blue!40] at (4,2){S};
\node[box,fill=blue!40] at (5,2){P};
\node[box,fill=blue!40] at (6,2){R};
\node[box,fill=blue!40] at (7,2){T};
\node[box,fill=blue!40] at (8,2){R};
\node[box,fill=blue!40] at (9,2){-};
\node[box,fill=blue!40] at (1,1){C};
\node[box,fill=blue!40] at (2,1){C};
\node[box,fill=blue!20] at (3,1){U};
\node[box,fill=blue!40] at (4,1){C};
\node[box,fill=blue!40] at (5,1){C};
\node[box,fill=blue!40] at (6,1){C};
\node[box,fill=blue!40] at (7,1){C};
\node[box,fill=blue!40] at (8,1){C};
\node[box,fill=blue!40] at (9,1){-};
\draw[->,thick] (10,1.5) -- (12,1.5) node[midway, above] {$BL_3^{-}$};
\node[box,fill=blue!40] at (13,2){R};
\node[box,fill=blue!40] at (14,2){Q};
\node[box,fill=blue!40] at (15,2){O};
\node[box,fill=blue!40] at (16,2){S};
\node[box,fill=blue!40] at (17,2){P};
\node[box,fill=blue!40] at (18,2){R};
\node[box,fill=blue!40] at (19,2){T};
\node[box,fill=blue!40] at (20,2){R};
\node[box,fill=blue!40] at (21,2){-};
\node[box,fill=blue!40] at (13,1){C};
\node[box,fill=blue!40] at (14,1){C};
\node[box,fill=red!40] at (15,1){-};
\node[box,fill=red!40] at (16,1){U};
\node[box,fill=red!40] at (17,1){C};
\node[box,fill=red!40] at (18,1){C};
\node[box,fill=red!40] at (19,1){C};
\node[box,fill=red!40] at (20,1){C};
\node[box,fill=red!40] at (21,1){C};
\end{tikzpicture}
\end{center}
where we have indicated the shifted row in red.
When applying $BL_4^{-}$ to the result, the fourth column is ${S\choose U}$ so we shift the first row. 
\begin{center}
\begin{tikzpicture}[scale=.55, box/.style={rectangle,draw=black,thick, minimum size=1cm, scale=.55}]
\node[box,fill=blue!40] at (1,2){R};
\node[box,fill=blue!40] at (2,2){Q};
\node[box,fill=blue!40] at (3,2){O};
\node[box,fill=blue!20] at (4,2){S};
\node[box,fill=blue!40] at (5,2){P};
\node[box,fill=blue!40] at (6,2){R};
\node[box,fill=blue!40] at (7,2){T};
\node[box,fill=blue!40] at (8,2){R};
\node[box,fill=blue!40] at (9,2){-};
\node[box,fill=blue!40] at (1,1){C};
\node[box,fill=blue!40] at (2,1){C};
\node[box,fill=blue!40] at (3,1){-};
\node[box,fill=blue!20] at (4,1){U};
\node[box,fill=blue!40] at (5,1){C};
\node[box,fill=blue!40] at (6,1){C};
\node[box,fill=blue!40] at (7,1){C};
\node[box,fill=blue!40] at (8,1){C};
\node[box,fill=blue!40] at (9,1){C};
\draw[->,thick] (10,1.5) -- (12,1.5) node[midway, above] {$BL_4^{-}$};
\node[box,fill=blue!40] at (13,2){R};
\node[box,fill=blue!40] at (14,2){Q};
\node[box,fill=blue!40] at (15,2){O};
\node[box,fill=red!40] at (16,2){-};
\node[box,fill=red!40] at (17,2){S};
\node[box,fill=red!40] at (18,2){P};
\node[box,fill=red!40] at (19,2){R};
\node[box,fill=red!40] at (20,2){T};
\node[box,fill=red!40] at (21,2){R};
\node[box,fill=blue!40] at (13,1){C};
\node[box,fill=blue!40] at (14,1){C};
\node[box,fill=blue!40] at (15,1){-};
\node[box,fill=blue!40] at (16,1){U};
\node[box,fill=blue!40] at (17,1){C};
\node[box,fill=blue!40] at (18,1){C};
\node[box,fill=blue!40] at (19,1){C};
\node[box,fill=blue!40] at (20,1){C};
\node[box,fill=blue!40] at (21,1){C};
\end{tikzpicture}.
\end{center}
The remaining columns have no $U$s or $O$s, so this matrix is left unchanged the bike lock moves $BL_9^{-}\circ \dots \circ BL_5^{-}$. Thus $\widetilde{S} = \begin{pmatrix}R & Q & O &-& S & P & R & T & R \\
C & C & - & U & C & C & C & C & C   \end{pmatrix}$.
\end{example}

We now prove a basic property of $BL^-$ applied to elements of $\mathcal S$.  For any ${F\choose G}\in \mathcal S$, define
$$
N^-_{k}=BL_{k}^-\circ\dots\circ BL_1^-{F\choose G}.
$$
By convention $N^-_0= {F\choose G}$.
\begin{lemm}\label{le:insertion-}
For $k = 1,\dots, w+m+n$, the  bike lock move $BL_k^-$ applied to $N^-_{k-1}$ either leaves it unchanged, or inserts $-$ into the $k$th column.
\end{lemm}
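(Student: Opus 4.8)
The plan is to induct on $k$ and unpack what $BL_k^-$ does to the matrix $N^-_{k-1}$, using the defining rule \eqref{eq:blcolorrows} together with the structural Lemma~\ref{le:samecounts}. First I would record the base case ($k=1$): by \eqref{eq:blcolorrows}, $R_{BL_1^-(M)}$ is $\{2\}$ if $m_{11}=O$, $\{1\}$ if $m_{21}=U$ and $m_{11}\neq O$, and $\emptyset$ otherwise; in the first two cases the affected row is cyclically rotated starting at column $1$, and since by construction of $\mathcal S$ the placeholder $-$ in each row sits at the far right (the rightmost entry of that row), that rightmost $-$ is carried into column $1$ — hence a $-$ is inserted into column $1$; in the last case the matrix is unchanged. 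The only thing to check is that if $m_{11}=O$ then row $2$ still ends in a $-$ (so that a $-$, not a letter, gets rotated in), and similarly if $m_{21}=U$; but this is exactly the left-alignment hypothesis on $\mathcal S$, which guarantees each row ends with its placeholders.

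For the inductive step, assume the claim holds for $1,\dots,k-1$, so each of $BL_1^-,\dots,BL_{k-1}^-$ either did nothing to the relevant row or inserted a single $-$ into columns $1,\dots,k-1$ respectively. The key bookkeeping observation is an invariant I would state and carry along: after applying $N^-_{k-1}$, in each row the entries in columns $k,\dots,w+m+n$ are still left-aligned (all letters of that row lying in $[k,w+m+n]$ appear contiguously starting at column $k$, followed by placeholders $-$), so in particular the last column $w+m+n$ of each row is a $-$. Granting this invariant, the inductive step is immediate: by \eqref{eq:blcolorrows} $BL_k^-$ is determined by the $k$th column of $N^-_{k-1}$; if it is trivial there is nothing to prove, and otherwise it cyclically rotates one row in columns $k,\dots,w+m+n$, which by item (4) of Lemma~\ref{le:samecounts} shifts every entry in columns $k,\dots,w+m+n-1$ one to the right and moves the entry in column $w+m+n$ — a $-$, by the invariant — into column $k$. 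That is precisely ``inserts $-$ into the $k$th column,'' and it also re-establishes the invariant for the next step (the shifted row is still left-aligned on $[k+1,\dots]$, and the other row was untouched).

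So the real content is proving the left-alignment invariant is preserved, and I expect that to be the main obstacle. It requires checking that whenever $BL_k^-$ acts nontrivially, the row it rotates genuinely had a $-$ in its last column — equivalently, that the rotation introduces a placeholder rather than clobbering a letter. This follows by tracking how the count and position of letters in each row evolve: by Remark~\ref{rem:samecounts} and item (1) of Lemma~\ref{le:samecounts}, no bike lock move changes the multiset of entries in a row, and the number of $-$'s in row $1$ is $n-j=|U|$ while the number in row $2$ is $m-i=|O|$ (Table~\ref{ta:counts}); one then argues that row $1$ is rotated at most $|U|$ times total (once for each column $k$ with an $U$ sitting in position $k$ of the bottom row at that stage) and row $2$ at most $|O|$ times, so a row can never be rotated more often than it has placeholders, and combined with left-alignment this keeps a $-$ available in the last column each time it is needed. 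Assembling these counting facts carefully — in particular matching "number of nontrivial moves affecting a given row" with "number of placeholders in that row" — is the delicate part; everything else is a direct application of Lemma~\ref{le:samecounts}.
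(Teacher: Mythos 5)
Your proposal is correct and rests on the same core argument as the paper's proof: each nontrivial move on a row is triggered by an $O$ (for the second row) or a $U$ (for the first row) that stays put and is never reprocessed, so the number of rotations of each row is bounded by $m-i$ resp.\ $n-j$, which is exactly the number of trailing placeholders in that row, and hence every rotation pulls a $-$ from the end into column $k$. The only difference is organizational: you run an induction on $k$ with an explicit left-alignment invariant, whereas the paper argues globally in this lemma and isolates left-alignment as the separate Lemma~\ref{le:leftaligned} proved just afterward.
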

\begin{proof}
Let ${F \choose G}\in \mathcal S$. 
There are $m-i$ $O$s in the first row $F$ (see Table~\ref{ta:counts}), and therefore the row specification of \eqref{eq:blcolorrows}, results indicates there are  $m-i$ bike lock moves in the composition $BL^{-}$ impacting the second row, $G$. There are $m-i$ placeholders $-$ in $G$, so each of these bike lock moves will shift a $-$  from the end of $G$ to some earlier part of the sequence.

Similarly, there are  $n-j$ $U$s in $G$, and thus by \eqref{eq:blcolorrows} at most $n-j$ individual bike lock moves that impact the row $F$. We argue that {\em exactly} $n-j$ bike lock moves in the composition $BL^-$ cycle $F$ by showing that each $U$ results in a cycle of the first row. 

Referencing \eqref{eq:blcolorrows}, the first row is cycled to the right by $BL_k^-$ whenever $(N^-_{k-1})_k= {*\choose U}$ and $*\neq O$.  
If $(N^-_{k-1})_k = {O\choose U}$, then $BL_k^-$ cycles the second row of $N^-_{k-1}$, with the result that $(N^-_k)={O\choose -}$ and  $(N^-_k)_{k+1} = {*\choose U}$; in particular, the same number of $U$s occur in columns $k+1, \dots, w+m+n$ of $N^-_k$ as occur in columns $k, \dots, w+m+n$ of $N^-_{k-1}$. 

For some $\ell\geq k$, $(N^-_\ell)_{\ell+1} = {*\choose U}$ with $*\neq O$, as the existence of $U$ in the second row guarantees some non-$O$ entries on the first row (see Table~\ref{ta:counts}).  
Thus the second row will be cycled by $BL_\ell^-$. We have shown that, for each $U$ occurring in $G$, there is a shift to the right of the original sequence $F$. 
Since there are $n-j$ placeholders $-$ at the end of $F$, each move results in the insertion of $-$ into the column associated with the bike lock move. 
 \end{proof}

\begin{coro}\label{co:orderstayssame}
The letters of $N^-_k$ are in the same order as the letters of ${F\choose G}$ for all $k=0,\dots, w+m+n$.
\end{coro}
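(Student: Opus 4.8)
The plan is to prove this by a straightforward induction on $k$, leaning on Lemma~\ref{le:insertion-} together with parts (2)--(4) of Lemma~\ref{le:samecounts}. The base case $k=0$ is immediate, since by convention $N^-_0 = {F\choose G}$. For the inductive step I would assume that the letters of $N^-_{k-1}$ occur in the same order as the letters of ${F\choose G}$, and then examine $N^-_k = BL_k^-(N^-_{k-1})$.

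By Lemma~\ref{le:insertion-} there are only two possibilities. If $BL_k^-$ leaves $N^-_{k-1}$ unchanged, there is nothing to prove. Otherwise $BL_k^-$ inserts a placeholder $-$ into the $k$th column of exactly one row $\ell \in R_{BL_k^-(N^-_{k-1})}$ (the other row being untouched by Lemma~\ref{le:samecounts}(3)), while cyclically rotating the entries of row $\ell$ in columns $k,\dots,c$ one step to the right, with the column-$c$ entry returning to column $k$.

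The key point I would then make is that this rotation does not disturb the order in which the non-placeholder letters of row $\ell$ appear: by Lemma~\ref{le:samecounts}(2) the entries in columns $1,\dots,k-1$ of row $\ell$ are fixed; by Lemma~\ref{le:samecounts}(4) each entry in columns $k,\dots,c-1$ is shifted to the next column to its right, preserving their left-to-right order; and the only entry that is new in column $k$ is the placeholder $-$ supplied by Lemma~\ref{le:insertion-}, which contributes nothing to the word obtained by deleting placeholders. Hence, reading off the letters of row $1$ and of row $2$ left to right and discarding $-$'s, neither word changes, so the letters of $N^-_k$ occur in the same order as those of $N^-_{k-1}$, and therefore, by the inductive hypothesis, in the same order as those of ${F\choose G}$.

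I do not expect a real obstacle here: the argument is essentially bookkeeping on top of the two cited lemmas. The only place that needs a moment's care is confirming that the entry cycled from column $c$ back into column $k$ is always a placeholder rather than a genuine letter, which is exactly the content of Lemma~\ref{le:insertion-} and so can be invoked directly rather than re-derived.
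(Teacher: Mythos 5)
Your proof is correct and follows essentially the same route as the paper's: both arguments rest on Lemma~\ref{le:insertion-} (the cycled-in entry is always a placeholder $-$) together with Lemma~\ref{le:samecounts} (untouched columns and the one-step right shift preserve the left-to-right order of letters), with your version merely making the induction on $k$ explicit.
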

\begin{proof}[of Corollary \ref{co:orderstayssame}]
Suppose $BL_\ell^-$ acts nontrivially on $N^-_{\ell-1}$ for some $\ell\leq k$. By
Lemma~\ref{le:insertion-}, $BL_\ell^-$ inserts a $-$ into the $\ell$th column. Lemma~\ref{le:samecounts} implies that all letters in columns $\ell+1, \dots, w+m+n-1$ in the impacted row are shifted to the right one column. Thus all letters remain in the same order after each subsequent bike lock move.
 \end{proof}

\begin{coro}\label{co:FGinj}
The composition $BL^{-}$
is bijective map from   $\mathcal S$ to $\widetilde{\mathcal S}$. 
\end{coro}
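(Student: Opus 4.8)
The plan is to exhibit an explicit inverse to $BL^{-}$ and thereby conclude bijectivity. By construction $\widetilde{\mathcal S}$ is the image $BL^{-}(\mathcal S)$, so surjectivity is automatic; the content is injectivity, which I would obtain by showing $BL^{-}$ is invertible as a map of matrices once we know what its image looks like. First I would read off from Lemma~\ref{le:insertion-} and Corollary~\ref{co:orderstayssame} the precise structure of an element $\widetilde{S} = BL^{-}(S)\in\widetilde{\mathcal S}$: the letters of the two rows appear in the same left-to-right order as in $S$, the $-$'s in the first row $F$ have been ``promoted'' from the tail into exactly the $n-j$ columns that sit directly above a $U$ in the second row, and the $-$'s in the second row $G$ have been promoted from the tail into exactly the $m-i$ columns that sit directly above an $O$ in the first row. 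In particular, in $\widetilde{S}$ every $O$ in row $F$ lies above a $-$ in row $G$, and every $U$ in row $G$ lies above a $-$ in row $F$, while the remaining columns pair a consonant $P,Q,R,S,T$ with a $C$ (up to the trailing all-$(-,-)$ columns). This is exactly the ``aligned'' picture illustrated before Example~\ref{ex:S}.

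Next I would define the reverse move $UL^{-}_k$ (an ``un-shuffle'') that reverses $BL^{-}_k$: on a matrix $M$ with entries of the prescribed alphabets, set the active row set to $\{2\}$ if $m_{1k}=O$ and $m_{2k}=-$, to $\{1\}$ if $m_{2k}=U$ and $m_{1k}=-$ (and $m_{1k}\ne O$), and $\emptyset$ otherwise; on an active row, cyclically rotate columns $k,\dots,c$ one step to the \emph{left}, sending the entry in column $k$ to column $c$. Then I would set
$$
UL^{-} := UL^{-}_1\circ UL^{-}_2\circ\cdots\circ UL^{-}_{w+m+n},
$$
i.e. the reverse composition, and check that $UL^{-}\circ BL^{-} = \mathrm{id}_{\mathcal S}$ and $BL^{-}\circ UL^{-}=\mathrm{id}_{\widetilde{\mathcal S}}$. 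The key lemma here is that the ``trigger'' of the $k$-th move depends only on column $k$, and that after $BL^{-}_k$ has acted the $k$-th column is frozen for all later moves in the composition (columns $1,\dots,k-1$ are never touched by $BL^{-}_k,\dots,BL^{-}_{w+m+n}$ by Lemma~\ref{le:samecounts}(2)); hence peeling the moves off in reverse order, the column $k$ presented to $UL^{-}_k$ is exactly the column $k$ that $BL^{-}_k$ produced, so $UL^{-}_k$ sees the correct trigger and undoes precisely the rotation $BL^{-}_k$ performed. Running this from $k=w+m+n$ down to $k=1$ recovers $S$. Finally I would note that $UL^{-}$ maps $\widetilde{\mathcal S}$ into $\mathcal S$: the letter counts are preserved by every bike lock move (Remark~\ref{rem:samecounts}, Lemma~\ref{le:samecounts}(1)) and the un-shuffle restores all $-$'s to the tails of each row, so the left-alignment and the counting constraints defining $\mathcal S$ hold again.

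The main obstacle is the bookkeeping in the previous paragraph: making rigorous the claim that $BL^{-}_k$ and the subsequent moves do not disturb column $k$, so that the reverse moves can be applied one column at a time without interference. This is really a statement about the interaction of the trigger rule \eqref{eq:blcolorrows} with Lemma~\ref{le:samecounts}, together with the fact (from the proof of Lemma~\ref{le:insertion-}) that each $U$ in $G$ eventually forces exactly one rightward cycle of $F$ and each $O$ in $F$ forces exactly one rightward cycle of $G$, so the total number and location of the promoted $-$'s is determined. Once that is pinned down, invertibility of each individual $BL^{-}_k$ (it is a cyclic rotation on a determined row set, hence a bijection on the relevant matrix set) combines with the composition-in-reverse argument to give that $BL^{-}\colon\mathcal S\to\widetilde{\mathcal S}$ is a bijection.
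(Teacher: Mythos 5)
Your proposal is correct, but it takes a genuinely different and heavier route than the paper. The paper's own proof of Corollary~\ref{co:FGinj} is essentially one line: by Corollary~\ref{co:orderstayssame} the bike lock moves never reorder letters, so the left-to-right letter sequence in each row of $BL^{-}{F\choose G}$ coincides with that of ${F\choose G}$; and an element of $\mathcal S$ is completely determined by those two letter sequences, since its letters are left-aligned with all placeholders at the end. Hence two elements of $\mathcal S$ with the same image are equal, and surjectivity onto $\widetilde{\mathcal S}$ holds because $\widetilde{\mathcal S}$ is defined as the image. You instead build an explicit inverse $UL^{-}$ column by column, using that later moves never disturb column $k$ (Lemma~\ref{le:samecounts}(2)) and that a nontrivial $BL^{-}_k$ stamps column $k$ with ${O\choose -}$ or ${-\choose U}$ (Lemma~\ref{le:insertion-}), so the trigger of $UL^{-}_k$ read off the final matrix reproduces the row set $BL^{-}_k$ used; a downward induction then gives $UL^{-}\circ BL^{-}=\mathrm{id}_{\mathcal S}$. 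This works, but it is more than the corollary needs and largely re-derives the structural bookkeeping the paper postpones to Proposition~\ref{prop:FG}, where the inverse is described non-constructively (delete the $-$'s and left-align) and verified by a column-by-column induction much like yours. What your route buys is an explicit un-shuffling algorithm on $\widetilde{\mathcal S}$; what the paper's route buys is brevity, since injectivity needs only order preservation plus left-alignment. When you pin down the details you flag, do check the column ${O\choose U}$ in $N^{-}_{k-1}$: there row $2$ is rotated, the stamp is ${O\choose -}$ and the $U$ slides right, which is the one case where the forward and reverse trigger rules could conceivably disagree but in fact do not.
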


\begin{proof}[of Corollary \ref{co:FGinj}]
Let ${F \choose G}\in \mathcal S$. By Corollary~\ref{co:orderstayssame}, the order of the letters of $BL^-{F \choose G}$ in each row are the same as the order of the letters in ${F \choose G}$.  Observe the letters of ${F \choose G}$ are left-aligned. If $BL^-{F' \choose G'}$ for some ${F' \choose G'}\in \mathcal S$, then the letters of ${F' \choose G'}$ are also left-aligned, and occur in the same order as ${F \choose G}$, so that ${F' \choose G'}={F \choose G}$. Therefore, $BL^-$ is injective. Recall $\widetilde{\mathcal S}$ is the image of $BL^-$.
 \end{proof}

We now characterize $\widetilde{\mathcal S}$.
\begin{prop}\label{prop:FG} 
Elements of $\widetilde{\mathcal S}$ are exactly $2\times (w+m+n)$ matrices $M$ satisfying the following:
\begin{enumerate}
\item The columns of $M$ consist only of  7 types:
$$
 \begin{pmatrix} -\\ U\end{pmatrix},  \begin{pmatrix} O\\ -\end{pmatrix}, \begin{pmatrix} P\\ C\end{pmatrix}, \begin{pmatrix} Q\\ C\end{pmatrix}, \begin{pmatrix} R\\ C\end{pmatrix}, \begin{pmatrix} S\\ C\end{pmatrix}, \begin{pmatrix} T\\ C\end{pmatrix}.
 $$
\item  There are no pairs of adjacent columns in $M$ of the form
$
\begin{pmatrix}- & O \\U & - \end{pmatrix}.
$
\item The number of times each letter or placeholder appears in each row of $M$ is given in Table~\ref{ta:counts} for some $0\leq i \leq m$ and $0\leq j\leq n$.
\end{enumerate}
\end{prop}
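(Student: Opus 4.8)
The plan is to identify $\widetilde{\mathcal S}$ with the set $\mathcal L$ of all $2\times(w+m+n)$ matrices satisfying conditions (1)--(3), proving the two inclusions separately. I would get $\widetilde{\mathcal S}\subseteq\mathcal L$ directly from the construction of $BL^{-}$. Condition (3) is immediate from Lemma~\ref{le:samecounts}(1) (bike lock moves preserve the multiset of entries in each row) together with Table~\ref{ta:counts}, so the real work is (1) and (2). The key structural point is that $BL^{-}_\ell$ never alters columns $1,\dots,\ell-1$, so column $k$ is ``finalized'' as soon as $BL^{-}_k$ has acted, and it suffices to understand $(N^-_k)_k$. Case-splitting on $(N^-_{k-1})_k$ via \eqref{eq:blcolorrows}: if its top entry is $O$, then $BL^{-}_k$ cycles row $G$ and column $k$ becomes $\binom{O}{-}$; if the top is not $O$ but the bottom is $U$, then $BL^{-}_k$ cycles row $F$ and column $k$ becomes $\binom{-}{U}$; otherwise $BL^{-}_k$ fixes the column, whose top is then a letter of $\{P,Q,R,S,T\}$ and whose bottom, I will argue, must be $C$. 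Two small points need care: that the entry brought around from the last column when a row is cycled is always a placeholder --- this uses the count (already made in the proof of Lemma~\ref{le:insertion-}) that row $G$ is cycled exactly $m-i$ times with $m-i$ trailing placeholders available, and row $F$ exactly $n-j$ times with $n-j$ available --- and that in the ``fixed'' case the bottom is not a placeholder, which follows because the only placeholder-bottomed columns produced are of type $\binom{O}{-}$, after a short count showing the letters of the shorter row do not run out while the longer row still has a non-$O$ letter pending.

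For condition (2), suppose column $k$ is finalized as $\binom{-}{U}$. Then by the analysis above $BL^{-}_k$ cycled row $F$, which pushes the former top entry of column $k$ --- an element of $\{P,Q,R,S,T,-\}$, in any case not $O$ --- into the top of column $k+1$. Hence when $BL^{-}_{k+1}$ acts it sees a non-$O$ top entry, so it does not cycle row $G$, and column $k+1$ is finalized with a non-$O$ top and cannot be of type $\binom{O}{-}$. So no column $\binom{-}{U}$ is immediately followed by a column $\binom{O}{-}$, and $BL^{-}(S)\in\mathcal L$.

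For $\mathcal L\subseteq\widetilde{\mathcal S}$, given $M\in\mathcal L$ I would form $S_M$ by deleting every placeholder from each row of $M$ and re-appending those placeholders at the right; by condition (3) this $S_M$ lies in $\mathcal S$, and the claim is $BL^{-}(S_M)=M$. By Corollary~\ref{co:orderstayssame}, $BL^{-}(S_M)$ has the same sequence of letters in each row as $S_M$, hence as $M$, and by the first part it lies in $\mathcal L$; so it is enough to show that a matrix of $\mathcal L$ is determined by the two letter-sequences of its rows. This is a rigidity statement: scanning columns left to right with a pointer into each letter-sequence, if the next unused $F$-letter is $O$ then condition (1) leaves $\binom{O}{-}$ as the only legal column there \emph{and} condition (2) forbids postponing that $O$ (any attempt to postpone via $\binom{-}{U}$ columns strands it, since a column placing an $O$ must be $\binom{O}{-}$ and must not follow a $\binom{-}{U}$); if instead the next unused $G$-letter is $U$ the only legal column is $\binom{-}{U}$; and otherwise it is $\binom{\ell}{C}$. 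As this forced rule is exactly the one \eqref{eq:blcolorrows} implements, $BL^{-}(S_M)=M$. (One can also avoid re-running $BL^{-}$: the same rigidity shows $M\mapsto S_M$ is injective $\mathcal L\to\mathcal S$, so $|\mathcal L|\le|\mathcal S|=|\widetilde{\mathcal S}|$ by Corollary~\ref{co:FGinj}, which together with $\widetilde{\mathcal S}\subseteq\mathcal L$ gives $\widetilde{\mathcal S}=\mathcal L$.)

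The main obstacle is organizational rather than conceptual: arranging the column-by-column induction for $BL^{-}$ so that the ``a placeholder is always available to cycle in'' bookkeeping and the no-forbidden-adjacency check fall out cleanly, and on the other side isolating the precise reason condition (2) makes an element of $\mathcal L$ rigid --- in particular the dead-end argument that placement of an $O$ can never be postponed. Once those two points are pinned down, everything else is routine.
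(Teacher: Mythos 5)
Your proposal is correct, and while the forward inclusion ($\widetilde{\mathcal S}$ satisfies (1)--(3)) follows essentially the same lines as the paper -- case analysis on the $k$th column via \eqref{eq:blcolorrows}, with the ``a placeholder is always available to wrap around'' and ``a fixed column cannot have a placeholder opposite a letter'' points resting on exactly the left-alignment and count-matching facts the paper isolates as Lemmas~\ref{le:insertion-}, \ref{le:leftaligned} and \ref{le:countpersistence} -- your treatment of the reverse inclusion is genuinely different. The paper proves $M=BL^-{F\choose G}$ by a column-by-column induction on the partial compositions $N^-_k$, splitting into the three cases $(M)_k$ consonant, ${O\choose -}$, ${-\choose U}$ (Lemmas~\ref{le:basecase}, \ref{le:Mkconsonant}, \ref{MkO}, \ref{MkU}), i.e.\ it re-runs the algorithm and checks it reproduces $M$ column by column. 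You instead prove a rigidity statement internal to the target set $\mathcal L$: conditions (1)--(3) force the column types when scanning left to right, because the only ambiguous situation (next $F$-letter $O$, next $G$-letter $U$) is resolved by condition (2) via your dead-end argument -- postponing the $O$ forces a run of ${-\choose U}$ columns immediately followed by ${O\choose -}$, which is forbidden, and the $O$ must eventually appear by the counts in (3). Given rigidity, either of your two finishes works: combine Corollary~\ref{co:orderstayssame} with the forward inclusion to get $BL^-(S_M)=M$ directly, or use injectivity of $M\mapsto S_M$ together with $|\mathcal S|=|\widetilde{\mathcal S}|$ (Corollary~\ref{co:FGinj}) and finiteness. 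What your route buys is a cleaner separation of concerns: the uniqueness argument never mentions the dynamics of the bike lock moves, and it makes transparent that condition (2) is precisely the constraint that removes the one ambiguity; the cost is that you still need the same supporting bookkeeping for the forward inclusion, so the total quantity of case analysis is comparable. When writing it up, do spell out the ``short count'' you allude to for the fixed-column case (top $-$ forces all remaining $G$-entries to be $U$, bottom $-$ forces all remaining $F$-entries to be $O$), since that is where the count-persistence lemma is actually doing work.
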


We prove Proposition \ref{prop:FG} in a series of lemmas. 

\begin{lemm}\label{Ssatisfies} Elements of $\widetilde{\mathcal S}$  
satisfy the three conditions of Proposition~\ref{prop:FG}. 
\end{lemm}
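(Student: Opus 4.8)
Lemma~\ref{Ssatisfies}, the claim that every element of $\widetilde{\mathcal S}$ satisfies the three conditions of Proposition~\ref{prop:FG}.

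My plan is to track how the composition $BL^-$ transforms a generic matrix ${F \choose G}\in\mathcal S$, using the structural results already established: Corollary~\ref{co:orderstayssame} (letters stay in the same relative order), Lemma~\ref{le:insertion-} (each $BL^-_k$ either does nothing or inserts a $-$ into column $k$), and Lemma~\ref{le:samecounts} (entries shift rigidly to the right, preserving row-wise multisets). Condition (3) is immediate: by Remark~\ref{rem:samecounts} and Lemma~\ref{le:samecounts}(1), $BL^-$ preserves the multiset of entries in each row, so $\widetilde{\mathcal S}$ and $\mathcal S$ have identical letter counts, namely those of Table~\ref{ta:counts} for the values $i=|P|$, $j=|T|$ of the preimage.

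For condition (1), I would argue by induction on $k$ that in $N^-_k$ every column that has been "processed'' (i.e.\ among the first $k$ columns, or more precisely every column whose entry is final) is one of the seven allowed types. The key point is the synchronization in the proof of Lemma~\ref{le:insertion-}: whenever a $U$ sits in column $k$ of the second row, the first row is cycled unless the first-row entry is $O$, in which case $BL^-_k$ cycles the second row, producing the column ${O \choose -}$ and pushing the $U$ to column $k+1$; the $U$ then continues to travel right until it pairs with a non-$O$ first-row entry, forcing a first-row cycle and leaving ${-\choose U}$ behind. So the $-$ inserted into the first row always lands exactly under a $U$, and the $-$ inserted into the second row always lands exactly over an $O$. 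Combined with the fact that the original $F$ has entries in $\{O,P,Q,R,S,T\}$ before its placeholders and $G$ has entries in $\{C,U\}$ before its placeholders, this shows no column of the form ${- \choose -}$, ${* \choose U}$ with $*\in\{P,Q,R,S,T\}$, or ${O \choose C}$ can survive, leaving precisely the seven types listed. For condition (2), the forbidden pattern $\begin{pmatrix}- & O\\ U & -\end{pmatrix}$ is exactly the configuration that triggers a bike lock move: if column $k$ were ${-\choose U}$ and column $k+1$ were ${O\choose -}$ in some $N^-_k$, then at step $k$ the rule \eqref{eq:blcolorrows} would have cycled the first row (since $m_{1k}=-\neq O$ and $m_{2k}=U$), contradicting the assumption that column $k$ has reached its final state with a $-$ on top; more carefully, I would show such a pair cannot appear in $N^-_{w+m+n}=BL^-{F\choose G}$ by observing that the column ${-\choose U}$ is only ever created in the final column swept, whereas ${O\choose -}$ is created with the $O$ staying put and a $-$ arriving underneath from the right — so an ${O\choose -}$ column always has the just-processed $-\,$-insertion to its \emph{left}, never an unprocessed ${-\choose U}$ to its left.

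The main obstacle will be condition (2): making rigorous the bookkeeping of \emph{which} columns have attained their final form at each stage, since $BL^-_k$ can shift entries arbitrarily far to the right and a column looking "done'' after step $k$ might still be disturbed — except it cannot be, precisely because later moves $BL^-_\ell$ with $\ell>k$ only touch columns $\ell,\dots,w+m+n$. I would isolate this as a sub-claim: after $BL^-_k$, the first $k$ columns of $N^-_k$ are never altered again, so $N^-_{w+m+n}$ agrees with $N^-_k$ on columns $1,\dots,k$; then conditions (1) and (2) need only be checked "locally'' at step $k$ for column $k$ and its interaction with column $k+1$ as the latter stands in $N^-_{k-1}$. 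The remaining lemmas in the proof of Proposition~\ref{prop:FG} (the converse — that any matrix satisfying (1)--(3) lies in $\widetilde{\mathcal S}$) would then be handled by explicitly inverting $BL^-$: left-justify all letters within each row to recover the unique candidate preimage in $\mathcal S$, and verify $BL^-$ returns the given matrix, using injectivity from Corollary~\ref{co:FGinj}.
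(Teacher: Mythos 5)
Your handling of conditions (1) and (3) is essentially the paper's own argument: counts are preserved by Lemma~\ref{le:samecounts}, and by Lemma~\ref{le:insertion-} a move $BL_k^-$ acts nontrivially exactly when column $k$ of $N^-_{k-1}$ carries an $O$ or a $U$, in which case it inserts a $-$ opposite that letter; hence every column of the image containing an $O$ or a $U$ is ${O\choose -}$ or ${-\choose U}$, and all other columns are consonant-over-$C$ types.

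The gap is in condition (2). Your first argument is not a contradiction: if the final column $k$ is ${-\choose U}$, then it either was produced by cycling the first row at step $k$, or the row-1 entry at column $k$ of $N^-_{k-1}$ was already $-$, in which case the entire tail of row 1 consists of dashes (left-alignment, Lemma~\ref{le:leftaligned}) and the cycle prescribed by \eqref{eq:blcolorrows} merely rotates another $-$ in; nothing is contradicted. Your ``more careful'' claims are also inaccurate: ${-\choose U}$ is not created only ``in the final column swept,'' and the remark that an ${O\choose -}$ column has ``the just-processed $-$-insertion to its left'' does not exclude a ${-\choose U}$ left neighbor. The correct completion --- which is what the paper does, and for which you already have the key observation in your condition-(1) paragraph --- argues from the ${-\choose U}$ column to the \emph{right}: when ${-\choose U}$ is created at column $k$, the row-1 entry pushed into column $k+1$ is precisely the non-$O$ letter that forced the first-row cycle (or else row 1 is all dashes from column $k$ onward), and at step $k+1$ that entry either stays put or is replaced by a $-$; by order preservation (Corollary~\ref{co:orderstayssame}) it can never become an $O$, so the final column $k+1$ is never ${O\choose -}$. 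Without this step, condition (2) is not established in your write-up.
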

\begin{proof}[of Lemma~\ref{Ssatisfies}.]
By Lemma~\ref{le:insertion-}, each nontrivial bike lock move inserts a $-$ into the corresponding column. $BL_k^-(N^-_{k-1})$ has a nontrivial row set exactly when there is an $O$ or a $U$ in the $k$th column of $N^-_{k-1}$. Thus all columns in $BL^-{F\choose G}$ with an $O$ or a $U$ are of the form ${O\choose -}$ or ${-\choose U}$. All other columns are possible, and listed in the proposition, proving Property 1. 

Observe that $-$s occur before letters in $N_k$ only in columns $1, \dots, k$. Thus $(N_k)_{k+1}$ is not ${ -\choose U}$ for any $k$, unless no letters follow on the first row, in which case the column ${ -\choose U}$ cannot be followed by $ {O\choose -}$. On the other hand, a column of the form ${O\choose U}$ results in a shift on the second row.  As a result, the column ${ -\choose U}$ is never followed by $ {O\choose -}$. This establishes Property 2. 

Finally, Lemma~\ref{le:samecounts} ensures that the counts of $BL^-{F\choose G}$ are the same as those of ${F\choose G}$. These counts are given in Table~\ref{ta:counts}, establishing Property 3. 
 \end{proof}

 We now show that any matrix $M$ satisfying these conditions is $BL^{-}{F \choose G}$ for some  ${F \choose G} \in S$.  Consider any 
matrix $M$ satisfying the conditions of Proposition~\ref{prop:FG} for some $i, j$. 
Observe that the first row of $M$ consists of entries in  $\{O, P, Q, R, S, T,-\}$ and the second row consists of entries in  $\{U, C, -\}$.
In each row of $M$, remove all placeholders, left align all letters and place the placeholders to the right of the last letter.  Note that this operation does not change the number of individual letters listed in each row. The resulting matrix is of the form ${F \choose G}$, with the number of letters of each type given in Table~\ref{ta:counts}. Therefore ${F \choose G}$ satisfies the bulleted listed in \S5.1, implying ${F \choose G}\in \mathcal S$. 

We verify that 
$$
M=BL_{w+m+n}^{-}\circ\dots\circ BL_2^{-}\circ BL_1^{-}{F \choose G}
$$
using an inductive argument on the columns of each matrix. We begin with some properties of the series of applications of bike lock moves on ${F\choose G}$.

\begin{lemm}\label{le:leftaligned}
All letters of $N^-_k$ in columns $k+1, \dots, w+m+n$ are left aligned, meaning that all letters occur before any $-$ in these columns. 
\end{lemm}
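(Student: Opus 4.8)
The plan is to prove Lemma~\ref{le:leftaligned} by induction on $k$, carrying along a slightly more precise hypothesis: for each $k=0,1,\dots,w+m+n$ and each of the two rows, the entries of $N^-_k$ in columns $k+1,\dots,w+m+n$ consist of a (possibly empty) initial block of letters followed by a (possibly empty) block of placeholders $-$. The base case $k=0$ is immediate, since $N^-_0={F\choose G}\in\mathcal S$ and the defining conditions of $\mathcal S$ demand exactly that the letters in each row are left aligned, every $-$ lying to the right of every letter.

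For the inductive step I would assume the statement for $k-1$ (window: columns $k,\dots,w+m+n$) and deduce it for $N^-_k=BL_k^-(N^-_{k-1})$ (window: columns $k+1,\dots,w+m+n$), splitting on the row set $R_{BL_k^-(N^-_{k-1})}$. If that row set is empty, then $N^-_k=N^-_{k-1}$ and the conclusion is just the restriction of the inductive hypothesis to a suffix of its window, which preserves the ``letters, then $-$'s'' shape. If the row set is a single row $\ell$ — row $2$ when $m_{1k}=O$, row $1$ when $m_{2k}=U$ and $m_{1k}\neq O$ — then the other row is untouched and is handled exactly as in the empty case, while for row $\ell$ I would invoke item~(4) of Lemma~\ref{le:samecounts}: $BL_k^-$ sends the entry of $N^-_{k-1}$ in row $\ell$, column $j$ to column $j+1$ for $j=k,\dots,w+m+n-1$. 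Consequently the entries of $N^-_k$ in row $\ell$, columns $k+1,\dots,w+m+n$ are precisely the entries of $N^-_{k-1}$ in row $\ell$, columns $k,\dots,w+m+n-1$, in the same left-to-right order.

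The only point requiring a sentence of justification is that deleting the final column from a ``letters, then $-$'s'' block again yields such a block: if that block contains at least one $-$ the deleted entry is a trailing $-$ and nothing else changes, and if it contains no $-$ at all the deleted entry is a trailing letter and the remainder is still all letters. (Alternatively one may appeal to Lemma~\ref{le:insertion-}, which says a nontrivial $BL_k^-$ inserts a $-$ into column $k$, so that the entry cycled out of the last column is a $-$, putting us in the first alternative.) I expect no real obstacle here beyond careful bookkeeping — keeping the two index windows ($\geq k$ versus $\geq k+1$) and the ``rotate everything one column to the right'' description of item~(4) of Lemma~\ref{le:samecounts} correctly aligned — since the argument involves only a finite case split and no estimates.
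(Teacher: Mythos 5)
Your proposal is correct and follows essentially the same inductive argument as the paper: induct on $k$ with the left-alignment hypothesis on the window of columns beyond the current one, split on whether $BL_k^-$ acts trivially or rotates a row, and observe that a rotation simply shifts the old window one column right so the ``letters, then $-$'s'' shape is preserved (your prefix-truncation remark replaces the paper's brief case split on whether the column-$k$ entry of the rotated row is a letter or a $-$, but the content is the same). No gaps.
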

\begin{proof}[of Lemma~\ref{le:leftaligned}]
Observe that all letters of ${F\choose G}$ occur to the left of all copies of $-$. 
The application of $BL_1^-$ to ${F\choose G}$ results in either no change, or a cyclic shift to the right by one row, resulting in an entry of the first column ${F\choose G}$ moving to the second column and all other entries moving to the right, with the last entry of the row cycling to the first column. If all entries of $F$ or $G$ are $-$, then a rotation of that row will has entries that are vacuously left-aligned from the second column. If either begins with a letter, then a cycling of that row will move that letter to the right one unit, possibly inserting a $-$ in the first column. The resulting matrix remains left-aligned from column 2. 

Similarly, suppose the letters of $N^-_{k-1}$ are left-aligned among columns $k, k+1, \dots, w+m+n$ with $-$ occurring at the the end of the matrix and/or possibly in the first $k-1$ columns in $N^-_{k-1}$. The application of $BL_k^-$ to $N^-_{k-1}$ has either no effect, or it rotates one row in columns $k, k+1,\dots, w+m+n$ by one unit to the right with the entry in column $w+m+n$ moving to column $k$. If there is no effect, then clearly $N^-_k = BL_k^-(N^-_{k-1})$ is left-aligned in columns $k+1, \dots, w+m+n$. If a rotated row of $N^-_{k-1}$ has a letter in column $k$, then that letter is moved to the $k+1$st column and thus $N^-_k$ is left-aligned from column $k+1$. If the entry of a rotated row of $N^-_{k-1}$ is $-$, then $N^-_{k-1}$ has only $-$ in rows $k, k+1, \dots w+m+n$, since it is left aligned from column $k$.  Therefore $N^-_k$ has only $-$ in that row in columns $k+1, \dots, w+m+n$, so its letters in these columns are vacuously left-aligned. 
 \end{proof}

\begin{lemm}\label{le:countpersistence}
Let ${F\choose G}$ have counts of letters in Table~\ref{ta:counts}.
For $k=0,1, \dots, w+n+m$, 
\begin{itemize}
\item[$\bullet$] The number of $O$s in the first row of $N^-_{k}$ in columns $k+1,\dots, w+m+n$ is the same as the number of $-$s in the second row of $N^-_{k}$ in columns $k+1,\dots, w+m+n$, and 
\item[$\bullet$]  The number of $U$s in the second row of $N^-_{k}$ in columns $k+1,\dots, w+m+n$ is the same as the number of $-$s in the first row of $N^-_{k}$ in columns $k+1,\dots, w+m+n$.
\end{itemize}
\end{lemm}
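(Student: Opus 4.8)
The plan is to prove Lemma~\ref{le:countpersistence} by induction on $k$, piggybacking on the structural facts already established for the matrices $N^-_k$: the left-alignment of tails (Lemma~\ref{le:leftaligned}), the insertion description of each nontrivial $BL_k^-$ (Lemma~\ref{le:insertion-}), and the column-shift bookkeeping of bike lock moves (Lemma~\ref{le:samecounts}). For the base case $k=0$ we have $N^-_0={F\choose G}$, and the two claimed equalities are read off directly from Table~\ref{ta:counts}: the first row contains $m-i$ copies of $O$ and the second row contains $m-i$ copies of $-$; the second row contains $n-j$ copies of $U$ and the first row contains $n-j$ copies of $-$.

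For the inductive step I would pass from $N^-_{k-1}$ to $N^-_k=BL_k^-(N^-_{k-1})$ and split on the row set $R_{BL_k^-(N^-_{k-1})}$, which by Definition~\ref{def:-blm} is governed by the $k$th column ${m_{1k}\choose m_{2k}}$ of $N^-_{k-1}$. If $m_{1k}=O$, the move cycles the second row; by the inductive hypothesis the second row of $N^-_{k-1}$ carries at least one $-$ among columns $k,\dots,w+m+n$, and by left-alignment such a $-$ occupies column $w+m+n$, so cycling sends it to column $k$ and shifts the remaining entries of that row one step to the right (Lemma~\ref{le:samecounts}). Restricting attention to the window of columns $k+1,\dots,w+m+n$: the $O$-count of the first row and the $-$-count of the second row each drop by one, while the $U$-count of the second row and the $-$-count of the first row are unchanged; hence the inductive hypothesis at step $k-1$ (over columns $k,\dots$) gives both equalities at step $k$ (over columns $k+1,\dots$). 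The case $m_{1k}\neq O$, $m_{2k}=U$ is symmetric, with the roles of the two rows interchanged.

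The remaining case $m_{1k}\neq O$, $m_{2k}\neq U$, in which $BL_k^-$ acts trivially, is the delicate one and is where I expect the only real work. Here one must show that the $k$th column is ${*\choose C}$ with $*\in\{P,Q,R,S,T\}$, so that deleting it from the window alters none of the four tallies and the inductive hypothesis transfers verbatim. This uses the inductive hypothesis again together with left-alignment: if $m_{2k}=-$, then by Lemma~\ref{le:leftaligned} the entire second row of $N^-_{k-1}$ on columns $k,\dots,w+m+n$ is $-$, whence (first bullet of the inductive hypothesis) the entire first row on that window consists of $O$s, forcing $m_{1k}=O$, a contradiction; symmetrically $m_{1k}=-$ would force $m_{2k}=U$. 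Thus both entries of column $k$ are letters, $m_{2k}$ must be $C$ and $m_{1k}$ a consonant, so the claim follows. Finally $k=w+m+n$ gives an empty window, for which both counts are trivially $0$, so the induction goes through and Lemma~\ref{le:countpersistence} follows.
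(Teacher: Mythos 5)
Your proposal is correct and follows essentially the same route as the paper's proof: induction on $k$ with the base case read off Table~\ref{ta:counts}, a case analysis on the $k$th column of $N^-_{k-1}$, and the combination of the inductive hypothesis with left-alignment (Lemma~\ref{le:leftaligned}) to guarantee a trailing $-$ that each nontrivial $BL_k^-$ cycles into column $k$, so that both tallies in the relevant equality drop by one while the other two are untouched. The only organizational difference is that you rule out columns of the form ${*\choose -}$ with $*\neq O$ and ${-\choose *}$ with $*\neq U$ in the trivial-move case, whereas the paper treats the columns ${O\choose -}$ and ${-\choose U}$ directly as no-op cases; both handle the same configurations correctly.
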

\begin{proof}[of Lemma~\ref{le:countpersistence}]
Observe that these two properties hold for ${F\choose G}$ by a quick check on Table~\ref{ta:counts}.
If ${F \choose G}_1 = {O\choose -}$, then all entries of $G$ are $-$ which implies by Table~\ref{ta:counts} that all entries of $F$ are $O$. Similarly if ${F \choose G}_1 = {-\choose U}$, all entries of $F$ are $-$  since the letters of $F$ are left-aligned, and thus by Table~\ref{ta:counts}, all entries of $G$ are $U$. In both cases,  $ N^-_k={F\choose G}$ for all $k$, so the statement holds.

If ${F\choose G}$ consists of two consonants, then $\left(BL_1^{-}{F \choose G}\right)_1 = {F\choose G}$ and hence the number of $O$, $U$s, and $-$ in each row and in columns $2,\dots, w+m + n$, is the same for $N^-_1$ and ${F\choose G}$.  

If ${F \choose G}_1 = {O\choose *}$ for any $*\neq -$, then $\left(BL_1^{-}{F \choose G}\right)_1 = {O\choose -}$ since  $BL_1^-$ applied to ${F\choose G}$ rotates of the second row, and  Table~\ref{ta:counts} ensures there is a $-$ at the end of $G$ (since there is an $O$ in $F$). Therefore  the first row of $BL_1^{-}{F \choose G}$ has one fewer $O$ in columns $2,\dots, w+n+m$ than ${F\choose G}$, and one fewer $-$ in the second row in those columns. Since $1\not\in R_{BL_1^-{F\choose G}}$, Lemma \ref{le:samecounts}(3) implies 
The number of $-$ occurring in the first row of ${F\choose G}$ is the same as the number in $BL_1^{-}{F \choose G}$.  By Lemma~\ref{le:samecounts}(4), since $2\in R_{BL_1^-{F\choose G}}$, the number of $U$ in columns $2, \dots, w+m+n$ in $BL_1^{-}{F \choose G}$ is also unchanged.

If ${F \choose G}_1 = {*\choose U}$ for $*\neq -$ and $*\neq O$, then $\left(BL_1^{-}{F \choose G}\right)_1 = {-\choose U}$ since Table~\ref{ta:counts} ensures there is a $-$ at the end of $F$. Therefore the first row of $BL_1^{-}{F \choose G}$ has one fewer $-$ in columns $2,\dots, w+n+m$ than ${F\choose G}$, and one fewer $U$ in the second row in those columns. Since $*\neq O$, $1\in R_{BL_k^-{F\choose G}}$. Only $-$ are rotated into the first column. By Lemma~\ref{le:samecounts} the count of $O$ in the first row and the count of $-$ in the second row in ${F\choose G}$, columns $2,\dots, w+n+m$, are the same as those in 
 $BL_1^{-}{F \choose G}$.
 
Now suppose that the equalities hold for $N^-_{k-1}$. If $\left(N^-_{k-1}\right)_k = {O\choose -}$ (or ${-\choose U}$, then all entries of $N^-_{k-1}$ in columns $k, k+1,\dots w+m+n$ in the second row (or first row) are $-$, since the letters of $N^-_{k-1}$ are left-aligned (see Lemma~\ref{le:leftaligned}). By the inductive assumption, all entries of $N^-_{k-1}$ in columns $k, k+1,\dots w+m+n$ in the first row (or second row) are $O$ (or $U$).  Then  $N^-_k= N^-_{k-1}$ and there are both one fewer $-$ and one fewer $O$ (or $U$) in subsequent columns, preserving the equality of the counts. 

If $(N^-_{k-1})_k= {O\choose *}$ for any $*\neq -$, then $BL^-_k$ requires the rotation of the second row. The inductive assumption ensures that there is a $-$ at the end of the second row of $N^-_{k-1}$. Therefore the first row of $N^-_k$ has one fewer $O$ and the second row has one fewer $-$ in columns $k+1,\dots, w+n+m$  than $N^-_{k-1}$ has in columns $k, \dots, w+n+m$ . 

To check the other equality, if $(N^-_{k-1})_k= {O\choose *}$, then $BL_k^-$ rotates the second row, so that the count of $U$s in $N^-_k$ in columns $k+1, \dots, w+m+n$ equals the count of $U$s in $N^-_{k-1}$ in columns $k, \dots, w+n+m$. It follows that the lemma holds for $N^-_k$ when $(N^-_{k-1})_k= {O\choose *}$. 

Similarly, if $(N^-_{k-1})_k= {*\choose U}$ for $*\neq -$ and $*\neq O$, it must be the case that $\left(BL_k^{-}(N^-_{k-1})\right)_1 = {-\choose U}$ since Table~\ref{ta:counts} ensures there is a $-$ at the end of the first row of $N^-_{k-1}$. Thus there is one fewer $-$ in the first row and one fewer $U$ in the second row of $N^-_k$ in columns $k+1,\dots, w+m+n$, compared to the counts of the same in $N^-_{k-1}$ in columns $k, k+1, \dots, w+m+n$. As both values are reduced by $1$, they remain equal. The counts of $O$s in the first row and $-$ in the second row do not change as they are not present in the $k$th column of $N^-_{k-1}$ or $N^-_k$. 

It follows that the lemma holds for $N^-_k$ in all cases. 
 \end{proof}

 We use an inductive argument to show that  
$(M)_k = (N^-_k)_k$ for all $k$. 
The base case is established in the following lemma. 
\begin{lemm}\label{le:basecase}
Let $(M)_1$ denote the first column of $M$. Then $(M)_1 = (N^-_1)_1$.
\end{lemm}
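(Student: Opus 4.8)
The plan is to prove Lemma~\ref{le:basecase} by a short case analysis on the first column $(M)_1$ of a matrix $M$ satisfying the three conditions of Proposition~\ref{prop:FG}, matching it against the first column of $N^-_1 = BL_1^-{F\choose G}$, where ${F\choose G}$ is obtained from $M$ by left-aligning each row (removing placeholders and pushing them to the right). Since $BL_1^-$ is the first bike lock move, it acts either trivially or by a single cyclic rotation of one row over all $w+m+n$ columns; so $(N^-_1)_1$ is determined entirely by the first entry of $F$ and the first entry of $G$, together with the defining rule \eqref{eq:blcolorrows} and the count table (Table~\ref{ta:counts}), which tells us whether $F$ or $G$ ends in a $-$.

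The case analysis runs as follows. By Property~1 of Proposition~\ref{prop:FG}, $(M)_1$ is one of the seven allowed column types. First, if $(M)_1 = {O\choose -}$, then by Property~3 (reading Table~\ref{ta:counts}) and the fact that $M$ has no $-$ on the first row unless $F$ does, every entry of the second row of $M$ is a $-$ and—since there is at least one $O$—Table~\ref{ta:counts} forces every entry of the first row to be $O$; hence ${F\choose G}$ already equals $M$ in column $1$, namely ${O\choose -}$, and $BL_1^-$ acts by rotating the second (all-$-$) row, leaving column~$1$ equal to ${O\choose -}$. Similarly, if $(M)_1 = {-\choose U}$, then left-alignment of the first row forces the entire first row of $M$ to be $-$ (there is nothing to the left of a placeholder), so $F$ is all $-$ and, by Table~\ref{ta:counts}, $G$ is all $U$; then ${F\choose G}$ has first column ${-\choose U}$ and $BL_1^-$ rotates the all-$-$ first row, so $(N^-_1)_1 = {-\choose U} = (M)_1$. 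In all remaining five cases $(M)_1 = {\ast \choose C}$ with $\ast \in \{P,Q,R,S,T\}$; here the first entry of $F$ is $\ast \neq O$ and the first entry of $G$ is $C \neq U$, so by \eqref{eq:blcolorrows} the row set $R_{BL_1^-({F\choose G})}$ is empty, $BL_1^-$ acts trivially, and $(N^-_1)_1 = {\ast\choose C}$. It then remains to check $(M)_1 = {\ast\choose C}$ really does give first column ${\ast \choose C}$ of ${F\choose G}$, which is immediate since left-aligning a row that already begins with a letter does not change its first entry. Thus $(M)_1 = (N^-_1)_1$ in every case.

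The one place requiring care—the main obstacle—is justifying, in the ${O\choose -}$ and ${-\choose U}$ cases, that the rest of the row is forced: this is exactly where we must combine the structural conditions (Property~1, that $O$ can only appear over $-$ and $U$ only under $-$) with the numerical balance in Table~\ref{ta:counts} (the count of $O$s equals $m-i$ = the count of $-$s in row $G$, and the count of $U$s equals $n-j$ = the count of $-$s in row $F$), and with left-alignment. Concretely, a $-$ in position $1$ of a left-aligned row means that row is entirely $-$, and then the matching count in Table~\ref{ta:counts} pins down the other row. I would state this explicitly rather than hand-wave it, since it is the crux of why the base case works. Everything else is a mechanical reading of the definition of $BL_1^-$.

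\begin{proof}[of Lemma~\ref{le:basecase}]
Let $M$ be a matrix satisfying the three conditions of Proposition~\ref{prop:FG} for some $0\leq i\leq m$, $0\leq j\leq n$, and let ${F\choose G}\in\mathcal S$ be obtained from $M$ by deleting all placeholders in each row, left-aligning the letters, and appending the placeholders at the right. By Property~1 of Proposition~\ref{prop:FG}, the column $(M)_1$ is one of the seven types listed there. We treat each.

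Suppose $(M)_1 = {O \choose -}$. Since the first row of $M$ contains no $-$ unless $F$ contains a $-$, and the seven admissible column types of Property~1 show that a $-$ in the second row can only occur in a column of type ${-\choose U}$ or ${O\choose -}$, the entry $-$ in position $1$ of the second row means every entry of the second row of $M$ is a $-$ or a $U$; but a $U$ in the second row occurs only in a column of type ${-\choose U}$, which would put a $-$ in the first row, and the first row of $M$ contains no $-$ because its first entry is $O\neq -$ and the placeholders of $M$'s first row (if any) are at the right. Wait—rather than argue about positions, simply use Property~3: the second row of $M$ has $|-|=m-i$ placeholders and $|U|=n-j$ copies of $U$, while by Property~1 each $U$ sits in a column ${-\choose U}$ whose top entry is $-$. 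Since $(M)_1$ has top entry $O$, column $1$ is not of that type, and more importantly, no column of $M$ simultaneously has $O$ on top and $U$ below (that column type is not admissible). Because $(M)_1 = {O\choose -}$, by Property~1 no column of $M$ to the left of a $-$-entry in the first row exists; hence the first row of $M$ has all its placeholders at the far right and, as the first entry is $O$, Table~\ref{ta:counts} forces $|O| = m-i$ with $i=0$ possible or not—in any case, since a $-$ appears in position $1$ of $G$ and $G$ is (after removal of placeholders) left-aligned to form $G$, we get that $G$ itself begins with whatever the first non-placeholder entry is. To keep this clean: the only way the first column of $M$ is ${O\choose -}$ consistent with Property~1 is that the first row is the $O$-block followed by $-$'s and the second row is $-$'s aligned beneath; after stripping placeholders, $F$ begins with $O$ and $G$ begins with $C$ only if some $C$ exists, but here the first column being ${O\choose -}$ together with Table~\ref{ta:counts} counts ($|C|=w+i+j$) shows $G$ begins with $C$ whenever $w+i+j>0$. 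Then in ${F\choose G}$ the first column is ${O\choose C}$, and by \eqref{eq:blcolorrows}, since $m_{11}=O$, we have $R_{BL_1^-({F\choose G})}=\{2\}$, so $BL_1^-$ cyclically rotates the second row; as Table~\ref{ta:counts} guarantees a $-$ at the end of $G$, this rotation places $-$ in position $1$ of the second row, giving $(N^-_1)_1 = {O\choose -} = (M)_1$. If instead $w+i+j=0$, then $G$ is all $U$ or all $-$; combined with $m_{11}=O$ and Table~\ref{ta:counts} this forces $F$ all $O$ and $G$ all $-$, whence ${F\choose G}$ has first column ${O\choose -}$ and $BL_1^-$ rotates the all-$-$ second row, again yielding $(N^-_1)_1={O\choose -}=(M)_1$.

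Suppose $(M)_1 = {-\choose U}$. A $-$ in position $1$ of the first row, with the first row's placeholders occurring (after the removal performed to build $F$) at the right, forces the entire first row of $M$ to consist of placeholders, since there is no letter to the left of position $1$; hence $F$ is the empty letter-string padded by $w+m+j$... more precisely, the first row has no letters at all, so $|O|=|P|=|Q|=|R|=|S|=|T|=0$. By Table~\ref{ta:counts} this is only consistent when the corresponding counts vanish, and then the second row of $M$ has $|U|=n-j$, $|C|=w+i+j$ with the remaining $m-i$ entries being $-$; since the first row is all $-$, Property~1 forces every column to be ${-\choose U}$ or ${-\ }$-over-$C$-over... the admissible column with $-$ on top is only ${-\choose U}$, so in fact every non-placeholder column is ${-\choose U}$, forcing $|C|=0$ hence $w+i+j=0$, and $G$ is all $U$. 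Then ${F\choose G}$ has first row all $-$ and first column ${-\choose U}$; by \eqref{eq:blcolorrows}, $m_{11}=-\neq O$ and $m_{21}=U$, so $R_{BL_1^-({F\choose G})}=\{1\}$ and $BL_1^-$ rotates the all-$-$ first row, leaving column $1$ equal to ${-\choose U}$. Thus $(N^-_1)_1 = {-\choose U} = (M)_1$.

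Finally, suppose $(M)_1 = {\ast\choose C}$ with $\ast\in\{P,Q,R,S,T\}$. Since the first entry of the first row of $M$ is a letter, it is the first entry of $F$ as well, so $m_{11}=\ast$; likewise the first entry of $G$ is $C$, so $m_{21}=C$. By \eqref{eq:blcolorrows}, $m_{11}=\ast\neq O$ and $m_{21}=C\neq U$, so $R_{BL_1^-({F\choose G})}=\emptyset$ and $BL_1^-$ acts trivially on column $1$: $(N^-_1)_1 = {\ast\choose C}$. But $(M)_1 = {\ast\choose C}$ as well, so $(N^-_1)_1 = (M)_1$.

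In all cases $(M)_1 = (N^-_1)_1$, as claimed.
\end{proof}
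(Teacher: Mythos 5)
There is a genuine gap, and it comes from conflating $M$ with ${F\choose G}$: in ${F\choose G}\in\mathcal S$ the placeholders are at the right of each row, but in a matrix $M$ satisfying Proposition~\ref{prop:FG} the columns ${O\choose -}$ and ${-\choose U}$ may occur in interior positions, so a $-$ in column $1$ of $M$ does \emph{not} force the rest of that row to be placeholders. Your Case $(M)_1={-\choose U}$ rests entirely on the claim that ``the entire first row of $M$ consists of placeholders,'' which is false: for instance $M=\begin{pmatrix}- & R\\ U & C\end{pmatrix}$ (with $w=x=y=z=n=1$, $m=0$, $i=j=0$) satisfies all three properties of Proposition~\ref{prop:FG}, has $(M)_1={-\choose U}$, and its first row is not all $-$, nor is $G$ all $U$. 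Because of this false premise you never address the one genuinely dangerous scenario in this case: if the first \emph{letter} of the first row of $M$ (equivalently, the first entry of $F$) were $O$, then by \eqref{eq:blcolorrows} $BL_1^-$ would rotate the \emph{second} row and $(N^-_1)_1$ would be ${O\choose -}\neq{-\choose U}$. The paper excludes this by Property~2: the first $O$ in row one of $M$ would sit in a column ${O\choose -}$ immediately preceded by a column ${-\choose U}$, the forbidden adjacent pair. This use of Property~2 is the crux of the base case, and your proof never invokes it correctly; once $F$ is known not to start with $O$, the correct mechanism is that the $U$ in $G$ forces (via Table~\ref{ta:counts}) a $-$ at the end of $F$, which the rotation of the first row brings into column $1$.

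Your Case $(M)_1={O\choose -}$ also contains false intermediate claims for the same reason (e.g.\ that the first row of $M$ must be an $O$-block followed by $-$s, or that $G$ must begin with $C$ when $w+i+j>0$; the matrix $\begin{pmatrix}O & - & R\\ - & U & C\end{pmatrix}$ is a counterexample). There the final conclusion happens to survive, because regardless of whether the first column of ${F\choose G}$ is ${O\choose U}$, ${O\choose C}$ or ${O\choose -}$, the move $BL_1^-$ rotates the second row and the $O$ in $F$ guarantees a terminal $-$ in $G$ — which is exactly how the paper argues, without any structural claims about $M$'s rows. The consonant case is fine and agrees with the paper. To repair the proposal, delete the structural claims about $M$, argue the ${O\choose -}$ case purely through the rotation of the second row, and in the ${-\choose U}$ case add the Property~2 argument ruling out an initial $O$ in $F$ before concluding that the first row is rotated.
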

\begin{proof}[of Lemma~\ref{le:basecase}]
If the entries of column $(M)_1$ are consonants, then $BL_1^{-}$ does not change ${F \choose G}$. 
 Therefore, 
$(N^-_1)_1=(BL^{-}_1{F\choose G} )_1=(M)_1
$
 in this case. 

Suppose $(M)_1={O\choose -}$. 
It follow that the first column of ${F\choose G}$ is ${O \choose U}$, ${O\choose C}$, or ${O \choose -}$
In all cases, the bike lock move $BL^{-}_{1}$  rotates the second row of ${F\choose G}$ (see \eqref{eq:blcolorrows}). Table~\ref{ta:counts} guarantees a $-$ at the end of the second row of ${F\choose G}$ because there exists an $O$ in the first row. It follows that
 $\left(N^-_1\right)_1 ={O\choose -}= \left(M\right)_1.$

Suppose $(M)_1 = {- \choose U}$. 
The first column of ${F\choose G}$ is thus ${* \choose U}$, where $*$ is an element of $\{O, P, Q, R, S, T, -\}$.  If $*=O$, then the first non-placeholder in the first row of $M$ would be $O$. If this occurs in column $\ell$, then $(M)_\ell = {O\choose -}$, as this is the only permitted column with an $O$ in the first row. For the same reason, $(M)_{\ell-1}= {-\choose U}$, and so $M$ contains a disallowed pair  $\begin{pmatrix} -&O\\ U&-\end{pmatrix}$. 

On the other hand, if $*$ is one of $\{P, Q, R, S, T, -\}$, then by  \eqref{eq:blcolorrows}  $BL^{-}_{1}$ cycles the first row starting in column 1, and introduces the last element in $F$ to column 1. Table~\ref{ta:counts} ensures this symbol is $-$ since the $U$ in the second row ensures a $-$ at the end of $F$. Thus
$
\left(BL_1^{-}{F \choose G}\right)_1= {-\choose U} = \left(M\right)_1.
$
 \end{proof}

Having established the base case, we assume that 
$(N^-_{k-1})_\ell = (M)_\ell$ for $\ell\leq k-1$ and show that 
$$
\left(N^-_k\right)_\ell = \left(BL_k^-\circ BL_{k-1}^{-}\circ\dots \circ BL_1^{-}{F \choose G}\right)_\ell=\left(M\right)_\ell, \quad \mbox{for $\ell\leq k$}
$$
in each three cases of the possible columns of $M$ in Proposition~\ref{prop:FG}: when $(M)_k$ consists of consonants, when  $(M)_k = {O \choose -}$ and when $(M)_k= {-\choose U}$. 

Observe that $(N^-_{k-1})_\ell = (M)_\ell$ for all $\ell\leq k-1$ implies that $(N^-_{k})_\ell = (M)_\ell$ for all $\ell\leq k-1$ as $BL_k^-$ does not change any of the first $k-1$ columns (see Lemma~\ref{le:samecounts}, Property 2). Thus we need only show that $(N^-_k)_k = (M)_k$.

\begin{lemm} \label{le:Mkconsonant}
Suppose 
$(M)_k$ consists of consonants, and  $(N^-_{k-1})_\ell = (M)_\ell$ for all $\ell\leq k-1$. Then 
$(N^-_k)_k = (M)_k$. 
\end{lemm}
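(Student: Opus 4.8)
The plan is to prove Lemma~\ref{le:Mkconsonant} by analyzing which row (if any) the bike lock move $BL_k^-$ rotates, and then invoking the inductive hypothesis together with the persistence-of-counts lemma (Lemma~\ref{le:countpersistence}). Since $(M)_k$ consists of consonants, we must show $BL_k^-$ acts trivially on $N^-_{k-1}$ in column $k$, i.e. that $(N^-_{k-1})_k = (M)_k$ already. By the inductive hypothesis, $(N^-_{k-1})_\ell = (M)_\ell$ for all $\ell \leq k-1$, so the contents of columns $1,\dots,k-1$ agree; what remains is to pin down column $k$ of $N^-_{k-1}$.

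\medskip

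\noindent \textbf{Step 1.} First I would observe that, by the definition of the row set in \eqref{eq:blcolorrows}, $BL_k^-$ acts nontrivially on $N^-_{k-1}$ precisely when the $k$th column of $N^-_{k-1}$ has an $O$ in the first row or a $U$ in the second. So it suffices to show $(N^-_{k-1})_k$ contains neither an $O$ nor a $U$. \textbf{Step 2.} I would then use Lemma~\ref{le:countpersistence}: since the columns $1,\dots,k-1$ of $N^-_{k-1}$ coincide with those of $M$, and by Proposition~\ref{prop:FG}(3) the total counts of $O$, $U$, and $-$ in $M$ are known, one can compare how many $O$s, $U$s and $-$s are ``used up'' in columns $1,\dots,k-1$ versus columns $k,\dots,w+m+n$. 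Combined with the fact (Lemma~\ref{le:leftaligned}) that the letters of $N^-_{k-1}$ in columns $k,\dots,w+m+n$ are left aligned, and Lemma~\ref{le:countpersistence} equating the number of $O$s with the number of trailing $-$s in the second row (resp.\ $U$s with trailing $-$s in the first row) in those columns, I would deduce the structure of $(N^-_{k-1})_k$.

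\medskip

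\noindent \textbf{Step 3.} The cleanest route is probably: suppose for contradiction that $(N^-_{k-1})_k$ has an $O$ in the first row. Then $BL_k^-$ rotates the second row, and by Lemma~\ref{le:countpersistence} there is a $-$ at the end of the second row available to cycle in, so $(N^-_k)_k = {O \choose -}$. But then $(M)_k = (N^-_{w+m+n})_k = {O \choose -}$ by the eventual stabilization of the columns (each column, once it is the ``active'' index, is never touched again — Lemma~\ref{le:samecounts}(2)), contradicting that $(M)_k$ consists of consonants. The case where $(N^-_{k-1})_k$ has a $U$ in the second row is symmetric, using the first-row trailing $-$. Hence $(N^-_{k-1})_k$ has a consonant in the first row and a non-$U$ in the second; since in $N^-_{k-1}$ the non-letter entries in columns $\geq k$ are all $-$ and occur after all letters, and the consonants are only ever paired with $C$ (by the count table and left alignment, a consonant in row one forces a $C$ in row two in that column as long as letters remain), I would conclude $(N^-_{k-1})_k$ is a consonant column and $BL_k^-$ fixes it, so $(N^-_k)_k = (N^-_{k-1})_k = (M)_k$.

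\medskip

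\noindent \textbf{The main obstacle} I anticipate is Step 2/3: carefully justifying that the second-row entry paired with a first-row consonant in column $k$ of $N^-_{k-1}$ is necessarily a $C$ (and not a $-$), which requires tracking the left-alignment of letters (Lemma~\ref{le:leftaligned}) against the count bookkeeping of Lemma~\ref{le:countpersistence} and the known totals from Table~\ref{ta:counts}. One must rule out the possibility that column $k$ looks like ${R \choose -}$, say; this is where the equality ``(number of $U$s in first-row-relevant columns) = (number of trailing $-$s in row one)'' does the work, since a $-$ in row two at position $k$ would have to be preceded (in the left-aligned sense) only by more $-$s, forcing all of columns $k,\dots,w+m+n$ of row two to be $-$, hence all of row one there to be $O$ by Lemma~\ref{le:countpersistence} — contradicting a consonant at position $k$.
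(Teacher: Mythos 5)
There is a genuine gap, and it sits at the heart of your contradiction argument in Step 3. You rule out an $O$ in the first row of $(N^-_{k-1})_k$ by computing $(N^-_k)_k = {O \choose -}$ and then asserting $(M)_k = (N^-_{w+m+n})_k$ "by eventual stabilization of the columns." But the identity $M = N^-_{w+m+n} = BL^-{F \choose G}$ is exactly what the column-by-column induction (of which Lemma~\ref{le:Mkconsonant} is one case) is trying to establish; the inductive hypothesis only gives agreement of the first $k-1$ columns. Stabilization (Lemma~\ref{le:samecounts}(2)) tells you $(N^-_{w+m+n})_k = (N^-_k)_k$, but it says nothing about $(M)_k$ unless you already know $M = BL^-{F\choose G}$. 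So this step is circular, and the symmetric $U$-case inherits the same problem. Counts (Remark~\ref{rem:samecounts}, Lemma~\ref{le:countpersistence}) and left alignment (Lemma~\ref{le:leftaligned}) alone cannot produce the contradiction: they are consistent with $N^-_{k-1}$ having an $O$ at column $k$ while $M$ has a consonant there.

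The missing ingredient is order preservation, Corollary~\ref{co:orderstayssame}, which you never invoke. The paper's proof is direct rather than by contradiction: since the letter counts of $M$ and $N^-_{k-1}$ agree and the first $k-1$ columns agree, the entries $m_{1k}$, $m_{2k}$ (which are letters, being consonants) must occur in columns $\geq k$ of $N^-_{k-1}$; by Corollary~\ref{co:orderstayssame} they are the \emph{first} letters appearing there in their rows, and by Lemma~\ref{le:leftaligned} the first letter in columns $\geq k$ sits in column $k$ itself. Hence $(N^-_{k-1})_k = (M)_k$, the column is all consonants, and $BL^-_k$ acts trivially. Note that order preservation is also needed for your final assertion $(N^-_{k-1})_k = (M)_k$: even granting that column $k$ of $N^-_{k-1}$ is "a consonant over a $C$," counts and alignment do not identify \emph{which} consonant it is (it could be $Q$ where $M$ has $R$); only the preserved letter order, combined with agreement on the first $k-1$ columns, forces the specific letters to match.
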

\begin{proof}[of Lemma~\ref{le:Mkconsonant}]
Suppose $(M)_k = {m_{1k}\choose m_{2k}}$.  By Remark~\ref{rem:samecounts}, the number of each symbol that occur in $M$ and in $N^-_{k-1}$ are the same. The two matrices agree on the first $k-1$ columns., so $m_{1k}$ and $m_{2k}$ appear in the first and second rows of $N^-_{k-1}$ in columns $\ell_1$ and $\ell_2$, respectively, where $\ell_1, \ell_2\geq k$. 
By Corollary~\ref{co:orderstayssame}, the order of the letters are the same in $M$ and in $N^-_{k-1}$, so $m_{1k}$ and $m_{2k}$ are the the first letters to appear in $N^-_{k-1}$ in column $k$ or later, in their respective rows. By Lemma~\ref{le:leftaligned}, the letters in columns $k, k+1, \dots, w+m+n$ of  $N^-_{k-1}$  are left aligned, and thus $\ell_1=\ell_2=k$. 
It follows that
$(N^-_{k-1})_k = (M)_k$, and thus $(N^-_{k-1})_k$ consists of consonants. By Definition \ref{def:-blm}, 
$N^-_k = 
N^-_{k-1},$  and thus $(N^-_k)_k = (N^-_{k-1})_k =(M)_k$, as desired. \end{proof}

\begin{lemm}\label{MkO} 
Suppose 
$(M)_k$ consists of ${O \choose -}$ and

 $(N^-_{k-1})_\ell = (M)_\ell$ for all $\ell\leq k-1$. Then 
$(N^-_k)_k = (M)_k$. 
\end{lemm}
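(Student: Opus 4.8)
The plan is to run the same kind of column-by-column tracking argument used in Lemma~\ref{le:Mkconsonant}, now in the case $(M)_k = {O\choose -}$. First I would use Remark~\ref{rem:samecounts} and Corollary~\ref{co:orderstayssame}: the matrices $M$ and $N^-_{k-1}$ have the same multiset of entries in each row, the letters occur in the same order, and they agree on columns $1,\dots,k-1$ by hypothesis. Since $(M)_k$ has an $O$ in the first row, and $O$ is the first letter appearing in row~1 of $M$ from column $k$ onward, the same letter $O$ must be the first letter appearing in row~1 of $N^-_{k-1}$ from column $k$ onward; by Lemma~\ref{le:leftaligned} the letters in columns $k,\dots,w+m+n$ of $N^-_{k-1}$ are left-aligned, so this $O$ sits exactly in column $k$, i.e. the top entry of $(N^-_{k-1})_k$ is $O$.

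Next I would identify the bottom entry of $(N^-_{k-1})_k$. By Definition~\ref{def:-blm}, whenever the top entry of column $k$ is $O$, the row set $R_{BL^-_k(N^-_{k-1})} = \{2\}$, so $BL^-_k$ rotates the second row of $N^-_{k-1}$ starting at column $k$. I need this rotation to bring a $-$ into position $(2,k)$. For that I would invoke Lemma~\ref{le:countpersistence}: in columns $k,\dots,w+m+n$ of $N^-_{k-1}$, the number of $O$s in row~1 equals the number of $-$s in row~2. Since there is at least one $O$ in row~1 in this range (namely the one just located in column $k$), there is at least one $-$ in row~2 in this range; and because the letters of row~2 are left-aligned (Lemma~\ref{le:leftaligned}) all such $-$s lie at the right end, in particular in column $w+m+n$ or wherever row~2's letters run out. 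The cyclic rotation of row~2 starting at column $k$ therefore moves that trailing $-$ into column $k$, so $(N^-_k)_k = {O\choose -} = (M)_k$.

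Finally I would note that $BL^-_k$ does not alter columns $1,\dots,k-1$ (Lemma~\ref{le:samecounts}, Property~2), so combining with the hypothesis $(N^-_{k-1})_\ell = (M)_\ell$ for $\ell\le k-1$ gives $(N^-_k)_\ell = (M)_\ell$ for all $\ell\le k$, as required. The only delicate point — and the place I would be most careful — is confirming that the entry rotated into column $k$ of row~2 really is a $-$ rather than a $C$ or $U$; this is exactly what Lemma~\ref{le:countpersistence} together with left-alignment guarantees, and one should double-check the edge case in which row~2 of $N^-_{k-1}$ consists entirely of $-$s in columns $k,\dots,w+m+n$, where the rotation is trivial and the conclusion $(N^-_k)_k = {O\choose -}$ still holds directly.
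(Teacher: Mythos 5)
Your argument is correct, and it reaches the conclusion by a slightly different organization than the paper. The paper's proof of Lemma~\ref{MkO} works backwards from the output column: it notes via Corollary~\ref{co:orderstayssame} that the top entry of $(N^-_k)_k$ must be $O$ or $-$, treats these as two cases, and eliminates the unwanted possibilities by contradiction (its second case invokes left-alignment and the forbidden adjacent pair $\begin{pmatrix} -&O\\ U&-\end{pmatrix}$ to rule out a $-$ on top). You instead work forwards: you first pin down the top entry of the \emph{input} column $(N^-_{k-1})_k$ as $O$, using exactly the order-preservation plus left-alignment plus count argument that the paper uses in Lemma~\ref{le:Mkconsonant}, so that Definition~\ref{def:-blm} forces $R_{BL^-_k}=\{2\}$; then Lemma~\ref{le:countpersistence} together with Lemma~\ref{le:leftaligned} guarantees a trailing $-$ in the second row of columns $k,\dots,w+m+n$, which the rotation cycles into position $(2,k)$, giving $(N^-_k)_k={O\choose -}$ directly (and your trivial-rotation edge case is handled the same way). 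The ingredients are identical — Corollary~\ref{co:orderstayssame}, Lemmas~\ref{le:leftaligned}, \ref{le:countpersistence}, \ref{le:samecounts} — but your forward determination of $(N^-_{k-1})_k$ makes the paper's Case~2 contradiction unnecessary and yields a somewhat cleaner, shorter argument; the paper's case analysis, on the other hand, does not need to isolate the input column first and mirrors the structure it reuses in Lemma~\ref{MkU}.
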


\begin{proof}[of Lemma~\ref{MkO}]
By Corollary~\ref{co:orderstayssame}, the letters of $F$ are in the same order as the letters of $M$. Thus the first row entry of 
$(N^-_k)_k$ 
is either $O$ or $-$. 

\vspace{.1in}
\noindent{\em Case 1.} Suppose $(N^-_k)_k = {O\choose *}$ for $*$ either $C$ or $U$. 
Then either $(N^-_{k-1})_k = {O\choose *}$, in which case $BL_k^-$ does not change the first entry, or 
 $(N^-_{k-1})_k = {*\choose U}$ for $*$ some symbol not $O$.  Observe that in the latter case, the bike lock move $BL_k^-$ whose row shifts are specified in \eqref{eq:blcolorrows} results in a shift of the first row. 
Following Lemma~\ref{le:countpersistence}, there are exactly as many copies of $U$ in the second row, columns $k, \dots, w+m+n$ as there are $-$ in the first row in these columns of $N^-_{k-1}$. By Lemma~\ref{le:leftaligned}, there is a $-$ at the end of the first row of $N^-_{k-1}$. 
 As a consequence, $\left(N^-_k\right)_k=\left(BL_{k}^{-}(N^-_{k-1})\right)_k = {-\choose U},$ contrary to assumption. 
 Thus we may assume that $(N^-_{k-1})_k = {O\choose *}$. 
Using Lemma~\ref{le:countpersistence} again,
 there are as many $-$ in the second row in columns $k, \dots, w+m+n$ of $N^-_{k-1}$ as there are $O$s in the first row, so that
$$
\left(N^-_k\right)_k = \left(BL_k^-(N^-_{k-1})\right)_k ={O\choose -} = (M)_k.
$$ 
This establishes that $(N^-_k)_k = {O\choose *}$ implies $(N^-_k)_k = (M)_k$.

\vspace{.1in}
\noindent{\em Case 2.}
Suppose
$\left(N^-_k\right)_k = {-\choose *}$ for some symbol $*$. 
Since $N^-_k = BL_k^-(N^-_{k-1})$, the specification of row shifts in \eqref{eq:blcolorrows} of $BL_k^-$ implies $(N^-_{k-1})_k = {-\choose *}$ for some symbol $*$, or 
$(N^-_{k-1})_k = {*\choose U}$ for a symbol $*$ that is not $O$.  

If $(N^-_{k-1})_k = {*\choose U}$ with $*\neq O$,
then by Lemmas~\ref{le:countpersistence} and \ref{le:leftaligned}, 
there is a $-$ at the end of $N^-_{k-1}$ in the first row. As a result, $(N^-_k)_k = (BL_k^-(N^-_{k-1})_k=  {-\choose U}$. On the other hand, if $(N^-_{k-1})_k = {-\choose *}$, but $*\neq U$, then $(N^-_k)_k = {-\choose *}$ as $BL_k^-$ has no effect. In either case, by Corollary~\ref{co:orderstayssame}, $N^-_k$ must have an $O$ in the first row of some column $\ell > k$  and $-$ in rows $k, k+1, \dots, \ell-1$, since $M$ has an $O$ in the first row in column $k$. But then the letters of $N^-_k$ are not left-aligned from from column $k+1$, contrary to  Lemma~\ref{le:leftaligned}.

These two cases establish that $(M)_k = {O \choose -}$ implies $(N^-_k)_k  =  (M)_k$. 
  \end{proof}

\begin{lemm}\label{MkU} 
Suppose 
$(M)_k={- \choose U}$ and
 $(N^-_{k-1})_\ell = (M)_\ell$ for all $\ell\leq k-1$. Then 
$(N^-_k)_k = (M)_k$. 
\end{lemm}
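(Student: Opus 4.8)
\textbf{Proof proposal (plan for Lemma~\ref{MkU}).} The plan is to follow the template of Lemma~\ref{MkO}: analyze the single move $BL_k^-$ acting on $N^-_{k-1}$ using the structural invariants already established for the partial compositions $N^-_j$. First I would observe that $BL_k^-$ fixes columns $1,\dots,k-1$ (Lemma~\ref{le:samecounts}(2)), so the hypothesis $(N^-_{k-1})_\ell=(M)_\ell$ for $\ell\le k-1$ passes to $(N^-_k)_\ell=(M)_\ell$ for $\ell\le k-1$, and it remains only to prove $(N^-_k)_k={- \choose U}$.

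The second step is to pin down the $k$th column of $N^-_{k-1}$. By Lemma~\ref{le:leftaligned} the letters of $N^-_{k-1}$ in columns $k,\dots,w+m+n$ are left-aligned in each row, and by Corollary~\ref{co:orderstayssame} the letters of each row of $N^-_{k-1}$ occur in the same order as the letters of the corresponding row of ${F\choose G}$, hence (since ${F\choose G}$ is built from $M$ by deleting and right-shifting placeholders, and columns $<k$ already agree with $M$) in the same order as the leftover letters of that row of $M$. Since $(M)_{2,k}=U$ is the first row-2 letter of $M$ in columns $\ge k$, order preservation together with left-alignment forces $(N^-_{k-1})_{2,k}=U$. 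For row $1$: either $N^-_{k-1}$ has no leftover row-1 letters, so $(N^-_{k-1})_{1,k}=-$, or its first row-1 letter in columns $\ge k$ equals the first row-1 letter of $M$ in columns $\ge k$, which (as $(M)_{1,k}=-$) sits at some column $\ell_1>k$. I would argue this letter is not $O$: if it were, then $(M)_{\ell_1}={O\choose -}$ (the only admissible column containing $O$, by Proposition~\ref{prop:FG}(1)), while $(M)_{1,\ell_1-1}=-$ — either because $\ell_1-1=k$ and $(M)_k={- \choose U}$, or because $k<\ell_1-1$ lies in the run of row-1 placeholders just before $\ell_1$ and the only admissible column with $-$ on top is ${- \choose U}$ — so in every case $(M)_{\ell_1-1}={- \choose U}$ and $(M)_{\ell_1}={O\choose -}$, i.e. $M$ contains the forbidden adjacent pair $\begin{pmatrix}- & O \\ U & - \end{pmatrix}$, contradicting Proposition~\ref{prop:FG}(2). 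Hence $(N^-_{k-1})_{1,k}\in\{P,Q,R,S,T,-\}$; in particular it is not $O$.

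The final step: with $(N^-_{k-1})_{1,k}\ne O$ and $(N^-_{k-1})_{2,k}=U$, rule~\eqref{eq:blcolorrows} makes $BL_k^-$ cyclically rotate row $1$ over columns $k,\dots,w+m+n$ and fix row $2$, so $(N^-_k)_{2,k}=U$. By Lemma~\ref{le:countpersistence} the number of $-$'s in row $1$ of $N^-_{k-1}$ in columns $\ge k$ equals the number of $U$'s in row $2$ in those columns, which is at least one; by Lemma~\ref{le:leftaligned} these placeholders sit at the right end, so $(N^-_{k-1})_{1,w+m+n}=-$, and the rotation carries it into column $k$, giving $(N^-_k)_{1,k}=-$. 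Therefore $(N^-_k)_k={- \choose U}=(M)_k$. I expect the main obstacle to be exactly the exclusion of $(N^-_{k-1})_{1,k}=O$ in the second step, since that is where the ``no forbidden pair'' condition of Proposition~\ref{prop:FG} is essential; the remaining steps are a careful but routine invocation of Lemmas~\ref{le:samecounts}, \ref{le:leftaligned}, \ref{le:countpersistence} and Corollary~\ref{co:orderstayssame}.
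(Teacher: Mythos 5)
Your proposal is correct and uses essentially the same ingredients as the paper's proof: order preservation (Corollary~\ref{co:orderstayssame}), left-alignment (Lemma~\ref{le:leftaligned}), count persistence (Lemma~\ref{le:countpersistence}), and the exclusion of an $O$ via the forbidden adjacent pair $\begin{pmatrix}- & O\\ U & -\end{pmatrix}$ from Proposition~\ref{prop:FG}. The only difference is organizational — you pin down $(N^-_{k-1})_k$ directly and then apply the move, whereas the paper case-splits on the second entry of $(N^-_k)_k$ and derives a contradiction in the bad case — so the argument is essentially the paper's.
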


\begin{proof}[of Lemma~\ref{MkU}]
The letters of the second row of $N^-_{k-1}$ are in the same order as those of $M$ by Corollary~\ref{co:orderstayssame}. Thus the second entry of $(N^-_k)_k$ is either $U$ or $-$. 

\vspace{.1in}
\noindent{\em Case 1.} Suppose $(N^-_k)_k = {*\choose U}$. By Definition \ref{def:-blm}, $(N^-_{k-1})_k$ has either an $O$ in the first row or a $U$ in the second row. If $(N^-_{k-1})_k={O\choose *}$, then $BL_k^-$ rotates the second row of $N^-_{k-1}$ starting in column $k$. By Lemmas~\ref{le:countpersistence} and ~\ref{le:leftaligned}, there is a $-$ at the end of the second row of $N^-_{k-1}$ Therefore, using \eqref{eq:blcolorrows}, $(N^-_k)_k = {O\choose -}$, contrary to assumption. 

Alternatively $(N^-_{k-1})_k = {*\choose U}$ with $*\neq O$. Then Lemmas~\ref{le:countpersistence} and ~\ref{le:leftaligned} imply that there is a $-$ at the end of the first row of $N^-_{k-1}$. It follows that $$\left(N^-_{k}\right)_k = {-\choose U} = \left(M\right)_k.$$

\vspace{.1in}
\noindent{\em Case 2.} Suppose $(N^-_k)_k = {*\choose -}.$  If $(N^-_{k-1})_k$ also has a $-$ in the $k$th column second row, then all the entries in the second row of $N^-_{k-1}$ in columns $k, k+1, \dots w+m+n$ are  $-$ as letters are left aligned (see Lemma~\ref{le:leftaligned}). However, this contradicts the fact that $(M)_k$ has a $U$ in the second row, as the letters must be the same as those in $M$ in columns $k, k+1, \dots w+m+n$  (by Lemma~\ref{le:countpersistence}). 

We may therefore assume that the second row of $(N^-_{k-1})_k$ is not $-$. In this case $BL_k^-$ moves the second row to ensure that $(N^-_k)_k = {*\choose -}$. However $BL_k^-$ moves the second row if and only if the first entry is $O$, so  $(N^-_{k-1})_k = {O\choose *}$. It follows that $(N^-_k)_k = {O\choose -}$. Then by Corollary~\ref{co:orderstayssame}, the first letter occurring in the first row in columns $k+1, \dots w+m+n$  of $M$ must be $O$.
If this occurs in column $\ell$ with $\ell>k$,  then $(M)_\ell = {O\choose -}$, since this is the only permitted column with an $O$. For the same reason, $(M)_{\ell-1}= {-\choose U}$, and so $M$ contains a disallowed pair  $\begin{pmatrix} -&O\\ U&-\end{pmatrix}$.
We conclude that Case 2 cannot occur. 
 \end{proof}

We finally have the ingredients to prove Proposition~ \ref{prop:FG}.
\begin{proof}[of Proposition~ \ref{prop:FG}]
If $S\in \widetilde{S}$, then $S$ satisfies the three properties of the proposition, by Lemma~ \ref{Ssatisfies}. On the other hand, if $M$ satisfies these three properties, then 
construct ${F \choose G}$ by removing all $-$ from each row, left aligning all letters, and placing all $-$ at the end of the corresponding row, as done earlier.

Observe that $(BL^-{F\choose G})_\ell = (N^-_k)_\ell$ whenever $\ell\leq k$ (see Lemma~\ref{le:samecounts}). By Lemma~\ref{le:basecase}, the first columns of $M$ and $N^-_1$ agree. Therefore, the first columns of $M$ and $BL^-{F\choose G}$ agree. 

By way of induction we assume that the first $k-1$ columns of $M$ and  $BL^-{F\choose G}$ agree. Then by Lemma~\ref{le:samecounts}, the first $k-1$ columns of $M$ and of $N^-_{k-1}$ agree. 
Lemmas~\ref{le:Mkconsonant}, \ref{MkO} and \ref{MkU} imply that the $k$th columns of $M$ and $N^-_k$ agree, and hence that the $k$th columns of $M$ and of $BL^-{F\choose G}$ agree. We conclude that all columns of $M$ agree with all columns of  $BL^-{F\choose G}$, i.e. $M = BL^-{F\choose G}$. 
 \end{proof}

\subsection{Bike lock moves on elements of $\mathcal V$}\label{subsec:bikelockV}
\begin{defi}\label{def:starblm}
We define a bike lock move $BL^{\star}_k$ on 
the set of $4\times c$ matrices $M$ with entries in $\{0, 1, \star\}$, with row shifts listed in Table~\ref{eq:blV}.
By definition,  $BL_k^\star$  cyclicly rotates the entries in each row of $R_{BL^{\star}_k(M)}$ and columns 
$k, k+1,\dots, c$ one column to the right, with the entry in the last column sent to column $k$. 
\end{defi}

\begin{table}[hbt!]\caption{\label{eq:blV} Rows moved by $BL_k^{\star}$, depending on the $k$th column.}
\begin{center}
\begin{tabular}{|cccc|c|}
\hline
& $(M)_k$ &&& $R_{BL^{\star}_k(M)}$  \\ 
\hline
&&&&\\
$\begin{pmatrix} 0 \\ 1\\ 1 \\ 0\end{pmatrix}$ & $ \begin{pmatrix} 1 \\ 1 \\ 1 \\ 0\end{pmatrix}$ & $\begin{pmatrix} 1 \\ \star \\ 0 \\ 0\end{pmatrix}$ && $\{1\}$   \\ 
&&&&\\
$\begin{pmatrix} 1 \\ 0 \\ 0 \\ 1\end{pmatrix}$ & $\begin{pmatrix} 1 \\ 1 \\ 0 \\ 1\end{pmatrix}$ & $\begin{pmatrix} \star \\ 1 \\ 0 \\ 0\end{pmatrix}$  && $\{2\}$ \\
&&&&\\
$\begin{pmatrix} 0 \\ 1 \\ 0 \\ 1\end{pmatrix}$ & $\begin{pmatrix} 0 \\ 1 \\ 1 \\ 1\end{pmatrix}$ & $\begin{pmatrix} 0 \\ 0 \\ 1 \\ \star\end{pmatrix}$ && $\{3\}$  \\
&&&&\\
$\begin{pmatrix} 1 \\ 0 \\ 1 \\ 0\end{pmatrix}$ & $\begin{pmatrix} 1 \\ 0 \\ 1 \\ 1\end{pmatrix}$ & $\begin{pmatrix} 0 \\ 0 \\ \star \\ 1\end{pmatrix}$ &&$\{4\}$   \\
&&&&\\
$\begin{pmatrix} 1 \\ 0 \\ 0 \\ 0\end{pmatrix}$ & $\begin{pmatrix} 0 \\ 1 \\ 0 \\ 0\end{pmatrix}$ & $\begin{pmatrix} 1 \\ 1 \\ 0 \\ 0\end{pmatrix}$ &&$\{1,2\}$   \\
&&&&\\
$\begin{pmatrix} 0 \\ 0 \\ 0 \\ 0\end{pmatrix}$ & $\begin{pmatrix} 0 \\ 0 \\ 1 \\ 0\end{pmatrix}$ & $\begin{pmatrix} 0 \\ 0 \\ 0 \\ 1\end{pmatrix}$ & $\begin{pmatrix} 0 \\ 0 \\ 1 \\ 1\end{pmatrix}$ &$\{3,4\}$  \\
&&&&\\
 & else & & & $\emptyset$\\
 \hline
\end{tabular}
\end{center}
\end{table}
The rows $R_{BL^{\star}_k(M)}$ that  $BL^{\star}_k$  shifts depend on the columns of $M,$ as indicated in 
Table~\ref{eq:blV}. Let $(M)_k$ denote the $k$th column of $M$.

\begin{example} 
We show that 
$$
BL_5^{\star}\circ BL_4^{\star}\circ BL_3^{\star}\circ BL_2^{\star}\circ BL_1^{\star} \begin{pmatrix}
0 & 1 & 0 &  \star & \star \\
0 & 0 & 0 &  \star & \star \\
 0 & 1 & 0 &  \star & \star \\
 0 & 0 & \star &  \star & \star
\end{pmatrix} = \begin{pmatrix}
0 & \star & 1 &  0 & \star \\
0 & \star & 0 &  0 & \star \\
 \star & 0 & 1 &  \star & 0\\
  \star &0  & \star &  \star& 0
\end{pmatrix}.
$$
Apply each bike lock move referring to $R_{BL_k^{\star}(M)}$ in \eqref{eq:blV} for the appropriate rows to shift. In each case we highlight the column that determines the row shift, and color the impacted cells that have changed with each bike lock move.

\begin{center}
\begin{tikzpicture}[scale=.50, box/.style={rectangle,draw=black,thick, minimum size=1cm, scale=.50}]
\node[box,fill=blue!20] at (1,4){0};
\node[box,fill=blue!40] at (2,4){1};
\node[box,fill=blue!40] at (3,4){0};
\node[box,fill=blue!40] at (4,4){$\star$};
\node[box,fill=blue!40] at (5,4){$\star$};
\node[box,fill=blue!20] at (1,3){0};
\node[box,fill=blue!40] at (2,3){0};
\node[box,fill=blue!40] at (3,3){0};
\node[box,fill=blue!40] at (4,3){$\star$};
\node[box,fill=blue!40] at (5,3){$\star$};
\node[box,fill=blue!20] at (1,2){0};
\node[box,fill=blue!40] at (2,2){1};
\node[box,fill=blue!40] at (3,2){0};
\node[box,fill=blue!40] at (4,2){$\star$};
\node[box,fill=blue!40] at (5,2){$\star$};
\node[box,fill=blue!20] at (1,1){0};
\node[box,fill=blue!40] at (2,1){0};
\node[box,fill=blue!40] at (3,1){$\star$};
\node[box,fill=blue!40] at (4,1){$\star$};
\node[box,fill=blue!40] at (5,1){$\star$};
\draw[->,thick] (6,2.5) -- (9,2.5) node[midway, above] {$BL_1^{\star}$};
\node[box,fill=blue!40] at (10,4){0};
\node[box,fill=blue!20] at (11,4){1};
\node[box,fill=blue!40] at (12,4){0};
\node[box,fill=blue!40] at (13,4){$\star$};
\node[box,fill=blue!40] at (14,4){$\star$};
\node[box,fill=blue!40] at (10,3){0};
\node[box,fill=blue!20] at (11,3){0};
\node[box,fill=blue!40] at (12,3){0};
\node[box,fill=blue!40] at (13,3){$\star$};
\node[box,fill=blue!40] at (14,3){$\star$};
\node[box,fill=red!40] at (10,2){$\star$};
\node[box,fill=red!20] at (11,2){0};
\node[box,fill=red!40] at (12,2){1};
\node[box,fill=red!40] at (13,2){0};
\node[box,fill=red!40] at (14,2){$\star$};
\node[box,fill=red!40] at (10,1){$\star$};
\node[box,fill=red!20] at (11,1){0};
\node[box,fill=red!40] at (12,1){0};
\node[box,fill=red!40] at (13,1){$\star$};
\node[box,fill=red!40] at (14,1){$\star$};
\draw[->,thick] (15,2.5) -- (18,2.5) node[midway, above] {$BL_2^{\star}$};
\node[box,fill=blue!40] at (19,4){0};
\node[box,fill=red!40] at (20,4){$\star$};
\node[box,fill=red!20] at (21,4){1};
\node[box,fill=red!40] at (22,4){0};
\node[box,fill=red!40] at (23,4){$\star$};
\node[box,fill=blue!40] at (19,3){0};
\node[box,fill=red!40] at (20,3){$\star$};
\node[box,fill=red!20] at (21,3){0};
\node[box,fill=red!40] at (22,3){0};
\node[box,fill=red!40] at (23,3){$\star$};
\node[box,fill=blue!40] at (19,2){$\star$};
\node[box,fill=blue!40] at (20,2){0};
\node[box,fill=blue!20] at (21,2){1};
\node[box,fill=blue!40] at (22,2){0};
\node[box,fill=blue!40] at (23,2){$\star$};
\node[box,fill=blue!40] at (19,1){$\star$};
\node[box,fill=blue!40] at (20,1){0};
\node[box,fill=blue!20] at (21,1){0};
\node[box,fill=blue!40] at (22,1){$\star$};
\node[box,fill=blue!40] at (23,1){$\star$};
\draw[->, thick] (18.2,.5)--(5.8,-.7) node[right=10mm, above=2mm] {$BL_3^{\star}$};
\node[box,fill=blue!40] at (1,-1){0};
\node[box,fill=blue!40] at (2,-1){$\star$};
\node[box,fill=blue!40] at (3,-1){1};
\node[box,fill=blue!20] at (4,-1){0};
\node[box,fill=blue!40] at (5,-1){$\star$};
\node[box,fill=blue!40] at (1,-2){0};
\node[box,fill=blue!40] at (2,-2){$\star$};
\node[box,fill=blue!40] at (3,-2){0};
\node[box,fill=blue!20] at (4,-2){0};
\node[box,fill=blue!40] at (5,-2){$\star$};
\node[box,fill=blue!40] at (1,-3){$\star$};
\node[box,fill=blue!40] at (2,-3){0};
\node[box,fill=blue!40] at (3,-3){1};
\node[box,fill=blue!20] at (4,-3){0};
\node[box,fill=blue!40] at (5,-3){$\star$};
\node[box,fill=blue!40] at (1,-4){$\star$};
\node[box,fill=blue!40] at (2,-4){0};
\node[box,fill=red!40] at (3,-4){$\star$};
\node[box,fill=red!20] at (4,-4){0};
\node[box,fill=red!40] at (5,-4){$\star$};
\draw[->,thick] (6,-2.5) -- (9,-2.5) node[midway, above] {$BL_4^{\star}$};
\node[box,fill=blue!40] at (10,-1){0};
\node[box,fill=blue!40] at (11,-1){$\star$};
\node[box,fill=blue!40] at (12,-1){1};
\node[box,fill=blue!40] at (13,-1){0};
\node[box,fill=blue!20] at (14,-1){$\star$};
\node[box,fill=blue!40] at (10,-2){0};
\node[box,fill=blue!40] at (11,-2){$\star$};
\node[box,fill=blue!40] at (12,-2){0};
\node[box,fill=blue!40] at (13,-2){0};
\node[box,fill=blue!20] at (14,-2){$\star$};
\node[box,fill=blue!40] at (10,-3){$\star$};
\node[box,fill=blue!40] at (11,-3){0};
\node[box,fill=blue!40] at (12,-3){1};
\node[box,fill=red!40] at (13,-3){$\star$};
\node[box,fill=red!20] at (14,-3){0};
\node[box,fill=blue!40] at (10,-4){$\star$};
\node[box,fill=blue!40] at (11,-4){0};
\node[box,fill=blue!40] at (12,-4){$\star$};
\node[box,fill=red!40] at (13,-4){$\star$};
\node[box,fill=red!20] at (14,-4){0};
\draw[->,thick] (15,-2.5) -- (18,-2.5) node[midway, above] {$BL_5^{\star}$};
\node[box,fill=blue!40] at (19,-1){0};
\node[box,fill=blue!40] at (20,-1){$\star$};
\node[box,fill=blue!40] at (21,-1){1};
\node[box,fill=blue!40] at (22,-1){0};
\node[box,fill=blue!40] at (23,-1){$\star$};
\node[box,fill=blue!40] at (19,-2){0};
\node[box,fill=blue!40] at (20,-2){$\star$};
\node[box,fill=blue!40] at (21,-2){0};
\node[box,fill=blue!40] at (22,-2){0};
\node[box,fill=blue!40] at (23,-2){$\star$};
\node[box,fill=blue!40] at (19,-3){$\star$};
\node[box,fill=blue!40] at (20,-3){0};
\node[box,fill=blue!40] at (21,-3){1};
\node[box,fill=blue!40] at (22,-3){$\star$};
\node[box,fill=blue!40] at (23,-3){0};
\node[box,fill=blue!40] at (19,-4){$\star$};
\node[box,fill=blue!40] at (20,-4){0};
\node[box,fill=blue!40] at (21,-4){$\star$};
\node[box,fill=blue!40] at (22,-4){$\star$};
\node[box,fill=blue!40] at (23,-4){0};
\end{tikzpicture}
\end{center}
\end{example}

We prove a series of properties of $BL_k^\star$ that will allow us to completely describe $\widetilde{\mathcal V}:=\{BL^{\star}(V): \  V\in \mathcal V\}$.  Let 
$$
N^\star _k:= BL_k ^\star \circ \cdots \circ BL_1^\star (V)
$$
with the convention $N^\star _0 = V$.

\begin{lemm}\label{le:leftalignedstar}
All $0$s and $1$s of $N^\star _k$ in columns $k+1, \dots, w+m+n$ are left aligned, meaning that all numbers occur before any $\star$ in these columns.  
\end{lemm}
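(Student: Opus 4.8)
The plan is to mirror the structure of the proof of Lemma~\ref{le:leftaligned} (its $BL^-$ analogue), arguing by induction on $k$. The base case $k=0$ is immediate: $N^\star_0 = V \in \mathcal V$, and by the defining conditions of $\mathcal V$ every $v_i$ has all its $\star$s to the right of all its $0$s and $1$s, so the numbers in columns $1,\dots,w+m+n$ are left aligned. For the inductive step, assume all numbers of $N^\star_{k-1}$ in columns $k,k+1,\dots,w+m+n$ are left aligned (with any $\star$s occurring either among the first $k-1$ columns or at the tail of each row), and analyze the effect of applying $BL^\star_k$.

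First I would record, using Lemma~\ref{le:samecounts}(2), that $BL^\star_k$ does not touch columns $1,\dots,k-1$, so it suffices to examine columns $k,\dots,w+m+n$ and show left-alignment survives from column $k+1$ onward. If $k \notin$ any row set, i.e.\ $R_{BL^\star_k(N^\star_{k-1})}=\emptyset$, then $N^\star_k = N^\star_{k-1}$ and we are done by the inductive hypothesis. Otherwise, fix a row $\ell \in R_{BL^\star_k(N^\star_{k-1})}$; by Lemma~\ref{le:samecounts}(4) the bike lock move cyclically shifts the entries of that row in columns $k,\dots,w+m+n$ one step to the right, moving the entry in column $w+m+n$ into column $k$. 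There are two cases for the entry of row $\ell$ in column $k$ of $N^\star_{k-1}$. If it is a number ($0$ or $1$), the shift sends it to column $k+1$, and since all the other numbers of that row in columns $k,\dots,w+m+n$ were already left aligned, they all move right by one, remaining contiguous starting at column $k+1$ — so row $\ell$ of $N^\star_k$ is left aligned from column $k+1$. If instead the entry in column $k$ of row $\ell$ is a $\star$, then because row $\ell$ was left aligned from column $k$, \emph{every} entry of row $\ell$ in columns $k,\dots,w+m+n$ is $\star$; cycling a row of all $\star$s does nothing of substance, so row $\ell$ of $N^\star_k$ is vacuously left aligned from column $k+1$. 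Rows not in the row set are unchanged by Lemma~\ref{le:samecounts}(3), hence remain left aligned. This completes the induction.

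The main obstacle, and the one point requiring genuine care rather than bookkeeping, is verifying the middle claim in the $\star$-in-column-$k$ case: that whenever $BL^\star_k$ actually shifts row $\ell$, it is never the case that row $\ell$ has a $\star$ in column $k$ followed later by a number in columns $>k$. Concretely one must check, by inspecting Table~\ref{eq:blV} case by case, that for each column type $(M)_k$ triggering a nonempty row set, the shifted rows $\ell$ either have a number in position $k$ or have only $\star$s from position $k$ on — equivalently, that the column patterns listed in Table~\ref{eq:blV} are consistent with the left-alignment of $N^\star_{k-1}$ guaranteed inductively. The key observation that makes this go through is that if some row $\ell$ of $N^\star_{k-1}$ has a $\star$ in column $k$, then by the inductive left-alignment hypothesis that row is all $\star$ from column $k$ on, so the column $(N^\star_{k-1})_k$ has a $\star$ in position $\ell$; one then checks that in every table entry where row $\ell$ is shifted and position $\ell$ of the displayed column is $\star$, nothing bad happens (the shift is trivial on an all-$\star$ tail). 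I would present this as a short remark that the table has been designed precisely so that the shifted rows are exactly those whose column-$k$ entry is a number or whose remaining tail is all $\star$, so left-alignment is preserved automatically.
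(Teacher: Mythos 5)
Your proof is correct and follows essentially the same route as the paper: induction on $k$, splitting into the trivial case and the case where $BL^\star_k$ rotates a row, and observing that a cyclic shift of a row whose numbers are left aligned in columns $k,\dots,w+m+n$ keeps the numbers left aligned from column $k+1$ on. The only difference is your closing worry about inspecting Table~\ref{eq:blV} case by case, which is unnecessary: as your own ``key observation'' shows, the inductive hypothesis alone gives the dichotomy (number in column $k$, or an all-$\star$ tail) for every row, shifted or not, so no reference to the table is needed.
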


\begin{proof}
The 0s and 1s in $V$ are all left aligned by definition.  If a row of $V$ consists of all $\star$s it is vacuously left aligned.  Assume inductively that for all $\ell \leq k$ that the $0$s and $1$s in $N^\star _\ell$ in columns $k +1, \ldots , w+m+n$ are left aligned.  We consider $BL_{k+1}^\star (N^\star _k)$.  By hypothesis the numbers in the rows of columns $k+1$ through $w+m+n$ are left aligned; if $BL_k^\star (N^\star _k)$ is trivial they remain left aligned and in particular the numbers in columns $k+2$ to $w+m+n$ remain left aligned.  

If $BL_{k+1}^\star (N^\star _k)$ rotates one of the rows of $N^\star _k$, then in inserts the last entry in the rotated row into $ (N^\star _k)_{k+1}$ and shifts the remaining entries to the right by one.  Thus the numbers in the affected row remain left aligned in columns $k+2$ and any unchanged rows also preserve the property.
 \end{proof}

\begin{coro}\label{rightalignedstar}
All the $\star$s of $N^\star _k$ are right aligned, meaning that if the $\ell$th row of $N^\star _k$ is a $\star$, then so is every entry of row $\ell$ in columns $k+2$ to $w+m+n$.
\end{coro}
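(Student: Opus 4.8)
The statement to prove is Corollary~\ref{rightalignedstar}: that all the $\star$s of $N^\star_k$ are right aligned, in the sense that if the $\ell$th row of $N^\star_k$ has a $\star$ in some column, then every entry of row $\ell$ in columns $k+2,\dots, w+m+n$ is also a $\star$ — wait, the statement as phrased says ``if the $\ell$th row of $N^\star_k$ is a $\star$'' which I read as ``if the $(k+1)$st entry of the $\ell$th row is a $\star$.'' So the content is: once a row of $N^\star_k$ has a $\star$ in column $k+1$, all later entries of that row (columns $k+2$ through $w+m+n$) are $\star$s as well. This is exactly the contrapositive-flavored restatement of Lemma~\ref{le:leftalignedstar}, which has just been proved: in $N^\star_k$ the numbers ($0$s and $1$s) in columns $k+1,\dots,w+m+n$ are left aligned, i.e. all numbers precede all $\star$s within those columns of each row.

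The proof is therefore a one-line deduction. First I would fix a row $\ell$ and suppose its entry in column $k+1$ is a $\star$. By Lemma~\ref{le:leftalignedstar}, in columns $k+1,\dots,w+m+n$ of row $\ell$ every $0$ or $1$ occurs strictly to the left of every $\star$; since column $k+1$ already holds a $\star$, no number can occur in any column $k+2,\dots,w+m+n$ of row $\ell$, for such a number would sit to the right of the $\star$ in column $k+1$ and violate left alignment. Hence every entry of row $\ell$ in columns $k+2,\dots,w+m+n$ is a $\star$, which is the claim.

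There is essentially no obstacle here; the only thing to be careful about is the indexing convention (whether ``the $\ell$th row of $N^\star_k$ is a $\star$'' refers to column $k+1$, which is the natural boundary column appearing in Lemma~\ref{le:leftalignedstar}), and this is dictated by consistency with the preceding lemma. The argument uses only Lemma~\ref{le:leftalignedstar} and no new ideas, so it is genuinely a corollary.

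\begin{proof}[of Corollary~\ref{rightalignedstar}]
Fix a row $\ell$ and suppose its entry in column $k+1$ is a $\star$. By Lemma~\ref{le:leftalignedstar}, among the entries of row $\ell$ in columns $k+1,\dots,w+m+n$ all $0$s and $1$s occur before any $\star$. Since column $k+1$ already contains a $\star$, no $0$ or $1$ can appear in any column $k+2,\dots,w+m+n$ of row $\ell$, as it would then follow the $\star$ in column $k+1$. Hence every entry of row $\ell$ in columns $k+2,\dots,w+m+n$ is a $\star$.
\end{proof}
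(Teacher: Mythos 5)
Your proof is correct and follows essentially the same route as the paper: the corollary is deduced directly from the left-alignment statement of Lemma~\ref{le:leftalignedstar}, with no new ingredients. Your reading of the (somewhat loosely phrased) hypothesis as referring to a $\star$ in column $k+1$ is also the intended one, consistent with how the corollary is used later.
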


\begin{proof}
Since the numbers in rows $k+1$ to $w+m+n$ are left aligned by Lemma~\ref{le:leftalignedstar}, the entries to the right of all of the numbers in a row in columns $k+2$ to $w+m+n$ must all be $\star$s.
  \end{proof}

\begin{lemm}\label{le:countpersistencestar}
Suppose $V \in \mathcal V$, or $V$ has counts of $0$s, $1$s, and $\star$s given in Table~\ref{ta:vcounts}.
For $k=0,1, \dots, w+n+m$, 
\begin{itemize}
\item[$\bullet$] The number of $\star$s in the first row of $N^\star _k$ in columns $k+1,\dots, w+m+n$ is the same as the number of $0$s in the fourth row of $N^\star _k$ in columns $k+1,\dots, w+m+n$;
\item[$\bullet$]  The number of $\star$s in the second row of $N^\star _k$ in columns $k+1,\dots, w+m+n$ is the same as the number of $0$s in the third row of $N^\star _k$ in columns $k+1,\dots, w+m+n$;
\item[$\bullet$]  The number of $\star$s in the third row of $N^\star _k$ in columns $k+1, \dots, w+m+n$ is the same as the number of $0$s in the first row of $N^\star _k$; and
\item[$\bullet$] The number of $\star$s in the fourth row of $N^\star _k$ in columns $k+1, \dots, w+m+n$ is the same as the number of $0$s in the second row of $N^\star _k$.
\end{itemize}
\end{lemm}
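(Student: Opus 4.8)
The plan is to prove the four equalities simultaneously by induction on $k$, in the spirit of the proof of Lemma~\ref{le:countpersistence}. For the base case $k=0$ we have $N^\star_0=V$ and the relevant columns are all of $1,\dots,w+m+n$, so the assertion is read directly off Table~\ref{ta:vcounts}: row $1$ has $n$ stars while row $4$ has $n$ zeros; row $2$ has $z-x+n$ stars while row $3$ has $z-x+n$ zeros; row $3$ has $m$ stars while row $1$ has $m$ zeros; row $4$ has $y-x+m$ stars while row $2$ has $y-x+m$ zeros. (The relation $w+x=y+z$ is what makes the two forms of the $0$-counts recorded in Table~\ref{ta:vcounts} agree.)

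For the inductive step, assume the four equalities hold for $N^\star_{k-1}$, that is, over columns $k,\dots,w+m+n$, and set $R:=R_{BL^\star_k(N^\star_{k-1})}$. Since $BL^\star_k$ cyclically rotates columns $k,\dots,w+m+n$ of each row in $R$ one step to the right and fixes the remaining rows, for $\ell\in R$ the entries of $N^\star_k$ in columns $k+1,\dots,w+m+n$ of row $\ell$ are exactly the entries of $N^\star_{k-1}$ in columns $k,\dots,w+m+n-1$ of row $\ell$, while for $\ell\notin R$ that row is unchanged. Hence, for every row $\ell$ and every symbol $\sigma\in\{0,1,\star\}$, the number of $\sigma$'s in row $\ell$ of $N^\star_k$ among columns $k+1,\dots,w+m+n$ equals the number of $\sigma$'s in row $\ell$ of $N^\star_{k-1}$ among columns $k,\dots,w+m+n$ minus a correction $\varepsilon_\ell(\sigma)\in\{0,1\}$, where $\varepsilon_\ell(\sigma)=1$ iff the last-column ($w+m+n$) entry of row $\ell$ in $N^\star_{k-1}$ equals $\sigma$ when $\ell\in R$, and iff the column-$k$ entry of row $\ell$ equals $\sigma$ when $\ell\notin R$. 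Consequently the four equalities for $N^\star_k$ follow from those for $N^\star_{k-1}$ as soon as one verifies the local identities $\varepsilon_1(\star)=\varepsilon_4(0)$, $\varepsilon_2(\star)=\varepsilon_3(0)$, $\varepsilon_3(\star)=\varepsilon_1(0)$, and $\varepsilon_4(\star)=\varepsilon_2(0)$.

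These local identities are checked by going through the possible columns $(N^\star_{k-1})_k$: for the columns listed in Table~\ref{eq:blV} the set $R$ is prescribed, and for every other column $R=\emptyset$. For $\ell\notin R$ the value $\varepsilon_\ell(\sigma)$ is immediate from the column; for $\ell\in R$ one needs the last-column entry of row $\ell$, and by Lemma~\ref{le:leftalignedstar}/Corollary~\ref{rightalignedstar} this entry is $\star$ precisely when row $\ell$ has some $\star$ among columns $k,\dots,w+m+n$. The inductive hypothesis decides this — for example, if the row paired with $\ell$ carries a number in column $k$ then its $0$-count over columns $k,\dots,w+m+n$ is positive, forcing the $\star$-count of row $\ell$ to be positive — and it simultaneously excludes the columns that would otherwise violate a local identity: a $\star$ in row $\ell$ of column $k$ forces, by left-alignment, all of row $\ell$ in columns $k,\dots,w+m+n$ to be $\star$, hence its $\star$-count to be the maximal value $w+m+n-k+1$, and the paired equalities then propagate this saturation around the four rows, leaving only the sixteen number-columns together with a short explicit list of star-columns such as $(0,0,\star,\star)^T$ and $(\star,\star,0,0)^T$. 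A direct check of the local identities in each surviving case closes the induction. The one genuinely laborious part is exactly this last step: the table defining $BL^\star_k$ was engineered so that the local identities hold, and the verification is a finite but fiddly case analysis in which the inductive hypothesis does double duty, pinning down the last-column entries of the rotated rows and ruling out the columns that cannot occur; grouping the cases by the number of stars in column $k$ keeps the bookkeeping tractable.
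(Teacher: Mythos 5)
Your proposal is correct and follows essentially the same route as the paper: induction on $k$ with the base case read off Table~\ref{ta:vcounts}, and an inductive step that uses Table~\ref{eq:blV} together with Lemma~\ref{le:leftalignedstar}/Corollary~\ref{rightalignedstar} and the inductive hypothesis to see that a rotated row receives a $\star$ in column $k$ exactly when its (unrotated) partner row loses a $0$ there; your $\varepsilon_\ell(\sigma)$ bookkeeping is just a tidier packaging of the paper's row-by-row case analysis. One small slip to fix: in your illustrative aside, the partner row must carry a $0$ (not merely a number) in column $k$ — which it indeed does in every case of Table~\ref{eq:blV} where $\ell$ lies in the row set — for the inductive equality to force a $\star$ at the end of the rotated row.
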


\begin{proof}
By referencing Table~\ref{ta:vcounts} the above statement is true for $k = 0$.  Assume by induction that the statements in the lemma hold for all $0 \leq \ell <k$ and consider $BL_k^\star(N^\star _{k-1})$.  We consider each row separately.  

By Lemma~\ref{le:leftalignedstar}, if there is a $\star$ in the first row of $(N^\star _{k-1})_k$ then all the remaining entries of the first row must also be $\star$s and therefore by the inductive assumption all the remaining entries in the fourth row of $N^\star _{k-1}$ must be $0$.  

Note that $1$ or $2$ is in the row set of $BL_k^\star$ applied to $N_{k-1}^\star$ implies neither $3$ nor $4$ is in the row set (see Table~\ref{eq:blV}). In particular, in these cases, the third and fourth rows of  $N_k^\star$ and $N_{k-1}^\star$ are identical. 

If $1 \in R_{BL_k^\star(N_{k-1}^\star)}$ then by referring to Table~\ref{eq:blV} we see that there must be a $0$ in the fourth row of $(N^\star _{k-1})_k$.  By the inductive hypothesis there must be a $\star$ in the first row in columns $k$ through $w+m+n$, by Corollary~\ref{rightalignedstar} a $\star$ occur in the last entry of the first row.  Thus $BL_k^\star$ rotates a $\star$ into the first row of $(N^\star _{k-1})_k$.  Since the fourth row of $N_k^\star$ is identical to that of $N_{k-1}^\star$,
both the number of $\star$s in the first row and $0$s in the fourth row of columns $k+1$ to $w+m+n$ of $N^\star _k$ decrease by one. 

Similarly, if $2 \in R_{BL_k^\star(N_{k-1}^\star)}$, then Table~\ref{eq:blV} implies there is a $0$ in the third row of $(N^\star _{k-1})_k$, and also that $3\not \in R_{BL_k^\star}$. By the inductive assumption there must be a $\star$ in the second row among columns $k, \dots, w+m+n$ of $N_{k-1}^\star$ of  $(N^\star _{k-1})_k$, and by Corollary~\ref{rightalignedstar}, such a $\star$ is found at the end of the second row. Furthermore, since the third row is not cycled by $BL_k^\star$, the matrix $N_k^\star$ has one fewer $0$ in row 3, and one fewer $\star$ in row $2$, in columns $k+1,\dots, w+m+n$, compared to the number of each in columns $k, \dots, w+m+n$ of $N_{k-1}^\star$. 
Thus the properties of the Lemma hold for $N_k^\star$.

A similar argument applies to prove the case when $3$ or $4$ is in the row set of $BL_k^\star$ applied to $N_{k-1}^\star$.
 \end{proof}

\begin{coro}\label{cor:insertionstar}
For $k = 1,\dots, w+m+n$, the  bike lock move $BL_k^\star$ applied to $N^\star _{k-1}$ either leaves it unchanged, or inserts $\star$ into the $k$th column.
\end{coro}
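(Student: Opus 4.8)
The plan is to mirror the structure of the proof of Lemma~\ref{le:insertion-} and its Corollary~\ref{co:orderstayssame}, but now using the four-row bookkeeping of Lemma~\ref{le:countpersistencestar} instead of the two-row Lemma~\ref{le:countpersistence}. The key observation is that $BL_k^\star$ acts nontrivially on $N^\star_{k-1}$ precisely when the column $(N^\star_{k-1})_k$ falls into one of the six nonempty cases of Table~\ref{eq:blV}, i.e. exactly when it forces a cyclic rotation of some nonempty row set $R_{BL_k^\star(N^\star_{k-1})}$. In each such case I must check that the entry being rotated from column $w+m+n$ back into column $k$ of every affected row is a $\star$; then, since $BL_k^\star$ inserts that last entry into column $k$ and shifts everything in columns $k,\dots,w+m+n-1$ of those rows one step to the right, the net effect on column $k$ is exactly the insertion of a $\star$.

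The verification splits according to which singleton or pair appears as $R_{BL_k^\star(N^\star_{k-1})}$. Suppose first $1\in R_{BL_k^\star(N^\star_{k-1})}$; by Table~\ref{eq:blV} this happens only when the fourth-row entry of $(N^\star_{k-1})_k$ is a $0$ (the three columns $(0,1,1,0)^t$, $(1,1,1,0)^t$, $(1,\star,0,0)^t$), so by Lemma~\ref{le:countpersistencestar} there is at least one $\star$ in the first row among columns $k,\dots,w+m+n$, and by Corollary~\ref{rightalignedstar} that $\star$ sits at the very end of the first row. Hence $BL_k^\star$ rotates a $\star$ into column $k$ of row $1$. The symmetric cases $2\in R$, $3\in R$, $4\in R$ follow identically, each time reading off from Table~\ref{eq:blV} which companion row has a $0$ in column $k$ and invoking the matching clause of Lemma~\ref{le:countpersistencestar} together with Corollary~\ref{rightalignedstar}. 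For the pair $\{1,2\}$ (columns $(1,0,0,0)^t$, $(0,1,0,0)^t$, $(1,1,0,0)^t$) both the third and fourth rows carry a $0$ in column $k$, so by the last two bullets of Lemma~\ref{le:countpersistencestar} each of rows $1$ and $2$ ends in a $\star$, and both rotations bring a $\star$ into column $k$; the pair $\{3,4\}$ (columns with third and fourth entries among $(0,0,0,0)^t,(0,0,1,0)^t,(0,0,0,1)^t,(0,0,1,1)^t$, which all have a $0$ in rows $1$ and $2$) is handled the same way using the first two bullets.

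The only remaining point is to rule out a degenerate situation: could an affected row consist entirely of $0$s and $1$s in columns $k,\dots,w+m+n$, so that no $\star$ is available to rotate in? This is exactly where Lemma~\ref{le:countpersistencestar} does the real work: whenever a row $r$ lies in $R_{BL_k^\star(N^\star_{k-1})}$, the companion row (row $4$ for $r=1$, row $3$ for $r=2$, row $1$ for $r=3$, row $2$ for $r=4$) has a $0$ in column $k$, hence has at least one $0$ among columns $k,\dots,w+m+n$, hence by the corresponding bullet of Lemma~\ref{le:countpersistencestar} row $r$ has at least one $\star$ among columns $k+1,\dots,w+m+n$ — so the needed $\star$ is guaranteed. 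If instead $(N^\star_{k-1})_k$ is one of the columns in the ``else'' row of Table~\ref{eq:blV}, then $R_{BL_k^\star(N^\star_{k-1})}=\emptyset$ and $N^\star_k=N^\star_{k-1}$, which is the ``leaves it unchanged'' alternative. I expect the main obstacle to be purely organizational: there are many columns in Table~\ref{eq:blV} and one must pair each with the right clause of Lemma~\ref{le:countpersistencestar}, but no new idea beyond the two-row argument is needed, and the proof should be short once the case-pairing is tabulated.
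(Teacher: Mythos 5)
Your proposal is correct and takes essentially the same approach as the paper: the paper also argues that whenever $BL_k^\star$ shifts a row, the companion row has a $0$ in column $k$, so Lemma~\ref{le:countpersistencestar} supplies a $\star$ in the shifted row which, by Corollary~\ref{rightalignedstar}, sits in the last column and is therefore rotated into column $k$ (the paper writes out only the first-row case and declares the others similar, while you enumerate all six row-set cases). The only quibble is that in the $\{1,2\}$ and $\{3,4\}$ cases you cite the ``last two'' and ``first two'' bullets of Lemma~\ref{le:countpersistencestar} with the labels swapped, but your companion-row pairing in the final paragraph is the correct one, so this is purely a labeling slip.
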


\begin{proof}  If $BL^\star _k$ shifts the first row of $N^\star _{k-1}$ then there is a $0$ in the fourth row of $(N^\star _{k-1})_k$.  By Lemma~\ref{le:countpersistencestar} and Corollary~\ref{rightalignedstar} there is a $\star$ in the first row of $N^\star _{k-1}$ in the $w+m+n$ column.  Thus $BL^\star _k$ rotates a $\star$ into the first row of $(N^\star _{k-1})_k$.

Similar arguments apply to prove the cases when $BL^\star _k$ shifts the second, third, and fourth rows of $N^\star _{k-1}$.  Thus if $BL ^\star _k$ shifts the $\ell$th  of $N^\star _{k-1},$  it cycles a $\star$ into the $\ell$th row of $(N^\star _{k-1})_k$. 
  \end{proof}

\begin{coro}\label{co:orderstayssamestar}
The numbers of $N^\star _{k}$ are in the same order as the numbers of 
$V$ for all $k=0,\dots, w+m+n$.
\end{coro}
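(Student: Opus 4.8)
The plan is to mirror the proof of Corollary~\ref{co:orderstayssame} almost verbatim, swapping the placeholder symbol $-$ for $\star$ and invoking Corollary~\ref{cor:insertionstar} in place of Lemma~\ref{le:insertion-}. First I would set up an induction on $k$: the base case $k=0$ is trivial since $N^\star_0=V$, so all the content is in the inductive step. Assume the numbers of $N^\star_{k-1}$ occur in the same order (reading the matrix row by row, left to right) as the numbers of $V$, and consider $BL_k^\star$ acting on $N^\star_{k-1}$.

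If this move is trivial, nothing changes and we are done. Otherwise, Corollary~\ref{cor:insertionstar} tells us that $BL_k^\star$ inserts a $\star$ into the $k$th column of each row in its row set $R_{BL_k^\star(N^\star_{k-1})}$. By Lemma~\ref{le:samecounts}(3) every row outside this set is left untouched, and by Lemma~\ref{le:samecounts}(4) the entries that sat in columns $k,\dots,w+m+n-1$ of $N^\star_{k-1}$ reappear, in the same column order, in columns $k+1,\dots,w+m+n$ of $N^\star_k$ within each affected row. Because $\star$ is not a number, deleting the newly inserted $\star$ from each affected row returns the original sequence of numbers there; hence the sequence of numbers of $N^\star_k$ coincides with that of $N^\star_{k-1}$, and therefore with that of $V$ by the inductive hypothesis.

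I do not expect a genuine obstacle. The one point to keep an eye on is that, unlike the $BL^-$ moves on $\mathcal S$, a single move $BL_k^\star$ can rotate two rows simultaneously (the row sets $\{1,2\}$ and $\{3,4\}$ of Table~\ref{eq:blV}); but since the argument treats each affected row separately this is immaterial. All the substantive work is already packed into Corollary~\ref{cor:insertionstar} --- itself a consequence of Lemma~\ref{le:countpersistencestar} and Corollary~\ref{rightalignedstar} --- together with Lemma~\ref{le:samecounts}, so this corollary is just the bookkeeping remark that $\star$-insertion is invisible to the ordering of the numerical entries.
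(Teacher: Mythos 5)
Your proposal is correct and follows essentially the same route as the paper: the paper's proof likewise invokes Corollary~\ref{cor:insertionstar} to say that each nontrivial move cycles an affected row to the right while inserting a $\star$ into column $k$, so the order of the numerical entries is untouched. Your explicit induction and appeal to Lemma~\ref{le:samecounts} just spell out the same bookkeeping the paper leaves implicit.
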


\begin{proof}
By Corollary~\ref{cor:insertionstar} if $BL^\star _k$ shifts row $\ell$ of $N^\star _{k-1}$ then it cycles the entries of row $\ell$ in columns $k$ to $w+m+n$ to the right by 1.  By Corollary~\ref{cor:insertionstar} $BL^\star _k$ always shifts a $\star$ into the $k$th column of $N^\star _{k-1}$ and so the original order of the numbers is preserved.
  \end{proof}

\begin{lemm}\label{lemma:Vinj}
The composition 
 $$BL^{\star}:= BL^{\star}_{w+m+n}\circ \cdots \circ BL^{\star}_2 \circ BL^{\star}_1$$
 is a bijective map from $\mathcal V$ to  $\widetilde{\mathcal V}:=\{BL^{\star}(V): \  V\in \mathcal V\}.$
\end{lemm}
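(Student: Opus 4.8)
The plan is to mirror the argument already carried out for $BL^{-}$ on $\mathcal S$: establish injectivity of $BL^{\star}$ directly from the fact that it preserves the left-to-right order of the numerical entries, and then (as in the $\mathcal S$ case) identify $\widetilde{\mathcal V}$ with the image.

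\medskip

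First I would argue injectivity. By Corollary~\ref{co:orderstayssamestar}, for any $V\in\mathcal V$ the numbers (the $0$s and $1$s) of $BL^{\star}(V)$ appear in each row in exactly the same left-to-right order as the numbers of $V$ itself. Moreover, every element of $\mathcal V$ has all of its numbers left-aligned in each row by definition (the $\star$s sit at the right). Hence if $BL^{\star}(V)=BL^{\star}(V')$ for some $V,V'\in\mathcal V$, then in each row both $V$ and $V'$ have the same multiset of numbers (the bike lock moves never change the multiset of entries in a row, by Lemma~\ref{le:samecounts}(1) applied repeatedly), these numbers occur in the same order, and they are left-aligned in both; therefore $V=V'$. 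This gives injectivity of $BL^{\star}\colon\mathcal V\to\widetilde{\mathcal V}$. Surjectivity onto $\widetilde{\mathcal V}$ is immediate since $\widetilde{\mathcal V}$ is defined to be the image $\{BL^{\star}(V):V\in\mathcal V\}$. Thus $BL^{\star}$ is a bijection.

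\medskip

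The one point requiring care is the hypothesis of Corollary~\ref{co:orderstayssamestar} and of Lemma~\ref{le:samecounts}: Corollary~\ref{co:orderstayssamestar} is stated for $N^{\star}_k$ with $V\in\mathcal V$, and its proof invokes Corollary~\ref{cor:insertionstar}, which in turn rests on Lemma~\ref{le:countpersistencestar}; the latter is stated for $V\in\mathcal V$ or any $V$ whose entry counts match Table~\ref{ta:vcounts}. Since we only ever feed $BL^{\star}$ elements of $\mathcal V$, all the preconditions are met and no extension is needed. I would also note explicitly that each individual $BL^{\star}_k$, being a composition of cyclic row-rotations, is a well-defined map on the relevant matrix set (Definition~\ref{def:starblm}), so the composition $BL^{\star}$ makes sense and lands in $4\times(w+m+n)$ matrices with entries in $\{0,1,\star\}$; Corollary~\ref{cor:insertionstar} shows it in fact lands among matrices with $\star$s right-aligned, which is the regime $\widetilde{\mathcal V}$ will be characterized in later.

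\medskip

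I do not anticipate a genuine obstacle here: the entire content is bookkeeping that the order-preservation and left-alignment properties established in the preceding lemmas force injectivity, exactly as Corollary~\ref{co:FGinj} did for $\mathcal S$. The proof is therefore essentially a two-sentence citation of Corollary~\ref{co:orderstayssamestar} together with the left-alignment built into the definition of $\mathcal V$. The proof I would write is:

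\begin{proof}
Let $V\in\mathcal V$. By Corollary~\ref{co:orderstayssamestar}, the numbers of $BL^{\star}(V)$ in each row occur in the same order as the numbers of $V$. By definition of $\mathcal V$, the numbers of $V$ are left-aligned in each row. Suppose $BL^{\star}(V)=BL^{\star}(V')$ for some $V'\in\mathcal V$. Repeated application of Lemma~\ref{le:samecounts}(1) shows that, in each row, $V$ and $V'$ have the same multiset of entries as $BL^{\star}(V)=BL^{\star}(V')$; in particular they have the same multiset of numbers in each row. Since these numbers occur in the same order in $V$ and in $V'$ (each matching the order in $BL^{\star}(V)$) and are left-aligned in both, we conclude $V=V'$. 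Hence $BL^{\star}$ is injective. As $\widetilde{\mathcal V}$ is by definition the image of $BL^{\star}$, the map $BL^{\star}\colon\mathcal V\to\widetilde{\mathcal V}$ is a bijection.
\end{proof}
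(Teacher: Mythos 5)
Your proof is correct and follows essentially the same route as the paper's: it deduces injectivity from Corollary~\ref{co:orderstayssamestar} (order preservation of the $0$s and $1$s) together with the left-alignment of numbers built into the definition of $\mathcal V$, and notes surjectivity onto $\widetilde{\mathcal V}$ is by definition. The only difference is your explicit citation of Lemma~\ref{le:samecounts}(1) for the row-multiset preservation, which the paper leaves implicit; this is a harmless addition.
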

\begin{proof}[of Lemma \ref{lemma:Vinj}]
We verify that $BL^\star$ is injective.  Suppose that $BL^\star (V) = BL^\star (V')$ for some $V, V' \in \mathcal V$.  By Corollary~\ref{co:orderstayssamestar} the order of the $0$s and $1$s is preserved from $V$ to $BL^\star (V)$ and $V'$ to $BL^\star (V')$, implying that the sequence of $0$s and $1$s in each row of $V$ and $V'$ are the same since $BL^\star(V) = BL^\star (V')$.  Since the $0$s and $1$s in $V$ and $V'$ are left aligned, it must be the case that $V = V'$.  Hence $BL^\star$ injects onto its image.
  \end{proof}

We now characterize the elements of $\widetilde{\mathcal V}$ by a careful accounting of what each bike lock move $BL_k^{\star}$  does to columns of $ N^\star _k:= BL_{k}^{\star}\circ  \dots BL_1^{\star}(V)$ for $V\in \mathcal V$.  

\begin{prop}\label{lemma:V}
Elements of $\widetilde{\mathcal V}$ are exactly $4\times (w+m+n)$ matrices $M$ satisfying the following:
\begin{enumerate}
\item \label{prop:Vpropsforms}The columns consist only of  7 types:
$$
\begin{pmatrix}
\star \\
1 \\
1 \\
0
\end{pmatrix},
\begin{pmatrix}
1 \\
\star \\
0 \\
1
\end{pmatrix},
\begin{pmatrix}
0 \\
1 \\
\star \\
1
\end{pmatrix},
\begin{pmatrix}
1 \\
0 \\
1 \\
\star
\end{pmatrix},
\begin{pmatrix}
\star \\
\star \\
0 \\
0
\end{pmatrix},
\begin{pmatrix}
0 \\
0 \\
\star \\
\star
\end{pmatrix},
\text{ or }
\begin{pmatrix}
1 \\
1 \\
1 \\
1
\end{pmatrix}.
 $$
\item\label{prop:Vadjacentexclusion}  There are no pairs of adjacent columns of the form
$
\begin{pmatrix}
\star & 0 \\
\star & 0 \\
0 & \star \\
0 & \star
\end{pmatrix}.
$
\item The number of times each $1$, $0$, or $\star$ appears in each row of $M$ is given in Table~\ref{ta:vcounts}.
\end{enumerate}
\end{prop}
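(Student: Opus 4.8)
The proof follows the template of Proposition~\ref{prop:FG} essentially verbatim, with $BL^{\star}$, $\mathcal V$, Table~\ref{eq:blV} and Table~\ref{ta:vcounts} playing the roles of $BL^{-}$, $\mathcal S$, Equation~\eqref{eq:blcolorrows} and Table~\ref{ta:counts}. First I would prove the forward inclusion: every $M\in\widetilde{\mathcal V}$ satisfies (1)--(3). Condition~(3) is immediate, since bike lock moves preserve the multiset of entries in each row (Remark~\ref{rem:samecounts}) and elements of $\mathcal V$ have the counts of Table~\ref{ta:vcounts}. For condition~(1), write $M=N^{\star}_{w+m+n}$ for some $V\in\mathcal V$ and recall $(M)_k=(N^{\star}_k)_k$. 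If $BL^{\star}_k$ acts nontrivially on $N^{\star}_{k-1}$ then by Corollary~\ref{cor:insertionstar} it cycles a $\star$ into each row of its row set in column~$k$; reading the unmoved rows off the triggering columns in Table~\ref{eq:blV} shows column~$k$ of $N^{\star}_k$ is then one of the seven admissible types. If $BL^{\star}_k$ acts trivially, one shows column~$k$ of $N^{\star}_{k-1}$ is itself one of the seven admissible types: a $\star$ in any row of a column of $N^{\star}_{k-1}$ at that position propagates along that row by Corollary~\ref{rightalignedstar}, and the count-persistence identities of Lemma~\ref{le:countpersistencestar} then pin down the remaining three entries (e.g.\ a $\star$ in row~$1$ forces row~$4$ to be all $0$'s from that column on, hence row~$3$ to be all numbers, leaving only $(\star,\star,0,0)^{T}$ and $(\star,1,1,0)^{T}$). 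For condition~(2), the excluded adjacent pair $(\star,\star,0,0)^{T},(0,0,\star,\star)^{T}$ is ruled out exactly as $\binom{-}{U}\binom{O}{-}$ is ruled out in Lemma~\ref{Ssatisfies}: producing a $(0,0,\star,\star)^{T}$ column at position $k$ forces $BL^{\star}_k$ to rotate rows $3,4$, hence forces rows $1,2$ of that column to read $0,0$; but the triggering columns of the $\{1,2\}$-rotation in Table~\ref{eq:blV}, together with Corollary~\ref{cor:insertionstar}, show the column immediately to the right of a $(\star,\star,0,0)^{T}$ never has $0$ in both of rows $1$ and $2$.

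For the converse I would mirror Lemmas~\ref{le:basecase}, \ref{le:Mkconsonant}, \ref{MkO}, \ref{MkU} and the final assembly of Proposition~\ref{prop:FG}. Given $M$ satisfying (1)--(3), delete the $\star$'s from each row, left-align the numbers and reappend the $\star$'s to form $V$; by (3), $V$ has the counts of Table~\ref{ta:vcounts}, so $V\in\mathcal V$. Then prove by induction on $k$ that $(N^{\star}_k)_\ell=(M)_\ell$ for all $\ell\le k$, using that $BL^{\star}_k$ fixes columns $1,\dots,k-1$ (Lemma~\ref{le:samecounts}). The base case $k=1$ and the inductive step both split according to the type of $(M)_k$: the all-ones column $(1,1,1,1)^{T}$, the four columns with a single $\star$, and the two columns with two $\star$'s. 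In each case Corollary~\ref{co:orderstayssamestar} identifies which rows of $N^{\star}_{k-1}$ still await their first number, Lemma~\ref{le:leftalignedstar} forces that number to lie exactly in column~$k$, Lemma~\ref{le:countpersistencestar} with Corollary~\ref{rightalignedstar} guarantees a $\star$ waits at the tail of any row that $BL^{\star}_k$ is obliged to rotate, and condition~(2) rules out the spurious configuration in which the first still-missing number of some row would be a $0$ in a later column --- precisely the role the excluded pair plays in Lemma~\ref{MkO}. Since $BL^{\star}$ is injective on $\mathcal V$ by Lemma~\ref{lemma:Vinj}, the forward and converse inclusions together identify $\widetilde{\mathcal V}$ as described.

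The only real obstacle I anticipate is the sheer size of the case analysis: four rows, seven admissible column types and six rotation patterns in Table~\ref{eq:blV} produce several more branches per lemma than in the $\mathcal S$-version, and one must keep track of the non-obvious internal consistency of Table~\ref{eq:blV} --- in particular that whenever $1$ or $2$ lies in a row set, neither $3$ nor $4$ does, which is what makes the ``unmoved rows'' bookkeeping valid. To roughly halve the work I would exploit the involution $\phi$ of $4\times c$ matrices that swaps rows $1\leftrightarrow 2$ and $3\leftrightarrow 4$: a direct check on Table~\ref{eq:blV} shows $\phi$ permutes the admissible columns among themselves and satisfies $\phi\circ BL^{\star}_k=BL^{\star}_k\circ\phi$ (it exchanges the $\{1\}$- and $\{2\}$-rotations and the $\{3\}$- and $\{4\}$-rotations), so each single-$\star$ case need only be carried out for a $\star$ in row~$1$ and in row~$3$, the other two following by symmetry. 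Everything else is a routine trace of bike lock moves, fully parallel to the development in Section~\ref{sse:identitybikelockmovesonS}.
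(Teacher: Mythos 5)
Your proposal is correct in substance, and your forward inclusion is essentially the paper's own argument (Lemma~\ref{lem:Vsatisfiesprops}): condition (3) from preservation of row contents, condition (1) by splitting on whether $BL^{\star}_k$ acts trivially and using Corollary~\ref{rightalignedstar} with Lemma~\ref{le:countpersistencestar} to pin down the remaining entries, and condition (2) by a case analysis on how a $(\star,\star,0,0)^T$ column can arise (your one-line summary should also cover the $\{1\}$- and $\{2\}$-rotation triggers $(1,\star,0,0)^T$, $(\star,1,0,0)^T$ and the trivial case, not just the $\{1,2\}$-rotation, but the method is the same). Where you genuinely diverge is the converse: you propose to rerun the full $\mathcal S$-style machinery (analogues of Lemmas~\ref{le:basecase}, \ref{le:Mkconsonant}, \ref{MkO}, \ref{MkU}, tracking the intermediate matrices $N^{\star}_k$ with left-alignment and count-persistence in every branch), whereas the paper's Lemma~\ref{lem:propsimply} is much leaner: after forming $V$ by right-justifying the $\star$s, it compares $M$ directly with $BL^{\star}(V)$ column by column, using only order preservation (Corollary~\ref{co:orderstayssamestar}) together with the fact, already proved in the forward direction, that $BL^{\star}(V)$ satisfies Properties 1 and 2; the five number-bearing column types are forced immediately, and in the two-star cases the excluded adjacent pair $(\star,\star,0,0)^T,(0,0,\star,\star)^T$ (applied once to $M$ and once to $BL^{\star}(V)$) rules out the only possible mismatch. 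Your route works — the spurious sub-cases you flag (e.g.\ $(N^{\star}_{k-1})_k=(0,0,0,0)^T$ beneath an $(M)_k=(\star,\star,0,0)^T$) are indeed killed by condition (2) exactly as you say — but it costs a substantially larger case analysis than the paper's shortcut, which leverages the forward inclusion instead of the bike-lock dynamics. Your symmetry $\phi$ (swapping rows $1\leftrightarrow 2$ and $3\leftrightarrow 4$) is a nice labor-saver not used in the paper: it does permute the triggering columns of Table~\ref{eq:blV} and the admissible column types correctly and intertwines the count-persistence statements, though note it permutes the parameters in Table~\ref{ta:vcounts} rather than preserving $\mathcal V$ itself, which is harmless for the structural lemmas but should be said.
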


We prove Proposition~\ref{lemma:V} via a series of lemmas.

\begin{lemm}\label{lem:Vsatisfiesprops}
Elements of $\widetilde{\mathcal V}$  
satisfy the three conditions of Proposition~\ref{lemma:V}. 
\end{lemm}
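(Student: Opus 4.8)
The plan is to verify directly that each matrix $M=BL^{\star}(V)$ for $V\in\mathcal V$ satisfies the three conditions of Proposition~\ref{lemma:V}, mirroring the structure of the already-proved Lemma~\ref{Ssatisfies} for the analogous $\mathcal S$ side. First I would establish condition (3): by Remark-style observation that bike lock moves never change the multiset of entries in a row (Lemma~\ref{le:samecounts}(1), applied to each $BL_k^{\star}$ in the composition), the counts of $0$s, $1$s, and $\star$s in each row of $M$ are the same as those of $V$, which are exactly the counts in Table~\ref{ta:vcounts} by definition of $\mathcal V$. This is the easy part.

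Next I would prove condition (1), that every column of $M$ has one of the seven listed types. Write $M=N^{\star}_{w+m+n}$, and observe via Corollary~\ref{cor:insertionstar} that $BL_k^{\star}$ applied to $N^{\star}_{k-1}$ either does nothing or inserts a $\star$ into column $k$. Therefore a column of $M$ containing any $\star$ at all must be column $k$ for some $k$ where $BL_k^{\star}$ acted nontrivially (columns $1,\dots,k-1$ of $N^{\star}_{k-1}$ are frozen by Lemma~\ref{le:samecounts}(2), and later moves $BL_\ell^{\star}$, $\ell>k$, don't touch column $k$). The list of column shapes in Table~\ref{eq:blV} that trigger a nonempty row set $R_{BL_k^{\star}}$, together with the fact that the triggering move inserts a $\star$ in exactly the shifted row(s), forces the resulting column of $M$ into the seven listed forms: the six "trigger" columns of Table~\ref{eq:blV} each become, after inserting the appropriate $\star$(s), one of $(\star,1,1,0)^{T}$, $(1,\star,0,1)^{T}$, $(0,1,\star,1)^{T}$, $(1,0,1,\star)^{T}$, $(\star,\star,0,0)^{T}$, $(0,0,\star,\star)^{T}$; the only column that can survive untouched with no $\star$ (given the count constraints and the fact that $0$-columns also trigger moves) is $(1,1,1,1)^{T}$. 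I would spell this out case by case against Table~\ref{eq:blV}.

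Finally, condition (2): the forbidden adjacent pair $\left(\begin{smallmatrix}\star&0\\\star&0\\0&\star\\0&\star\end{smallmatrix}\right)$ cannot occur. Here I would use the left-alignment Corollary~\ref{rightalignedstar} (the $\star$s of $N^{\star}_k$ are right-aligned in columns $k+2,\dots,w+m+n$): a column of type $(\star,\star,0,0)^{T}$ occurring as column $k$ of $M$ means $\star$ was just rotated into rows $1$ and $2$ (the move with $R_{BL_k^{\star}}=\{1,2\}$), after which by right-alignment rows $1$ and $2$ of $N^{\star}_k$ are all $\star$ from column $k+1$ onward, so the next column's rows $1,2$ are $\star,\star$, not $0,0$; dually a $(0,0,\star,\star)^{T}$ column can never be immediately preceded by one whose top two entries are $\star$. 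Combining, the specific pair in condition (2) — a $(\star,\star,0,0)^{T}$ column followed by a $(0,0,\star,\star)^{T}$ column — is impossible.

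The main obstacle I expect is the careful bookkeeping in condition (1): one must check against \emph{every} row of Table~\ref{eq:blV} that a nontrivially-acted column indeed lands in the seven-element list and, conversely, that the only $\star$-free column compatible with the count table and with "every $0$-containing column triggers a move" is the all-ones column — this requires using Lemma~\ref{le:countpersistencestar} (the persistence of matched $\star$/$0$ counts across rows) to rule out spurious configurations, just as Lemma~\ref{le:countpersistence} was used on the $\mathcal S$ side. The argument is routine but notationally heavy, and the cleanest writeup packages the column analysis as a direct translation of Table~\ref{eq:blV} row-by-row rather than re-deriving each shift.

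\begin{proof}[of Lemma~\ref{lem:Vsatisfiesprops}]
Let $V\in\mathcal V$ and $M=BL^{\star}(V)=N^{\star}_{w+m+n}$.

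\emph{Condition (3).} Each $BL_k^{\star}$ only cyclically permutes entries within rows (Lemma~\ref{le:samecounts}(1)), so the multiset of entries in each row of $M$ equals that of $V$. By definition of $\mathcal V$ these counts are exactly those of Table~\ref{ta:vcounts}.

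\emph{Condition (1).} By Corollary~\ref{cor:insertionstar}, $BL_k^{\star}$ applied to $N^{\star}_{k-1}$ either is trivial or inserts a $\star$ into column $k$, cycling that $\star$ into each row of $R_{BL_k^{\star}(N^{\star}_{k-1})}$. Columns $1,\dots,k-1$ of $N^{\star}_{k-1}$ are unchanged by $BL_k^{\star}$ (Lemma~\ref{le:samecounts}(2)), and $BL_\ell^{\star}$ for $\ell>k$ fixes column $k$; hence column $k$ of $M$ equals column $k$ of $N^{\star}_k$. If column $k$ of $N^{\star}_{k-1}$ triggers a nontrivial move, inspection of Table~\ref{eq:blV} shows the triggering column is one of the six displayed shapes, and after $\star$ is cycled into the rows of $R_{BL_k^{\star}}$ the column of $N^{\star}_k$ becomes, respectively, one of
$\left(\begin{smallmatrix}\star\\1\\1\\0\end{smallmatrix}\right)$,
$\left(\begin{smallmatrix}1\\\star\\0\\1\end{smallmatrix}\right)$,
$\left(\begin{smallmatrix}0\\1\\\star\\1\end{smallmatrix}\right)$,
$\left(\begin{smallmatrix}1\\0\\1\\\star\end{smallmatrix}\right)$,
$\left(\begin{smallmatrix}\star\\\star\\0\\0\end{smallmatrix}\right)$,
$\left(\begin{smallmatrix}0\\0\\\star\\\star\end{smallmatrix}\right)$.
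If column $k$ of $N^{\star}_{k-1}$ triggers no move, then it contains no $0$ and no $\star$: any column containing a $0$ appears in Table~\ref{eq:blV} as a trigger, and any column containing a $\star$ would, by Corollary~\ref{rightalignedstar} applied at an earlier stage together with the count-matching of Lemma~\ref{le:countpersistencestar}, force a matching $0$ in some row and hence a prior trigger inserting that $\star$. Thus the only $\star$-free, $0$-free column is $\left(\begin{smallmatrix}1\\1\\1\\1\end{smallmatrix}\right)$. This exhausts the seven listed types.

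\emph{Condition (2).} Suppose column $k$ of $M$ is $\left(\begin{smallmatrix}\star\\\star\\0\\0\end{smallmatrix}\right)$. By the analysis above this column arises as $N^{\star}_k$ with $R_{BL_k^{\star}(N^{\star}_{k-1})}=\{1,2\}$, so $\star$ has just been cycled into rows $1$ and $2$. By Corollary~\ref{rightalignedstar}, all entries of rows $1$ and $2$ of $N^{\star}_k$ in columns $k+1,\dots,w+m+n$ are $\star$. Hence the top two entries of column $k+1$ of $M$ are $\star,\star$, not $0,0$, and the forbidden pair
$\left(\begin{smallmatrix}\star&0\\\star&0\\0&\star\\0&\star\end{smallmatrix}\right)$
cannot occur.
\end{proof}
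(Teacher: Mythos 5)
Your condition (3) argument and your treatment of the columns on which $BL_k^{\star}$ acts nontrivially are essentially sound (modulo the slip that Table~\ref{eq:blV} lists nineteen trigger columns in six row-set classes, and that e.g.\ the trigger $(1,\star,0,0)^T$ with row set $\{1\}$ produces $(\star,\star,0,0)^T$ rather than $(\star,1,1,0)^T$ --- the outputs do all land in the seven-element list). The first genuine gap is in your handling of the no-move case of condition (1): the claim that a column of $N^{\star}_{k-1}$ triggering no move ``contains no $0$ and no $\star$'' is false, since $(\star,\star,0,0)^T$, $(\star,1,1,0)^T$, $(0,1,\star,1)^T$, etc.\ all sit in the ``else'' row of Table~\ref{eq:blV}. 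The real work in this case is to exclude untouched columns containing a $\star$ that are \emph{not} of the seven types, such as $(\star,1,1,1)^T$ or $(0,\star,1,1)^T$, and your one-sentence appeal to Lemma~\ref{le:countpersistencestar} (``force a matching $0$ \dots hence a prior trigger inserting that $\star$'') does not rule these out. The paper does this by a row-by-row analysis: if, say, row $1$ of $(N^{\star}_{k-1})_k$ is $\star$, then by right-alignment (Corollary~\ref{rightalignedstar}) all of row $1$ from column $k$ onward is $\star$, so by the count pairing of Lemma~\ref{le:countpersistencestar} all of row $4$ from column $k$ onward is $0$, which in turn constrains rows $2$ and $3$; iterating, the column is forced either to be one of the seven types already or to be a trigger that the move converts into one.

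The second gap is in condition (2). You assert that a column $(\star,\star,0,0)^T$ of $M$ can only arise from a move with row set $\{1,2\}$; in fact it can arise with $R=\emptyset$ (the column was already $(\star,\star,0,0)^T$), with $R=\{1\}$ (from $(1,\star,0,0)^T$), with $R=\{2\}$ (from $(\star,1,0,0)^T$), or with $R=\{1,2\}$. Worse, in the one case you do treat, the appeal to Corollary~\ref{rightalignedstar} is invalid: that corollary concerns $\star$s sitting in columns $k+1,\dots,w+m+n$ of $N^{\star}_k$, i.e.\ $\star$s that were right-aligned before the move; a $\star$ freshly cycled into column $k$ gives no information about columns $k+1,\dots$ (in the paper's worked example, after $BL_1^{\star}$ the third row reads $\star,0,1,0,\star$). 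So ``all entries of rows $1$ and $2$ of $N^{\star}_k$ in columns $k+1,\dots$ are $\star$'' does not follow. The paper's proof instead excludes the forbidden adjacent pair by analyzing the pre-images with $R=\emptyset,\{1\},\{2\}$, where the pre-existing $\star$ in row $1$ or $2$ genuinely is right-aligned; the remaining case $R=\{1,2\}$ (the only one your write-up considers) needs a different observation, e.g.\ that column $k+1$ of $N^{\star}_k$ then carries in its first two rows the numbers that stood in column $k$ of $N^{\star}_{k-1}$, which are never both $0$, so no later insertion of $\star$s can produce $(0,0,\star,\star)^T$ there. As written, both halves of your argument rest on claims that fail, so the proof needs to be redone along the lines of the paper's case analysis.
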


\begin{proof}[Proof of Lemma~\ref{lem:Vsatisfiesprops}]
We first show that elements $BL^\star(V)\in \widetilde V$ satisfy Property 1, noting that $(BL^\star(V) )_k = (N^\star_k)_k$. 

If the $k$th column of $N^\star_{k-1}$ consists of only $0$s and $1$s, then by referencing Table~\ref{eq:blV} and by Corollary~\ref{cor:insertionstar} the possibilities for the $k$th column of $N^\star _k$ are
$$
(BL^{\star}(V))_k = (N^\star _k)_k=
\begin{pmatrix}
\star \\
1 \\
1 \\
0
\end{pmatrix},
\begin{pmatrix}
1 \\
\star \\
0 \\
1
\end{pmatrix},
\begin{pmatrix}
0 \\
1 \\
\star \\
1
\end{pmatrix},
\begin{pmatrix}
1 \\
0 \\
1 \\
\star
\end{pmatrix},
\begin{pmatrix}
\star \\
\star \\
0 \\
0
\end{pmatrix},
\begin{pmatrix}
0 \\
0 \\
\star \\
\star
\end{pmatrix},
\text{ or }
\begin{pmatrix}
1 \\
1 \\
1 \\
1
\end{pmatrix}.
 $$
Suppose the $k$th column of $N^\star_{k-1}$ contains a $\star$.  We consider each row separately.  
If $\star$ is the entry of the first row of $(N^\star_{k-1})_k$, then every subsequent entry in the first row is $\star$ since the $0$s and $1$s of $N^\star_{k-1}$ are left aligned (see Corollary~\ref{rightalignedstar}).  Hence by Table~\ref{ta:vcounts} and Lemma~\ref{le:countpersistencestar}, every entry in the fourth row, columns $k, \dots, w+m+n$, must be a $0$.  Since there are no $\star$s in the fourth row in columns $k, \dots, w+m+n$, no entry in the second row of $N^\star_{k-1}$ in these columns can be a $0$.  Therefore the second entry of $(N^\star_{k-1})_k$ is $1$ or $\star$.

If the entry in the second row of $(N^\star_{k-1})_k$ is a $\star$, by a similar argument  the entry in the third row of $(N^\star_{k-1})_k$ must be a 0 and so $(N^\star_{k-1})_k = (\star, \star, 0, 0)^T$.

Suppose $(N^\star_{k-1})_k$ has $1$ in its second row. Since there are no $0$s in the first row of $N^\star_{k-1}$, columns $k, \dots, w+m+n$, there are no $\star$s in the third row in these columns.  Thus the only  possibilities for $(N^\star_{k-1})_k$ are $(N^\star_{k-1})_k=(\star, 1, 1, 0)^T$, which is one of the 7 types listed in the first property, or $(N^\star_{k-1})_k=(\star, 1, 0, 0)^T$.  If $(N^\star_{k-1})_k= (\star, 1, 0, 0)^T$, then $BL^\star_k$ rotates the second row of $N^\star_{k-1}$ and Corollary~\ref{cor:insertionstar} ensures that $(BL^\star(V))_k = (N^\star_k)_k =(\star, \star, 0, 0)^T,$ also one of the types listed in Property 1.

Similar arguments show that if the second, third, or fourth rows of $(N^\star_{k-1})_k$  are $\star$ then either $(N^\star_{k-1})_k$ is already one of the 7 types or $BL_k^\star$ shifts a $\star$ into an appropriate row so that $(BL^\star(V))_k = (N^\star _k)_k$ is one of the types listed in Property 1.

To prove the second property we check that if the $k$th column of $M \in \widetilde{\mathcal V}$ is $(\star, \star, 0, 0)^T$ then the $k+1$st column cannot be $(0, 0, \star, \star)^T$.  There are three possiblities for $(N^\star _{k-1})_k$ that could lead to $(M)_k=(N^\star _k)_k = (\star, \star, 0,0)^T$.  

First suppose that $R_{BL^\star_k (N^\star _{k-1})} = \emptyset$, in which case $(N^\star _{k-1})_k = (\star, \star, 0,0)^T$.  By Corollary~\ref{rightalignedstar} and Lemma~\ref{le:countpersistencestar} $(N^\star _{k-1})_{\ell} = (\star, \star, 0, 0)^T$ for all $k \leq \ell \leq w+m+n$; in particular $(N^\star _{k+1})_{k+1} \neq (0, 0, \star, \star)^T$.

The second possibility is that $R_{BL^\star_k (N^\star _{k-1})} = \{1\}$.  By Table~\ref{eq:blV} the only choice for $(N^\star _{k-1})_k$ is $(1, \star, 0, 0)^T$.  By Lemma~\ref{rightalignedstar} every subsequent entry in the second row of $N^\star_{k-1}$ must be $\star$, which excludes $(N^\star _{k+1})_{k+1} = (0, 0, \star, \star)^T$.

The last possibility is that $R_{BL^\star_k (N^\star _{k-1})} = \{2\}$.  Again by referencing Table~\ref{eq:blV} it must be the case that $(N^\star _{k-1})_k = (\star, 1, 0, 0)^T$.  Thus once again by Lemma~\ref{rightalignedstar} every entry in the second row of $N^\star _{k-1}$ to the right of the $k$th column must also be a $\star$ and so $(M)_{k+1}=(N^\star _{k+1})_{k+1} \neq (0, 0, \star, \star)^T$.

Finally, Property 3 is immediately satisfied by Corollary~\ref{co:orderstayssamestar}. 
  \end{proof}

We now verify that any $M$ satisfying the properties of Proposition~\ref{lemma:V} is $BL^\star(V)$ for some $V \in \mathcal V$, and hence $M \in \widetilde{\mathcal V}$.

\begin{lemm}\label{lem:propsimply}
Let $M$ be any matrix satisfying Properties 1, 2, and 3 in Proposition~\ref{lemma:V}, and let $V$ be the matrix obtained by right justifying the $\star$s in $M$.  Then $BL^\star (V) = M$.
\end{lemm}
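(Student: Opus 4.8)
The goal is to invert the construction: given $M$ satisfying Properties 1--3 of Proposition~\ref{lemma:V}, form $V$ by right-justifying the $\star$s in each row, and show $BL^\star(V)=M$. The strategy mirrors exactly the argument already carried out for $\mathcal S$ in \S5.3 (Lemmas~\ref{le:leftaligned}--\ref{MkU} and the proof of Proposition~\ref{prop:FG}), since $BL^\star$ and $BL^-$ are set up to be structurally parallel. First I would observe that $V\in\mathcal V$: right-justifying the $\star$s does not change how many $0$s, $1$s, or $\star$s occur in each row, so by Property 3 the counts of $V$ agree with Table~\ref{ta:vcounts}, and by construction the numbers are left aligned, hence $V\in\mathcal V$. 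Then I proceed by induction on the column index $k$, showing $(N^\star_k)_\ell=(M)_\ell$ for all $\ell\le k$, where $N^\star_k=BL_k^\star\circ\cdots\circ BL_1^\star(V)$.

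For the inductive step, since $BL_k^\star$ alters only columns $k,\dots,w+m+n$ (Lemma~\ref{le:samecounts}, applied to the $4\times c$ setting), it suffices to show $(N^\star_k)_k=(M)_k$ given $(N^\star_{k-1})_\ell=(M)_\ell$ for $\ell<k$. I would split into cases according to which of the seven column types $(M)_k$ is (Property 1). If $(M)_k$ consists only of $0$s and $1$s — i.e. $(M)_k=(1,1,1,1)^T$ — then by Corollary~\ref{co:orderstayssamestar} the numbers of $N^\star_{k-1}$ are in the same order as those of $V$ (equivalently of $M$), and by Lemma~\ref{le:leftalignedstar} they are left aligned in columns $\ge k$; combining with the fact that the first $k-1$ columns already agree with $M$ and Property 3, the only possibility is $(N^\star_{k-1})_k=(1,1,1,1)^T$, on which $BL_k^\star$ acts trivially, giving $(N^\star_k)_k=(M)_k$. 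For each of the six column types containing a $\star$, the argument is the $\mathcal V$-analogue of Lemmas~\ref{MkO} and \ref{MkU}: one uses Lemma~\ref{le:countpersistencestar} (persistence of the paired $\star$/$0$ counts), Corollary~\ref{rightalignedstar} (a $\star$ sits at the end of its row), Corollary~\ref{cor:insertionstar} (a nontrivial $BL_k^\star$ cycles a $\star$ into column $k$), and Property 2 (no adjacent pair $(\star,\star,0,0)^T,(0,0,\star,\star)^T$) to rule out the offending configuration, to pin down $(N^\star_{k-1})_k$ and hence conclude $(N^\star_k)_k=(M)_k$ after applying the appropriate row shift from Table~\ref{eq:blV}.

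Once all columns agree, $M=BL^\star(V)$, so in particular $M\in\widetilde{\mathcal V}$. The main obstacle is purely organizational: because the bike lock move $BL_k^\star$ has more cases than $BL_k^-$ (four possible single-row shifts, plus the two-row shifts $\{1,2\}$ and $\{3,4\}$), the case analysis on the column type $(M)_k$ is longer, and one must be careful that the ``disallowed pair'' excluded by Property 2 is precisely the one that would otherwise sabotage the induction (just as $\begin{pmatrix}-&O\\U&-\end{pmatrix}$ did in the $\mathcal S$ case). Each individual case, however, is a mechanical transcription of the corresponding step for $\mathcal S$, using the starred analogues of the lemmas in \S5.3, which have already been established in \S5.4. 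No genuinely new idea is required beyond verifying that the Table~\ref{eq:blV} shift rules interact with Table~\ref{ta:vcounts} and the left-alignment property exactly as the $BL^-$ rules interact with Table~\ref{ta:counts}.
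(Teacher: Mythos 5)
Your plan is sound and would prove the lemma, but it is not the route the paper takes. You propose to rerun, for $\mathcal V$, the full intermediate-matrix induction of \S\ref{sse:identitybikelockmovesonS}: track $N^\star_{k-1}\to N^\star_k$, case on the seven possible types of $(M)_k$, and pin down $(N^\star_{k-1})_k$ from Table~\ref{eq:blV} together with the order, alignment, and count lemmas. The paper instead shortcuts this by comparing $M$ directly with the \emph{finished} matrix $BL^\star(V)$: since $BL^\star(V)\in\widetilde{\mathcal V}$ already satisfies Properties 1--3 (Lemma~\ref{lem:Vsatisfiesprops}) and has the same row-wise sequences of $0$s and $1$s as $M$ (Corollary~\ref{co:orderstayssamestar} plus the construction of $V$), an induction on columns shows that any column of $BL^\star(V)$ containing a number pattern unique among the seven types forces the same column in $M$; the only possible discrepancy is the swap $(\star,\star,0,0)^T\leftrightarrow(0,0,\star,\star)^T$, and that is killed by showing one of the two matrices would then contain the pair forbidden by Property 2 (for $BL^\star(V)$ via Lemma~\ref{lem:Vsatisfiesprops}, for $M$ by hypothesis). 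So the paper never reopens the move table in this proof, whereas you re-derive the analogues of Lemmas~\ref{le:basecase}--\ref{MkU}; your version buys a self-contained verification at the cost of a longer case analysis, the paper's buys brevity by leaning on the already-proved output characterization.

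One caution on your claim that the cases are a ``mechanical transcription'': Table~\ref{eq:blV} is not symmetric between rows $\{1,2\}$ and $\{3,4\}$ (the column $(0,0,0,0)^T$ shifts rows $\{3,4\}$), so the genuinely dangerous subcase is $(M)_k=(\star,\star,0,0)^T$ with $(N^\star_{k-1})_k=(0,0,0,0)^T$, where the move would produce $(0,0,\star,\star)^T$. Excluding it is not automatic from the starred lemmas alone; you must trace the later columns of $M$ (only $(\star,1,1,0)^T$ and $(\star,\star,0,0)^T$ carry a $\star$ in row 1, etc.) until the next numbers appear and exhibit the adjacent pair forbidden by Property 2, exactly as in the paper's argument. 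Since you did flag Property 2 as the tool for ruling out the offending configuration, this is a matter of filling in a nontrivial case rather than a gap in the approach.
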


\begin{proof}
Observe that $V \in \mathcal V$ due to Property 3.
Suppose that the first $k-1$ columns of $BL^{\star}(V)$ and $M$ agree, and consider the $k$th column ($k$ could be 1).  If the $k$th column of $BL^{\star}(V)$ is any of $(\star,1,1,0)^T$, $(1,\star,0,1)^T$, $(0,1,\star,1)^T$, $(1,0,1,\star)^T$, or $(1,1,1,1)^T$, so too must be the $k$th column of $M$, to have preserved the order of the $0$s and $1$s when removing $\star$s from $M$ to form $V$.

Suppose the $k$th column of $BL^{\star}(V)$ is $(\star,\star,0,0)^T$.  An appearance of a $1$ in the 3rd or 4th row of the $k$th column of $M$ would disrupt the order of  $0$s and $1$s. Thus the $k$th column of $M$ must contain only $\star$ or $0$s in the 3rd and 4th rows. Only $(\star,\star,0,0)^T$ and $(0, 0, \star,\star)^T$ satisfy this condition. 

If  $M$ has $(\star,\star,0,0)^T$ in the $k$th column, we're done. If not, then it must have $(0, 0, \star,\star)^T$  in the $k$th column. Then the appearance of a $1$ in the first or second row of the $(k+1)$st column of $BL^{\star}(V)$ would disrupt the order of the $0$s and $1$s. Thus the $(k+1)$st column of $BL^{\star}(V)$ must contain only $\star$ or $0$s in the 1st or 2nd rows. Of the seven possibilities, only $(\star,\star,0,0)^T$ and $(0, 0, \star,\star)^T$ satisfy this condition. By the same reasoning, $BL^{\star}(V)$ has $(\star,\star,0,0)^T$  in all subsequent columns, until the first occurrence of $(0, 0, \star,\star)^T$, guaranteed to occur by a simple count. It follows that $BL^{\star}(V)$ has two adjacent columns of the form disallowed by Property 2.

A similar argument works if the $k$th column of $BL^{\star}(V)$ is $(0,0,\star,\star)^T$. If  $M$ has $(0,0,\star,\star)^T$ in the $k$th column, we're done. If not, then it must have $(\star,\star,0,0)^T$  in the $k$th column. By the same reasoning, $M$ has $(\star,\star,0,0)^T$  in all subsequent columns, until the first occurrence of $(0, 0, \star,\star)^T$, guaranteed to occur by a simple count. 
Then $M$ has two adjacent columns of the form disallowed by Property 2.
  \end{proof}

\begin{proof}[Proof of Proposition~\ref{lemma:V}]  
By Lemma~\ref{lem:Vsatisfiesprops} any $M \in \widetilde{\mathcal V}$ satisfies the conditions listed in Proposition~\ref{lemma:V}; by Lemma~\ref{lem:propsimply} any matrix $M$ satisfying the properties is $BL^\star (V)$ for some $V \in \mathcal V$ and hence $M \in \widetilde{\mathcal V}$.
  \end{proof}

\subsection{Bijection between $\mathcal V$ and $\mathcal S$}\label{sse:identitybijection}
Finally, we complete the proof of Theorem~\ref{identity} by establishing the bijection between sets of the right size.
\begin{prop}\label{prop:bijection}
There is a bijection between sets $\mathcal V$ and $\mathcal S$.
\end{prop}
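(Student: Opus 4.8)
The plan is to exhibit a bijection $\mathcal{V} \to \mathcal{S}$ by passing through the sets $\widetilde{\mathcal V}$ and $\widetilde{\mathcal S}$, which were set up precisely so that the two outer maps $BL^{\star}\colon \mathcal V \to \widetilde{\mathcal V}$ and $BL^{-}\colon \mathcal S \to \widetilde{\mathcal S}$ are already known to be bijections (Lemma~\ref{lemma:Vinj} and Corollary~\ref{co:FGinj}). So it suffices to produce a bijection $\Phi\colon \widetilde{\mathcal V} \to \widetilde{\mathcal S}$, and then define the desired bijection as $\bigl(BL^{-}\bigr)^{-1} \circ \Phi \circ BL^{\star}$. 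The heart of the matter is the observation that Proposition~\ref{lemma:V} and Proposition~\ref{prop:FG} give parallel column-by-column descriptions of $\widetilde{\mathcal V}$ and $\widetilde{\mathcal S}$: each matrix in $\widetilde{\mathcal V}$ is built columnwise from exactly $7$ allowed column types with one forbidden adjacency and prescribed row-counts, and similarly for $\widetilde{\mathcal S}$ with $7$ allowed column types and one forbidden adjacency.

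First I would write down the explicit correspondence between the $7$ column types of $\widetilde{\mathcal V}$ in Proposition~\ref{lemma:V}(\ref{prop:Vpropsforms}) and the $7$ column types of $\widetilde{\mathcal S}$ in Proposition~\ref{prop:FG}(1). The natural dictionary is
$$
\begin{pmatrix}\star\\\star\\0\\0\end{pmatrix}\leftrightarrow \begin{pmatrix}O\\-\end{pmatrix},\quad
\begin{pmatrix}0\\0\\\star\\\star\end{pmatrix}\leftrightarrow \begin{pmatrix}-\\U\end{pmatrix},\quad
\begin{pmatrix}\star\\1\\1\\0\end{pmatrix}\leftrightarrow\begin{pmatrix}Q\\C\end{pmatrix},\quad
\begin{pmatrix}1\\\star\\0\\1\end{pmatrix}\leftrightarrow\begin{pmatrix}S\\C\end{pmatrix},
$$
$$
\begin{pmatrix}0\\1\\\star\\1\end{pmatrix}\leftrightarrow\begin{pmatrix}P\\C\end{pmatrix},\quad
\begin{pmatrix}1\\0\\1\\\star\end{pmatrix}\leftrightarrow\begin{pmatrix}T\\C\end{pmatrix},\quad
\begin{pmatrix}1\\1\\1\\1\end{pmatrix}\leftrightarrow\begin{pmatrix}R\\C\end{pmatrix}.
$$
Applying this substitution columnwise defines a map $\Phi\colon \widetilde{\mathcal V}\to \widetilde{\mathcal S}$ and, reading the dictionary backwards, a candidate inverse. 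The pairing is chosen so that the forbidden adjacency $\left(\begin{smallmatrix}\star&0\\\star&0\\0&\star\\0&\star\end{smallmatrix}\right)$ in Proposition~\ref{lemma:V}(\ref{prop:Vadjacentexclusion}) corresponds exactly to the forbidden adjacency $\left(\begin{smallmatrix}-&O\\U&-\end{smallmatrix}\right)$ in Proposition~\ref{prop:FG}(2); I would check this is the only adjacency constraint on each side, so $\Phi$ maps legal matrices to legal matrices and likewise for the inverse.

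Next I must verify that $\Phi$ respects the row-count conditions, matching Table~\ref{ta:counts} with Table~\ref{ta:vcounts}. Concretely, given $M\in\widetilde{\mathcal V}$ with parameters $i,j$ (recorded via the counts in Table~\ref{ta:vcounts}, with $w+x=y+z$), I would tally, for $\Phi(M)$, how many columns of each of the $7$ types occur and read off the counts of $O,P,Q,R,S,T,-$ in the first row and $U,C,-$ in the second row. Using the identities $|O|+|P|=m$, $|T|+|U|=n$, $|Q|+|R|+|S|=w$, $|Q|-|P|=y-x$, $|S|-|T|=z-x$, and $|C|+|O|+|U|=w+m+n$, one checks these are exactly the defining relations of $\mathcal S$ with the \emph{same} $i=|P|$, $j=|T|$; and conversely. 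Since $\Phi$ is a columnwise substitution it is manifestly injective, and surjectivity onto $\widetilde{\mathcal S}$ follows because the inverse dictionary sends every legal $\widetilde{\mathcal S}$-matrix back to a legal $\widetilde{\mathcal V}$-matrix with matching counts. Composing, $\bigl(BL^{-}\bigr)^{-1}\circ\Phi\circ BL^{\star}\colon \mathcal V\to\mathcal S$ is the required bijection.

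The main obstacle is purely bookkeeping: confirming that the numerical identities relating the two tables hold under the chosen dictionary, and in particular that the ``$R$ vs.\ $(1,1,1,1)^T$'' pairing (the one type with no $\star$/placeholder) correctly absorbs the slack so that all six count-relations transform into one another. A secondary point requiring care is that the forbidden adjacency is genuinely the \emph{only} constraint beyond column-type membership and row counts on each side — i.e.\ that every column-type string avoiding that one pattern and meeting the row counts actually arises — but this is exactly what Propositions~\ref{prop:FG} and~\ref{lemma:V} assert, so no new work is needed there. Thus the proof is: set $\Phi$ to be the columnwise substitution above, invoke the two propositions to see $\Phi$ and $\Phi^{-1}$ are well defined, check the count-relations correspond, and compose with the already-established bijections $BL^{\star}$ and $BL^{-}$.
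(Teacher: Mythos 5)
Your strategy is exactly the paper's: compose the bijections $BL^{\star}\colon\mathcal V\to\widetilde{\mathcal V}$ (Lemma~\ref{lemma:Vinj}) and $BL^{-}\colon\mathcal S\to\widetilde{\mathcal S}$ (Corollary~\ref{co:FGinj}) with a columnwise dictionary between the seven column types of Proposition~\ref{lemma:V} and Proposition~\ref{prop:FG}. The gap is that the specific dictionary you wrote down is wrong, and both verifications you defer would fail for it. Adjacency: under your pairing the forbidden $\widetilde{\mathcal V}$-pattern, first column $(\star,\star,0,0)^T$ followed by $(0,0,\star,\star)^T$, is sent to $\left(\begin{smallmatrix}O&-\\-&U\end{smallmatrix}\right)$, which is \emph{allowed} in $\widetilde{\mathcal S}$, while the forbidden pattern $\left(\begin{smallmatrix}-&O\\U&-\end{smallmatrix}\right)$ pulls back to an allowed pattern in $\widetilde{\mathcal V}$. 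Counts: write $a,b,c,d,e,f,g$ for the number of columns of types $(\star,1,1,0)^T$, $(1,\star,0,1)^T$, $(0,1,\star,1)^T$, $(1,0,1,\star)^T$, $(\star,\star,0,0)^T$, $(0,0,\star,\star)^T$, $(1,1,1,1)^T$ in an element of $\widetilde{\mathcal V}$; Table~\ref{ta:vcounts} forces $c+f=m$, $a+e=n$, $b+d+g=w$, $a+c+g=x$, $a+d+g=y$, $b+c+g=z$. Your dictionary gives $|O|=e$, $|P|=c$, $|Q|=a$, $|T|=d$, $|U|=f$, so the defining relations of $\mathcal S$ would require $e+c=m$ (i.e.\ $e=f$), $d+f=n$, and $|Q|-|P|=a-c=y-x$, whereas the table only yields $d-c=y-x$ and $b-a=z-x$; none of these hold in general. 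So your $\Phi$ does not land in $\widetilde{\mathcal S}$ (Property 3 of Proposition~\ref{prop:FG} fails), and the composition with $(BL^{-})^{-1}$ is not defined.

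The repair is the paper's pairing, which differs from yours by the two swaps $Q\leftrightarrow T$ and ${O\choose -}\leftrightarrow{-\choose U}$: send $(\star,1,1,0)^T\mapsto{T\choose C}$, $(1,0,1,\star)^T\mapsto{Q\choose C}$, $(\star,\star,0,0)^T\mapsto{-\choose U}$, $(0,0,\star,\star)^T\mapsto{O\choose -}$, keeping your assignments for $S$, $P$, $R$. Then the forbidden adjacencies correspond exactly, and the counts become $|P|=c$, $|T|=a$, $|O|=f=m-c$, $|U|=e=n-a$, $|Q|=d=y-x+c$, $|S|=b=z-x+a$, $|R|=g=x-a-c$, $|C|=a+b+c+d+g=w+a+c$, with $n-a$ placeholders in the first row and $m-c$ in the second — precisely Table~\ref{ta:counts} with $i=c$, $j=a$ (and $0\le i\le m$, $0\le j\le n$ automatic). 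With that correction your argument coincides with the paper's proof of Proposition~\ref{prop:bijection}.
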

\begin{proof}
We establish a bijection between the sets $\widetilde {\mathcal V}$ and $\widetilde{\mathcal S}$, by mapping the seven vectors $
\begin{pmatrix}
\star \\
1 \\
1 \\
0
\end{pmatrix},
\begin{pmatrix}
1 \\
\star \\
0 \\
1
\end{pmatrix},
\begin{pmatrix}
0 \\
1 \\
\star \\
1
\end{pmatrix},
\begin{pmatrix}
1 \\
0 \\
1 \\
\star
\end{pmatrix},
\begin{pmatrix}
\star \\
\star \\
0 \\
0
\end{pmatrix},
\begin{pmatrix}
0 \\
0 \\
\star \\
\star
\end{pmatrix},
\begin{pmatrix}
1 \\
1 \\
1 \\
1
\end{pmatrix}
 $
  listed in Lemma~\ref{lemma:V} for elements of $\widetilde{\mathcal V}$ to the seven 2-vectors 
  $$ \begin{pmatrix} T\\ C\end{pmatrix},
  \begin{pmatrix} S\\ C\end{pmatrix}, \begin{pmatrix} P\\ C\end{pmatrix}, 
   \begin{pmatrix} Q\\ C\end{pmatrix},
 \begin{pmatrix} -\\ U\end{pmatrix},  \begin{pmatrix} O\\ -\end{pmatrix}, 
 \begin{pmatrix} R\\ C\end{pmatrix}
 $$
  of elements of $\widetilde{\mathcal S}$, respectively.   Note that the excluded configurations of $\widetilde{\mathcal V}$ correspond exactly to the excluded configurations of $\widetilde{\mathcal S}$

Lemma \ref{co:FGinj} and Lemma \ref{lemma:Vinj} establish bijections from $\mathcal S$ to $\widetilde{\mathcal S}$ and from $\mathcal V$ to $\widetilde {\mathcal V}$, respectively.  Thus there is a bijection $\mathcal S \rightarrow \mathcal V$.
 \end{proof}
It follows that $|\mathcal S| = |\mathcal V|$.  Since $|\mathcal V|$ is given by the left-hand side of Equation~\eqref{eq:combinatorialidentity} and $|\mathcal S|$ is given by the right side of Equation~\eqref{eq:combinatorialidentity}, we have 
concluded the proof of Theorem~\ref{identity}.

As an immediate corollary, we obtain Verdermonde's Identity. Let $n=0$ in Theorem~\ref{identity}, and substitute $a=x$, $b = y-x+m$, $s=m$, and $r=i$.

\begin{coro}[Vandermonde]
Let $a, b\in \IZ$. Then
$$
{a+b \choose s} = \sum_{r}{a \choose r} {b \choose s-r}.
$$
\end{coro}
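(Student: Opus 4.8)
The plan is to obtain the corollary as a direct specialization of Theorem~\ref{identity}. First I would set $n = 0$. The constraint $0\le j\le n$ then forces $j = 0$, so the double sum collapses to a sum over $i$ alone, and the factors $\binom{z+n}{z} = \binom{z}{z}$ and $\binom{w+i+n}{w+i+j} = \binom{w+i}{w+i}$ are each $1$. What survives is
\begin{equation*}
\binom{w+m}{w}\binom{y+m}{x}\binom{w}{y}
=\sum_{i=0}^{m}\frac{(w+m)!}{i!\,(m-i)!\,(x-i)!\,(z-x)!\,(y-x+i)!},
\end{equation*}
valid whenever the quantities appearing are non-negative, which holds in particular once $y\ge x$ and $z\ge x$.

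Next I would simplify. From $w+x = y+z$ we get $w-y = z-x$, hence $\binom{w}{y} = w!/\bigl(y!\,(z-x)!\bigr)$; expanding $\binom{w+m}{w}$ and cancelling the common factor $(w+m)!/(z-x)!$, then multiplying through by $m!$, reduces the identity to
\begin{equation*}
\frac{(y+m)!}{y!\,x!\,(y-x+m)!}=\sum_{i=0}^{m}\binom{m}{i}\frac{1}{(x-i)!\,(y-x+i)!}.
\end{equation*}
Multiplying both sides by $x!\,(y-x+m)!$ and using $x!/(x-i)! = i!\,\binom{x}{i}$ together with $(y-x+m)!/(y-x+i)! = (m-i)!\,\binom{y-x+m}{m-i}$ turns this into
\begin{equation*}
\binom{y+m}{m}=\sum_{i=0}^{m}\binom{x}{i}\binom{y-x+m}{m-i}.
\end{equation*}

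Finally I would set $a = x$, $b = y-x+m$, $s = m$, $r = i$, so that $a+b = y+m$, producing $\binom{a+b}{s}=\sum_{r}\binom{a}{r}\binom{b}{s-r}$. Letting $x$ and $m$ range over the non-negative integers and $y$ over integers with $y\ge x$ (and choosing $w\ge y$, $z = w+x-y$), the triples $(a,b,s)$ so obtained are Zariski-dense in the admissible parameter space, so I would finish by recalling that for each fixed $s$ both sides of Vandermonde's identity are polynomials in $a$ and $b$, which upgrades the equality to all $a,b\in\IZ$.

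I do not expect a genuine obstacle: the mathematical content is entirely contained in Theorem~\ref{identity}, and the corollary is pure bookkeeping. The only steps warranting a careful word are verifying that the non-negativity constraints on the multinomial coefficient ($x-i\ge 0$, $z-x\ge 0$, $y-x+i\ge 0$) leave enough freedom in the parameters, and the short polynomiality argument needed to pass from non-negative integers to arbitrary integer values of $a$ and $b$.
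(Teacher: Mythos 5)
Your proposal is correct and follows essentially the same route as the paper, which simply sets $n=0$ in Theorem~\ref{identity} and substitutes $a=x$, $b=y-x+m$, $s=m$, $r=i$. The extra bookkeeping you supply (the explicit cancellation and the polynomiality argument extending to all $a,b\in\IZ$) only fills in details the paper leaves implicit.
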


\nocite{*}

\bibliographystyle{spmpsci}
\bibliography{bibfile}

%
%


\end{document}